\theoremstyle{plain}
    \newtheorem{thm}{Theorem}[section] %
    \renewcommand{\thethm}%
    {\arabic{section}.\arabic{thm}}
    \newtheorem{cor}[thm]{Corollary}
    \newtheorem{prop}[thm]{Proposition}
    \newtheorem{lem}[thm]{Lemma}
    \newtheorem*{thmA}{Theorem A}
    \newtheorem*{thmB}{Theorem B}
    \newtheorem*{thmC}{Theorem C}
    \newtheorem*{thmD}{Theorem D}
\theoremstyle{definition}
    \newtheorem{dfn}[thm]{Definition}
    \newtheorem{nota}[thm]{Notation}
    \newtheorem*{nottn}{Notation}
\theoremstyle{remark}
    \newtheorem{rem}[thm]{Remark}
    \newtheorem*{remn}{Remark}
    \newtheorem{exam}[thm]{Example}
    \newtheorem{conj}[thm]{Conjecture}
    \newtheorem{fact}[thm]{Fact}
\newcommand{\BCC}{\mathbb{C}}
\newcommand{\BGG}{\mathbb{G}}
\newcommand{\BPP}{\mathbb{P}}
\newcommand{\BQQ}{\mathbb{Q}}
\newcommand{\BRR}{\mathbb{R}}
\newcommand{\BZZ}{\mathbb{Z}}
\newcommand{\SA}{\mathcal{A}}
\newcommand{\SC}{\mathcal{C}}
\newcommand{\SF}{\mathcal{F}}
\newcommand{\SH}{\mathcal{H}}
\newcommand{\SO}{\mathcal{O}}
\newcommand{\SR}{\mathcal{R}}
\newcommand{\SU}{\mathcal{U}}
\newcommand{\SW}{\mathcal{W}}
\newcommand{\ep}{\varepsilon}
\newcommand{\id}{\operatorname{id}}
\newcommand{\Hom}{\operatorname{Hom}}
\newcommand{\Aut}{\operatorname{Aut}}
\newcommand{\Bir}{\operatorname{Bir}}
\newcommand{\Bim}{\operatorname{Bim}}
\newcommand{\PGL}{\operatorname{PGL}}
\newcommand{\Ker}{\operatorname{Ker}}
\newcommand{\Alb}{\operatorname{Alb}} %Albanese variety
\newcommand{\OH}{\operatorname{H}} %%%Homology, Cohomology group
\newcommand{\OR}{\operatorname{R}} %%%Higher direct image
\newcommand{\Hilb}{\operatorname{Hilb}} %%%Hilbert scheme
\newcommand{\Chow}{\operatorname{Chow}} %%%Chow scheme
\newcommand{\LS}[1]{|#1|} %%%Linear system |D|
\newcommand{\SSpec}{\operatorname{\mathit{Spec}}}
\newcommand{\SProj}{\operatorname{\mathit{Proj}}}
\newcommand{\Km}{\operatorname{Km}} %Kummer surface
\newcommand{\Sym}{\operatorname{Sym}} %Symmetric tensor
\newcommand{\class}{\operatorname{cl}} %cohomology class
\newcommand{\isom}{\simeq} %%isomorphism
\newcommand{\ratmap}%%rational map
{{\,\cdot\negmedspace\cdot\negmedspace\cdot\negmedspace\to\,}}
\newcommand{\injmap}{\hookrightarrow}
\newcommand{\topo}{\mathrm{top}}
\newcommand{\red}{\mathrm{red}}
\newcommand{\coh}{\mathrm{coh}}
\newcommand{\can}{\mathrm{can}}
\newcommand{\rat}{\mathrm{rat}}
\newcommand{\qis}{\mathrm{qis}}
\newcommand{\alg}{\mathrm{alg}}
\newcommand{\reg}{\mathrm{reg}}
\newcommand{\apr}{\mathrm{apr}} %alg. pi_{1} rat. sing
\newcommand{\norm}[1]{\|#1\|}
\newcommand{\lquot}{\backslash} %quotient space
\newcommand{\rdn}[1]{\llcorner{#1}\lrcorner}%Round-down
\begin{document}

\title[\'Etale endomorphisms of projective manifolds]{%
Building blocks of \'etale endomorphisms of complex projective manifolds}
\author{Noboru Nakayama}
\address[Noboru Nakayama]{%
\textsc{Research Institute for Mathematical Sciences} \endgraf
\textsc{Kyoto University, Kyoto 606-8502 Japan}}
\email{nakayama@kurims.kyoto-u.ac.jp}
\author{De-Qi Zhang}
\address[De-Qi Zhang]{%
\textsc{Department of Mathematics} \endgraf
\textsc{National University of Singapore, 2 Science Drive 2,
Singapore 117543}}
\email{matzdq@nus.edu.sg}

\begin{abstract}
\'Etale endomorphisms of complex projective manifolds
are constructed from two building blocks
up to isomorphism
if the good minimal model conjecture is true.
They are the endomorphisms of abelian varieties and
the nearly \'etale rational endomorphisms of weak Calabi-Yau varieties.
\end{abstract}

\subjclass[2000]{14E20, 14E07, 32H50}
\keywords{endomorphism, Iitaka fibration, Albanese map,
rationally connected variety}

\thanks{The first named author is partly supported by the Grant-in-Aid for
Scientific Research (C), Japan Society for the Promotion of Science.
The second named author is supported by an ARF of NUS}

\maketitle

%\raggedbottom

\section{Introduction}
We work over the field $\BCC$ of complex numbers.
In this paper, we shall give a systematic study of
\'etale endomorphisms of nonsingular projective varieties.
The \'etaleness assumption is quite natural because every surjective
endomorphism of $X$ is \'etale provided that $X$ is a nonsingular
projective variety and is non-uniruled
(cf.\ Remark~\ref{rem:IitakaFujimoto} below).
In the study of birational classification of algebraic varieties, we
usually have the following three reductions, where \( \kappa \)
denotes the Kodaira dimension and \( q \) the irregularity:
\begin{enumerate}
\item[(A)]  Varieties of \( \kappa > 0 \) \( \Rightarrow \)
varieties of \(\kappa = 0\), by the Iitaka fibration.

\item[(B)]  Varieties of \(\kappa = 0\) \( \Rightarrow \) abelian varieties and
varieties with \(\kappa = q = 0\), by the Albanese map.

\item[(C)]  Uniruled varieties \(\Rightarrow\) non-uniruled varieties,
by the maximal rationally connected fibration
(cf.\ \cite{Cp} and \cite{KoMM}).
\end{enumerate}

We want to show that there are similar reductions in the study of
\'etale endomorphisms of nonsingular projective varieties.
Theorems A, B, and C below correspond to the reductions (A), (B), and (C),
respectively. See \cite{Zh} for the case of automorphisms.

\subsection{The reduction (A)}
\label{subsect:Intro A}

This reduction is based on the Iitaka fibration. Let \( X \) be a
nonsingular projective variety with \( \kappa(X) > 0 \) and
\( f \colon X \to X \) a surjective morphism.
From a standard argument of pluricanonical systems, we infer that
\( f \) induces a birational automorphism \( g \)
of the base space \( Y \) of the Iitaka fibration \( X \ratmap Y \).
Theorem A below shows that the order of \( g \) is finite.
This was conjectured in several papers
(cf.\ \cite[Proposition 6.4]{AC}, \cite[Proposition 3.7]{FN}).
Theorem A treats not only holomorphic surjective endomorphisms of projective
varieties of \( \kappa > 0 \) but also
dominant meromorphic endomorphisms of
compact complex manifolds of \( \kappa > 0 \)
in the class \( \SC \) in the sense of Fujiki \cite{Fujiki_C}.
Note that a compact complex manifold is in the class \( \SC \) if and
only if it is bimeromorphic to a compact K\"ahler manifold
(cf.\ \cite{Va}).

\begin{thmA}
Let $X$ be a compact complex manifold in the class \( \SC \)
of \( \kappa(X) \geq 1 \)
and let $f \colon X \ratmap X$ be a dominant meromorphic map.
Let \( W_{m} \) be the image of the \( m \)-th pluricanonical map
\[\Phi_{m} \colon X \ratmap \LS{mK_{X}}^{\vee}
= \BPP(\OH^{0}(X, mK_{X}))\]
for a positive integer \( m \) with \( \LS{mK_{X}} \ne \emptyset \).
Then there is an automorphism \(g\) of \( W_{m} \) of
finite order such that
\(\Phi_{m} \circ f = g \circ \Phi_{m}\).
\end{thmA}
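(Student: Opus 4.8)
The plan is to carry out the standard pluricanonical construction to produce the candidate automorphism $g$, then to reduce the finiteness of its order to the single value of $m$ realizing the Iitaka fibration, and finally to invoke the classical finiteness of the group of birational self-maps of a variety of general type. So first I would construct $g$. Resolving the meromorphic map, choose a proper modification $\nu\colon Z\to X$ from a compact complex manifold with $h:=f\circ\nu\colon Z\to X$ holomorphic (possible in the class $\SC$). Since $\nu$ is a modification, $\OH^{0}(Z,mK_{Z})=\OH^{0}(X,mK_{X})$; since $h$ is generically finite ($f$ being a dominant self-map), $K_{Z}=h^{*}K_{X}+R$ with $R\ge 0$ the ramification divisor, so multiplying by the tautological section of $mR$ embeds $\OH^{0}(Z,mh^{*}K_{X})=\OH^{0}(Z,mK_{Z}-mR)$ into $\OH^{0}(Z,mK_{Z})=\OH^{0}(X,mK_{X})$. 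Composing the pull-back $h^{*}\colon\OH^{0}(X,mK_{X})\to\OH^{0}(Z,mh^{*}K_{X})$ with this embedding yields an endomorphism $f^{*}$ of $V:=\OH^{0}(X,mK_{X})$ that is injective (because $h$ is surjective), hence bijective since $\dim V<\infty$. Its transpose induces an automorphism $g$ of $\BPP(V)=\LS{mK_{X}}^{\vee}$, and the usual computation --- the local Jacobian factor cancels projectively --- gives $\Phi_{m}\circ f=g\circ\Phi_{m}$. Thus $g$ maps the dense subset $\Phi_{m}(X)$ of $W_{m}$ into $W_{m}$, so $g(W_{m})=W_{m}$ and $g$ restricts to an automorphism $g_{m}$ of $W_{m}$; it remains only to bound the order of $g_{m}$.

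Next I would reduce to the Iitaka fibration. Fix $m_{0}$ so that $\Phi_{m_{0}}\colon X\ratmap W_{m_{0}}$ is a model of the Iitaka fibration of $X$. Given $m$ with $\LS{mK_{X}}\ne\emptyset$, pick a multiple $m'$ of $m$ for which $\Phi_{m'}$ also gives the Iitaka fibration (for instance a large multiple of $m_{0}m$). The image $S$ of the multiplication map $\Sym^{m'/m}V\to\OH^{0}(X,m'K_{X})$ is $f^{*}$-invariant, with $f^{*}|_{S}$ induced by $\Sym^{m'/m}(f^{*}|_{V})$; hence the linear projection $\LS{m'K_{X}}^{\vee}\ratmap\BPP(S^{\vee})$ followed by the $(m'/m)$-uple Veronese identification defines a dominant rational map $\pi\colon W_{m'}\ratmap W_{m}$ with $\pi\circ\Phi_{m'}=\Phi_{m}$ and $\pi\circ g_{m'}=g_{m}\circ\pi$. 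Therefore $g_{m'}^{\,k}=\id$ forces $g_{m}^{\,k}\circ\pi=\pi$, and since $\pi$ is dominant and $g_{m}^{\,k}$ an automorphism, $g_{m}^{\,k}=\id$. So it suffices to treat the case $m=m'$, i.e.\ when $\Phi_{m}$ is the Iitaka fibration.

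Finally, in that case $W_{m'}$ is a projective variety bimeromorphic to the base $Y$ of the Iitaka fibration of $X$, hence of general type with $\dim W_{m'}=\kappa(X)$. Since $g_{m'}\in\Aut(W_{m'})\subseteq\Bir(W_{m'})=\Bir(Y)$ and the group of birational self-maps of a variety of general type is finite (Matsumura; one may also argue that $\Aut$ of the canonical model is an algebraic group whose Lie algebra $\OH^{0}(Y',T_{Y'})$ vanishes for a smooth general-type model $Y'$, hence is finite), the order of $g_{m'}$ is finite, and by the reduction so is the order of $g_{m}$ for every admissible $m$. This completes the proof.

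I expect the essential point --- rather than an obstacle of computation --- to be exactly the passage to the Iitaka fibration together with the general-type property of its base: this is what makes the theorem true, since for a bad admissible $m$ the group $\Aut(W_{m})$ can be positive dimensional (e.g.\ $W_{m}\isom\BPP^{1}$), so the conclusion genuinely uses that $g$ originates from $f$ and not merely that $g\in\Aut(W_{m})$. The residual care lies in having the Iitaka fibration machinery and the finiteness of birational automorphisms of general-type varieties available for compact complex manifolds merely in the class $\SC$, and in the bookkeeping of the equivariant comparison map $\pi$ for arbitrary $m$.
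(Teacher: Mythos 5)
The construction of $g$ and the reduction to the case where $\Phi_{m}$ realizes the Iitaka fibration are both sound and agree with the paper (the comparison map you call $\pi$ is the paper's $\Psi_{m,ml}$). But your final step rests on a false assertion: the base of the Iitaka fibration is \emph{not} of general type in general, only of dimension $\kappa(X)$. The standard counterexample is an elliptic surface $X\to\BPP^{1}$ with $\kappa(X)=1$: here $\Phi_{m}$ for large $m$ \emph{is} the Iitaka fibration, yet $W_{m}\isom\BPP^{1}$ (embedded as a rational normal curve via Kodaira's canonical bundle formula), so $\Aut(W_{m})\isom\PGL_{2}$ is positive-dimensional and Matsumura's finiteness of $\Bir$ for general-type varieties is unavailable. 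You in fact observe in your closing paragraph that $W_{m}\isom\BPP^{1}$ can occur and that the theorem must use that $g$ comes from $f$; the point you miss is that this situation persists even for the $m$ giving the Iitaka fibration, so the entire difficulty of the theorem remains after your reduction.

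What the paper does to close this gap is the actual content of the proof. First (Lemma~\ref{lem:infinite->ruled}), if $g$ had infinite order, some power of $g$ would lie in a positive-dimensional connected commutative linear algebraic subgroup $G\subset\Aut(W_{m})$, so $W_{m}$ is ruled. One then builds a tower of $G$-equivariant meromorphic quotients $Y=Y_{0}\to Y_{1}\to\cdots\to Y_{l}$ by the one-dimensional subquotients of $G$, whose general fibers are rational curves, and reduces to the key Proposition~\ref{prop:overP1}: for a fiber space $X\to\BPP^{1}$ with $\kappa(X)\geq 0$ and a dominant meromorphic self-map covering an automorphism $g$ of $\BPP^{1}$, the order of $g$ is finite. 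That proposition is proved by Hodge theory — one passes to a cyclic cover to arrange $p_{g}>0$, considers the variation of Hodge structure $\OR^{d}\pi_{*}\BZZ_{X}$ stabilized by $f^{*}$, and uses Griffiths' horizontality and Schwarz-type inequality for the period map to show that an infinite-order $g$ would force the VHS (hence $\pi_{*}\omega_{X/Y}$) to be trivial, contradicting $p_{g}>0$. None of this is replaceable by the general-type finiteness you invoke, so as written your proof has a genuine gap at its decisive step.
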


\begin{remn}%\hfill
\begin{enumerate}
\item  If \( f \) is holomorphic, then, resolving
the indeterminacy locus of $\Phi_{m}$, we may assume that both
$f \colon X \to X$ and
$\Phi_m \colon X \to W_m$ are holomorphic
so that $\Phi_m \circ f = g\circ \Phi_m$.
This is because $f$ is \'etale and we can take an equivariant
resolution (in the sense of Section~\ref{subsect:equiv resol} below)
of the graph of Iitaka fibration
(cf.\ the proof of Lemma~\ref{lem:1}).

\item
Theorem A is known to be true by Deligne and Nakamura-Ueno
when \(X\) is Moishezon and
$f$ is a bimeromorphic automorphism
(cf.\ \cite[Theorem 14.10]{Ue}, \cite{NU}).
\end{enumerate}
\end{remn}

\subsection{The reduction (B)}
\label{subsect:intro B}

For a compact K\"ahler manifold \(M\) with \(c_{1}(M)_{\BRR} = 0\), we have
a finite \'etale cover \(\widetilde{M} \to M\) such that
\(\widetilde{M} \isom T \times F\) for a complex torus \(T\) and
a simply connected manifold \(F\) with \(c_{1}(F) = 0\),
by Bogomolov's decomposition theorem (cf.\ \cite{Bo}, \cite{Be}).
For a normal projective variety \( V \) with only
canonical singularities and with torsion \(K_{V}\),
we have the following weak decomposition by Kawamata
\cite[Corollary 8.4]{Ka85}:
\emph{There exists a finite \'etale covering \( F \times A \to V \)
for a weak Calabi-Yau variety \( F \) and
an abelian variety \( A \).}
Here, a normal projective variety \(F\) is called
\emph{weak Calabi-Yau} if
\( F \) has only canonical singularities, \( K_{F} \sim_{\BQQ} 0 \), and
\[ q^{\max}(F) := \max\{ q(F') \mid F' \to F \text{ is finite \'etale}
\} = 0 \]
(cf.\ Section~\ref{subsect:AlbClosure}).
If \( F \) is a nonsingular weak Calabi-Yau variety,
then \( \pi_1(F) \) is finite by
Bogomolov's decomposition theorem, so a finite \'etale cover
of \(F \) is expressed as a product of holomorphic symplectic manifolds
and Calabi-Yau manifolds.

In order to study the surjective endomorphisms of
a nonsingular projective variety \(X\) with \( \kappa(X) = 0 \),
we assume the existence of a good minimal model \(V\) of \( X \);
but we allow the variety \( V \) to have canonical singularities
as in \cite{Ka85}.
Then it has a meaning to consider the reduction of the endomorphisms
to those of weak Calabi-Yau varieties \( F \) and those of
abelian varieties \(A\) by
an \'etale covering \( F \times A \to V \).
Unfortunately, a holomorphic surjective endomorphism of \( X \) induces only
a rational map \(V \ratmap V\), but it satisfies the condition of
\emph{nearly \'etale map}, which is introduced
in Section~\ref{sect:nearlyEtale}.
Therefore, Theorem B below is meaningful for the reduction of type (B):

\begin{thmB}
Let \( V \) be a normal projective variety with only canonical
singularities such that \( K_{V} \sim_{\BQQ} 0 \).
Let \( h \colon V \ratmap V \) be a dominant rational map which is
nearly \'etale.
Then there exist an abelian variety \( A \),
a weak Calabi-Yau variety \( F \),
a finite \'etale morphism \( \tau \colon F \times A \to V \),
a nearly \'etale dominant rational map
\( \varphi_{F} \colon F \ratmap F \), and
a finite \'etale morphism \( \varphi_{A} \colon A \to A \) such that
\( \tau \circ (\varphi_{F} \times \varphi_{A}) = h \circ \tau \),
i.e., the diagram below is commutative\emph{:}
\[ \begin{array}{ccc}
F \times A & \overset{\varphi_{F} \times \varphi_{A}}{\ratmap} &
F \times A \\
\mbox{\scriptsize $\tau$} \downarrow \phantom{\tau} & & \phantom{\tau}
\downarrow \mbox{\scriptsize $\tau$} \\
V &\overset{h}{\ratmap} & \phantom{.}V.
\end{array}  \]
\end{thmB}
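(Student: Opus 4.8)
The plan is to combine Kawamata's weak decomposition theorem with an equivariance argument supplied by the nearly-\'etale hypothesis, reducing to the case $V = F \times A$, and then to prove a rigidity statement: the weak Calabi--Yau component of $h$ does not vary. Throughout one uses that canonical singularities are rational, so that the Albanese morphism of $V$ (and of any resolution of $V$) is well defined and is functorial for dominant rational maps.

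First I would extract the abelian part. The dominant generically finite map $h$ induces a dominant rational self-map of $A_{0} := \Alb(V)$; since a rational map from a smooth proper variety to an abelian variety is a morphism, and since $h$ is generically finite, this self-map is a finite \'etale morphism $\varphi_{A_{0}}$ (an isogeny composed with a translation, \'etale because we work over $\BCC$), and $\alpha_{V} \circ h = \varphi_{A_{0}} \circ \alpha_{V}$ for the Albanese morphism $\alpha_{V}$. By Kawamata \cite[Corollary~8.4]{Ka85} some finite \'etale cover of $V$ is a product $F \times A$ with $F$ weak Calabi--Yau and $A$ abelian; as $q^{\max}(F) = 0$ forces $\Alb(F) = 0$, the factor $A$ is the Albanese of that cover and the second projection is its Albanese morphism. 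Using the functoriality just noted, the fact (again essentially Kawamata's) that the monodromy of the Albanese fibration of $V$ is finite, and the formalism of Section~\ref{sect:nearlyEtale} (a nearly \'etale dominant rational map pulls finite \'etale covers back to finite \'etale covers, hence acts on the finite set of covers of a bounded degree), one arranges a finite \'etale morphism $\tau \colon F \times A \to V$ to which $h$ lifts as a nearly \'etale dominant rational self-map $\widetilde h$ compatible with the Albanese projection $p_{A} \colon F \times A \to A$. After absorbing a translation of $A$ into the $A$-factor (a translation lifts to $F \times A$ as the identity on $F$ times that translation), we may write $\widetilde h(x,a) = (h_{1}(x,a), \varphi_{A}(a))$ with $\varphi_{A} \colon A \to A$ finite \'etale and $h_{1} \colon F \times A \ratmap F$ dominant rational, whose restriction $\varphi_{F}^{(a)} := h_{1}|_{F \times \{a\}} \colon F \ratmap F$ is a nearly \'etale dominant rational self-map of $F$ for general $a \in A$.

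It then remains to prove that $\varphi_{F}^{(a)}$ does not depend on $a$; granting this, one sets $\varphi_{F} := \varphi_{F}^{(a)}$, observes that $\varphi_{F}$ is nearly \'etale because $\widetilde h$ is, concludes $\widetilde h = \varphi_{F} \times \varphi_{A}$, and reads off the commutative diagram of Theorem~B through $\tau$. For the independence, the degree of $\varphi_{F}^{(a)}$ against a fixed polarisation of $F$ is locally constant in $a$ (nearly-\'etaleness pins down the ramification), so $a \mapsto [\,\text{graph of }\varphi_{F}^{(a)}\,]$ gives a rational map from the connected variety $A$ into a fixed quasi-projective parameter space $\mathcal{M}$ of nearly \'etale dominant rational self-maps of $F$. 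The key assertion is that $\mathcal{M}$ is rigid at each of its points, so this map is constant. To see it, realize a point $\varphi$ of $\mathcal{M}$ by a suitable model, i.e.\ a smooth $\overline{F}$ with a birational $\mu \colon \overline{F} \to F$ and a generically finite $\psi \colon \overline{F} \to F$ with $\psi \circ \mu^{-1} = \varphi$, chosen so that the ramification divisor of $\psi$ vanishes (this is where nearly-\'etaleness is used). First-order deformations of $\varphi$ within $\mathcal{M}$ are then controlled by the global sections, computed on big open sets, of the pull-back of the reflexive tangent sheaf $T_{F}$ along $\psi$; and the differential of $\psi$, being an isomorphism $T_{\overline{F}} \to \psi^{*}T_{F}$ in codimension one, identifies these with $\OH^{0}(\overline{F}, T_{\overline{F}})$. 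Finally $\OH^{0}(\overline{F}, T_{\overline{F}}) = 0$, since a global vector field on $\overline{F}$ would, via $\mu$, give one on $F$, hence an element of $\operatorname{Lie}\Aut^{0}(F)$, and $\Aut^{0}(F) = \{1\}$ for a weak Calabi--Yau variety: a finite \'etale cover of a smooth model of $F$ is, by Bogomolov's decomposition \cite{Bo,Be}, a product of a complex torus --- trivial here because $q^{\max}(F) = 0$ --- with simply connected Calabi--Yau factors.

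I expect this rigidity step to be the main obstacle: making the deformation theory rigorous when $F$ is singular and $\varphi_{F}^{(a)}$ is only nearly \'etale rather than \'etale --- that is, choosing $\overline{F}$ and the parameter space $\mathcal{M}$ so that first-order deformations of the nearly \'etale map really are computed by global sections of $\psi^{*}T_{F}$, and propagating the relevant sheaf isomorphisms and sections across the loci where $\varphi$, $\psi$, or the singularities of $F$ degenerate. By comparison, the vanishing $\OH^{0}(\overline{F}, T_{\overline{F}}) = 0$, the triviality of $\Aut^{0}(F)$, and the Albanese bookkeeping of the first two paragraphs are comparatively routine.
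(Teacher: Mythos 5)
Your overall strategy --- pass to the Albanese closure, split off the abelian factor via Kawamata, and then kill the dependence on $a \in A$ by a rigidity argument resting on $q(F)=0$ and non-uniruledness --- is the same as the paper's. But the step you yourself flag as ``the main obstacle'' is a genuine gap, not just a technical loose end. You want a quasi-projective parameter space $\mathcal{M}$ of nearly \'etale dominant rational self-maps of $F$ whose first-order deformations are computed by $\OH^{0}(\overline{F},\psi^{*}T_{F})$. No such space is constructed, and the na\"{\i}ve candidates do not give this tangent space: if you parametrize graphs in a Hilbert or Chow scheme, the tangent space is controlled by the normal bundle of the graph in $\overline{F}\times F$ (or worse, by deformations that break the graph property), and identifying it with sections of $\psi^{*}T_{F}$ requires exactly the kind of delicate analysis you defer. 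The paper sidesteps this entirely by Stein-factorizing the nearly \'etale map $\varphi\colon F\times A \ratmap F'\times A'$ into a birational map followed by a finite \'etale cover (Lemma~\ref{lem:nearlyEt vs pi1}), and then proving rigidity separately for each piece: for the finite \'etale part it uses the genuine Hom-scheme $\Hom(F,F')$, whose tangent space at an \'etale $\psi$ is $\OH^{0}(F,\Theta_{F})=\operatorname{Lie}\Aut^{0}(F)=0$ (Lemmas~\ref{lem:aut=0} and \ref{lem:diagonal}); for the birational part it invokes Hanamura's theorem that $\Bir(F)$ carries a scheme structure with $\dim\Bir(F)=0$ for non-uniruled $F$ (Lemma~\ref{lem:bir diag1}, citing \cite{Hm88}). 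Your sketch is in effect an attempt to reprove Hanamura's theorem ad hoc; either cite it or supply the construction of $\mathcal{M}$.

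Two smaller omissions. First, you assert that $h$ lifts to the Kawamata cover $F\times A\to V$, but Kawamata only produces \emph{some} \'etale trivializing cover; the paper gets equivariance by routing through canonically defined covers --- the Albanese closure $V\sptilde\to V$ (Proposition~\ref{alb-c}), whose uniqueness forces $h$ to lift, and then the multiplication-by-$m$ isogeny on $A$, through which $h_{A}$ lifts by Lemma~\ref{lem:mu}. You should make the lifting argument explicit along these lines. Second, this construction only yields $q(F)=0$, not $q^{\max}(F)=0$; the paper concludes by applying the whole argument again to $\varphi_{F}\colon F\ratmap F$ and inducting on $\dim V$ to reach a genuine weak Calabi--Yau factor. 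Your write-up skips this induction.
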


\begin{remn}%\hfill
\begin{enumerate}
\item
By the proof, the commutative diagram above is birationally Cartesian.
%In particular, \( \deg h = \deg \varphi_{F} \deg \varphi_{A} \).

\item If the algebraic fundamental group \( \pi_{1}^{\alg}(F) \) is
finite, then \( \varphi_{F} \) is a birational automorphism
(cf.\ Section~\ref{subsect:conj_discuss}).
In particular, if \( V \) has only terminal singularities and \(
q^{\max}(V) = \dim V - 2 \),
then \( \varphi_{F} \) is an automorphism, since \(F\) is a K3 surface
or an Enriques surface.
\end{enumerate}
\end{remn}

\subsection{The reduction (C)}
\label{subsect:intro C}

Let \( X \) be a uniruled nonsingular projective variety.
A \emph{maximal rationally connected fibration} of \( X \) in the
sense of \cite{Cp} and \cite{KoMM} is obtained by a certain rational
map \(X \ratmap \Chow(X) \)
into the Chow variety \( \Chow(X)  \), which
assigns a general point \(x \in X\)
a maximal rationally connected subvariety containing \( x \).
Let \( Y  \) be the normalization of the image of
\( X \ratmap \Chow(X) \) and
let \( \pi \colon X \ratmap Y \) be the induced rational fibration.
Assume that \( X \) admits an \'etale endomorphism
\( f \colon X \to X \). Then
there is an endomorphism \( h \colon Y \to Y \) such that
\( \pi \circ f = h \circ \pi \) (cf. Lemma \ref {lem:1}).
Since rationally connected manifolds are simply connected, the
endomorphism \( f \) is induced from \( h \).
In Theorem C below, we shall show that \( h \) is nearly \'etale.

\begin{thmC}
Let $X$ be a projective manifold with an \'etale endomorphism $f$.
Assume that \( X \) is uniruled.
Then there exist a projective manifold \( M \) with
an \'etale endomorphism \( f_{M} \colon M \to M\),
a non-uniruled normal projective variety \( Y \) with
a nearly \'etale endomorphism \( h \colon Y \to Y \),
a birational morphism \( \mu \colon M \to X \), and
a surjective morphism \( \pi \colon M \to Y \) such that
\begin{enumerate}
\item \( \pi \circ f_{M} = h \circ \pi\), \( \mu \circ f_{M} = f \circ \mu \),
i.e., the diagram below is commutative\emph{:}
\[ \begin{CD}
Y @<{\pi}<< M @>{\mu}>> X \\
@V{h}VV @V{f_{M}}VV @V{f}VV \\
Y @<{\pi}<< M @>{\mu}>> \phantom{,}X,
\end{CD} \]
\item \( \deg f = \deg f_{M} = \deg h \),
\item \( \pi \circ \mu^{-1} \colon X \ratmap M \to Y \) is birational to
the maximal rationally connected fibration of \( X \).
\end{enumerate}
\end{thmC}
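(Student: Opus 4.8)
The plan is to build the entire diagram by an equivariant resolution of the maximal rationally connected fibration and then to verify the three assertions in turn. Let $Y_{0}$ be the normalization of the image of $X \ratmap \Chow(X)$, so that $X \ratmap Y_{0}$ is the maximal rationally connected fibration and, by Lemma~\ref{lem:1}, $f$ descends to a rational self-map of $Y_{0}$. I would then apply the equivariant resolution procedure of Section~\ref{subsect:equiv resol} (exactly as in the proof of Lemma~\ref{lem:1}) to the graph of $X \ratmap Y_{0}$; this is legitimate precisely because $f$ is \'etale, and it yields a projective manifold $M$, a birational morphism $\mu \colon M \to X$, an \'etale endomorphism $f_{M} \colon M \to M$ with $\mu \circ f_{M} = f \circ \mu$, a surjective morphism $\pi \colon M \to Y$ onto a normal projective variety $Y$ birational to $Y_{0}$, and a morphism $h \colon Y \to Y$ with $\pi \circ f_{M} = h \circ \pi$. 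By construction $\pi \circ \mu^{-1}$ is birational to the maximal rationally connected fibration, which is~(3), and $Y$ is non-uniruled, as the base of any maximal rationally connected fibration is (cf.\ \cite{Cp}, \cite{KoMM}). This puts the commutative square of~(1) in place, so it remains to see that $h$ is finite and to establish~(2) and the near-\'etaleness.

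For the degrees, $\deg f_{M} = \deg f$ because $\mu$ is birational and $\mu \circ f_{M} = f \circ \mu$. To compare with $h$, let $F_{y} = \pi^{-1}(y)$ for a general point $y \in Y$: it is a smooth connected projective rationally connected variety, hence simply connected. Since $df_{M}$ is everywhere an isomorphism and $\pi \circ f_{M} = h \circ \pi$, for such $y$ the morphism $f_{M}$ sends $F_{y}$ into $F_{h(y)}$ with injective differential at each point, so $f_{M}|_{F_{y}} \colon F_{y} \to F_{h(y)}$ is a finite unramified --- hence \'etale --- morphism of smooth projective rationally connected varieties of the same dimension; as $F_{h(y)}$ is connected and simply connected, this restriction is an isomorphism. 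Counting the points of $f_{M}^{-1}(z)$ for a general $z \in M$ fibrewise over the $\deg h$ points of the fibre of $h$ through $\pi(z)$ then gives $\deg f_{M} = \deg h$, which is~(2). The same dimension count shows $h$ is finite: if $h$ contracted a positive-dimensional $Z \subsetneq Y$ to a point, then $f_{M}$ would carry $\pi^{-1}(Z)$, of dimension strictly larger than that of a fibre of $\pi$, into a single fibre of $\pi$, which is impossible because $f_{M}$ is finite; being proper and quasi-finite, $h$ is finite, completing~(1).

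The remaining and most delicate point is that $h$ is nearly \'etale in the sense of Section~\ref{sect:nearlyEtale}. Here the mechanism is the equality $K_{M} = f_{M}^{*}K_{M}$ coming from the \'etaleness of $f_{M}$: writing $K_{M} = \pi^{*}K_{Y} + E$ with $E = K_{M/Y}$ over $Y_{\reg}$ and using the ramification formula $h^{*}K_{Y} = K_{Y} - R_{h}$ with $R_{h} \geq 0$, one gets $f_{M}^{*}E - E = \pi^{*}R_{h}$. Decomposing $E$ into its $\pi$-horizontal and $\pi$-vertical parts --- a decomposition preserved by $f_{M}^{*}$ because $f_{M}$ is finite and $\pi$-fibre-preserving --- the horizontal contributions cancel, leaving $\pi^{*}R_{h} = f_{M}^{*}E_{\mathrm{vert}} - E_{\mathrm{vert}}$. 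Thus every branch divisor of $h$ is matched by the non-reduced or exceptional vertical fibre components of the resolution over prime divisors of $Y$, and tracing this identity --- together with the $f_{M}$-, hence $h$-, invariance it forces on $\mathrm{Supp}\, E_{\mathrm{vert}}$ --- yields exactly the condition defining nearly \'etale. I expect this local bookkeeping over the bad fibres of the maximal rationally connected fibration, which requires first refining the equivariant resolution so that $\pi$ has a reduced rationally connected fibre component over the generic point of each prime divisor of $Y$, to be the main obstacle; the remaining steps are formal.
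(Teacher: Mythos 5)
Your construction of the diagram and your verification of the degree equalities follow the paper's route: Lemma~\ref{lem:1} plus an equivariant resolution produces \( M \), \( f_{M} \), \( Y \), \( h \), and the fact that \( f_{M} \) restricts to an isomorphism between general (simply connected, rationally connected) fibers of \( \pi \) gives \( \deg f_{M} = \deg h \) exactly as in Remark~\ref{rem:normalization}. The genuine gap is the final step, where you try to prove that \( h \) is nearly \'etale by a ramification-divisor computation starting from \( f_{M}^{*}K_{M} = K_{M} \). This strategy cannot succeed even in principle: near-\'etaleness (Definition~\ref{dfn:nearlyEtale}) asserts the existence of an honest finite \emph{\'etale} morphism between birational models of source and target, which is a statement about fundamental groups, not about ramification in codimension one. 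The paper's own example at the end of Section~\ref{sect:nearlyEtale} --- the endomorphism \( \mu_{V} \) of the Kummer quotient \( V = A/\iota \) --- is a finite surjective endomorphism of a normal projective variety whose branching is concentrated in codimension two (over the images of the \(2\)-torsion points), yet it is \emph{not} nearly \'etale because \( \Km(A) \) is simply connected. So even if your identity \( \pi^{*}R_{h} = f_{M}^{*}E_{\mathrm{vert}} - E_{\mathrm{vert}} \) were fully justified (and note that \( K_{Y} \) need not be \( \BQQ \)-Cartier on the normal variety \( Y \), so \( h^{*}K_{Y} = K_{Y} - R_{h} \) already requires interpretation), concluding that the branch divisor vanishes would not yield near-\'etaleness; you admit yourself that the remaining ``bookkeeping'' is the main obstacle, and it is precisely the part that cannot be completed along these lines.

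What is actually needed is Proposition~\ref{prop:2}. Since a general fiber \( F \) of \( \pi \) is rationally connected, it is connected and simply connected with \( \OH^{i}(F, \SO_{F}) = 0 \) for \( i > 0 \), so Lemma~\ref{lem:pi1} gives \( \pi_{1}(M') \isom \pi_{1}(N) \) for a resolution \( N \to Y \) and a suitable model \( M' \) of \( M \). The finite-index subgroup \( (f_{M})_{*}\pi_{1}(M) \subset \pi_{1}(M) \isom \pi_{1}(N) \) then corresponds to a finite \'etale cover \( \widetilde{N} \to N \), and the Stein factorization of \( \widetilde{N} \to N \to Y \) recovers \( h \colon Y \to Y \); this produces exactly the commutative square demanded by Definition~\ref{dfn:nearlyEtale}. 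A smaller issue: your finiteness argument for \( h \) (``\( \pi^{-1}(Z) \) has dimension strictly larger than that of a fibre of \( \pi \)'') is not airtight, since special fibers of \( \pi \) may have excess dimension; in the paper \( h \) arises as a morphism from the Chow-variety construction, and finiteness of a surjective endomorphism of a projective variety is a standard fact.
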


\begin{remn}\hfill
\begin{enumerate}
\item
$h^{-1}(Y_{\rat}) = Y_{\rat}$ and the
restriction $Y_{\rat} \to Y_{\rat}$ of \( h \) is \'etale for
the open subset $Y_{\rat} \subset Y$ consisting of
the smooth points and the points of rational singularity,
by Proposition~\ref{prop:nEtendo} and
Proposition~\ref{prop:nEt}, \eqref{prop:nEt:item2}.

\item If $Y$ has the relative canonical model
$Y_{\can}$ for resolutions of singularities of $Y$,
then, by Lemma~\ref{lem:fg->et},
$h$ lifts to an \'etale endomorphism of
$Y_{\can}$ and also to an \'etale endomorphism of a certain
resolution $Y'$ of singularities of $Y$.
The recent paper \cite{BCHM} has announced a proof of the existence of
minimal models of varieties of general type even in a relative
setting.
The existence of the relative canonical model $Y_{\can}$ follows
from the result.
\end{enumerate}
\end{remn}

\subsection{Equivariant resolutions}
\label{subsect:equiv resol}

Let \(V\) be a normal projective complex variety and
\( f \colon V
\linebreak
\to V \) an \'etale endomorphism.
Then there exists a resolution \( \mu \colon X \to V \)
of singularities such that the induced rational map
\( \mu^{-1} \circ f \circ \mu \colon X \to V \to V \ratmap X \)
is a holomorphic \'etale endomorphism of \( X \).
This is known as the existence theorem of an equivariant resolution
when \( f \) is an automorphism.
However, the proof is also effective for non-isomorphic \'etale
endomorphisms:
A method of resolution of singularities
is called to have a \emph{functoriality} if, for any smooth morphism
\(X \to Y\), and for the resolutions of singularities
\(X' \to X\) and \(Y' \to Y\) given by the method,
\(X'\) is isomorphic to the fiber product \(X \times_{Y} Y'\).
The recent methods by Bierstone-Milman and by Villamayor using
the notion of \emph{invariant} have the functoriality
(cf.\ \cite{BM}, \cite{EH}, \cite{EV}, \cite{JW}, \cite{KoR}).
Therefore, we call the resolution \( X \to V \) above also an
\emph{equivariant resolution} even if \( f \) is a non-isomorphic \'etale
endomorphism of \( V \).

\subsection{The meaning of our reduction}
\label{subsect:meaning}

Let \( X \) be a nonsingular projective variety with an \'etale
endomorphism \( f \).

First, assume that \( X \) is uniruled.
In view of Theorem~C, \( f \) is considered to be
built from a nearly \'etale endomorphism \( h \)
of a non-uniruled normal variety \( Y \)
up to isomorphism.
Further, we can replace \( Y \) with a nonsingular variety and
\( h \) with an \'etale endomorphism, provided that the minimal model
conjecture is true for varieties birational to \( Y \).

It is conjectured that a projective variety \( X \) is non-uniruled
if and only if \( \kappa(X) \geq 0 \).
This is regarded as a weak version of the abundance conjecture, and the
three-dimensional case is proved by Miyaoka \cite{MiAnn},
which is a key to the proof of
the three-dimensional abundance conjecture by \cite{MiAb} and \cite{Ka92}.
The good minimal model conjecture is the combination of the minimal
model conjecture and the abundance conjecture.
Thus, under the assumption of good minimal model conjecture,
the study of \'etale endomorphisms
is reduced to that of \'etale endomorphisms
of varieties of \( \kappa \geq 0 \).

\begin{rem}\label{rem:IitakaFujimoto}
A surjective endomorphism of a non-uniruled projective manifold \(X\) is \'etale.
This follows from \cite[Theorem 2]{Ii77} in the case of \(\kappa(X) \geq 0\)
(cf.\ \cite[Theorem 11.7]{Iitaka}).
Fujimoto \cite[Lemma 2.3]{Fujimoto} gives
another proof in the same case. The proof also works in the case
where \(K_X H^{n-1} \geq 0\) for any ample divisor \(H\) of \(X\);
this condition is satisfied if \(X\) is not uniruled by
Miyaoka-Mori's criterion \cite{MiMo}.
Thus, the \'etaleness for non-uniruled manifolds follows.
\end{rem}

Second, assume that \( \kappa(X) > 0 \).
Then we have the Iitaka fibration \( \Phi \colon X \ratmap Y \).
By Theorem~A, \( f \) induces a birational automorphism \( g \) of \( Y \)
of finite order.
Replacing \( X \) with a birational model,
we may assume that \( \Phi \) is holomorphic as remarked
after Theorem~A.
Iterating \( f \), we may assume \( f \) to be a morphism over \( Y \),
i.e., \(\Phi \circ f = \Phi\).
Then \( f \) induces an \'etale endomorphism of a general fiber \( F \)
of \( \Phi \).
This is a kind of reduction of \( f \) to an endomorphism of
varieties of \( \kappa = 0 \).
Theorem~A reduces the dynamical study of holomorphic endomorphisms of compact
K\"ahler manifolds of \(\kappa > 0\) to the case
of \(\kappa = 0\), by the results in Appendix~\ref{append:ent}.
In fact, we show in Appendix~\ref{append:ent} (cf.\ Theorem~D)
that \( d_{1}(f) = d_{1}(f|_{F}) \) and \( h_{\topo}(f) =
h_{\topo}(f|_{F}) \) for the first dynamical degree
\( d_{1} \) and the topological entropy \( h_{\topo} \).

However, even if we know the endomorphisms of fibers very well, it is
rather difficult to determine the structure of \( f \), as in the
papers \cite{Fujimoto} and \cite{FN}, which have determined the structure
of endomorphisms of \(3\)-dimensional projective manifolds of
\( \kappa \geq 0 \).

Third, assume that \( \kappa(X) = 0 \).
As in Section~\ref{subsect:intro B}, an \'etale endomorphism \( f \)
of \( X \) induces a nearly \'etale rational endomorphism of a weak
Calabi-Yau variety \( F \) and an endomorphism of an abelian variety \( A \),
provided that the good minimal model conjecture is true.
However, it is not clear that the nearly \'etale endomorphism induces
an \'etale endomorphism of a certain resolution of singularities of \( F \).
Further, for the converse direction,
it is not easy to recover the original endomorphism \( f \)
from the two endomorphisms of \( F \) and \( A \)
(cf.\ \cite{Fujimoto} for the \( 3 \)-dimensional case).

Therefore, we can conclude, under the assumption of good minimal model
conjecture, that the nearly \'etale endomorphisms of
weak Calabi-Yau varieties and the endomorphisms of abelian varieties
are the building blocks of all the \'etale endomorphisms of complex projective
manifolds.

For non-\'etale surjective endomorphisms (necessarily on
uniruled manifolds), on the one hand, we know
many examples of rationally connected varieties admitting
non-isomorphic surjective endomorphisms, such as projective spaces,
toric varieties, etc. (cf.\ \cite{Na02}). On the other hand, at the moment,
we do not have structure theorems for
%have no good idea to consider the building blocks of
all the endomorphisms on them.

\subsection*{Organization of the paper}
Section~\ref{sect:Iitaka} is devoted to proving Theorem~A and the
application to the pluricanonical representation of the bimeromorphic
automorphism group (cf.\ Corollary~\ref{cor:NU}).
In Section~\ref{sect:nearlyEtale}, we introduce a key notion of nearly
\'etale map and study its elementary properties.
Sections~\ref{sect:alb} and \ref{sect:mrc} are devoted to Theorem~B
and Theorem~C, respectively.
In Appendix~\ref{append:ent},
we shall prove the equalities on the first dynamical degrees and
the topological entropies mentioned above.

\subsection*{Notation and terminology}%\hfill
We refer to \cite{KMM} for the definitions of
minimal models and of singularities including terminal,
canonical, log-terminal, and rational singularities.
Also, we refer to \cite{Ko96}, \cite{KM}, \cite{Mo}
for additional information on the birational geometry
and the minimal model theory.
Here we add an explanation of
the notion of \emph{fibration} (or \emph{fiber space}):
Let \(\pi \colon X \to Y\) be a proper morphism of
normal algebraic (resp.\ complex analytic) varieties.
If a general fiber of \(\pi\) is connected, then every fiber
is connected and \(\SO_{Y} \isom \pi_*\SO_{X}\).
In this case, \( \pi \) is called a fibration or a fiber space.
A proper rational map
(cf.\ Definition~\ref{dfn:propratmap} below)
(resp.\ a proper meromorphic map) between normal varieties is
called a fibration or a fiber space if it is the
the composite of a proper birational (resp.\ bimeromorphic) map
and a holomorphic fibration.

\subsection*{Acknowledgement}
The first named author expresses his gratitude to the Department of
Mathematics of the National University of Singapore for the hospitality
during his stay in October 2006. The work of this paper is based on
the discussion with the second named author during the stay.
The authors are grateful to Dr.~Hiraku Kawanoue for answering questions on
equivariant resolutions, and to Professor Takeshi Abe for pointing out
a gap in the proof of Proposition~\ref{prop:overP1} in an earlier version.
The authors would like to thank Professor Yoshio Fujimoto
for his encouragement and the referee for the suggestions.

\section{The case of positive Kodaira dimension}%Iitaka fibration; proof of Theorem A
\label{sect:Iitaka}

\subsection{Iitaka fibration}%Preparation to Theorem A
\label{subsect:pre ThA}

In the situation of Theorem A, we may assume that \( X \) is a compact
K\"ahler manifold, without loss of generality.
We have a natural isomorphism
\( f^{*} \colon \OH^{0}(X, mK_{X}) \xrightarrow{\isom} \OH^{0}(X, mK_{X})\).
In fact, there is a bimeromorphic morphism \( \mu \colon Z \to X \)
from another compact K\"ahler manifold \( Z \)
such that \( \varphi := f \circ \mu \colon Z \to X\) is holomorphic;
then \( f^{*} \) is expressed as the composite
\[ \OH^{0}(X, mK_{X}) \xrightarrow{\varphi^{*}} \OH^{0}(Z, mK_{Z})
\xrightarrow[\isom]{(\mu^{*})^{-1}} \OH^{0}(X, mK_{X}), \]
which does not depend on the choice of \( \mu \colon Z \to X \).
The pullback \( f^{*} \) induces an automorphism \( g \) of
\( \LS{mK_{X}}^{\vee} \) preserving \( W_{m} \).
The problem is to show the finiteness of the order of \( g \in \Aut(W_{m}) \).
We begin with the following simple result.

\begin{lem}\label{lem:infinite->ruled}
If \emph{Theorem A} does not hold, then there is a
positive-dimensional connected commutative linear algebraic subgroup
\( G \subset \Aut(W_{m})\) such that \( g^{k} \in G \) for some \( k > 0 \).
In particular, \(W_{m} \) is ruled in this case.
\end{lem}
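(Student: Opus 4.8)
The plan is to exploit that $g$ is the restriction to $W_{m}$ of a genuine projective-linear automorphism. As recalled just above, $f^{*}$ induces an automorphism $\bar g$ of $\LS{mK_{X}}^{\vee}=\BPP(\OH^{0}(X,mK_{X}))$ which preserves $W_{m}$, satisfies $\Phi_{m}\circ f=\bar g\circ\Phi_{m}$, and restricts to $g$ on $W_{m}$. Since $W_{m}$ linearly spans $\LS{mK_{X}}^{\vee}$ (the map $\Phi_{m}$ is given by a basis of $\OH^{0}(X,mK_{X})$, so $W_{m}$ lies in no hyperplane), the restriction homomorphism from the stabilizer $S:=\{h\in\PGL(\OH^{0}(X,mK_{X}))\mid h(W_{m})=W_{m}\}$ to $\Aut(W_{m})$ is injective; in particular $\bar g$ and $g$ have the same order. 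If $g$ had finite order it would be an automorphism of $W_{m}$ of finite order with $\Phi_{m}\circ f=g\circ\Phi_{m}$, i.e.\ Theorem~A would hold; so the failure of Theorem~A forces $g$, equivalently $\bar g$, to have infinite order, and I work from that hypothesis.

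Next I would let $H$ be the Zariski closure of the cyclic group $\langle\bar g\rangle$ inside $\PGL(\OH^{0}(X,mK_{X}))$. Then $H$ is an algebraic subgroup; it is commutative, being the closure of a commutative subgroup; it is affine, hence linear algebraic, being a closed subgroup of the affine group $\PGL(\OH^{0}(X,mK_{X}))$; and it is positive-dimensional because $\langle\bar g\rangle$ is infinite. Its identity component $G:=H^{\circ}$ is a connected commutative linear algebraic group of positive dimension with $H/G$ finite, so $\bar g^{k}\in G$ for some $k>0$. As $S$ is Zariski-closed and contains $\langle\bar g\rangle$, we have $H\subset S$, and the composite $G\hookrightarrow S\injmap\Aut(W_{m})$ realizes $G$ as a positive-dimensional connected commutative linear algebraic subgroup of $\Aut(W_{m})$ with $g^{k}=\bar g^{k}|_{W_{m}}\in G$. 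This proves the first assertion.

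For the ruledness of $W_{m}$, I would use the structure of $G$: over $\BCC$ a connected commutative linear algebraic group is the direct product of its unipotent part, isomorphic to some $\BGG_{a}^{r}$, and a torus $\BGG_{m}^{s}$; since $\dim G>0$, $G$ contains a closed subgroup $\Gamma$ isomorphic to $\BGG_{a}$ or to $\BGG_{m}$. As $\Gamma\subset\Aut(W_{m})$, it acts faithfully, hence nontrivially, on $W_{m}$, and by Rosenlicht's theorem there is a $\Gamma$-stable dense open $U\subseteq W_{m}$ admitting a geometric quotient $q\colon U\to Q$ whose generic fibre is a principal homogeneous space under $\Gamma$ over $\BCC(Q)$. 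Such a torsor is trivial (Hilbert~90 for $\BGG_{m}$; $\OH^{1}=0$ for $\BGG_{a}$ in characteristic zero), so $\BCC(W_{m})=\BCC(U)=\BCC(Q)(t)$ is purely transcendental of transcendence degree one over $\BCC(Q)$, i.e.\ $W_{m}$ is birational to $\BPP^{1}\times Q$ and hence ruled. The argument is essentially formal; the points that need care are the passage from $\Aut(W_{m})$ to the linear group $\PGL(\OH^{0}(X,mK_{X}))$ via the non-degeneracy of $W_{m}$, and invoking the correct form of Rosenlicht's theorem together with the triviality of $\BGG_{a}$- and $\BGG_{m}$-torsors over a field — I regard the latter, rather than any single hard estimate, as the main thing to get right.
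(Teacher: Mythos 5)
Your proof is correct and follows essentially the same route as the paper: take the Zariski closure of the cyclic group generated by $g$ inside $\PGL(\OH^{0}(X,mK_{X}))$, observe it is commutative and positive-dimensional, and let $G$ be its identity component. The only difference is at the final step, where the paper simply invokes Matsumura's theorem (cf.\ \cite[Theorem 14.1]{Ue}) that a projective variety admitting a non-trivial action of a positive-dimensional connected linear algebraic group is ruled, whereas you reprove that result via one-parameter subgroups, Rosenlicht's theorem, and the triviality of $\BGG_{\mathtt{a}}$- and $\BGG_{\mathtt{m}}$-torsors over a field.
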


\begin{proof}
Let \( \widetilde{G} \subset \PGL = \Aut(\LS{mK_{X}}^{\vee}) \) be
the Zariski closure of the cyclic group \( I = \{ g^{j} \mid j \in \BZZ\} \).
Then \( \widetilde{G} \) is abelian, since it is contained in the
abelian algebraic subgroup \( Z(I) \cap Z(Z(I)) \),
where \( Z(S) \) denotes the algebraic subgroup
\( \{\gamma \in \PGL \mid \gamma s = s \gamma \text{ for any }
s \in S\} \)
for a subset \( S \subset \PGL \).
Let \( G \) be the identity connected component of \( \widetilde{G} \).
Then \( \dim G > 0 \) and \( g^{k} \in G \) for some \( k > 0 \).
Since the action of \( G \) preserves \( W_{m} \), \( G \) acts
non-trivially on \( W_{m} \). Thus \( W_{m} \)
is ruled by a result of Matsumura (cf.\ \cite[Theorem 14.1]{Ue}).
\end{proof}

\begin{rem}
There is another proof of Lemma~\ref{lem:infinite->ruled} by an
argument similar to \cite[Proposition 14.7]{Ue}: In fact, we can show
that \( f^{*} \) is expressed as a
diagonal matrix. Thus, \( G \) is contained in an algebraic torus.
We can prove more on \( f^{*} \) by the argument of
\cite[Proposition 14.4]{Ue}: If \( \lambda \) is an
eigenvalue of \( f^{*} \colon \OH^{0}(X, mK_{X})  \to \OH^{0}(X, mK_{X})\),
then \( \lambda \) is an algebraic integer with \( |\lambda|^{2/m} = \deg f \).
\end{rem}

The following is a key to the proof of Theorem~A.

\begin{prop}\label{prop:overP1}
Let \( \pi \colon X \to Y \) be a fiber space of a compact K\"ahler
manifold \( X \) over a nonsingular rational curve \( Y \isom \BPP^{1} \).
Let \( f \colon X \ratmap X \) be a dominant meromorphic map and
\( g \colon Y \xrightarrow{\isom} Y \) an automorphism such that
\( \pi \circ f = g \circ \pi \).
If \( \kappa(X) \geq 0 \), then \( g \) is of finite order.
\end{prop}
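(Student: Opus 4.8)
The plan is to argue by contradiction: suppose $g \in \Aut(Y) \isom \PGL(2,\BCC)$ has infinite order. Then, after replacing $g$ by a suitable positive power, $g$ lies in a one-parameter subgroup of $\PGL(2,\BCC)$; concretely, after a coordinate change on $Y \isom \BPP^1$ we may assume $g$ is either the multiplicative action $t \mapsto \lambda t$ for some $\lambda$ that is not a root of unity, or the additive action $t \mapsto t+1$. In both cases $g$ has a fixed point $P \in Y$ (namely $t = 0$ or $t = \infty$, respectively) which is an attracting or neutral fixed point, and the orbit of a general point of $Y$ accumulates to $\{P\}$ (together with a second fixed point in the multiplicative case). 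The strategy is to extract a contradiction from the fact that the fibers of $\pi$ then cannot all carry a nonnegative Kodaira dimension consistently, using the behavior of $f$ over such an orbit.

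The key input is the easy additivity / semicontinuity of Kodaira dimension for fiber spaces and the fact that $f$ is dominant meromorphic. First I would reduce to the case where $f$ is holomorphic by resolving indeterminacy: take a bimeromorphic $\mu \colon Z \to X$ with $\varphi = f \circ \mu \colon Z \to X$ holomorphic and $Z$ compact K\"ahler; then $\pi \circ \varphi = g \circ \pi \circ \mu$, so $\varphi$ is a morphism over $g$, and $\kappa(Z) = \kappa(X) \geq 0$. Now iterate: since $g$ has infinite order, for the purposes of deriving a contradiction we may pass to powers $f^N$, hence assume (after a coordinate change) $g$ is in multiplicative or additive normal form. The crucial geometric observation is that $f$ restricts, over a general point $y \in Y$, to a dominant meromorphic map $X_y \ratmap X_{g(y)}$ between general fibers, so $\kappa(X_{g(y)}) \geq \kappa(X_y)$ by easy addition for meromorphic maps; combined with the constancy of $\kappa(X_y)$ for general $y$, this already forces equality $\kappa(X_{g(y)}) = \kappa(X_y)$ for general $y$, which is not yet a contradiction. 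The real leverage comes from looking at special fibers over the fixed point(s) of $g$.

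The main step, and the main obstacle, is to produce the contradiction at the fixed point. Here I would use that $Y \isom \BPP^1$ and $\kappa(Y) = -\infty$, so by the behavior of the Iitaka fibration / relative canonical divisor one expects $K_X$ to be, roughly, "negative along the base direction": more precisely, I would analyze $\kappa(X)$ via the relative pluricanonical sheaves $\pi_* (mK_{X/Y})$ and the formula $mK_X = mK_{X/Y} + \pi^*(mK_Y)$ with $\deg K_Y = -2 < 0$. The equivariance $\pi \circ f = g \circ \pi$ together with the automorphy of $g$ means $f$ acts on $\bigoplus_m \OH^0(X, mK_X)$, and $f^*$ acts on sections compatibly with the $g$-action on the base. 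Since $g$ has infinite order but acts on the finite-dimensional space $\OH^0(X, mK_X)$ with $f^*$ having eigenvalues of absolute value $(\deg f)^{m/2}$ (as in the Remark after Lemma~\ref{lem:infinite->ruled}), one gets that $g$ acts with finite order on the image $W_m^Y$ of $X$ in $\BPP(\OH^0(X,mK_X))$ composed with the projection induced by $\pi$; but a nonconstant $g$-equivariant map $Y \to (\text{projective space with finite-order action})$ is impossible when $g$ has infinite order, unless the map is constant, i.e. $\OH^0(X, mK_X)$ carries no sections that separate fibers of $\pi$. Pushing this through for all $m$ shows $\kappa(X) = \kappa(X_y) + \dim Y$ cannot hold with a genuinely one-dimensional base contribution, and a careful bookkeeping with $\deg K_Y = -2$ forces $\OH^0(X, mK_X) = \OH^0(X_y, mK_{X_y})$-type behavior that contradicts $\kappa(X) \geq 0$ once $g$ has infinite order. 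The delicate point I expect to spend the most effort on is making the last implication precise: controlling the twist by $\pi^* \SO_Y(-2)$ along the $g$-orbit structure (attracting/repelling/parabolic), for which I would invoke a linearization of $g$ near its fixed point and a local analysis of $f$ over a $g$-invariant disk, reducing to the statement that a holomorphic self-map of a family over a disk, semiconjugating an infinite-order attracting germ, cannot preserve a nonzero pluricanonical section — essentially a Schwarz-lemma / normal-families argument on $\OH^0(X, mK_X)$.
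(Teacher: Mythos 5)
There is a genuine gap at the heart of your argument, in two places. First, the step ``$f^{*}$ has eigenvalues of absolute value $(\deg f)^{m/2}$ on $\OH^{0}(X,mK_{X})$, hence $g$ acts with finite order on the image of the pluricanonical map'' is not a valid inference and is essentially circular: knowing $|\lambda_i|=|\lambda_j|$ for the eigenvalues only tells you that the ratios $\theta_i=\lambda_i/\lambda_1$ have absolute value one in a single complex embedding, not that they are roots of unity (you would need all Galois conjugates to have absolute value one, plus a bound on their degree, to invoke Kronecker). In the paper the finiteness of the projective action is the \emph{conclusion} of Theorem~A, and Proposition~\ref{prop:overP1} is precisely the lemma needed to establish it; the eigenvalue bookkeeping from \cite[Proposition 14.4]{Ue} is only used \emph{afterwards}, in Corollary~\ref{cor:NU}, to upgrade ``finite order'' to ``uniformly bounded order.'' So you cannot use it here.

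Second, your ``main step'' --- a Schwarz-lemma/normal-families argument on $\OH^{0}(X,mK_{X})$ near a fixed point of an infinite-order $g$ --- is stated as a hope rather than carried out, and the object you propose to apply it to is the wrong one: pluricanonical sections do not carry the rigidity you need. The paper's mechanism is different: after reducing to $p_{g}(X)>0$ by the cyclic covering extracting an $m$-th root of an $f^{*}$-eigenvector $s\in\OH^{0}(X,mK_{X})$ (a reduction your sketch omits entirely, and which also forces a discussion of the genus of the new base curve), one studies the polarized variation of Hodge structure $\OR^{d}\pi_{*}\BZZ_{X}$ over the smooth locus. If $g$ has infinite order, the $g$-invariant degeneration locus has at most two points, so the VHS lives over $\BPP^{1}$, $\BCC$, or $\BCC^{\star}$; Griffiths' horizontality and distance-decreasing property of period maps then force the period map to be constant, hence $\pi_{*}\omega_{X/Y}$ trivial, and only at this point does $\deg\omega_{\BPP^{1}}=-2$ enter, giving $\OH^{0}(X,K_{X})=\OH^{0}(\BPP^{1},\SO(-2))=0$, the desired contradiction. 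Your intuition that $\deg K_{\BPP^{1}}=-2$ and a hyperbolicity phenomenon are the engine is correct, but the hyperbolicity lives on the period domain, not on $\OH^{0}(X,mK_{X})$, and without the cyclic-cover reduction and the VHS rigidity the argument does not close. (A genuinely different completion along positivity lines exists via Viehweg--Zuo, as the paper remarks, but only for $X$ projective and $f$ holomorphic, hypotheses not available here.)
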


\begin{proof}
\emph{Step}~1.
\emph{We first prove in the case where
\( p_{g}(X) = \dim \OH^{0}(X, K_{X}) > 0 \)}.
Since \( \pi \) is smooth over a dense open subset \( U \) of \( Y \),
we have a variation of Hodge structure
\( H_{U} = \OR^{d}\pi_{*}\BZZ_{X}|_{U} \) for
\( d = \dim X/Y = \dim X - 1  \).

Let \( \mu \colon Z \to X \) be a bimeromorphic morphism from another
compact K\"ahler manifold \( Z \) such that
\( \varphi := f \circ \mu \colon Z \to X\) is holomorphic.
Then
\[ f^{*}_{\omega} \colon g^{*}(\pi_{*}\omega_{X/Y})
\xrightarrow{\varphi^{*}}
g^{*}(\pi_{*}\varphi_{*} \omega_{Z/Y})
\xrightarrow[\isom]{(\mu^{*})^{-1}} \pi_{*}\omega_{X/Y}\]
is injective.
Note that \( \pi_{*}\omega_{X/Y} \) is just the \( d \)-th
filter \( \SF^{d}({}^{u}\SH) \) of the upper canonical extension \( {}^{u}\SH \) of
\( H_{U} \otimes \SO_{U}\) in the sense of Koll\'ar~\cite{KollarHD2}.

We have also the pullback homomorphism
\[ f^{*}_{\coh} \colon g^{-1}(\OR^{d}\pi_{*}\BZZ_{X})
\xrightarrow{\varphi^{*}}
g^{-1}\OR^{d} (\pi \circ \varphi)_{*}\BZZ_{Z}
\xrightarrow{\mu_{*}} \OR^{d}\pi_{*}\BZZ_{X}, \]
where \( \mu_{*} \) is induced from the trace map
\( \OR \mu_{*}\BZZ_{Z}[2n] \to \BZZ_{X}[2n] \) for \( n = \dim X \).
Note that
\( f^{*}_{\coh} \) is compatible with \( f^{*}_{\omega} \), i.e., the diagram
\[ \begin{CD}
(g^{-1}\OR^{d}\pi_{*}\BZZ_{X})|_{U'} \otimes \SO_{U'}
@>{f^{*}_{\coh}}>>
(\OR^{d}\pi_{*}\BZZ_{X})|_{U'} \otimes \SO_{U'} \\
@AAA @AAA \\
(g^{*}\pi_{*}\omega_{X/Y})|_{U'}
@>{f^{*}_{\omega}}>> \pi_{*}\omega_{X/Y}|_{U'}
\end{CD} \]
is commutative over the open subset \( U' = U \cap g^{-1}U \).
Let \( J_{k} \subset \OR^{d} \pi_{*}\BZZ_{X}\) be the image of
\begin{multline*}
(f^{*}_{\coh})^{k} \colon
(g^{k})^{-1}(\OR^{d}\pi_{*}\BZZ_{X}) \xrightarrow{(g^{k-1})^{-1}f^{*}_{\coh}}
(g^{k-1})^{-1}(\OR^{d}\pi_{*}\BZZ_{X}) \to \cdots \\
\cdots \to g^{-1}(\OR^{d}\pi_{*}\BZZ_{X}) \xrightarrow{f^{*}_{\coh}}
\OR^{d}\pi_{*}\BZZ_{X}
\end{multline*}
for \( k > 0 \).
Then \( J_{k} \supset J_{k+1} \) and
\( f^{*}_{\coh}(g^{-1}J_{k}) = J_{k+1} \).
Moreover, for \( k \gg 0 \),
\( f^{*}_{\coh} \colon g^{-1}J_{k} \to J_{k + 1} \) is isomorphic
and \( J_{k} \otimes \BQQ = J_{k+1} \otimes \BQQ \),
since any stalk of \( \OR^{d}\pi_{*}\BZZ_{X} \) is a finitely
generated abelian group.
We set \( J := J_{k}\) for certain \( k \gg 0 \). Then, for a non-empty
Zariski open subset \( U'' \subset U \),
\( J|_{U''} \) defines a variation of Hodge substructure of
\( \OR^{d}\pi_{*}\BZZ_{X}|_{U''} \), and
\( J|_{U''} \otimes \SO_{U''} \) contains
\( \pi_{*}\omega_{X/Y}|_{U''} \) as the \( d \)-th Hodge filter.
Furthermore, the injection \( f^{*}_{\coh} \colon g^{-1}J \injmap J \) is compatible
with \( f^{*}_{\omega} \).
Let \( U_{\max} \) be the maximal open subset of \( Y \) such that
there is a variation of Hodge structure \( J_{\max} \) on \( U_{\max} \)
with \( J_{\max}|_{U''} \isom J|_{U''} \).
Then \( g^{-1}U_{\max} = U_{\max} \).
Thus, we may assume that \( Y \setminus U_{\max} \)
consists of at most two points; otherwise, \( g \) is of finite order.
Note that a K\"ahler form of \( X \) defines a real polarization of
the variation of Hodge structure
\( J_{\max} \).

Suppose that \( U_{\max} = Y \isom \BPP^{1} \) or  \( U_{\max} \isom \BCC \).
Then \( U_{\max} \) is simply connected
and we have a period map from
\( U_{\max} \) to the period domain associated with the polarized
variation of Hodge structure \( J_{\max} \)
(we need the polarization here).
Griffiths proves that this map is ``horizontal'' (cf.\ \cite[Sections 8 and 9]{Gri})
and satisfies a kind of Schwarz inequality (\cite[Theorem 10.1]{Gri}).
Thus, this is a constant map by a reason similar to the fact that the map from \( \BCC \) to
a Kobayashi hyperbolic manifold is constant; in this case, the monodromy
representation is also trivial, so the variation of Hodge structure \( J_{\max} \) is constant,
i.e., the
local system is trivial and every Hodge filtration \( F^{p}(J_{\max}) \) is
a trivial locally free sheaf.
Hence, the sheaf \( \pi_{*}\omega_{X/Y} \isom F^{d}(J_{\max}) \) is trivial.
This leads to a contradiction: \( H^0(X, K_X) \isom
H^0(Y, \pi_*(\omega_X)) = H^0(Y, \pi_*(\omega_{X/Y}) \otimes \omega_Y)
= H^0(\BPP^1, \SO(-2)) = 0. \)
%\( J_{\max} \) is a trivial
%variation of Hodge structure, and
%hence \( \pi_{*}\omega_{X/Y} \) is a free \( \SO_{Y} \)-module.
%Thus, we have a contradiction:
%\( \OH^{0}(X, K_{X}) \isom
%\OH^{0}(Y, \pi_{*}\omega_{X/Y} \otimes \omega_{Y}) = 0\).

Hence, we may assume \( U_{\max} = \BCC^{\star} \).
Then, the period map associated with \( J_{\max} \) is constant,
since the universal covering space of \( U_{\max} \) is \( \BCC \).
In particular, the image of the monodromy representation
\( \pi_{1}(U_{\max}, y) \to \Aut(J_{\max, y})\) is finite
for a reference point \(y \in Y\).
Let \( \tau \colon Y' \isom \BPP^{1} \to Y \) be a cyclic covering
\'etale over \( U_{\max} \) such that \( \tau^{-1}J_{\max} \) extends to
a trivial constant sheaf of \( Y' \).
We may assume that \( g \) lifts to an automorphism \( g' \)
of \( Y' \).
Let \( X' \to X \times_{Y} Y' \) be a resolution of singularities and
let \( \pi' \colon X' \to Y' \) be the induced morphism.
Then \( f \times g' \) induces a meromorphic endomorphism of \( X' \)
and \( p_{g}(X') > 0 \).
Since \( \tau^{-1}J_{\max} \) has trivial monodromy, a similar
variation of Hodge structure \( J'_{\max} \) is defined on \( Y' \).
Thus \( \pi'_{*}\omega_{X'/Y'} \) is a free \( \SO_{Y'} \)-module,
and we have the same contradiction as above.

\emph{Step}~2. \emph{General case}:
Let \( s \in \OH^{0}(X, mK_{X}) \) be an eigenvector of \( f^{*} \).
We shall consider a cyclic covering corresponding
to taking the \( m \)-th root of \( s \):
Let \( \SA = \bigoplus_{i = 0}^{m-1} \SO_{X}(-iK_{X}) \) be
the \( \SO_{X} \)-algebra determined by
\( s \colon \SO_{X}(-mK_{X}) \to \SO_{X} \).
Taking a resolution \( \widehat{X} \) of singularities of
\( \SSpec_{X} \SA \), let \( \tau \colon \widehat{X} \to X \)
be the composite \(\widehat{X} \to \SSpec_{X} \SA \to X \).
Then \( \tau^{*}s \in \OH^{0}(\widehat{X}, mK_{\widehat{X}}) \) is
expressed as \( \sigma^{m} \) for a section
\( \sigma \in \OH^{0}(\widehat{X}, K_{\widehat{X}}) \).
Let \( X' \) be a connected component of \( \widehat{X} \).
Then \( \kappa(X') = \kappa(X) \) and \( p_{g}(X') > 0 \).
Let \( \pi' \colon X' \to Y' \) be the fiber space and let
\( \lambda \colon Y' \to Y \) be the finite morphism obtained as the
Stein factorization of \( X' \to X \to Y \).

Since \( s \) is an eigenvector of \( f^{*} \), we have
a meromorphic map \( \hat{f} \): \( \widehat{X} \ratmap \widehat{X} \) with
\( \tau \circ \hat{f} = f \circ \tau\).
Replacing \( f \) with a suitable power \( f^{k} \), we may assume
that \( \hat{f} \) preserves \( X' \).
Let \( f' \colon X' \ratmap X' \) be the induced rational map.
Then there is an automorphism \( g' \) of \( Y' \) such that
\(\pi' \circ f' = g' \circ \pi'\) and
\(\lambda \circ g' = g \circ \lambda\).
If \( g' \) is of finite order, then so is \( g \).
If the genus of \( Y' \) is greater than one, then \( g' \) is of
finite order.
Even if the genus of \( Y' \) is one, \( g' \) is of finite order
since \( g' \) preserves the ample invertible sheaf
\( \lambda^{*}\SO(1) \). If the genus of \( Y' \) is zero,
then \( g' \) is of finite order by \emph{Step}~1.
Thus, we are done.
\end{proof}

\begin{remn}
If \( f \) is a holomorphic endomorphism of \( X \) and
if \( X \) is projective, then
Proposition~\ref{prop:overP1} follows from \cite{VZ}, since \( \pi \)
has at least three singular fibers preserved by \( g \).
\end{remn}

\subsection{Proof of Theorem A}
\label{subset:pf ThA}
We first compare \( W_{m} \) and \( W_{ml} \) for positive
integers \( m \) and \( l \) with \( \LS{mK_{X}} \ne \emptyset \).
There is a natural rational map
\( \Psi_{m, ml} \colon W_{ml} \ratmap W_{m} \) such that \(
\Phi_{m} = \Psi_{m, ml} \circ \Phi_{ml} \). In fact, \( \Psi_{m, ml}
\) is defined by the
commutative diagram
\[ \begin{array}{ccccc}
W_{ml} & \injmap & \LS{mlK_{X}}^{\vee} & = & \BPP(\OH^{0}(X, mlK_{X})) \\
\raisebox{1ex}{\mbox{\scriptsize $\Psi_{m, ml}$}}
\overset{\vdots}{\downarrow}\phantom{\mbox{\scriptsize $\Psi_{m, ml}$}}
& & & & \phantom{\mbox{\scriptsize $\mu$}}\overset{\vdots}{\downarrow}
\raisebox{1ex}{\mbox{\scriptsize $\mu$}}  \\
W_{m} & \injmap & \LS{mK_{X}}^{\vee} & \overset{\iota}\injmap &
\phantom{,}\BPP(\Sym^{l}\OH^{0}(X, mK_{X})),
\end{array} \]
where \( \iota \) is the Veronese embedding and \( \mu \) is induced
from the natural homomorphism
\( \Sym^{l}\OH^{0}(X, mK_{X}) \to \OH^{0}(X, mlK_{X}) \).
Let \( g_{m} \) and \( g_{ml} \), respectively,
be the automorphisms of
\( W_{m} \) and \( W_{ml} \) induced from \( f \).
Then \(g_{m} \circ \Psi_{m, ml} = \Psi_{m, ml} \circ g_{ml} \)
by construction.
Thus, we may replace \( m \) with any multiple \( ml \)
in order to prove Theorem~A.
Therefore, we may assume from the beginning
that \( \Phi_{m} \colon X \ratmap W_{m} \)
gives rise to the Iitaka fibration of \( X \).

We shall derive a contradiction by
assuming that \( g := g_{m} \) is of infinite order.
By Lemma~\ref{lem:infinite->ruled}, we may assume that
\( g \) is contained in a positive-dimensional
connected commutative linear algebraic group
\( G \subset \Aut(W_{m}) \).
Let \( Y \to W_{m} \) be an equivariant resolution of singularities
so that \( G \) acts on \( Y \) holomorphically.
There is a sequence
\[ \{\id\} = G_{0} \subset G_{1} \subset G_{2} \subset \cdots \subset
G_{l} = G \]
of algebraic subgroups such that \( \dim G_{i}/G_{i - 1} = 1 \)
for \( 1 \leq i \leq l \).
In particular, \( G_{i}/G_{i - 1} \isom \BGG_{\mathtt{m}} \) or
\( \BGG_{\mathtt{a}} \).
Let \( Y \ratmap Y_{1} \subset \Hilb(Y) \) be the meromorphic
quotient of \( Y \) by \( G_{1} \) (cf.\ \cite[Theorem 4.1]{Fujiki_auto}).
Then \( G/G_{1} \) acts on \( Y_{1} \).
Replacing \( Y \) and \( Y_{1} \) by their nonsingular models, respectively,
we may assume that \( Y \to Y_{1} \) is holomorphic and \( G \)-equivariant.
Let \( Y_{1} \ratmap Y_{2} \subset \Hilb(Y_{1}) \)
be the meromorphic quotient of \( Y_{1} \) by \( G_{2}/G_{1} \), and
replace \( Y \), \( Y_{1} \), and \( Y_{2} \) by their nonsingular
models, respectively, so that \( Y \to Y_{1} \) and
\( Y_{1} \to Y_{2} \) are \( G \)-equivariant morphisms.
Continuing similar constructions, we have a sequence
of \( G \)-equivariant morphisms
\[ Y = Y_{0} \to Y_{1} \to \cdots \to Y_{l} \]
such that \( Y \to Y_{i} \) is
birational to the meromorphic quotient by \( G_{i} \).
Then, for \( 0 \leq i < l \),
a general fiber of \( Y_{i} \to Y_{i+1} \) is a smooth rational curve which is
the closure of an orbit of \( G_{i+1}/G_{i} \).
There is a similar and stronger assertion in \cite[Theorem 4.6]{L}.

Replacing \(X\) by its blowup, we may assume that
the composite \(X \ratmap W_m \ratmap Y \) is holomorphic.
Let us consider the composition
\( \phi_{i} \colon X \to Y \to Y_{i} \) for \( 1 \leq i \leq l\).
For a very general point \( y_{i} \in Y_{i} \), let \( F_{i} \subset X\)
be the fiber \( \phi_{i}^{-1}(y_{i})  \) and let
\( C_{i-1} \subset Y_{i - 1} \) be
the fiber of \( Y_{i - 1} \to Y_{i} \) over \( y_{i} \).
Then \( \kappa(F_{i}) > 0 \) for \( i > 0 \)
by the easy addition formula:
\[ \kappa(X) = \dim Y \leq \kappa(F_{i}) + \dim Y_{i}. \]

Now \( g \) acts on \( Y_{l} \) trivially.
Thus \( f \colon X \ratmap X \) is a meromorphic map over
\( Y_{l}\), and a dominant meromorphic map \( F_{l} \ratmap F_{l} \)
is induced. By Proposition~\ref{prop:overP1}, the
action of \( g \) on \( C_{l - 1} \) is of finite order.
Thus \( g^{k} \) acts on \( Y_{l - 1} \) trivially for some
\( k > 0 \). Hence, \( \{g^{kj} \mid j \in \BZZ\} \subset G_{l-1} \).
This contradicts that \( G \) is the identity component of the
Zariski closure of \( \{g^{j}\mid j \in \BZZ\} \). This completes the proof
of Theorem~A.
\hfill \qedsymbol

\vspace{2ex}

Theorem~A has an application to the pluricanonical representations of
the bimeromorphic automorphism group \( \Bim(X)\) of
compact complex manifolds \( X \) in the class \( \SC \).
The following result is proved in \cite[\S 14]{Ue} (cf.\ \cite{NU})
when \( X \) is a Moishezon manifold.

\begin{cor}\label{cor:NU}
Let \( X \) be a compact complex manifold in the class \( \SC \) and let
\[ \rho_{m} \colon \Bim(X) \to \Aut(\OH^{0}(X, mK_{X}))
\]
be the \( m \)-th pluricanonical representation
\emph{(cf.\ \cite[\S 14]{Ue})} for a positive integer \( m \) with
\( \OH^{0}(X, mK_{X}) \ne 0 \).
Then the image of the induced group homomorphism below is
finite\emph{:}
\[ \rho'_{m} \colon \Bim(X) \to
\PGL(\OH^{0}(X, mK_{X})) = \Aut(\OH^{0}(X, mK_{X}))/\BCC^{\star}. \]
\end{cor}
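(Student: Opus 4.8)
The plan is to reduce, through the comparison maps among the images of the pluricanonical maps, to the case where $m$ is sufficiently divisible, and there to combine Theorem~A with the fact that the base of the Iitaka fibration is of general type. First, the small Kodaira dimensions are immediate: since $\OH^{0}(X,mK_{X})\neq 0$ one has $\kappa(X)\geq 0$, and if $\kappa(X)=0$ then $\dim\OH^{0}(X,mK_{X})=1$, so $\PGL(\OH^{0}(X,mK_{X}))$ is trivial and there is nothing to prove; thus I may assume $\kappa(X)\geq 1$. Next I reduce to large $m$: for every $l\geq 1$, the construction in \S\ref{subset:pf ThA} gives a dominant rational map $\Psi_{m,ml}\colon W_{ml}\ratmap W_{m}$ with $g_{m}\circ\Psi_{m,ml}=\Psi_{m,ml}\circ g_{ml}$, where $g_{m}$ and $g_{ml}$ are the automorphisms of $W_{m}$ and $W_{ml}$ induced by a given $\sigma\in\Bim(X)$. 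Since $\Psi_{m,ml}$ is dominant and $W_{m}$ is linearly nondegenerate in $\BPP(\OH^{0}(X,mK_{X}))$ (it is the image of the map $\Phi_{m}$ defined by the complete linear system $\LS{mK_{X}}$), the automorphism $g_{ml}$ determines $g_{m}$, hence $\rho'_{ml}(\sigma)$ determines $\rho'_{m}(\sigma)$; therefore $\Ker\rho'_{ml}\subseteq\Ker\rho'_{m}$, so $\rho'_{m}(\Bim(X))$ is a quotient of $\rho'_{ml}(\Bim(X))$. It thus suffices to prove that $\rho'_{m'}(\Bim(X))$ is finite for one sufficiently divisible $m'$.

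So I fix $m'$ so divisible that $\Phi_{m'}\colon X\ratmap W_{m'}$ is birational to the Iitaka fibration of $X$; then $\dim W_{m'}=\kappa(X)\geq 1$ and $W_{m'}$ is a projective variety of general type (a standard property of the Iitaka fibration). By linear nondegeneracy, restriction to $W_{m'}$ embeds $H:=\rho'_{m'}(\Bim(X))$ into $\Aut(W_{m'})$, and by Theorem~A --- applied to each $\sigma\in\Bim(X)$ viewed as a dominant meromorphic self-map of $X$ --- every element of $H$ has finite order. Suppose $H$ were infinite. Then its Zariski closure $\overline{H}$ in $\PGL(\OH^{0}(X,m'K_{X}))$ is infinite, so its identity component is a positive-dimensional connected linear algebraic group; and since $H$ is a torsion subgroup of the linear algebraic group $\PGL(\OH^{0}(X,m'K_{X}))$, the Jordan--Schur theorem furnishes an abelian torsion subgroup of finite index, which is simultaneously diagonalizable, so $\overline{H}$ contains a positive-dimensional algebraic torus $S$. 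By nondegeneracy $S$ acts faithfully on $W_{m'}$, whence $W_{m'}$ is ruled by the theorem of Matsumura used in the proof of Lemma~\ref{lem:infinite->ruled} --- contradicting that $W_{m'}$ is of general type. Hence $H$ is finite, and together with the first step this proves the corollary. (Alternatively the final step is immediate: $\Aut(W_{m'})\subseteq\Bir(W_{m'})$ is finite because $W_{m'}$ is of general type.)

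The step I expect to carry the weight is the passage from ``each element of the image has finite order'' --- which is exactly what Theorem~A provides for bimeromorphic maps --- to ``the image is a finite group''; this requires a genuinely global input, namely either the finiteness of the birational automorphism group of a variety of general type, or the Jordan--Schur theorem together with Matsumura's theorem on algebraic group actions, and it rests in turn on the fact that the image of a sufficiently divisible pluricanonical map is of general type. The two small-$\kappa(X)$ cases and the descent $\rho'_{ml}(\Bim(X))\twoheadrightarrow\rho'_{m}(\Bim(X))$ are routine.
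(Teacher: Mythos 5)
Your preliminary reductions are sound: the case $\kappa(X)=0$ is trivial, the descent $\Ker\rho'_{ml}\subseteq\Ker\rho'_{m}$ via $\Psi_{m,ml}$ and linear nondegeneracy is correct, and so is the embedding of $H=\rho'_{m'}(\Bim(X))$ into $\Aut(W_{m'})$ together with the application of Theorem~A to each element. The gap is the assertion that $W_{m'}$, the base of the Iitaka fibration, is of general type. This is false: for a relatively minimal elliptic surface $S\to\BPP^{1}$ with $\kappa(S)=1$ the Iitaka fibration is the elliptic fibration itself, so $W_{m'}$ is a rational curve. Consequently both of your concluding steps collapse. The Jordan--Schur/Matsumura route only tells you that $W_{m'}$ is ruled, which is no contradiction --- indeed this is precisely the content of Lemma~\ref{lem:infinite->ruled}, which in the paper is the \emph{starting point} of the proof of Theorem~A (the quotient tower and Proposition~\ref{prop:overP1} exist exactly to handle the ruled case), not a terminal absurdity. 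And the ``alternative'' via finiteness of $\Bir$ of a variety of general type fails for the same reason: in the example $\Aut(W_{m'})\isom\PGL(2,\BCC)$ is infinite.

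The genuinely global input you are missing is a \emph{uniform} bound on the orders of the elements $\rho'_{m}(\gamma)$; without it, ``every element has finite order'' does not yield finiteness of the image. The paper obtains this bound arithmetically and works with the given $m$ directly (no reduction to divisible $m$ is needed): once Theorem~A gives that $\rho'_{m}(\gamma)$ has finite order, $\rho_{m}(\gamma)$ is diagonalizable with eigenvalues $\alpha\theta_{1},\dots,\alpha\theta_{k}$ for a scalar $\alpha$ and roots of unity $\theta_{i}$; by \cite[Proposition 14.4]{Ue} and the argument of \cite[Theorem 14.10]{Ue} the $\alpha\theta_{i}$ are algebraic integers with $|\alpha|=1$ and $[\BQQ(\alpha\theta_{i}):\BQQ]\leq N$ for a constant $N$ independent of $\gamma$ and $i$, whence $[\BQQ(\theta_{i}):\BQQ]\leq N^{2}$ and the $\theta_{i}$ are roots of unity of uniformly bounded order. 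Burnside's theorem for linear groups of bounded exponent then gives finiteness of the image. To repair your argument you would need either this arithmetic input or some substitute for the false general-type claim (e.g.\ the log/orbifold structure on the Iitaka base), neither of which appears in your proposal.
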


\begin{proof}
Let \( W_{m} \) be the image of the \( m \)-th pluricanonical map
\( \Phi_m \colon X \ratmap \LS{mK_{X}}^{\vee}  \). Then \( W_{m} \) is not
contained in any hyperplane of \( \LS{mK_{X}}^{\vee} \).
Thus, for \( \gamma \in \Bim(X) \),
the order of \( \rho'_{m}(\gamma) \) equals the order of the
action of \( \gamma  \) on \( W_{m} \).
By Theorem~A, the order of \( \rho'_{m}(\gamma) \) is finite
for any \( \gamma \in \Bim(X) \).
So \( \rho_{m}(\gamma) \) is expressed as a diagonal
matrix and its eigenvalues are
\( (\alpha\theta_{1}, \alpha\theta_{2}, \ldots, \alpha\theta_{k}) \)
for \( k = \dim \OH^{0}(X, mK_{X}) \),
for a constant \( \alpha \in \BCC^{\star} \),
and for roots \( \theta_{i} \) of unity, where \( \theta_{1} = 1 \).
By \cite[Proposition 14.4]{Ue} and by an argument in
\cite[Theorem 14.10]{Ue}, \( \alpha\theta_{i} \) are algebraic integers,
\(|\alpha| = 1 \), and the degree
\( [\BQQ(\alpha\theta_{i}) : \BQQ] \) of field extension
\( \BQQ(\alpha\theta_{i})/\BQQ \) is bounded above by a suitable
constant \( N \) which depends neither on \( i \) nor on \( \gamma \).
Thus the degree \( [\BQQ(\theta_{i}) : \BQQ] \) is bounded above by
\( N^{2} \) for any \( i \).
Hence, the order of \( \rho'_{m}(\gamma) \) is uniformly bounded.
Therefore, the image of \( \rho'_{m} \) is a finite group by a theorem
of Burnside (cf.\ \cite[Theorem 14.9]{Ue}).
\end{proof}

\section{Nearly \'etale maps}
\label{sect:nearlyEtale}

We introduce the notion of nearly \'etale map
and study its basic properties.

\begin{dfn}[cf.\ {\cite{Iitaka}}]\label{dfn:propratmap}
Let \( h \colon V \ratmap W \) be a rational
(resp.\ meromorphic) map between
algebraic (resp.\ complex analytic) varieties.
The map \( h \) is called \emph{proper} if the projections
\( p_{1} \colon \Gamma_{h} \to V \) and
\( p_{2} \colon \Gamma_{h} \to W\)
are both proper for the graph \( \Gamma_{h} \subset V \times W \).
For algebraic varieties \( V \) and \( W \), \( V \) is called
\emph{proper birational} to \( W \) if there exists
a proper birational map \( V \ratmap W \).
\end{dfn}

\begin{remn}
The first projection \( p_{1} \colon \Gamma_{h} \to X \) is
proper for any meromorphic map \( h \). In particular,
a bimeromorphic map is always proper.
\end{remn}

We extend the notion of Stein factorization to
the case of proper rational (resp.\ proper meromorphic) maps.

\begin{dfn}\label{Stein}
Let \(h \colon V \ratmap W\) be a proper rational
(resp.\ proper meromorphic) map from a normal variety \(V\) to
an algebraic (resp.\ complex analytic) variety \(W\).
Then there exists a finite morphism \(\tau \colon W^{\sharp} \to W\)
uniquely up to isomorphism over \(W\) such that
\(W^{\sharp}\) is normal and \(h = \tau \circ \varphi \)
for a rational (resp.\ meromorphic) fiber space
\(\varphi \colon V \ratmap W^{\sharp}\).
The factorization \(V \ratmap W^{\sharp} \to W\) of \( h \)
is called the \emph{Stein factorization}.
\end{dfn}

\begin{remn}
If \(h\) is holomorphic, then this is the usual Stein factorization,
where \(\tau\) is determined by the isomorphism
\(h_{*}\SO_{V} \isom \tau_{*}\SO_{W^{\sharp}}\).
In case \(h\) is not holomorphic, let \(\mu \colon V' \to V\) be
a proper birational (resp.\ bimeromorphic) morphism
such that \(V'\) is normal and \(h' = h \circ \mu \colon V'\to W\)
is holomorphic; then \(\tau\) is just given
by the Stein factorization of \(h'\).
The uniqueness of \(\tau\) in this case
follows from Zariski's main theorem.
If \(V\) and \(W\) are algebraic, then \(W^{\sharp}\) is
just the normalization of \(W\) in the function field \(\BCC(V)\).
\end{remn}

\begin{dfn}\label{dfn:nearlyEtale}
Let \( h \colon V \ratmap W \) be a proper
rational (resp.\ meromorphic) map between
algebraic (resp.\ complex analytic) varieties.
The map \( h \) is called \emph{nearly \'etale} if
there exist proper birational (resp.\ bimeromorphic) maps
\( \mu \colon Y \ratmap W \), \( \nu \colon X \ratmap V \)
and a morphism \( f \colon X \to Y \) such that
\begin{enumerate}
\item  \(X\) and \(Y\) are algebraic (resp.\ complex analytic) varieties,

\item  \( f \) is a finite \'etale morphism, and

\item  \( \mu \circ f = h \circ \nu \), i.e., the diagram below
is commutative:
\[ \begin{array}{ccc}
X & \overset{f}{\longrightarrow} & Y \\
\raisebox{1ex}{\mbox{\scriptsize $\nu$}}
\overset{\vdots}{\downarrow} \phantom{\nu}
& & \phantom{\mu} \overset{\vdots}\downarrow
\raisebox{1ex}{\mbox{\scriptsize $\mu$}} \\
V & \overset{h}{\ratmap} & \phantom{.}W.
\end{array} \]
\end{enumerate}
\end{dfn}

\begin{remn}\hfill
\begin{itemize}
\item A nearly \'etale map is dominant and generically finite.
\item  A proper birational (resp.\ bimeromorphic) map is nearly \'etale.
\item  A finite \'etale morphism is nearly \'etale.
\item  Any nearly \'etale rational (resp.\ meromorphic) map
\( V \ratmap W \) is the composition of a proper birational
(resp.\ bimeromorphic) map
\( V \ratmap W^{\sharp} \) and
a nearly \'etale finite morphism \( W^{\sharp} \to W \).
In fact, it is enough to take the Stein factorization of the
rational (resp. meromorphic) map
from the normalization of \(V\) to \(W\).
\end{itemize}
\end{remn}

We now study basic properties of nearly \'etale maps.

\begin{lem}\label{lem:nearlyEt vs pi1}
Let \( h \colon V \ratmap W \) be a nearly \'etale rational
\emph{(}resp.\ meromorphic\emph{)} map between normal varieties.
Suppose that
\( \pi_{1}^{\alg}(W) \isom \pi_{1}^{\alg}(Y) \) for a resolution
\( Y \to W \) of singularities of \(W\),
where \(\pi_1^{\alg}\) stands for the algebraic fundamental group.
Then the finite morphism \(W^{\sharp} \to W\) is \'etale for
the Stein factorization \(V \ratmap W^{\sharp} \to W\) of \(h\).
\end{lem}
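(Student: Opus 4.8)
The plan is to reduce everything to the standard fact that a finite morphism is étale precisely when it is étale in codimension one over a normal base together with a correspondence between such covers and index-$n$ subgroups of the algebraic fundamental group. First I would replace $V$ by its normalization and, using the last bullet in the remark after Definition~\ref{dfn:nearlyEtale}, write $h$ as the composite of a proper birational (resp.\ bimeromorphic) map $V \ratmap W^{\sharp}$ and the finite morphism $\tau \colon W^{\sharp}\to W$ coming from the Stein factorization; so the task is purely to show $\tau$ is étale. Next I would pull back the étale cover $f\colon X \to Y$ furnished by the nearly-étale hypothesis: resolving $W^{\sharp}$ and comparing with $Y$, one obtains a commutative diagram in which, over a big open subset of $W$ (the complement of a closed set of codimension $\ge 2$), $\tau$ is dominated by the finite étale map $f$, hence is itself finite étale there. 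Thus $\tau$ is étale in codimension one.

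The second and decisive step is to upgrade "étale in codimension one over the normal variety $W$'' to "étale.'' Here the hypothesis $\pi_{1}^{\alg}(W)\isom\pi_{1}^{\alg}(Y)$ for a resolution $Y\to W$ is exactly what is needed: a finite morphism which is étale in codimension one over a normal variety corresponds, by purity of the branch locus applied on a resolution, to a finite étale cover of the smooth locus $W_{\reg}$, equivalently to an open subgroup of $\pi_{1}^{\alg}(W_{\reg})=\pi_{1}^{\alg}(Y)$ of the appropriate index. Under the isomorphism $\pi_{1}^{\alg}(W)\isom\pi_{1}^{\alg}(Y)$ this open subgroup comes from a genuine finite étale cover $W'\to W$; since $W$ is normal and $W'$ and $W^{\sharp}$ agree over the big open set $W_{\reg}$, Zariski's main theorem (normality of $W^{\sharp}$) forces $W^{\sharp}\isom W'$ over $W$, so $\tau$ is finite étale. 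I would phrase this cleanly as: the surjection $\pi_{1}^{\alg}(W_{\reg})\twoheadrightarrow\pi_{1}^{\alg}(W)$ (which always holds) is an isomorphism under our hypothesis, so every connected finite étale cover of $W_{\reg}$ of bounded... extends to a finite étale cover of $W$, and $\tau$ is the normalization of $W$ in such a cover.

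The step I expect to be the main obstacle is the careful bookkeeping showing that $\tau$ really is étale in codimension one, i.e.\ that the various birational modifications ($\nu$, the resolution $Y\to W$, a resolution of $W^{\sharp}$) can be arranged compatibly so that over the smooth locus of $W$ minus a codimension-$\ge 2$ set the map $X\to Y$ descends to $W^{\sharp}\to W$ as a finite étale morphism. The subtlety is that $\nu$ and $\mu$ are merely birational, so one must check that the indeterminacy loci and exceptional loci do not interfere in codimension one; shrinking to a common big open subset of $W$ over which $\mu$, $\nu$ are isomorphisms and $\pi$ is a morphism handles this, but it requires knowing that the branch locus of $\tau$, being closed, either has codimension $\ge 2$ (and is then killed by purity on the resolution) or would produce a genuine ramification divisor, contradicting étaleness of $f$ upstairs. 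Once that is in place, the group-theoretic conclusion is immediate. I would also remark that in the meromorphic case one replaces algebraic fundamental groups by topological ones throughout, the argument being formally identical.
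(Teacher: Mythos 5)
Your overall architecture is close to the paper's (classify the cover by a subgroup of a fundamental group, use the hypothesis to descend it to an honest \'etale cover $W'\to W$, then identify $W'$ with $W^{\sharp}$ by normality and Zariski's main theorem), but the pivot of your second step is a false statement. You assert $\pi_{1}^{\alg}(W_{\reg})=\pi_{1}^{\alg}(Y)$ and, in the ``clean phrasing,'' that the hypothesis makes $\pi_{1}^{\alg}(W_{\reg})\twoheadrightarrow\pi_{1}^{\alg}(W)$ an isomorphism, so that \emph{every} finite \'etale cover of $W_{\reg}$ extends \'etale-ly over $W$. Neither holds: one only has surjections $\pi_{1}(W_{\reg})\twoheadrightarrow\pi_{1}(Y)\twoheadrightarrow\pi_{1}(W)$ (the first because $W_{\reg}\isom Y\setminus(\text{exceptional locus})$), and the hypothesis controls only the second. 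For $W=A/\{\pm1\}$ with $Y=\Km(A)$ one has $\pi_{1}^{\alg}(W)\isom\pi_{1}^{\alg}(Y)=\{1\}$, yet $W_{\reg}$ carries the nontrivial double cover restricted from $A\to W$, whose normalization over $W$ is $A\to W$, which is not \'etale. So the step ``\'etale in codimension one $+$ hypothesis $\Rightarrow$ \'etale'' as you state it would prove a false theorem.

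The repair is exactly the point your detour obscures and the paper uses directly: the subgroup of $\pi_{1}(W_{\reg})$ classifying $W^{\sharp}|_{W_{\reg}}$ is not arbitrary --- it is the preimage of $\pi_{1}(X)\subset\pi_{1}(Y)$ under $\pi_{1}(W_{\reg})\twoheadrightarrow\pi_{1}(Y)$, because the nearly \'etale hypothesis hands you a genuine finite \'etale cover $f\colon X\to Y$ of the resolution with $X$ birational to $V$ (hence to $W^{\sharp}$). In particular that subgroup automatically contains $\ker(\pi_{1}(W_{\reg})\to\pi_{1}(Y))$, and only \emph{then} does the hypothesis $\pi_{1}^{\alg}(Y)\isom\pi_{1}^{\alg}(W)$ produce the \'etale cover $W'\to W$ with $W'\isom W^{\sharp}$. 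Once phrased this way, the ``\'etale in codimension one'' and purity steps are superfluous: the paper's proof is just the two sentences ``$\pi_{1}(X)\subset\pi_{1}(Y)\isom\pi_{1}(W)$ gives $W'\to W$ \'etale; $X$ is proper birational to $W'$, so $W'\isom W^{\sharp}$ over $W$.''
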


\begin{proof}
The fundamental group \(\pi_1\) is a proper
birational (resp.\ bimeromorphic) invariant for nonsingular varieties.
Therefore, there is a finite \'etale covering \(W' \to W\)
corresponding to the finite-index subgroup
\(\pi_1(X) \subset \pi_1(Y) \) via the isomorphism \(\pi_1(Y) \isom \pi_1(W)\)
for the \'etale covering \(X \to Y\) in Definition~\ref{dfn:nearlyEtale}.
Since \(X\) is proper birational (resp.\ bimeromorphic) to \(W'\), we have
\(W' \isom W^{\sharp}\) over \(W\).
\end{proof}

As a consequence of Lemma \ref{lem:nearlyEt vs pi1}, we can show that
the composition of two nearly \'etale maps are also
nearly \'etale.
Our next result shows that a nearly \'etale finite morphism is turned to be
an \'etale morphism by a suitable base change.

\begin{cor}\label{cor:nearlyEt vs pi1}
Let \( V \to W \) be a nearly \'etale finite morphism between normal
varieties. Let \( Z \to W \) be a morphism from a normal variety \(Z\)
such that the image contains a non-empty open subset of \(W\) and
\( \pi_{1}^{\alg}(Z) \isom \pi_{1}^{\alg}(M) \) for a resolution
\( M \to Z \) of singularities of \(Z\).
Let \(\tilde{V}_Z\) be the normalization of \(V \times_{W} Z\) and
\(\tilde{V}_Z \to Z^{\sharp} \to Z\) the Stein factorization of
the morphism induced from the second projection \(V \times_{W} Z \to Z\).
Then \( Z^{\sharp} \to Z \) is \'etale.
\end{cor}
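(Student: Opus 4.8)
The plan is to deduce the statement from Lemma~\ref{lem:nearlyEt vs pi1}. Write $h \colon V \to W$ for the given finite nearly \'etale morphism and $p \colon \tilde{V}_Z \to Z$ for the morphism induced by the second projection $V \times_W Z \to Z$, so that the factorization $\tilde{V}_Z \to Z^{\sharp} \to Z$ in the statement is precisely the Stein factorization of $p$ in the sense of Definition~\ref{Stein}. I claim that $p$ is nearly \'etale; granting this, Lemma~\ref{lem:nearlyEt vs pi1} applied to $p$ — whose hypothesis $\pi_1^{\alg}(Z) \isom \pi_1^{\alg}(M)$ for a resolution $M \to Z$ is exactly the hypothesis of the corollary — immediately gives that $Z^{\sharp} \to Z$ is \'etale.

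To verify that $p$ is nearly \'etale I would pull back a witness of the near-\'etaleness of $h$ along a modification of $Z$. By Definition~\ref{dfn:nearlyEtale} (and after blowing up to make the birational map onto $W$ a morphism) fix proper birational maps $\mu \colon Y \to W$, $\nu \colon X \ratmap V$ and a finite \'etale morphism $f \colon X \to Y$ with $h \circ \nu = \mu \circ f$. Let $Z'$ be the normalization of the unique component of $Z \times_W Y$ that dominates $Z$ (this component is unique because $\mu$ is an isomorphism over a dense open of $W$ met by the image of $Z \to W$); then $Z' \to Z$ is proper birational, while $Z' \to Y$ is a morphism with $\mu \circ (Z' \to Y)$ equal to the composite $Z' \to Z \to W$. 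Set $\tilde{V}_{Z'}$ to be the normalization of $V \times_W Z'$ and consider $X \times_Y Z'$, which is finite \'etale over $Z'$, hence normal.

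The heart of the argument is the identification $X \times_Y Z' \isom \tilde{V}_{Z'}$ over $Z'$. For this, pick a dense open $W_0 \subset W$ over which $h$ is \'etale, $\mu$ is an isomorphism, and $\nu$ restricts to an isomorphism $X|_{\mu^{-1}(W_0)} \isom V|_{W_0}$ compatible with the structure maps to $W_0 \isom \mu^{-1}(W_0)$. Over the preimage of $W_0$ in $Z'$ — which is dense, since the image of $Z \to W$ is dense and hence meets $W_0$ — the $Z'$-schemes $X \times_Y Z'$, $V \times_W Z'$ and $\tilde{V}_{Z'}$ all agree and are \'etale over $Z'$. As $X \times_Y Z'$ and $\tilde{V}_{Z'}$ are both normal, finite over $Z'$, and isomorphic over a dense open of $Z'$, each is the normalization of $Z'$ in the same (product of) function field(s), so $X \times_Y Z' \isom \tilde{V}_{Z'}$ over $Z'$; in particular $\tilde{V}_{Z'} \to Z'$ is finite \'etale.

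To conclude, note that the base change $V \times_W Z' = (V \times_W Z) \times_Z Z' \to V \times_W Z$ of $Z' \to Z$ is proper and birational onto the component dominating $Z$, hence induces, through the normalizations, a proper birational morphism $\tilde{V}_{Z'} \to \tilde{V}_Z$; and the square with top $\tilde{V}_{Z'} \to Z'$, bottom $p$, left $\tilde{V}_{Z'} \to \tilde{V}_Z$ and right $Z' \to Z$ commutes. This is exactly the data of Definition~\ref{dfn:nearlyEtale} showing that $p$ is nearly \'etale, and the proof finishes as above. The step I expect to demand the most care is the isomorphism $X \times_Y Z' \isom \tilde{V}_{Z'}$: one has to arrange that $h$, $\mu$ and $\nu$ are simultaneously well behaved over a single dense open $W_0$, to keep track of the components of the (possibly reducible) fiber products, and to argue that the generic isomorphism over $Z'$ genuinely propagates to an isomorphism of normal $Z'$-schemes.
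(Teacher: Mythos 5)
Your proof is correct and follows essentially the same route as the paper: both pull back the \'etale witness $f\colon X\to Y$ of the near-\'etaleness of $V\to W$ along a proper birational modification of $Z$ (your $Z'$, the paper's nonsingular $Y'\to Y\times_W Z$), identify the resulting finite \'etale cover with the normalized fiber product, and conclude by Lemma~\ref{lem:nearlyEt vs pi1} applied over $Z$. The only cosmetic difference is that you package the identification as ``$\tilde V_Z\to Z$ is nearly \'etale'' and invoke the lemma once, whereas the paper argues on each connected component of $Z^{\sharp}$ separately --- a step you should also make explicit, since $\tilde V_Z$ need not be connected and Definition~\ref{dfn:nearlyEtale} and Lemma~\ref{lem:nearlyEt vs pi1} are stated for (irreducible) varieties.
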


\begin{proof}
Let \(\mu \colon Y \to W\) be a proper birational (resp.\ bimeromorphic)
morphism from a nonsingular variety \(Y\).
By Lemma~\ref{lem:nearlyEt vs pi1},
we have a finite \'etale covering \(X \to Y\)
such that \(V \to W\) is obtained as the Stein factorization
of \(X \to Y \to W\).
There is a morphism \(Y' \to Y \times_W Z\) from a nonsingular variety \(Y'\)
such that the composite \(Y' \to Y \times_W Z \to Z\)
is proper birational (resp.\ bimeromorphic),
since the image of \(Z \to W\) contains a non-empty open subset.
Let \(X' \to Y'\) be the finite \'etale morphism obtained as the pullback of
\(X \to Y\) by \(Y' \to Y \times_{W} Z \to Y\).
We infer that any connected component of \(Z^{\sharp}\) is obtained as the Stein
factorization of the morphism \(X' \to Y' \to Z\) restricted to
a connected component of \(X'\).
Therefore, \(Z^{\sharp} \to Z\) is \'etale by Lemma \ref{lem:nearlyEt vs pi1}.
\end{proof}

\begin{dfn}[{cf.\ \cite[(7.1.2)]{Ko93}, \cite[Section 2]{Na99}}]
Let \( (V, P) \) be a germ of normal complex analytic singularity
and \( \mu \colon Z \to V \) a resolution of singularity.
\begin{itemize}
\item  \( (V, P) \) is called an
\emph{algebraically \( \pi_{1} \)-rational singularity}
if the algebraic fundamental group of \( \mu^{-1}(P) \) is trivial.

\item  \( (V, P) \) is a
\emph{\( \pi_{1} \)-rational singularity}
if the fundamental group of \( \mu^{-1}(P) \) is trivial.
\end{itemize}
Let \(W\) be a normal variety.
\begin{itemize}
\item \( W_{\reg} \) denotes the nonsingular locus of \( W \).
\item  \( W_{\rat} \) denotes the set of points \( P \in W\) such that
\( (W, P) \) is nonsingular or is a rational singularity.
\item  \( W_{\apr} \) denotes the set of
points \( P \in W \) such that \( (W, P) \) is
an algebraically \( \pi_{1} \)-rational singularity.
\end{itemize}
\end{dfn}

\begin{rem}\label{rem:apr->pi1alg}
\hfill
\begin{enumerate}
\item If \( (V, \Delta) \) is log-terminal (klt) at the point \( P \)
for a \( \BQQ \)-divisor \( \Delta \) with \( \rdn{\Delta} = 0 \),
then \( (V, P) \) is an algebraically \( \pi_{1} \)-rational
singularity by \cite[Theorem 7.5]{Ko93}, and is in fact a
\( \pi_{1} \)-rational singularity by \cite{Ta}.
\item  \( W_{\reg} \) and \( W_{\rat} \) are Zariski open dense subset of a normal variety
\( W \), but \( W_{\apr} \) is not necessarily open.
\item If \( W = W_{\apr} \), then
\( \pi_{1}^{\alg}(W) \isom \pi_{1}^{\alg}(W) \) for any resolution
\( Y \to V \) of singularities of \(W\). In particular, this holds
if \( (W, \Delta) \) is log-terminal for a \( \BQQ \)-divisor \( \Delta \)
with \( \rdn{\Delta} = 0 \) (cf.\ \cite[Theorem 7.8]{Ko93}).
\end{enumerate}
\end{rem}

The result below says that a nearly \'etale endomorphism induces an
\'etale endomorphism of a certain nonsingular model, provided that the
minimal model conjecture is true.

\begin{lem}\label{lem:fg->et}
Let \( h \colon V \to V \) be a nearly \'etale finite morphism for a
normal variety \( V \). Assume the existence of the
relative canonical model \( V_{\can} \) for the resolutions of singularities of
\( V \)\emph{;}
equivalently, assume that the sheaf
\[ \SR_{V} := \bigoplus\nolimits_{m \geq 0} \mu_{*}\SO_{M}(mK_{M}) \]
of relative canonical ring is a finitely generated \( \SO_{V} \)-algebra
for a resolution
\( \mu \colon M \to V \) of singularities of \(V\), where
\( V_{\can} = \SProj_{V} \SR_{V} \). Then
\(h\) lifts to an \'etale endomorphism of \(V_{\can}\) and
to an \'etale endomorphism of
a certain resolution of singularities of \(V\).
\end{lem}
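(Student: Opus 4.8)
The plan is to reduce the whole statement to lifting $h$ to an \'etale endomorphism of $V_{\can}$: once that is done, applying the functorial (equivariant) resolution of Section~\ref{subsect:equiv resol} to that \'etale --- hence smooth --- endomorphism immediately produces a resolution $V' \to V_{\can} \to V$ of $V$ carrying an \'etale lift, which is the second assertion. So the work is concentrated on $V_{\can}$. Let $c \colon V_{\can} \to V$ denote the structure morphism; it is projective, birational and surjective, and $V_{\can}$ has only canonical singularities, so $(V_{\can},0)$ is klt and hence $\pi_{1}^{\alg}(V_{\can}) \isom \pi_{1}^{\alg}(M)$ for every resolution $M \to V_{\can}$ by Remark~\ref{rem:apr->pi1alg}.

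The key construction is to base change $c$ along $h$. Let $P$ be the normalization of the fiber product of $c \colon V_{\can} \to V$ and $h \colon V \to V$, with the two projections $p \colon P \to V_{\can}$ and $q \colon P \to V$. Then $p$ is finite (a base change of $h$) and $q$ is proper birational (a base change of $c$); in particular $P$ is irreducible and normal, and $c \circ p = h \circ q$. Because $h$ is nearly \'etale and finite, $c$ is surjective, and $\pi_{1}^{\alg}(V_{\can})$ agrees with that of its resolutions, Corollary~\ref{cor:nearlyEt vs pi1}, applied with $Z = V_{\can}$, tells us precisely that $p \colon P \to V_{\can}$ is \'etale. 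This is the heart of the argument.

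Next I would identify $P$ with $V_{\can}$. Since $p$ is finite \'etale and $V_{\can}$ is canonical, $P$ again has only canonical singularities and $K_{P} = p^{*}K_{V_{\can}}$; as $K_{V_{\can}}$ is $c$-ample, $K_{P}$ is $(c \circ p)$-ample, i.e.\ $(h \circ q)$-ample, and since $h$ is finite this forces $K_{P}$ to be $q$-ample. Thus $q \colon P \to V$ exhibits $P$ as a proper birational modification of $V$ with only canonical singularities and with $q$-ample canonical divisor, so by uniqueness of the relative canonical model there is an isomorphism $\theta \colon V_{\can} \xrightarrow{\isom} P$ with $q \circ \theta = c$. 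Setting $h_{\can} := p \circ \theta$ gives a finite \'etale endomorphism of $V_{\can}$ with $c \circ h_{\can} = c \circ p \circ \theta = h \circ q \circ \theta = h \circ c$, i.e.\ a lift of $h$. Feeding $h_{\can}$ into the functorial resolution of Section~\ref{subsect:equiv resol} then completes the proof.

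I expect the main obstacle to be the \'etaleness of $p \colon P \to V_{\can}$; this is exactly what Corollary~\ref{cor:nearlyEt vs pi1} is designed to deliver, and its hypotheses are met only because $V_{\can}$ is canonical (hence klt), which is what makes $\pi_{1}^{\alg}$ insensitive to resolutions. A smaller point to get right is that $K_{P}$ must be shown ample relative to the \emph{birational} projection $q$, not merely relative to $c \circ p$; this rests on the elementary fact that a $\BQQ$-divisor ample relative to $\psi \circ q$ is ample relative to $q$ whenever $\psi$ is a finite morphism.
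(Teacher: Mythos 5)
Your proof is correct and follows essentially the same route as the paper's: apply Corollary~\ref{cor:nearlyEt vs pi1} to the base change of $h$ along $V_{\can}\to V$ (using that $V_{\can}$ is canonical, hence klt, so $\pi_1^{\alg}$ is insensitive to resolution) to get a finite \'etale $p\colon P\to V_{\can}$, identify $P$ with $V_{\can}$ by uniqueness of the relative canonical model, and finish with the functorial equivariant resolution. The only difference is that you spell out two details the paper leaves implicit, namely the irreducibility of $P$ and the passage from $(h\circ q)$-ampleness to $q$-ampleness of $K_P$; both are correct and worth making explicit.
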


\begin{proof}
Since \( V_{\can} \) has only \( \pi_{1} \)-rational singularities,
applying Corollary~\ref{cor:nearlyEt vs pi1} to \(V_{\can} \to V\),
we have a finite \'etale morphism
\( h_{\can} \colon V^{\sharp}_{\can} \to V_{\can} \) and
a proper birational (resp.\ bimeromorphic) morphism \(V^{\sharp}_{\can} \to V\) such that
\[ \begin{CD}
V^{\sharp}_{\can} @>>> V \\
@V{h_{\can}}VV @V{h}VV \\
V_{\can} @>>> V
\end{CD} \]
is commutative.
Then \( V^{\sharp}_{\can} \) is also the relative canonical model
for the resolutions of singularities of \(V\),
since it has only canonical singularities and its
canonical divisor is also relatively ample over \( V \).
So \( V^{\sharp}_{\can} \isom V_{\can} \) and \( h_{\can} \) is
regarded as an \'etale endomorphism of \( V_{\can} \).
Applying the equivariant resolution of singularities of \(V_{\can}\)
with respect to the \'etale endomorphism \(h_{\can}\) in
the sense of Section~\ref{subsect:equiv resol},
we have a lift to a certain resolution of singularities of \(V\).
\end{proof}

\begin{lem}[{cf.\ \cite[Theorem 5.2]{Ko93}}]%
\label{lem:simpconn}
For a commutative diagram
\[ \begin{CD}
X @>{f}>> Y \\
@V{\psi}VV @V{\pi}VV \\
V @>{h}>> W
\end{CD} \]
of proper surjective morphisms of normal complex analytic varieties,
\(h\) is \'etale provided that
the following conditions are satisfied\emph{:}
\begin{enumerate}
\item  \( X\) and \( Y \) are nonsingular.
\item \( \psi \) and \( \pi \) have connected fibers.
\item The induced morphism \( X \to V \times_{W} Y \) is an isomorphism
over a dense open subset of \( V \).
\item  \( f \) is a finite \'etale morphism,
and \( h \) is a finite morphism.
\item \label{lem:simpconn:cond5} \( \OR^{i}\pi_{*}\SO_{Y} = 0 \) and
\( \OR^{i}\psi_{*}\SO_{X} = 0 \) for any \( i > 0 \).
\end{enumerate}
\end{lem}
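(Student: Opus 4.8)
The plan is to show that \( h_{*}\SO_{V} \) is a locally free \( \SO_{W} \)-algebra of rank \( d := \deg f \) whose fibres are reduced; this is exactly the assertion that the finite morphism \( h \) is \'etale of degree \( d \).

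I would first translate the hypotheses into a statement about direct images. Since \( \psi \) and \( \pi \) have connected fibres and \( V \), \( W \) are normal, \( \psi_{*}\SO_{X} = \SO_{V} \) and \( \pi_{*}\SO_{Y} = \SO_{W} \). As \( f \) is finite \'etale, \( \SA := f_{*}\SO_{X} \) is a locally free \( \SO_{Y} \)-algebra of rank \( d \) carrying a non-degenerate trace form; in particular \( \SO_{Y} \) is a direct summand of \( \SA \). From \( h \circ \psi = \pi \circ f \), together with \( \OR f_{*} = f_{*} \), \( \OR \psi_{*}\SO_{X} = \SO_{V} \) (by (2) and (5)) and \( \OR h_{*} = h_{*} \) (by (4)), one gets
\[ h_{*}\SO_{V} = \OR h_{*}\SO_{V} = \OR(h \circ \psi)_{*}\SO_{X} = \OR(\pi \circ f)_{*}\SO_{X} = \OR\pi_{*}\SA , \]
so \( \pi_{*}\SA = h_{*}\SO_{V} \) and \( \OR^{i}\pi_{*}\SA = 0 \) for \( i > 0 \). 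Moreover \( h_{*}\SO_{V} \) is a torsion-free \( \SO_{W} \)-module, because \( V \) is a normal variety.

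Next I would read off the generic picture from (3). Shrink \( W \) to a dense open \( W^{\circ} \) over which \( h \) and \( \pi \) are smooth and over whose \( h \)-preimage the map \( X \to V \times_{W} Y \) is an isomorphism. For \( w \in W^{\circ} \) and \( v \in h^{-1}(w) \), this isomorphism identifies \( \psi^{-1}(v) \) with \( Y_{w} \) in a way compatible with \( f \); hence \( f \) restricts to a trivial \( (\deg h) \)-sheeted covering \( X_{w} := f^{-1}(Y_{w}) = \psi^{-1}(h^{-1}(w)) \to Y_{w} \) of the connected variety \( Y_{w} \). Since this restriction is also \'etale of degree \( d \), we get \( \deg h = d \) and \( h \) \'etale over \( W^{\circ} \). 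Thus \( h_{*}\SO_{V} \) has generic rank \( d \), and by semicontinuity \( \dim_{k(w)}\bigl(h_{*}\SO_{V} \otimes k(w)\bigr) \geq d \) for every \( w \).

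The heart of the matter is the reverse inequality together with reducedness of the fibres. For each \( w \) there is a natural base change homomorphism of \( \SO_{W} \)-algebras
\[ h_{*}\SO_{V} \otimes k(w) = (\OR\pi_{*}\SA) \otimes k(w) \longrightarrow \OH^{0}(Y_{w}, \SA|_{Y_{w}}) = \OH^{0}(X_{w}, \SO_{X_{w}}) , \]
where \( \SA|_{Y_{w}} = (f|_{X_{w}})_{*}\SO_{X_{w}} \) because \( f \) is finite. Using the vanishing \( \OR^{i}\pi_{*}\SA = 0 = \OR^{i}\pi_{*}\SO_{Y} \) for \( i>0 \), I would argue that this map is an isomorphism and that \( \OH^{0}(Y_{w}, \SO_{Y_{w}}) = \BCC \), i.e.\ the fibres of \( \pi \) are connected and reduced. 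Granting this, \( X_{w} \to Y_{w} \) is \'etale of degree \( d \) over the reduced connected \( Y_{w} \), so \( X_{w} \) is reduced with at most \( d \) connected components and \( \OH^{0}(X_{w},\SO_{X_{w}}) \isom \BCC^{c} \) with \( c \leq d \). Hence \( \dim_{k(w)}\bigl(h_{*}\SO_{V} \otimes k(w)\bigr) = c \leq d \), which together with the lower bound forces \( c \equiv d \). A torsion-free coherent sheaf of generic rank \( d \) and constant fibre dimension \( d \) is locally free of rank \( d \), so \( h \) is flat; and then \( h^{-1}(w) \) has structure ring \( \OH^{0}(X_{w},\SO_{X_{w}}) \isom \BCC^{d} \), i.e.\ \( h^{-1}(w) \) is \( d \) reduced points, so \( h \) is unramified. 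Being finite, flat and unramified, \( h \) is \'etale, as desired.

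The main obstacle is precisely the step just flagged. Since \( \pi \) is not flat (its fibre dimension may jump), the usual cohomology-and-base-change theorems do not apply, and the base change homomorphism must be shown to be an isomorphism by extracting it from the hypotheses \( \OR^{i}\pi_{*}\SA = 0 = \OR^{i}\pi_{*}\SO_{Y} \) \( (i>0) \) directly; and, most delicately, one must prove that the a priori possibly non-reduced scheme-theoretic fibres of \( \pi \) are in fact reduced and connected --- using the smoothness of \( Y \) and the normality of \( W \) --- and transport this reducedness down through \( \psi \) to the fibres of \( h \). The remaining steps are formal.
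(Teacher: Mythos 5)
Your outline has the right target (count the contribution of each point of $h^{-1}(P)$ and show it adds up to $\deg f$), but the step you yourself flag as ``the main obstacle'' is exactly the mathematical content of the lemma, and it is left unproven. Worse, the form in which you pose it is not salvageable as stated: you ask to prove that the scheme-theoretic fibres of $\pi$ are reduced, but for a non-flat fibration from a smooth variety the scheme-theoretic fibres are in general \emph{not} reduced, and condition (5) does not force them to be. Likewise, cohomology-and-base-change for $\pi_*\SA$ at a point where $\pi$ is not flat is not available, as you note. So the proposal reduces the lemma to two assertions, one of which is false in the generality you need and the other of which has no proof offered.

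The paper's proof circumvents both difficulties by never touching scheme-theoretic fibres or coherent base change. Fix $P\in W$; after a blow-up of $Y$ over $P$ (which changes neither the hypotheses nor $\sharp h^{-1}(P)$) one may assume $E=\pi^{-1}(P)_{\red}$ is a normal crossing divisor, and then $f^{-1}E$ is the disjoint union of the reduced fibres $\widetilde E_Q$, $Q\in h^{-1}(P)$. Proper base change for the constructible sheaf $\BCC_Y$ gives $(\OR^i\pi_*\BCC_Y)_P\isom\OH^i(E,\BCC)$, and mixed Hodge theory on the normal crossing variety $E$ makes $\OH^i(E,\BCC)\to\OH^i(E,\SO_E)$ surjective; combined with $(\OR^i\pi_*\SO_Y)_P=0$ for $i>0$ this yields $\OH^i(E,\SO_E)=0$ for $i>0$, hence $\chi(E,\SO_E)=1$, and similarly $\chi(\widetilde E_Q,\SO)=1$ for each $Q$. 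Since $f$ is \'etale, $\chi(f^{-1}E,\SO)=(\deg f)\chi(E,\SO_E)=\deg f$, whence $\sharp h^{-1}(P)=\sum_Q\chi(\widetilde E_Q,\SO)=\deg f=\deg h$; a finite morphism of normal varieties whose every fibre has $\deg h$ points is \'etale. If you want to complete your argument you should replace the reducedness-of-fibres and base-change claims by this Euler characteristic computation on \emph{reduced} fibres, with the normal-crossing reduction and the mixed Hodge theory input supplying the bridge from condition (5) to $\chi(E,\SO_E)=1$.
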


\begin{proof}
We may assume that \( \deg f = \deg h > 1 \). Let \( P \in W \) be
an arbitrary point. It is enough to show that the cardinality
\(\sharp h^{-1}(P) \) of \( h^{-1}(P) \) equals \( \deg h \).
For the proof, we may replace \( Y \) with another nonsingular
variety \(Y'\) by taking a bimeromorphic morphism \( Y' \to Y \).
In fact, the pullback \(f' \colon X' \to Y' \) of \( f \colon X \to Y \)
by \( Y' \to Y \) induces a similar commutative diagram
satisfying the same conditions. Thus, we may assume that
%\( Y \) is nonsingular and
the reduced
structure \( E = \pi^{-1}(P)_{\red} \) of the fiber
\( \pi^{-1}(P) =Y \times_{W} \{P\} \) is a normal crossing divisor,
after blowing up \( \pi^{-1}(P) \).
Then \( f^{-1}E\) is also a reduced normal crossing divisor of
\( X \). Here, \( f^{-1}E \) is the disjoint union of the reduced structures
\( \widetilde{E}_{Q} \) of the fibers \( \psi^{-1}(Q) \) for
points \( Q \in h^{-1}(P) \).

We shall show \( \chi(E, \SO_{E}) = 1 \). In the natural commutative
diagram
\[ \begin{CD}
(\OR^{i}\pi_{*}\BCC_{Y})_{P} @>{\isom}>> \OH^{i}(E, \BCC) \\
@VVV @VVV \\
(\OR^{i}\pi_{*}\SO_{Y})_{P} @>>> \phantom{,}\OH^{i}(E, \SO_{E}),
\end{CD}\]
the right vertical arrow is surjective by the theory of mixed Hodge
structures on normal crossing varieties. Therefore,
\( \OH^{i}(E,\SO_{E}) = 0 \) for \( i > 0 \) by
the condition \eqref{lem:simpconn:cond5}.
In particular, \( \chi(E, \SO_{E}) = 1\). By the same argument,
we also have \( \chi(\widetilde{E}_{Q}, \SO) = 1 \).
On the other hand, \( \chi(f^{-1}E, \SO) = (\deg f)\chi(E, \SO_{E}) = \deg f\),
since \( f \) is \'etale. Therefore,
\[ \sharp h^{-1}(P) = \sum \chi(\widetilde{E}_{Q}, \SO) =
\chi(f^{-1}E, \SO) = \deg f = \deg h. \qedhere\]
\end{proof}

\begin{prop}\label{prop:nEt}
Let \( h \colon V \to W \) be a nearly \'etale finite morphism
between normal varieties. Then\emph{:}
\begin{enumerate}
\item \label{prop:nEt:item1}
\( h^{-1}(W_{\apr}) \subset V_{\apr} \) and \( h \) is \'etale
along \( h^{-1}(W_{\apr}) \).
\item  \label{prop:nEt:item2}
\( h \) is \'etale along \( V_{\rat} \).
\item \label{prop:nEt:item3}
\( h^{-1}(W_{\apr} \cap W_{\rat}) = V_{\apr} \cap V_{\rat} \).
\item \label{prop:nEt:item4}
\( h^{-1}(W_{\reg}) = V_{\reg} \).
\end{enumerate}
\end{prop}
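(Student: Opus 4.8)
The plan is to prove \eqref{prop:nEt:item1} and \eqref{prop:nEt:item2} directly and then to deduce \eqref{prop:nEt:item3} and \eqref{prop:nEt:item4} formally. As in Definition~\ref{dfn:nearlyEtale} I fix a proper birational (resp.\ bimeromorphic) morphism \(\mu \colon Y \to W\) with \(Y\) nonsingular (replace \(Y\) by a resolution of singularities and pull the \'etale cover back), a finite \'etale morphism \(f \colon X \to Y\) with \(X\) nonsingular, and a proper birational (resp.\ bimeromorphic) morphism \(\nu \colon X \to V\), so that \(h \circ \nu = \mu \circ f\); then \(\mu_*\SO_Y = \SO_W\), \(\nu_*\SO_X = \SO_V\), both \(\mu\) and \(\nu\) have connected fibres, and \(\deg h = \deg f\). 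Since all four assertions are local on \(W\), I fix a point \(Q \in W\) and work over a sufficiently small connected Stein neighbourhood \(W^0 \ni Q\), chosen so that \(h^{-1}(W^0)\) is the disjoint union of connected — hence, \(V\) being normal, irreducible — neighbourhoods \(V^0_{P'}\) of the finitely many points \(P' \in h^{-1}(Q)\), so that \(Y^0 := \mu^{-1}(W^0)\) is connected and irreducible, so that each \(h|_{V^0_{P'}} \colon V^0_{P'} \to W^0\) is finite and surjective, and (a standard property of proper maps over small neighbourhoods) so that the inclusion \(\mu^{-1}(Q) \injmap Y^0\) is a homotopy equivalence.

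For \eqref{prop:nEt:item1}, assume \(Q \in W_\apr\). Then \(\pi_1^{\alg}(Y^0) \isom \pi_1^{\alg}(\mu^{-1}(Q)) = 1\) by the definition of \(W_\apr\), so the finite \'etale cover \(X^0 := f^{-1}(Y^0) \to Y^0\) is trivial, \(X^0 \isom \coprod_{j=1}^{\deg f} Y^0\). As \(X^0 = \nu^{-1}(h^{-1}(W^0))\) and \(\nu\) is proper birational onto \(h^{-1}(W^0)\), each connected copy of \(Y^0\) maps under \(\nu\) into a single component \(V^0_{P'}\); this assignment is surjective because \(\nu\) is onto \(h^{-1}(W^0)\), and injective because \(\nu\) is one-to-one over a dense open subset of \(V\) while each copy of \(Y^0\), being Zariski dense in \(X\), dominates its target \(V^0_{P'}\). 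Hence it is a bijection, \(\sharp h^{-1}(Q) = \deg f = \deg h\), and each resulting finite morphism \(V^0_{P'} \to W^0\) is birational (it is a factor of the birational \(\mu\)) onto the normal \(W^0\), hence an isomorphism. Thus \(h\) is \'etale over \(W^0\) and \((V,P') \isom (W,Q)\) shows \(P' \in V_\apr\). Letting \(Q\) range over \(W_\apr\) gives \eqref{prop:nEt:item1}.

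For \eqref{prop:nEt:item2}, take \(P \in V_\rat\) and \(Q := h(P)\); since \(V_\rat\) is Zariski open (Remark~\ref{rem:apr->pi1alg}) I shrink \(W^0\) so that the component \(V^0 := V^0_P\) of \(h^{-1}(W^0)\) lies inside \(V_\rat\). Put \(X^0 := \nu^{-1}(V^0)\): it is nonsingular and connected (connected fibres of \(\nu\)), \(\nu|_{X^0} \colon X^0 \to V^0\) is a resolution, \(f|_{X^0} \colon X^0 \to Y^0\) is a finite \'etale morphism of positive degree (restriction of \(f\) to an open-and-closed subset of \(f^{-1}(Y^0)\)), \(h|_{V^0} \colon V^0 \to W^0\) is finite and surjective, and the resulting square commutes. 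I then verify the hypotheses of Lemma~\ref{lem:simpconn}: the first four are routine (for condition~(3), use that \(h\) is generically \'etale). For condition~\eqref{lem:simpconn:cond5}, \(\OR^i(\nu|_{X^0})_*\SO_{X^0} = 0\) for \(i>0\) because \(V^0\) has only rational singularities; and because the finite \'etale morphism \(f|_{X^0}\) has degree invertible in \(\BCC\), the trace map splits the inclusion \(\SO_{Y^0} \injmap (f|_{X^0})_*\SO_{X^0}\), so applying \(\OR^i\mu_*\) exhibits \(\OR^i\mu_*\SO_{Y^0}\) as a direct summand of \(\OR^i(\mu \circ f)_*\SO_{X^0} = \OR^i(h \circ \nu)_*\SO_{X^0} = h_*\bigl(\OR^i(\nu|_{X^0})_*\SO_{X^0}\bigr)\), which is \(0\) for \(i>0\) since \(h\) is finite and the last direct image vanishes. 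Therefore Lemma~\ref{lem:simpconn} applies and \(h|_{V^0}\) is \'etale, so \(h\) is \'etale at \(P\); letting \(P\) vary gives \eqref{prop:nEt:item2}.

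Finally, \eqref{prop:nEt:item3} and \eqref{prop:nEt:item4} follow from \eqref{prop:nEt:item1}, \eqref{prop:nEt:item2} and the fact that wherever \(h\) is \'etale it induces an isomorphism of analytic germs \((V,P) \isom (W,h(P))\), under which smoothness, rationality of the singularity, and algebraic \(\pi_1\)-rationality of the singularity all transfer in both directions. Indeed, for \eqref{prop:nEt:item3}, a point of \(h^{-1}(W_\apr \cap W_\rat)\) lies in \(V_\apr\) with \(h\) \'etale there by \eqref{prop:nEt:item1}, hence lies in \(V_\rat\) as well; conversely a point of \(V_\apr \cap V_\rat\) has \(h\) \'etale by \eqref{prop:nEt:item2}, so maps into \(W_\apr \cap W_\rat\). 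For \eqref{prop:nEt:item4}, \(W_\reg \subset W_\apr\) together with ``\'etale over a regular base has a regular source'' gives \(h^{-1}(W_\reg) \subset V_\reg\), while \(V_\reg \subset V_\rat\) together with the descent of regularity along the flat local homomorphism \(\SO_{W,h(P)} \to \SO_{V,P}\) gives the reverse inclusion. I expect the two delicate points to be, in \eqref{prop:nEt:item1}, the bookkeeping of connected components of \(h^{-1}(W^0)\) against those of the trivial cover, relying on the local homotopy equivalence \(\pi_1^{\alg}(\mu^{-1}(W^0)) \isom \pi_1^{\alg}(\mu^{-1}(Q))\); and, in \eqref{prop:nEt:item2}, the verification of Lemma~\ref{lem:simpconn}, \eqref{lem:simpconn:cond5}, since \((W,Q)\) is not assumed to have rational singularities and the vanishing must be transported across the finite map \(h\) via the trace splitting.
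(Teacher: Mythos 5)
Your proof is correct and follows essentially the same route as the paper: part \eqref{prop:nEt:item1} by trivializing the \'etale cover over a neighbourhood where \( \pi_1^{\alg} \) of the resolution fibre vanishes (the content of Lemma~\ref{lem:nearlyEt vs pi1} localized), part \eqref{prop:nEt:item2} by Lemma~\ref{lem:simpconn}, and parts \eqref{prop:nEt:item3}--\eqref{prop:nEt:item4} as formal consequences of the local analytic isomorphisms that \'etaleness provides. Your verification of condition~\eqref{lem:simpconn:cond5} for \( \mu \) via the trace splitting of \( \SO_{Y^0} \injmap (f|_{X^0})_*\SO_{X^0} \) addresses a point that the paper's one-line invocation of Lemma~\ref{lem:simpconn} leaves implicit (since \( W \) is not assumed to have rational singularities, the vanishing \( \OR^i\mu_*\SO_{Y^0} = 0 \) must indeed be transported across \( h \) exactly as you do), and you are right that this step is needed.
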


\begin{proof}
\eqref{prop:nEt:item1} follows from Lemma~\ref{lem:nearlyEt vs pi1}
applied to an open neighborhood of a point in \(W_{\apr}\). Moreover,
\eqref{prop:nEt:item3} and \eqref{prop:nEt:item4} are derived from
\eqref{prop:nEt:item1} and \eqref{prop:nEt:item2}, since
\( h(V_{\rat}) \subset W_{\rat} \). Therefore, it suffices to show
\eqref{prop:nEt:item2}.

Let \(P\) be a point of \(V_{\rat}\). In order to prove the \'etaleness of \(h\)
at \(P\), we may replace \(W\) with an open neighborhood \(\SW\) of \(h(P)\)
and \(V\) with a connected component of \(h^{-1}(\SW)\) containing \(P\).
Thus, we may assume that \(W\) is simply connected and
\(h^{-1}(h(P)) = \{P\}\), set-theoretically.
Let  \( \mu \colon Y \to W \) be a resolution of singularities.
Then, by Lemma~\ref{lem:nearlyEt vs pi1}, we have a finite
\'etale covering \( f \colon X \to Y \)
and a bimeromorphic morphism \( \nu \colon X \to V \) such that
\( \mu \circ f = h \circ \nu  \).
Applying Lemma~\ref{lem:simpconn} to the commutative diagram
expressing \(\mu \circ f = h \circ \nu \),
we infer that \(h\) is an isomorphism. Thus, we are done.
\end{proof}

For a nearly \'etale endomorphism of a normal algebraic variety,
we have the following additional property:

\begin{prop}\label{prop:nEtendo}
Let \( h \colon V \to V \) be a nearly \'etale finite surjective
endomorphism of a normal algebraic variety \( V \).
Then \( h^{-1}(V_{\rat}) = V_{\rat} \).
\end{prop}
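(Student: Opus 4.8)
The plan is to deduce the equality $h^{-1}(V_{\rat})=V_{\rat}$ from Proposition~\ref{prop:nEt} by an elementary dynamical argument that uses, in an essential way, that $h$ is an \emph{endomorphism}. First I would observe that the inclusion $V_{\rat}\subseteq h^{-1}(V_{\rat})$ is almost free: by Proposition~\ref{prop:nEt}, \eqref{prop:nEt:item2} the morphism $h$ is \'etale along $V_{\rat}$, an \'etale morphism of varieties over $\BCC$ induces an isomorphism of completed local rings, and smoothness and the property of being a rational singularity depend only on the completed local ring; hence $P\in V_{\rat}$ implies $h(P)\in V_{\rat}$, i.e.\ $h(V_{\rat})\subseteq V_{\rat}$. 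Equivalently, for the closed set $Z:=V\setminus V_{\rat}$ one has $h^{-1}(Z)\subseteq Z$. It therefore remains to prove the opposite inclusion $Z\subseteq h^{-1}(Z)$, equivalently $h^{-1}(V_{\rat})\subseteq V_{\rat}$.

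Next I would bring in iteration together with the Noetherianity of $V$. Applying $h^{-k}$ to $V_{\rat}\subseteq h^{-1}(V_{\rat})$ shows that the open sets $h^{-k}(V_{\rat})$ form an increasing chain, which stabilizes; hence $h^{-N}(V_{\rat})=h^{-2N}(V_{\rat})$ for some $N\ge 1$. Since a composite of nearly \'etale maps is again nearly \'etale (as noted after Lemma~\ref{lem:nearlyEt vs pi1}), $h^{N}$ is once more a nearly \'etale finite surjective endomorphism of $V$, and from $V_{\rat}\subseteq h^{-1}(V_{\rat})\subseteq\cdots\subseteq h^{-N}(V_{\rat})$ it follows that proving the statement for $h^{N}$ yields it for $h$. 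Thus I may assume from the outset that $h^{-1}(U)=U$, where $U:=h^{-1}(V_{\rat})\supseteq V_{\rat}$; note that $h^{-1}(Z)\subseteq Z$ still holds, and that by the very definition of $U$ we have $h(B)\subseteq V_{\rat}$ for $B:=U\setminus V_{\rat}=U\cap Z$.

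The last step is short. I claim $h^{-1}(B)=\emptyset$. Indeed, if $h(x)\in B=U\cap Z$, then $h(x)\in U$ forces $x\in h^{-1}(U)=U$, while $h(x)\in Z$ forces $x\in h^{-1}(Z)\subseteq Z$; hence $x\in U\cap Z=B$, and therefore $h(x)\in h(B)\subseteq V_{\rat}$, contradicting $h(x)\in B\subseteq Z=V\setminus V_{\rat}$. Since $h$ is surjective, $h^{-1}(B)=\emptyset$ forces $B=\emptyset$, i.e.\ $h^{-1}(V_{\rat})=V_{\rat}$; unwinding the reduction to $h^{N}$ through the chain of inclusions above gives the same conclusion for the original $h$.

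The step that needs the most care is the first one: one must make sure that ``\'etale along $V_{\rat}$'' really does yield $h(V_{\rat})\subseteq V_{\rat}$, i.e.\ that being a rational singularity (or a smooth point) is an \'etale-local, indeed formal-local, property — this is standard, but it is precisely where the hypothesis that $h$ is nearly \'etale (rather than merely finite and surjective) enters, through Proposition~\ref{prop:nEt}, \eqref{prop:nEt:item2}. Everything afterwards is formal manipulation of preimages together with the surjectivity of $h$ and the Noetherianity of $V$, plus the harmless passage to the power $h^{N}$, which is justified by the chain of inclusions and the stability of near-\'etaleness under composition.
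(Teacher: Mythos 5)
Your proof is correct. The first half --- deducing $V_{\rat}\subseteq h^{-1}(V_{\rat})$ from the \'etaleness of $h$ along $V_{\rat}$ (Proposition~\ref{prop:nEt}, \eqref{prop:nEt:item2}) together with the \'etale-local (indeed formal-local) nature of smoothness and of rational singularities --- is exactly the paper's first step, there phrased as $h^{-1}(B)\subset B$ for $B=V\setminus V_{\rat}$. For the reverse inclusion the paper does something different: it invokes the separate Lemma~\ref{lem:endoclosed}, which asserts for any finite surjective endomorphism and any closed $B$ with $h^{-1}(B)\subset B$ that $h^{-1}(B)=B$, and is proved by descending induction on the dimensions of the irreducible components of $B$, yielding in addition that $\Gamma\mapsto h(\Gamma)$ permutes those components. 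Your route is more elementary: you stabilize the ascending chain of open sets $h^{-k}(V_{\rat})$ by Noetherianity (note $V_{\rat}$ is indeed Zariski open, as recorded in the paper), pass to the power $h^{N}$ so that $h^{-1}(U)=U$ for $U=h^{-N}(V_{\rat})$, and extract a contradiction from surjectivity; the final sandwich $V_{\rat}\subseteq h^{-1}(V_{\rat})\subseteq h^{-N}(V_{\rat})=V_{\rat}$ then closes the argument. This second half uses only that $V$ is a Noetherian topological space and that $h$ is a continuous surjection (finiteness and near-\'etaleness enter only through the first half), so it in fact gives an alternative proof of the set-theoretic content of Lemma~\ref{lem:endoclosed}; what it does not recover is that lemma's extra information about the permutation of irreducible components, which is not needed for the present proposition. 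The remaining ingredients you cite --- the stability of near-\'etaleness under composition (noted after Lemma~\ref{lem:nearlyEt vs pi1}, and in any case not strictly necessary, since $V_{\rat}\subseteq g^{-1}(V_{\rat})$ and $g^{-1}(Z)\subseteq Z$ for $g=h^{N}$ already follow by iterating the corresponding inclusions for $h$) --- all check out.
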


\begin{proof}
Let \( B \) be the complement of \( V_{\rat} \) in \( V \).
Then we have the inclusion \( h^{-1}(B) \subset B \) by Proposition~\ref{prop:nEt}.
The other inclusion follows from Lemma~\ref{lem:endoclosed} below.
\end{proof}

\begin{lem}\label{lem:endoclosed}
Let \( h \colon V \to V\) be a finite surjective endomorphism of
an algebraic variety \( V \).
Suppose \(h^{-1}(B) \subset B\) for a closed subset \(B \subset V\). Then
\(h^{-1}(B) = B\). Further, \( \Gamma \mapsto h(\Gamma) \) induces an automorphism
of the set \( I_{B} \)  of irreducible components \( \Gamma \) of \( B \).
The inverse map is given by \( \Gamma \mapsto h^{-1}(\Gamma) \).
\end{lem}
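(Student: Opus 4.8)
The plan is to prove the two assertions in turn: the first is purely topological (it uses only that $h$ is a surjective continuous self-map of the Noetherian space $V$), and the second is a finiteness argument on the set $I_B$ combined with a dimension count.

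First, to show $h^{-1}(B)=B$, I would look at the decreasing chain of closed subsets $B=B_0\supseteq B_1\supseteq B_2\supseteq\cdots$, where $B_n:=h^{-n}(B)$; it is decreasing because $B_1=h^{-1}(B)\subseteq B$ and $h^{-1}$ preserves inclusions. Since $V$ is Noetherian, the chain stabilizes, so $h^{-1}(C)=C$ for $C:=B_n$ with $n\gg 0$. Now I use surjectivity twice: $h(C)=h(h^{-1}(C))=C$, hence $h^{n}(C)=C$ by iteration; and on the other hand $h^{n}(h^{-n}(B))=B$ because $h^{n}$ is surjective. Combining, $B=h^{n}(C)=C=B_n\subseteq B_1\subseteq B_0=B$, so $B_1=h^{-1}(B)=B$. (Surjectivity really is needed here: for a non-surjective endomorphism, e.g.\ a constant map, one easily produces a closed $B$ with $h^{-1}(B)\subsetneq B$.) In particular $h(B)=h(h^{-1}(B))=B$, so $h$ restricts to a finite surjective map $B\to B$.

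Second, for the components: given $\Gamma\in I_B$, the image $h(\Gamma)$ is closed (as $h$ is finite, hence a closed map), irreducible, of dimension $\dim\Gamma$ (as $h|_\Gamma$ is finite), and contained in $h(B)=B$; choose a component $\sigma(\Gamma)\in I_B$ with $h(\Gamma)\subseteq\sigma(\Gamma)$. From $B=h(B)=\bigcup_{\Gamma\in I_B}h(\Gamma)\subseteq\bigcup_{\Gamma\in I_B}\sigma(\Gamma)\subseteq B$ it follows that every component of $B$, being irreducible, lies in --- hence equals --- some $\sigma(\Gamma)$; thus $\sigma\colon I_B\to I_B$ is surjective, and bijective since $I_B$ is finite. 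Then $\dim\Gamma=\dim h(\Gamma)\le\dim\sigma(\Gamma)$ for every $\Gamma$, and summing over the finite set $I_B$ and using that $\sigma$ is a bijection forces $\dim\Gamma=\dim\sigma(\Gamma)$ for all $\Gamma$; hence $h(\Gamma)=\sigma(\Gamma)$ is a component. So $\varphi\colon\Gamma\mapsto h(\Gamma)$ is an automorphism of $I_B$. For its inverse, $\varphi^{-1}(\Gamma)$ is the unique $\Gamma'\in I_B$ with $h(\Gamma')=\Gamma$; since $\Gamma'\subseteq h^{-1}(\Gamma)\subseteq h^{-1}(B)=B$ and $\Gamma'$ is a maximal irreducible closed subset of $B$, it is a component of $h^{-1}(\Gamma)$ of dimension $\dim\Gamma$, and using injectivity of $\varphi$ one checks it is the \emph{only} component of $h^{-1}(\Gamma)$ of that dimension; this is the sense in which the inverse map is $\Gamma\mapsto h^{-1}(\Gamma)$.

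I expect the one delicate point --- and essentially the only obstacle --- to be that $B$ need not be equidimensional, so $h^{-1}(\Gamma)$ may acquire irreducible components of dimension strictly below $\dim\Gamma$ (there is no flatness of $h$ to appeal to), and "$h^{-1}(\Gamma)$" in the statement has to be read as the top-dimensional component of the preimage. The dimension count against the bijection $\sigma$ is precisely what isolates that component; everything else in the argument is formal.
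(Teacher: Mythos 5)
Your proof is correct, but it is organized quite differently from the paper's. The paper does not prove \(h^{-1}(B)=B\) first; instead it runs a descending induction on the dimension \(k\) of the components, showing simultaneously that every \(\Gamma\in I_B^{(k)}\) satisfies \(h(\Gamma)\subset B\) and that \(\Gamma\mapsto h(\Gamma)\) is a bijection of \(I_B^{(k)}\) (the induction hypothesis in higher dimensions is what rules out a component \(\Delta\) of \(h^{-1}(\Gamma)\) sitting inside a larger-dimensional component \(\Gamma'\) of \(B\)); the equality \(h^{-1}(B)=B\) then falls out at the end from \(h(B)\subset B\). You instead get \(h^{-1}(B)=B\) up front by a clean Noetherian stabilization of the chain \(h^{-n}(B)\) combined with surjectivity --- an argument the paper does not use at all --- and you replace the paper's stratified induction by the single global device \(\sum_\Gamma\dim\Gamma\le\sum_\Gamma\dim\sigma(\Gamma)=\sum_\Gamma\dim\Gamma\), which forces \(h(\Gamma)=\sigma(\Gamma)\) all at once. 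Both routes use the same pigeonhole on the finite set of components and both correctly handle the fact that \(B\) need not be equidimensional; yours is arguably more self-contained, and you also spell out the sense in which \(\Gamma\mapsto h^{-1}(\Gamma)\) is the inverse (the unique top-dimensional, i.e.\ dominating, component of the preimage), a point the paper's proof leaves implicit.
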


\begin{proof}
Let \( I_{B}^{(k)} \) be the set of
irreducible components of \( B \) of dimension \( k \). Then
\( I_{B} = \bigcup I_{B}^{(k)} \).
Let \( J_{B}^{(k)} \) be the subset of \( I_{B}^{(k)} \) consisting of
\( \Gamma \in I_{B} \) with \( h(\Gamma) \subset B \).
It is enough to show the following two properties for any \( k \):
\begin{itemize}
\item[$(1)_{k}$:] \( I_{B}^{(k)} = J_{B}^{(k)} \).
\item[$(2)_{k}$:] \( h_{*} \colon J_{B}^{(k)} \to I_{B}^{(k)} \)
given by \( \Gamma \mapsto h(\Gamma) \) is bijective.
\end{itemize}

We shall prove by descending induction on \( k \).
We set \( d = \dim B\). For \( \Gamma \in I_{B}^{(d)} \),
some \( \Gamma' \in I_{B}^{(d)} \) is contained in \( h^{-1}(\Gamma) \).
Here, \( \Gamma' \in J_{B}^{(d)} \), since \( h(\Gamma') = \Gamma \).
Hence \( h_{*} \colon J_{B}^{(d)} \to I_{B}^{(d)} \) is surjective.
Since \( I_{B}^{(d)} \) is a finite set,
\( J_{B}^{(d)} = I_{B}^{(d)} \) and \( h_{*} \colon J_{B}^{(d)} \to
I_{B}^{(d)} \) is bijective.

Next, assume that \( (1)_{l} \) and \( (2)_{l} \) hold
for any integer \( l > k \).
If \( \Gamma \in I_{B}^{(k)} \), then an irreducible component
\( \Delta \) of \( h^{-1}(\Gamma) \) dominates \( \Gamma \),
and \( \Delta \subset \Gamma' \) for some \( \Gamma' \in I_{B} \).
If \( \dim \Gamma' > k \), then \( h(\Gamma') \subset B \) by induction,
and hence \( \Gamma \subset h(\Gamma') \); this is a contradiction.
Thus \( \dim \Gamma' = k \), \( \Gamma' = \Delta \),
and \( \Gamma' \in J_{B}^{(k)} \).
Hence, \( h_{*} \colon J_{B}^{(k)} \to I_{B}^{(k)} \) is surjective.
Therefore, \( I_{B}^{(k)} = J_{B}^{(k)} \) and
\( h_{*} \colon J_{B}^{(k)} \to I_{B}^{(k)} \) is bijective.
Thus, we are done.
\end{proof}

If \( f \colon X \to X \) is a surjective endomorphism of a nonsingular
projective variety \( X \) with \( \kappa(X) \geq 0 \), then \( f \)
is \'etale. But if we drop the condition of nonsingularity, then we
can expect neither the \'etaleness nor even the nearly \'etaleness.
Indeed, we have:

\begin{exam}
Let \( A \) be an abelian surface and \( V \) the quotient space of
\( A \) by the involution \( \iota \colon A \ni a \mapsto -a \in A \) with
respect to a group structure of \( A \). The minimal resolution
of singularities of \( V \) is a K3 surface called the Kummer surface associated
with \( A \) and is denoted by \(\Km(A) \).
The variety \( V \) has only canonical singularities and \( K_{V} \sim 0 \).
Let \( \mu = \mu_{m} \colon A \to A \) be the multiplication map
\( a \mapsto ma \) by an integer \(m\) with \(|m| > 1\).
Then it descends to a non-isomorphic surjective
endomorphism \( \mu_{V} \colon V \to V \).
Here, \( \mu_{V} \) is not nearly \'etale since
\( \Km(A)  \) is simply connected.
\end{exam}

\section{The case of zero Kodaira dimension}%Albanese map; proof of Theorem B
\label{sect:alb}

\subsection{Albanese closure}
\label{subsect:AlbClosure}

\begin{dfn}
For a normal projective variety \( V \),
we define \( q^{\max}(V) \) to be the supremum of the irregularities
\( q(V') = \dim \OH^{1}(V', \SO_{V'})\) for all the finite \'etale
coverings \( V' \to V \).
\end{dfn}

If \( X \) is nonsingular projective with \( \kappa(X) = 0 \),
then \( q(X) \leq  q^{\max}(X) \leq \dim X \) by \cite{Ka81}.
Let \( V \) be a normal projective variety with only canonical
singularities such that \( \kappa(V) = 0\). Let \( X \) be
a nonsingular projective variety birational to \( V \).
Then \( q(X) = q(V) \) and \( \Alb(X) \isom \Alb(V) \),
since \( V \) has only rational singularities. Furthermore,
\(\pi_{1}^{\alg}(X) \isom \pi_{1}^{\alg}(V) \) by
Remark~\ref{rem:apr->pi1alg}.
In particular, the category of finite \'etale coverings over \( X \) is
equivalent to that of finite \'etale coverings over \( V \).
Therefore, \( q^{\max}(X) = q^{\max}(V) \).

\begin{dfn}
Let \( F \) be a normal projective variety with only canonical
singularities such that \( K_{F} \sim_{\BQQ} 0\).
If \( q^{\max}(F) = 0 \), then \( F\) is called
a \emph{weak Calabi-Yau variety}.
\end{dfn}

Let \( V \) be a normal projective variety with only canonical
singularities such that \( K_{V} \sim_{\BQQ} 0\).  Then, by
\cite[Corollary 8.4]{Ka85},
there is a finite \'etale covering \( F \times A \to V \) with \( F \)
a weak Calabi-Yau variety and \( A \) an abelian variety.
Here, \( q^{\max}(V) = \dim A \).

The result below guarantees the uniqueness (up to isomorphism) of
minimal \'etale cover $V\sptilde$ of $V$ realizing
$q^{\max}(V)$ as $q(V\sptilde)$. A similar result is found in \cite{Be}.

\begin{prop}\label{alb-c}
Let \( V \) be a normal projective variety with only canonical
singularities such that \( \kappa(V) = 0\). Then there exists a
finite \'etale Galois covering \( V\sptilde \to V \), unique up to
\emph{(non-canonical)} isomorphisms, such that\emph{:}
\begin{enumerate}
\item  \( q(V\sptilde) = q^{\max}(V) \), and
\item  if \( V' \to V \) is a finite \'etale covering from a variety
\( V' \) with \( q(V') = q^{\max}(V) \), then \( V' \to V \) factors
through \( V\sptilde \to V \).
\end{enumerate}
\end{prop}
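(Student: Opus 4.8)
The plan is to build $V\sptilde$ as the \'etale cover corresponding to a canonically determined finite-index subgroup of the algebraic fundamental group, and then to verify that it does the job. First I would pass to a nonsingular projective model $X$ birational to $V$; since $V$ has only canonical (hence rational) singularities, $\pi_1^{\alg}(X) \isom \pi_1^{\alg}(V)$, $\Alb(X) \isom \Alb(V)$, and $q^{\max}(X) = q^{\max}(V)$ by the discussion preceding the statement, so nothing is lost. The key geometric input is Kawamata's result \cite[Corollary 8.4]{Ka85}: there is a finite \'etale Galois cover $W \to V$ (take the Galois closure) such that $W$ admits a finite \'etale splitting $F \times A \to W$ with $F$ a weak Calabi-Yau variety and $A$ an abelian variety, and $\dim A = q^{\max}(V)$. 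In particular $q(W) \ge q(A) = \dim A = q^{\max}(V)$, and since $q(W) \le q^{\max}(V)$ always, we get $q(W) = q^{\max}(V)$, so a cover realizing the maximal irregularity exists; the issue is minimality and uniqueness.

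Next I would isolate the right subgroup. Consider the Albanese morphism $\alpha_W \colon W \to \Alb(W)$; by Kawamata's theorem (after the splitting) this is, up to \'etale base change, an $F$-bundle over an abelian variety, and in particular $\alpha_W$ is surjective with connected fibers and $\dim \Alb(W) = q^{\max}(V)$. The point is that for \emph{any} finite \'etale cover $V' \to V$ with $q(V') = q^{\max}(V)$, the composite $V' \times_V W \to W$ is \'etale and each component $W'$ has $q(W') \geq q(V') = q^{\max}(V) = q(W)$, forcing $q(W') = q(W)$; an \'etale cover of $W$ cannot increase the irregularity here, and comparing Albanese varieties shows $W' \to W$ must be \'etale of degree equal to the degree of the isogeny on Albanese varieties induced on the torus part. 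Concretely, the relevant data is the image of the induced map on $\pi_1$: I would let $K \trianglelefteq \pi_1^{\alg}(V)$ be the kernel of the composite $\pi_1^{\alg}(V) \to \pi_1^{\alg}(\Alb(V)) = \widehat{H_1(\Alb(V),\BZZ)}$ (equivalently, restrict to the part that can be "untwisted" to realize all of $H_1(\Alb)$ after the weak-Calabi-Yau factor is trivialized), and define $V\sptilde \to V$ to be the corresponding Galois cover. Then $V\sptilde$ sits between $V$ and $W$, it is Galois over $V$ by construction, and $q(V\sptilde) = q^{\max}(V)$ because the Albanese of $V\sptilde$ already surjects onto a torus of the maximal dimension. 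For property (2): if $V' \to V$ is \'etale with $q(V') = q^{\max}(V)$, then the corresponding subgroup of $\pi_1^{\alg}(V)$ has image in $\pi_1^{\alg}(\Alb(V))$ of the maximal possible (finite) index, hence is contained in $K$, so $V' \to V$ factors through $V\sptilde \to V$; uniqueness up to isomorphism is then immediate since the subgroup $K$ is intrinsic to $V$.

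The main obstacle I expect is the passage "$q(V') = q^{\max}(V) \Rightarrow$ the map $\pi_1^{\alg}(V') \to \pi_1^{\alg}(\Alb(V))$ has maximal image," i.e.\ controlling how irregularity interacts with \'etale covers for varieties of Kodaira dimension zero. This is exactly where Kawamata's structure theorem is essential: after pulling back to $W$ and using the product decomposition $F \times A$, the Albanese of any intermediate \'etale cover is an isogeny factor of $A$ together with a contribution from $F$, but $q^{\max}(F) = 0$ kills any extra irregularity from the $F$-side, so the irregularity of an intermediate cover is governed entirely by the degree of the isogeny on the torus part. Making this precise — ensuring that $q(V') = \dim A$ really does force the cover to "see" all of $H_1(A,\BZZ)$ and hence lie under $V\sptilde$ — is the technical heart; once it is in place, the Galois property, the equality $q(V\sptilde) = q^{\max}(V)$, the factorization (2), and uniqueness all follow formally from the Galois correspondence for $\pi_1^{\alg}(V)$. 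A minor point to check along the way is that $V\sptilde$, being an intermediate cover $V \mid V\sptilde \mid W$ with $W$ Galois over $V$, is itself normalizable with only canonical singularities and $K_{V\sptilde} \sim_{\BQQ} 0$, which is automatic since \'etale covers preserve these properties.
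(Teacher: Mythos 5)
Your overall strategy --- realize $q^{\max}(V)$ by a Galois cover obtained from Kawamata's splitting and then descend to a minimal cover via the Galois correspondence --- is the right one and matches the paper's proof in spirit, but the step where you actually \emph{define} the cover is broken. You set $K$ to be the kernel of $\pi_1^{\alg}(V)\to\pi_1^{\alg}(\Alb(V))=\widehat{\OH_1(\Alb(V),\BZZ)}$. Whenever $q(V)>0$ this kernel has infinite index (the target is $\hat{\BZZ}^{2q(V)}$), so it does not correspond to any finite \'etale cover; and when $q(V)<q^{\max}(V)$ --- the interesting case, e.g.\ a bielliptic surface, where $\Alb(V)$ is an elliptic curve while $\Alb(V\sptilde)$ is an abelian surface --- the Albanese of $V$ itself is too small to see the irregularity that only appears after passing to a cover, so no subgroup defined purely in terms of $\pi_1^{\alg}(V)\to\pi_1^{\alg}(\Alb(V))$ can possibly work. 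The parenthetical ``restrict to the part that can be untwisted\ldots'' is not a definition, and the later claim that the subgroup corresponding to $V'$ ``has image in $\pi_1^{\alg}(\Alb(V))$ of the maximal possible (finite) index'' pins nothing down either: images of finite-index subgroups of $\pi_1^{\alg}(V)$ in $\hat{\BZZ}^{2q(V)}$ have unbounded index. Since both the uniqueness and the universal property (2) are derived from the alleged intrinsic nature of $K$, they are left unproved.

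The fix, which is what the paper does, is to work with the Albanese of the top cover rather than of $V$: take a finite \'etale Galois cover $V_0\to V$ with $q(V_0)=q^{\max}(V)$ and Galois group $G_0$, let $H_0=\Ker\bigl(G_0\to\Aut(\OH_1(\Alb(V_0),\BZZ))\bigr)$ --- a finite group acting on a lattice, so this is well-posed --- and set $V\sptilde:=H_0\lquot V_0$. Since $H_0$ acts on $\Alb(V_0)$ by translations, $H_0\lquot\Alb(V_0)$ is an abelian variety of dimension $q^{\max}(V)$ receiving the Albanese map of $V\sptilde$, which gives (1). Independence of the choice of $V_0$ and the factorization (2) both follow from the single observation you gesture at but never nail down: if $V_1\to V_0$ is a further \'etale cover with $q(V_1)=q(V_0)$, then $\Alb(V_1)\to\Alb(V_0)$ is an isogeny, hence an isomorphism on $\OH_1\otimes\BQQ$, so a deck transformation acts trivially on $\OH_1(\Alb(V_1),\BZZ)$ if and only if it acts trivially on $\OH_1(\Alb(V_0),\BZZ)$. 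Applied to a Galois cover dominating a given $V'\to V$ with $q(V')=q^{\max}(V)$, this shows the Galois group of $V_0\to V'$ lands in $H_0$ and yields $V'\to V\sptilde\to V$. Your use of Kawamata's theorem to produce the initial cover realizing $q^{\max}(V)$, and your remark that $q^{\max}(F)=0$ kills any contribution from the weak Calabi--Yau factor, are both correct and can be kept; the argument needs to be rebuilt around the finite group action on $\OH_1(\Alb(V_0),\BZZ)$.
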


\begin{proof}
There is a finite \'etale Galois covering \( V_{0} \to V \) with
\(q^{\max}(V) = q(V_{0}) \). For the Galois group \( G_{0} \) of
\(V_{0} \to V \), let \( H_{0} \) be the kernel of the natural
homomorphism
\[ G_{0} \to \Aut(\OH_{1}(\Alb(V_{0}), \BZZ)). \]
We set \( V\sptilde \) to be the quotient space
\( H_{0} \lquot V_{0} \). Then \( H_{0} \lquot \Alb(V_{0})\) is an abelian variety and
\(V\sptilde \to H_{0} \lquot \Alb(V_{0}) \) is the Albanese map of
\(V^{\sptilde} \). In particular, \( q^{\max}(V) = q(V\sptilde) \).
Let \( V_{1} \to V \) be a finite \'etale Galois covering such that
\( V_{1} \to V \) factors as \( V_{1} \to V_{0} \to V \). For the
Galois group \( G_{1} \) of \( V_{1} \to V \), let
\( H_{1} \subset G_{1} \) be the kernel of
\[ G_{1} \to \Aut(\OH_{1}(\Alb(V_{1}), \BZZ)). \]
Then \( H_{1} \) is just the pullback of \( H_{0} \subset G_{0} \)
by \( G_{1} \to G_{0} \), since \( \Alb(V_{1}) \to \Alb(V_{0}) \) is
an isogeny. In particular, \( H_{1} \lquot V_{1} \isom H_{0} \lquot
V_{0} = V\sptilde\). Therefore, \( V\sptilde \) is independent of
the choice of Galois covering \( V_{0} \to V \).
For an arbitrary finite \'etale cover \( V' \to V \) with
\( q(V') = q^{\max}(V) \), we have a finite Galois cover
\( V_{0}\to V \) which factors through \( V' \to V \). Then the Galois
group \( G'_{0} \) of \( V_{0} \to V' \) acts on \( \Alb(V_{0})\)
and, acts on \( \OH_{1}(\Alb(V_{0}), \BZZ) \)
trivially, since \( \Alb(V_{0}) \to \Alb(V') \)
is an isogeny. Thus
\(G'_{0} \subset H_{0} \) and we have a factorization
\( V' \to V\sptilde \to V \).
\end{proof}

We call the Galois cover \( V\sptilde \to V\)
the \emph{Albanese closure}. The Albanese closures of birationally
equivalent varieties satisfying the conditions of Proposition \ref{alb-c}
are also birationally equivalent.

\subsection{Splitting endomorphisms}
\label{subsect:split_endo}

We shall show that any nearly \'etale rational endomorphism \( \varphi \)
of the product \( F \times A \) of a weak Calabi-Yau variety \( F \)
and an abelian variety \( A \) is split as the product \( \varphi_{F}
\times \varphi_{A} \) of a nearly \'etale rational endomorphism
\( \varphi_{F}\) of \( F \) and an \'etale endomorphism
\( \varphi_{A} \) of \( A \).
A slightly more general assertion is proved in
Proposition~\ref{prop:bir diagonal} below.
To begin with, we recall the following well-known result
(cf.\ \cite[Theorem (4.6)]{Hm87}):

\begin{lem}\label{lem:aut=0}
Let \( F \) be a normal projective variety such that \( q(F) = 0 \).
If \( F \) is not ruled, then \( \Aut(F) \) is discrete.
\end{lem}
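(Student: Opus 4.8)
The plan is to prove Lemma~\ref{lem:aut=0} by showing that if $\Aut(F)$ is not discrete, then its identity component $\Aut^{0}(F)$ is a positive-dimensional connected algebraic group acting faithfully on $F$, and then to extract from this a rational curve sweeping out $F$, contradicting non-ruledness. First I would recall that $\Aut(F)$ is a complex Lie group acting algebraically on the projective variety $F$, with identity component $G := \Aut^{0}(F)$; if $\Aut(F)$ is not discrete then $\dim G > 0$. The key structural input is Chevalley's theorem: $G$ sits in an extension
\[ 1 \to L \to G \to B \to 0, \]
where $L$ is a connected linear algebraic group and $B$ is an abelian variety. I would handle the two cases according to whether $\dim L > 0$ or $\dim L = 0$ (so that $G = B$ is an abelian variety).

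If $\dim L > 0$, then $L$ contains a one-dimensional connected subgroup isomorphic to $\BGG_{\mathtt m}$ or $\BGG_{\mathtt a}$, and since $G$ acts faithfully on $F$ this subgroup acts non-trivially; by the result of Matsumura already invoked in the proof of Lemma~\ref{lem:infinite->ruled} (cf.\ \cite[Theorem 14.1]{Ue}), $F$ is ruled, a contradiction. If instead $G = B$ is a positive-dimensional abelian variety acting faithfully on $F$, I would argue that this forces $q(F) > 0$: indeed, a faithful action of an abelian variety $B$ on $F$ gives a non-constant morphism from $B$ to $F$ through any point with positive-dimensional orbit (the orbit is an abelian subvariety quotient, hence carries non-trivial global $1$-forms that pull back to $F$ on a desingularization), so $q(F') > 0$ for a resolution $F'$ of $F$; but $F$ is normal, so $q(F') = q(F)$ only if $F$ has rational singularities — here it is cleaner to pass to a resolution $F' \to F$, on which $B$ acts (by functoriality of equivariant resolution, Section~\ref{subsect:equiv resol}), obtain $q(F') > 0$, and note $q(F') = q(F) = 0$ by hypothesis since $F$ is normal with $H^{1}(F,\SO_{F}) = H^{1}(F',\SO_{F'})$ failing — so I would simply phrase the hypothesis $q(F)=0$ as already applying to a chosen resolution, or observe that $\Alb(F')$ receives a non-trivial map from $B$.

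The cleanest route, and the one I would actually carry out, avoids the singularity subtleties: let $\mu\colon F' \to F$ be a resolution of singularities equivariant for $G = \Aut^{0}(F)$; then $G$ acts on $F'$, $F'$ is ruled iff $F$ is (ruledness is a birational invariant), and $q(F') \le q^{\max}$-type bounds are not needed — I only need that a positive-dimensional algebraic group acting faithfully on the smooth projective $F'$ makes $F'$ either ruled (linear part case, via Matsumura) or have $q(F') > 0$ (abelian part case, via the Albanese: the induced map $B \to \Alb(F')$ is non-trivial because the $B$-action on $F'$ is non-trivial and $\Alb$ is functorial, so $q(F') \ge \dim(\text{image}) > 0$). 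Since $F$ is not ruled by assumption and $q(F) = 0$ forces $q(F') = 0$ (as $F$ normal implies $\mu_{*}\SO_{F'} = \SO_{F}$ and, for the $H^{1}$ comparison, one uses that $F$ having worse than rational singularities would still give the Leray inequality $q(F) \le q(F')$ in the wrong direction — so here I must be slightly careful and instead invoke that in the intended application $F$ has canonical hence rational singularities; but as stated the lemma only assumes $q(F) = 0$, so I would include the hypothesis that $q$ is computed on a resolution or restrict to the rational-singularity case).

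The main obstacle I expect is precisely this last point: reconciling $q(F) = 0$ on the possibly-singular normal variety $F$ with $q(F') = 0$ on a resolution. In the abelian-part case I need $q(F') = 0$ to derive the contradiction, and $q(F') \ge q(F)$ always holds by the Leray spectral sequence, so $q(F) = 0$ does \emph{not} immediately give $q(F') = 0$. I would resolve this by noting that the statement is quoted from \cite[Theorem (4.6)]{Hm87}, where $F$ is understood to be such that $H^{1}(F', \SO_{F'}) = 0$ for a resolution (equivalently, $F$ has rational singularities and $q(F) = 0$, which is exactly the situation for weak Calabi-Yau varieties in this paper), and under that reading the argument above goes through cleanly: the linear part gives ruledness (excluded), and the abelian part gives $q(F') > 0$ (excluded), so $\dim G = 0$ and $\Aut(F)$ is discrete.
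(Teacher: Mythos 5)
Your overall strategy (Chevalley decomposition of $\Aut^{0}(F)$, Matsumura's theorem for the linear part, an Albanese argument for the abelian part) is a legitimate classical route, but, as you yourself diagnose at the end, it has a genuine gap for the lemma \emph{as stated}: in the abelian-part case you need the irregularity of a resolution $F'$ of $F$ to vanish, whereas the hypothesis only gives $q(F)=\dim \OH^{1}(F,\SO_{F})=0$ on the normal variety $F$ itself, and the Leray spectral sequence (with $\mu_{*}\SO_{F'}=\SO_{F}$ by normality) only yields $q(F)\leq q(F')$ --- the wrong direction. Your proposed repair is to strengthen the hypotheses (rational singularities, or reading $q$ on a resolution), which amounts to proving a different statement, not the one asserted.

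The paper's proof avoids this entirely and needs no Chevalley decomposition: since $\OH^{1}(F,\SO_{F})$ is the tangent space of the Picard scheme of $F$ at $[\SO_{F}]$, its vanishing forces the connected group $\Aut^{0}(F)$ to fix the isomorphism class of a very ample sheaf $\SH$, hence to act through a linear algebraic subgroup of $\PGL(N+1,\BCC)$ preserving the image of the embedding $F \injmap \BPP^{N}$ given by $\LS{\SH}$. If that subgroup were positive-dimensional it would contain a $\BGG_{\mathtt{m}}$ or $\BGG_{\mathtt{a}}$ acting non-trivially, so $F$ would be ruled by Matsumura's theorem; hence $\Aut^{0}(F)=\{\id_{F}\}$. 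In other words, $q(F)=0$ computed on $F$ itself is exactly the right hypothesis to linearize \emph{all} of $\Aut^{0}(F)$, which kills the abelian part for free. You should replace your Albanese/resolution step with this Picard-scheme argument; your treatment of the linear part (via Matsumura) then coincides with the paper's.
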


\begin{proof}
Let \( \SH \) be a very ample invertible sheaf
of \( F \). For an automorphism \( f \) of \( F \) belonging to the
identity component \( \Aut^{0}(F) \), the invertible sheaf \(
f^{*}\SH \) is isomorphic to \( \SH \), since the tangent space of
the Picard scheme of \( F \) at the origin \( [\SO_{F}] \) is isomorphic to the
zero-dimensional vector space \( \OH^{1}(F, \SO_{F}) \). Let \( \Phi
\colon F \injmap \BPP^{N} \) be the embedding defined by the very
ample linear system \( |\SH| \). Then \( f \) induces an
automorphism \( \rho(f) \colon \BPP^{N} \to \BPP^{N} \) such that
\(\Phi \circ f = \rho(f) \circ \Phi\). The automorphism \( \rho(f) \)
is contained in a linear subgroup of \( \PGL(N+1, \BCC) \)
preserving \( \Phi(F) \).
Since \( F \) is not ruled, we infer that the linear subgroup
acts on \( F \) trivially. Therefore, \( f = \id_{F} \).
Hence, \( \Aut^{0}(F) = \{\id_{F}\} \).
\end{proof}

The following is the first splitting criterion for an \'etale morphism.

\begin{lem}\label{lem:diagonal}
Let \( F \) and \( F' \) be non-ruled normal projective varieties
such that \( q(F) = q(F') = 0 \) and \( \dim F = \dim F' \). Let
\(A \) and \( A' \) be abelian varieties with \( \dim A = \dim A'\).
Let \( \varphi \colon F \times A \to F' \times A' \) be a surjective
\'etale morphism. Then \( \varphi = \varphi_{1} \times \varphi_{2}
\) for surjective \'etale morphisms \( \varphi_{1} \colon F \to F'
\) and \( \varphi_{2} \colon A \to A' \).
\end{lem}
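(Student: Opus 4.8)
\textbf{Proof proposal for Lemma~\ref{lem:diagonal}.}
The plan is to use the first projection \( F\times A \to F \) to show that \( \varphi \) descends to a morphism \( F \to F' \), and to use a translation/Albanese argument on the \( A \)-factor to show that each fiber \( \{x\}\times A \) maps isomorphically onto a fiber of \( F'\times A'\to F' \). First I would consider the composite \( q\colon F\times A \xrightarrow{\varphi} F'\times A' \to A' \). For each fixed \( x\in F \), the restriction \( q|_{\{x\}\times A}\colon A\to A' \) is a morphism from an abelian variety to an abelian variety, hence (after translating so that \( 0\mapsto 0 \)) it is a homomorphism, in particular a morphism of constant ``degree''; since \( \varphi \) is finite \'etale of some degree \( d \), and \( \dim A=\dim A' \), one checks that this homomorphism is an isogeny, and moreover its kernel is independent of \( x \) because it varies algebraically in \( x \) over the connected base \( F \) while taking values in the discrete set of finite subgroups of \( A \) of a fixed order. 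Hence \( q \) factors as \( F\times A \to F\times (A/\Lambda) \to A' \) for a fixed finite subgroup \( \Lambda\subset A \); absorbing the isogeny \( A\to A/\Lambda \) we may as well assume this second map is, fiberwise over \( F \), a translation, i.e. \( q(x,a) = \psi(x) + \bar a \) for a morphism \( \psi\colon F\to A' \). But \( q(F)=0 \) forces \( \Alb(F) = 0 \), so every morphism from \( F \) to an abelian variety is constant; thus \( \psi \) is constant and, after translating \( A' \), we get \( q(x,a) = \varphi_{2}(a) \) with \( \varphi_{2}\colon A\to A' \) an isogeny independent of \( x \).

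Next I would analyze the first component \( p\colon F\times A \xrightarrow{\varphi} F'\times A' \to F' \). Having shown \( \varphi = (p,\varphi_{2}\circ\mathrm{pr}_{A}) \) up to translation, and knowing \( \varphi \) is an isomorphism onto its image locally (it is \'etale) while \( \varphi_{2}\circ\mathrm{pr}_{A} \) is already surjective with \( \deg\varphi_{2} \) matching, a degree count shows \( p \) restricted to each slice \( F\times\{a\} \) is generically one-to-one onto \( F' \), and in fact finite \'etale of degree \( \deg\varphi/\deg\varphi_{2} \). To see that \( p \) does not depend on the \( A \)-coordinate, I would use \( \Aut(F') \) together with Lemma~\ref{lem:aut=0}: for \( a,a'\in A \) the two maps \( p(-,a), p(-,a')\colon F\to F' \) differ by an element of \( \Aut(F') \) (both are finite \'etale of the same degree between birational, non-ruled varieties with the same Hodge numbers — one argues they induce the same map on the relevant cohomology or on a fixed polarization, hence the difference lies in \( \Aut^{0}(F') \)); but \( \Aut^{0}(F')=\{\id\} \) by Lemma~\ref{lem:aut=0} since \( q(F')=0 \) and \( F' \) is non-ruled, so \( p(-,a)=p(-,a') \). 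Hence \( p(x,a)=\varphi_{1}(x) \) for a morphism \( \varphi_{1}\colon F\to F' \), which is then forced to be finite \'etale by the corresponding statement for \( \varphi \). Therefore \( \varphi = \varphi_{1}\times\varphi_{2} \), and surjectivity and \'etaleness of each factor follow from that of \( \varphi \) together with the product structure.

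The main obstacle I expect is the rigidity step for the first component: showing that the family of maps \( \{p(-,a)\}_{a\in A} \) is constant in \( a \). The cleanest route is to argue that \( a\mapsto p(-,a) \) gives a morphism from the connected variety \( A \) into \( \Hom(F,F') \), and that the subscheme of \( \Hom(F,F') \) consisting of finite \'etale maps of a fixed degree is, near any given point, a torsor under \( \Aut^{0}(F') \) acting by post-composition — this uses that any two such maps differ by an automorphism of \( F' \) homotopic to the identity, which in turn needs a little input (e.g. that such a map is determined up to \( \Aut^{0} \) by its action on \( \mathrm{Pic}^{0} \) or by a chosen ample class, using \( q(F')=0 \)). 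Once \( \Aut^{0}(F')=\{\id\} \) is invoked via Lemma~\ref{lem:aut=0}, the family is forced to be constant and the splitting follows; the remaining verifications (isogeny, \'etaleness, surjectivity of the factors) are routine degree and smoothness arguments.
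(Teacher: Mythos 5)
Your overall architecture matches the paper's: first produce \( \varphi_{2} \colon A \to A' \) from the abelian factor, then show the residual family of maps \( F \to F' \) parametrized by \( A \) is constant, using \( \Hom(F,F') \), connectedness of \( A \), and Lemma~\ref{lem:aut=0}. On the abelian factor the paper is more economical: since \( q(F)=q(F')=0 \), the second projections \( p_{2} \) and \( p_{2}' \) \emph{are} the Albanese maps of \( F\times A \) and \( F'\times A' \), so \( \varphi_{2} \) drops out at once from the universal property, with no need for your fiberwise homomorphism-plus-translation analysis and the rigidity of the kernel (though that route also works, and your observation that \( q(F)=0 \) kills the translation part is correct).

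The genuine gap is in the rigidity step for the \( F' \)-component. Your mechanism — that \( p(-,a) \) and \( p(-,a') \) ``differ by an element of \( \Aut(F') \)'', or that the locus of finite \'etale maps of fixed degree in \( \Hom(F,F') \) is a torsor under \( \Aut^{0}(F') \) by post-composition — is not justified and is false in general: two finite \'etale covers \( F \to F' \) of the same degree need not be related by any automorphism of \( F' \), and they can induce the same map on \( \operatorname{Pic}^{0} \) or on an ample class without being so related. Establishing the torsor structure you invoke is at least as hard as the statement you are trying to prove, so as written the argument is circular. The correct and direct fix, which is what the paper does, is to compute the tangent space of \( \Hom(F,F') \) at a point \( [\psi] \) with \( \psi \) surjective \'etale: it is \( \Hom_{\SO_{F}}(\psi^{*}\Omega^{1}_{F'}, \SO_{F}) \isom \Hom_{\SO_{F}}(\Omega^{1}_{F}, \SO_{F}) \isom \OH^{0}(F, \Theta_{F}) \) (using \'etaleness to identify \( \psi^{*}\Omega^{1}_{F'} \) with \( \Omega^{1}_{F} \)), and this space has the dimension of \( \Aut^{0}(F) \) — note: of the \emph{source} \( F \), not of \( F' \) — which vanishes by Lemma~\ref{lem:aut=0}. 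Hence \( \Hom(F,F') \) is discrete at such points and the morphism \( A \to \Hom(F,F') \) is constant. With this replacement your proof closes up and agrees with the paper's.
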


\begin{proof}
The second projections \( p_{2} \colon F \times A \to A \) and
\(p'_{2} \colon F' \times A' \to A' \) are the Albanese maps of
\( F \times A \) and \( F' \times A' \), respectively. Thus an
\'etale map \( \varphi_{2} \colon A \to A' \) is induced so that
\(\varphi_2 \circ p_2 = p'_2 \circ \varphi\).
So for any \( a \in A \), there is an
\'etale morphism \( \rho_{a} \colon F \to F' \) such that
\[ \varphi(x, a) = (\rho_{a}(x), \varphi_{2}(a)). \]
The collection \( \{\rho_{a}\} \) gives rise to a morphism from \( A
\) into the scheme \( \Hom(F, F') \) of morphisms from \( F \) to \(
F' \). For a surjective \'etale morphism \( \psi \colon F \to F' \),
the tangent space of \( \Hom(F, F') \) at the point \( [\psi] \in
\Hom(F, F')\) corresponding to \( \psi \) is isomorphic to
\[ \Hom_{\SO_{F}}(\psi^{*}\Omega^{1}_{F'}, \SO_{F})
\isom \Hom_{\SO_{F}}(\Omega^{1}_{F}, \SO_{F}) \isom \OH^{0}(F,
\Theta_{F})\] for the tangent sheaf \( \Theta_{F} \). In particular,
the dimension of the tangent space equals that of
\( \Aut^{0}(F) \).
So the tangent space is
zero by Lemma~\ref{lem:aut=0}. Hence, \( \rho_{a} \) is independent of
the choice of \( a \in A \). Thus, \( \varphi = \varphi_{1}
\times \varphi_{2} \) for \( \varphi_{1} = \rho_{a} \).
\end{proof}

The following is a partial generalization of
Lemma~\ref{lem:diagonal} in the birational case.

\begin{lem}\label{lem:bir diag1}
Let \( F \) and \( F' \) be non-ruled nonsingular
projective varieties
such that \( q(F) = q(F') = 0 \) and \( \dim F = \dim F' \). Let
\(A \) and \( A' \) be abelian varieties with \( \dim A = \dim A'\).
Let \( \varphi \colon F \times A \ratmap F' \times A' \) be a birational
map. Then \( \varphi = \varphi_{1} \times \varphi_{2} \) for
a birational map \( \varphi_{1} \colon F \ratmap F' \) and
an isomorphism \( \varphi_{2} \colon A \xrightarrow{\isom} A' \).
\end{lem}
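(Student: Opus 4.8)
The plan is to use the Albanese map to split off the abelian factor and to show that $\varphi$ respects the two projections, and then to prove a rigidity statement saying that the induced family of birational maps between the $F$-fibres is constant; the first part is routine and the rigidity is the real content. First I would record that, since $q(F)=q(F')=0$, the Albanese varieties of $F\times A$ and $F'\times A'$ are $A$ and $A'$, and the Albanese maps are, up to translation, the second projections $p_{2}$ and $p'_{2}$. Since the Albanese variety is a birational invariant of smooth projective varieties and the Albanese map is functorial up to translation, the birational map $\varphi$ induces an isomorphism $\varphi_{2}\colon A\xrightarrow{\isom}A'$ with $p'_{2}\circ\varphi=\varphi_{2}\circ p_{2}$ after a suitable choice of base points (this also re-derives $\dim A=\dim A'$ and $\dim F=\dim F'$). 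Identifying $A'$ with $A$ via $\varphi_{2}$, we may then regard $\varphi$ as a birational map $F\times A\ratmap F'\times A$ lying over $A$.

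Next, over a dense open subset $B\subset A$ — equivalently, at the generic point $\SSpec\BCC(A)$ — the map $\varphi$ restricts on fibres to birational maps $\rho_{a}\colon F\ratmap F'$ with $\varphi(x,a)=(\rho_{a}(x),a)$ on a dense open set. It then suffices to prove that $\rho_{a}$ is independent of $a$: for then $\varphi=\varphi_{1}\times\id_{A}$ with $\varphi_{1}:=\rho_{a}$ a birational map, and undoing the identification gives $\varphi=\varphi_{1}\times\varphi_{2}$. Fixing $a_{0}\in B$ and setting $h_{a}:=\rho_{a}\circ\rho_{a_{0}}^{-1}\in\Bir(F')$, so that $h_{a_{0}}=\id$, the assignment sending $a$ to the closure of the graph of $h_{a}$ in $F'\times F'$ is a morphism from the irreducible variety $B$ into $\Hilb(F'\times F')$ (or into the Chow variety) that passes through the point corresponding to the diagonal; equivalently, $\Psi:=\varphi\circ(\rho_{a_{0}}^{-1}\times\id_{A})$ is a birational self-map of $F'\times A$ over $A$ which is the identity on the fibre over $a_{0}$. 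The assertion reduces to showing that this morphism is constant.

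The hard part will be the rigidity: a connected algebraic family of birational self-maps of $F'$ passing through the identity must be trivial, because $F'$ is non-ruled with $q(F')=0$. This is the birational analogue of Lemma~\ref{lem:aut=0}, and I would deduce it from Hanamura's structure theory of birational automorphism groups \cite{Hm87} (cf.\ the proof of Lemma~\ref{lem:aut=0}): the identity component $\Bir^{0}(F')$ is controlled by $\Alb(F')$ and by $\Aut^{0}$ of a suitable relatively minimal model, and both pieces are trivial here — the Albanese part because $q(F')=0$, and the automorphism part because a positive-dimensional connected subgroup would make $F'$ ruled by Matsumura's criterion, exactly as in Lemma~\ref{lem:aut=0}. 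Hence $\Bir^{0}(F')$ is trivial, the family $\{h_{a}\}_{a\in B}$ is constant, and $h_{a}=\id$ for all $a$; therefore $\rho_{a}=\rho_{a_{0}}$ for every $a\in B$, which completes the proof. (A variant avoiding the explicit structure theory is to argue directly with the morphism $B\to\Hilb(F'\times F')$ above: a positive-dimensional component parametrising graphs of birational self-maps of $F'$ would again produce either rational curves sweeping out $F'$ or a positive-dimensional group acting on a model of $F'$, contradicting the non-ruledness of $F'$.)
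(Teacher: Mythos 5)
Your proposal is correct and follows essentially the same route as the paper: identify $A'$ with $A$ via the Albanese functoriality, view $\varphi$ as a family of birational maps $F\ratmap F'$ over $A$, normalize by one fibre, and kill the resulting family in $\Bir(F')$ using Hanamura's structure theory (the paper cites \cite[Theorem (2.1)]{Hm88} directly for $\dim\Bir(F)=0$, which is exactly the rigidity you deduce from $q(F')=0$ and non-ruledness). No gaps.
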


\begin{proof}
There is an isomorphism \( \varphi_{2} \colon A \to A' \) such that
\( p'_{2} \circ \varphi = \varphi_{2} \circ p_{2}\) for the second
projections \( p_{2} \colon F \times A \to A \) and \( p_{2}' \colon
F' \times A' \to A' \). Hence, we may assume that \( A = A' \) and
\( \varphi_{2} = \id_{A} \). Then \( \varphi \) is a birational map
\( F \times A \ratmap F' \times A \) over \( A \). For a general
point \( a \in A \), we have a birational map \( f_{a} \colon F
\ratmap F' \) as the restriction of \( \varphi \) to \( F \times
\{a\} \). Thus, we may also replace \( F' \) with \( F \) by
a suitable \( f_{a} \).
Therefore, we may assume from the beginning
that \( \varphi \) is a birational map \( F \times A \ratmap F
\times A \) over \( A \).
Then \( \varphi \) induces a rational map
from \( A \) into the scheme \( \Bir(F) \) of
birational automorphisms studied in \cite{Hm87}.
By \cite[Theorem (2.1)]{Hm88}, we have
\( \dim \Bir(F) = 0 \), and hence
the map \( A \ratmap \Bir(F) \) is constant.
Therefore, \( \varphi = \varphi_{F} \times \id_{A} \) for
a birational map \( \varphi_{F} \colon F \ratmap F\).
\end{proof}

The next is a sufficient condition to split the variety into a
product.

\begin{lem}\label{lem:bir split}
Let \( V \) be a normal projective variety with only canonical
singularities such that \( K_{V} \sim_{\BQQ} 0 \). Suppose that
\( V \) is birational to \( F \times A \) for an abelian variety
\( A \) and a normal variety \( F \) with only canonical singularities
such that \( K_{F} \sim_{\BQQ} 0\) and \( q(F) = 0 \). Then
\( V \isom F'\times A \) for a variety \( F' \) birational to \( F \).
\end{lem}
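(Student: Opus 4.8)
The plan is to recover $V$ as the Albanese fibration over $A$, to produce an action of $A$ on $V$ via the birational splitting of products established in Lemma~\ref{lem:bir diag1}, and then to descend the product structure from a birational model to $V$ itself.

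First I would set $A := \Alb(V)$ and let $\alpha \colon V \to A$ be the Albanese morphism. Since $V$ has only canonical, hence rational, singularities, $\Alb(V)$ coincides with the Albanese variety of a resolution $\tilde V$ of $V$; as $\tilde V$ is birational to $\tilde F \times A$ for a resolution $\tilde F \to F$ and $q(F) = q(\tilde F) = 0$, this identifies $A$ with the abelian-variety factor occurring in the hypothesis. The birational map $V \ratmap F \times A$ respects the Albanese maps up to a translation of $A$, so $\alpha$ is, birationally over $A$, the second projection $F \times A \to A$; in particular $\alpha$ is a fiber space (its Stein factorization is finite and birational over the normal variety $A$, hence trivial). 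A general fiber $F_0 := \alpha^{-1}(a)$ is then a normal projective variety birational to $F$, with only canonical singularities (general fibers of a morphism from a canonical variety), with $K_{F_0} \sim_{\BQQ} 0$ by adjunction since $K_A \sim 0$, and with $q(F_0) = q(F) = 0$; in particular $F_0$ is not uniruled, so not ruled.

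Next I would apply Lemma~\ref{lem:bir diag1} to $\tilde F$ (smooth, non-ruled, $q(\tilde F) = 0$): every birational self-map of $\tilde F \times A$ splits, so $\Bir(V) \isom \Bir(F \times A) \isom \Bir(\tilde F) \times \Aut(A)$. The translation subgroup $A \subset \Aut(A)$ therefore yields a homomorphism $A \to \Bir(V)$, i.e.\ an action of $A$ on $V$ by birational transformations which by construction commutes with $\alpha$ and acts on the base $A$ by translations. Because $K_V$ is nef (indeed $\sim_{\BQQ} 0$), each of these birational self-maps is an isomorphism in codimension one by the negativity lemma.

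The heart of the proof, and the step I expect to be the main obstacle, is to upgrade this to an honest product decomposition; I see two interchangeable ways to finish. The direct route is to show the birational $A$-action on $V$ is in fact \emph{biregular}: then, fixing $v_0 \in F_0$ and using that $A$ acts freely and transitively on the base compatibly with $\alpha$, the $A$-equivariant morphism $A \times F_0 \to V$, $(a,x) \mapsto a\cdot x$, is the identity over the base point $a$, hence an isomorphism, giving $V \isom F_0 \times A$ with $F_0$ birational to $F$. The second route avoids regularizing the action: $V$ and $F \times A$ are two minimal models (normal, canonical, $\BQQ$-trivial canonical class) of the same birational class, hence connected by a finite sequence of flops; since a flopping contraction preserves $\Alb$ and contracts only rational curves, each flop is a flop over $A$, and a flop over $A$ of a variety $F' \times A$ is again of the form $F'' \times A$ with $F''$ a flop of $F'$ (the flopping locus is preserved by the $A$-translations, by numerical invariance of extremal rays, hence is constant along the second factor). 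Iterating from $F \times A$ gives $V \isom F' \times A$ for some $F'$ birational to $F$. The delicate point in either route is precisely that minimal models are not rigid: one must show the product-over-$A$ structure survives either regularization of the birational $A$-action or the chain of flops connecting $V$ to $F \times A$; by contrast, the identification of the Albanese, the splitting of $\Bir(V)$, and the properties of the general fiber are routine given Lemma~\ref{lem:bir diag1} and standard facts on canonical singularities.
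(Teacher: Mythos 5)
There is a genuine gap, and it sits exactly where you flag it yourself: the passage from a birational product structure to a biregular one. Neither of your two routes is routine. For the first route, to regularize the birational $A$-action you would need the standard argument that a small birational self-map preserving an ample class is an automorphism; but $V$ is only assumed normal with canonical singularities, not $\BQQ$-factorial, so the strict transform $g_a^*H$ of an ample divisor need not be $\BQQ$-Cartier, and the claim that the classes $[g_a^*H]$ are constant in $a$ requires setting up the $\{g_a\}$ as an algebraic family of pseudo-automorphisms and controlling specialization — none of which is carried out. For the second route, the assertion that two birational minimal models with $K\sim_{\BQQ}0$ are connected by flops is itself a deep theorem available only in the $\BQQ$-factorial terminal setting, not for canonical non-$\BQQ$-factorial varieties such as $V$ here; and the further claim that a flop of $F'\times A$ over $A$ is again of product form is asserted, not proved. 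So the "main obstacle" you identify is real and your proposal does not overcome it.

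The paper closes this gap by a different mechanism: it invokes Kawamata's structure theorem \cite[Theorem 8.3]{Ka85}, which for the Albanese map $\alpha\colon V\to A$ of a canonical variety with $K_V\sim_{\BQQ}0$ produces an isogeny $A'\to A$ and an isomorphism $V\times_A A'\isom F'\times A'$ \emph{over} $A'$ — this is the hard geometric input that replaces both of your routes. The remaining steps are then the ones you do have: Lemma~\ref{lem:bir diag1} applied to the birational map $F\times A'\ratmap F'\times A'$ over $A'$ shows $F'$ is birational to $F$, and Lemma~\ref{lem:diagonal} shows the Galois group of $A'\to A$ acts diagonally on $F'\times A'$; comparing with its action on $F\times A'$ (trivial on the first factor) shows it acts trivially on $F'$, so the product descends and $V\isom F'\times A$. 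If you want to salvage your argument, the cleanest fix is to import \cite[Theorem 8.3]{Ka85} rather than attempt to regularize the birational $A$-action directly.
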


\begin{proof}
The composition \( V \ratmap F \times A \to A \) with the second
projection is the Albanese map of \( V \). There is a finite Galois
\'etale covering  \( A' \to A \) from an abelian variety \( A' \)
such that \( V \times_{A} A' \isom F' \times A' \) over \( A' \) for
a variety \( F' \), by \cite[Theorem 8.3]{Ka85}. Then \( F' \) is
normal with only canonical singularities, \( K_{F'} \sim_{\BQQ} 0
\), \( q(F') = 0 \), and we have a birational map
\[ \varphi \colon
F \times A' = (F \times A) \times_{A} A' \ratmap V \times_{A} A'
\isom F' \times A' \]
over \( A' \).
By Lemma~\ref{lem:bir diag1}, \( \varphi = \varphi_{F} \times \id_{A'} \)
for a birational map \( \varphi_{F} \colon F \ratmap F' \),
since the irregularities of nonsingular models of \( F \) and \( F' \)
are both zero.
The action of the Galois group \( G \) of \( A' \to A \) on
\( F' \times A' \isom V \times_{A} A'\) is written as a diagonal action
by Lemma~\ref{lem:diagonal}. Moreover, it is compatible with the
action of \( G \) on \( F \times A' \) by \( \varphi \), where \( G \)
acts trivially on the first factor \( F \). Therefore, \( G \) acts
trivially on the first factor \( F' \), and hence, \( V \isom F' \times A \).
\end{proof}

The following is also a partial generalization of
Lemma~\ref{lem:diagonal}.

\begin{prop}\label{prop:bir diagonal}
Let \( F \) and \( F' \) be normal projective varieties with only
canonical singularities such that \( K_{F} \sim_{\BQQ} 0 \),
\(K_{F'} \sim_{\BQQ} 0 \), and \( q(F) = q(F') = 0 \). Let \( A \)
and \( A' \) be abelian varieties with \( \dim A = \dim A' \). Let
\(\varphi \colon F \times A \ratmap F' \times A' \) be a nearly
\'etale rational map such that \( p_{2}' \circ \varphi =
\varphi_{A} \circ p_{2} \) for an \'etale morphism \( \varphi_{A}
\colon A \to A' \) and for the second projections \( p_{2} \colon F
\times A \to A \) and \( p_{2}' \colon F' \times A' \to A' \). Then
\( \varphi = \varphi_{F} \times \varphi_{A} \) for a nearly \'etale
rational map \( \varphi_{F} \colon F \ratmap F' \).
\end{prop}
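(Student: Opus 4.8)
The plan is to reduce Proposition~\ref{prop:bir diagonal} to the birational case already handled in Lemma~\ref{lem:bir diag1}, via an equivariant étale cover that trivializes the nearly étale map. First I would invoke the definition of nearly étale: there are proper birational maps $\nu \colon X \ratmap F \times A$ and $\mu \colon Y \ratmap F' \times A'$ and a finite étale morphism $f \colon X \to Y$ with $\mu \circ f = \varphi \circ \nu$. After replacing $X$ and $Y$ by equivariant resolutions (Section~\ref{subsect:equiv resol}), I may assume $X$ and $Y$ are nonsingular. Since $F \times A$ has only canonical, hence rational, singularities and $K_{F\times A}\sim_{\BQQ}0$, we have $\Alb(X) \isom \Alb(F\times A) \isom A$ and $q^{\max}(X) = q^{\max}(F\times A) = \dim A$ (by the discussion in Section~\ref{subsect:AlbClosure}), and likewise for $Y$. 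The Albanese map of $X$ is $X \ratmap F\times A \xrightarrow{p_2} A$, and similarly $Y \ratmap F'\times A' \xrightarrow{p'_2} A'$.

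The key structural step is to pass to the Albanese closures (Proposition~\ref{alb-c}). Let $\widetilde{X} \to X$ and $\widetilde{Y} \to Y$ be the Albanese closures; these are finite étale Galois covers whose Albanese maps are, after a finite étale isogeny, products with $F$-type and $F'$-type fibers. Concretely, by Kawamata's splitting \cite[Theorem 8.3]{Ka85} applied to nonsingular models, there is a finite étale cover $B \to A$ from an abelian variety such that $X \times_A B$ is birational to $F_0 \times B$ with $q$ of nonsingular models of $F_0$ zero, and similarly a cover $B' \to A'$ with $Y\times_{A'}B'$ birational to $F'_0 \times B'$. Since $f \colon X \to Y$ is finite étale and compatible with the Albanese maps over $\varphi_A \colon A \to A'$, I can choose these covers compatibly (taking $B$ to lie over $B'$ via $\varphi_A$), so that $f$ lifts to a finite étale map $F_0 \times B \ratmap F'_0 \times B'$ compatible with the second projections. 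Because $F_0, F'_0$ are non-ruled (they have $\kappa = 0$) with $q = 0$, Lemma~\ref{lem:bir diag1} combined with Lemma~\ref{lem:diagonal} forces this lifted map to split as a product of a map $F_0 \ratmap F'_0$ and an isogeny $B \to B'$.

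Finally I would descend the splitting. The original data $F\times A$, $F'\times A'$, and $\varphi$ are recovered from $F_0\times B$, $F'_0\times B'$ by quotienting by the respective Galois groups, and by Lemma~\ref{lem:diagonal} these Galois actions are diagonal; moreover, by the argument used in the proof of Lemma~\ref{lem:bir split}, the action on the $F_0$-factor is (birationally) trivial, since the Galois group already acts trivially on $H_1(\Alb, \BZZ)$ and hence on the weak Calabi-Yau factor. Thus the split map $F_0 \ratmap F'_0$ descends to a rational map $\varphi_F \colon F \ratmap F'$, the isogeny $B \to B'$ descends to $\varphi_A$, and $\varphi = \varphi_F \times \varphi_A$ birationally. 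One then checks that $\varphi_F$ is itself nearly étale: the finite étale cover realizing this is obtained as the appropriate component of the cover $F_0 \to F$ composed with the étale data downstairs, using that the $F$-direction étale cover of $X \to Y$ descends. I expect the main obstacle to be the \emph{compatibility of the two Albanese closures} — ensuring that the isogenies $B \to A$ and $B' \to A'$ can be chosen so that $f$ genuinely lifts to a morphism of products over a common abelian base, rather than merely that each side splits separately. This is where the hypothesis $p'_2 \circ \varphi = \varphi_A \circ p_2$ is essential, since it pins down the map on Albanese varieties and lets one pull back one splitting to the other.
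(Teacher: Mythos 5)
Your route diverges from the paper's, and as written it has a genuine gap at its central step. The paper's proof is short: by Lemma~\ref{lem:nearlyEt vs pi1} (usable because $F'\times A'$ has canonical, hence algebraically $\pi_1$-rational, singularities), $\varphi$ Stein-factorizes as a birational map $F\times A\ratmap V^{\sharp}$ followed by a finite \'etale covering $V^{\sharp}\to F'\times A'$; then $V^{\sharp}$ has canonical singularities, $K_{V^{\sharp}}\sim_{\BQQ}0$, and is birational to $F\times A$, so Lemma~\ref{lem:bir split} gives $V^{\sharp}\isom F^{\sharp}\times A$ as an honest product; finally Lemma~\ref{lem:diagonal} splits the \'etale part and Lemma~\ref{lem:bir diag1} splits the birational part. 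The crucial ingredient is Lemma~\ref{lem:bir split}, which upgrades ``birational to a product'' to ``isomorphic to a product'' for the intermediate variety. You never invoke it at the point where it is needed. Your lifted map $F_0\times B\ratmap F_0'\times B'$ is only a nearly \'etale \emph{rational} map between varieties that are merely \emph{birational} to your chosen models (Kawamata's Theorem 8.3, applied to a nonsingular model $X$, only yields that $X\times_A B$ is birational to $F_0\times B$, as you yourself write). Lemma~\ref{lem:diagonal} requires a finite \'etale \emph{morphism} between honest products, and Lemma~\ref{lem:bir diag1} requires a birational map between honest products; neither applies to your lifted map, and ``combining'' them requires exactly the missing step of exhibiting the Stein-factorization intermediate as a product.

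Two secondary problems: first, the compatibility of the two isogenies $B\to A$ and $B'\to A'$, which you correctly flag as the main obstacle, is never resolved, whereas the paper's argument avoids the issue entirely by never base-changing (the hypothesis $p_2'\circ\varphi=\varphi_A\circ p_2$ is used only to identify the $A$-direction of the splitting, not to match covers). Second, your descent justification --- that the Galois group ``acts trivially on $\OH_1(\Alb,\BZZ)$ and hence on the weak Calabi-Yau factor'' --- is a non sequitur; triviality on the homology of the Albanese says nothing about the action on the fiber factor. The correct argument (as in the proof of Lemma~\ref{lem:bir split}) is that the Galois group of $A'\to A$ acts on $(F\times A)\times_A A'=F\times A'$ trivially on the first factor by construction, and the diagonal splitting of the action on $F'\times A'$ together with the equivariant birational identification forces triviality on $F'$. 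To repair your proof you would essentially have to reproduce Lemma~\ref{lem:bir split} inside it, at which point you may as well follow the paper's direct Stein-factorization argument.
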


\begin{proof}
By Lemma \ref{lem:nearlyEt vs pi1}, \(\varphi\) is the composite
of a birational map \(F \times A \ratmap V^{\sharp}\) and
a finite \'etale covering \(V^{\sharp} \to F' \times A'\),
since \(F' \times A'\) has only canonical singularities
(cf.\ Remark~\ref{rem:apr->pi1alg}).
Then \(V^{\sharp} \simeq F^{\sharp} \times A\) for a normal projective
variety \(F^{\sharp}\) birational to \(F\) by Lemma \ref{lem:bir split}.
Further, the \'etale covering \( V^{\sharp} \to F' \times
A' \) is isomorphic to \( \psi \times \varphi_{A} \) for finite
\'etale morphisms \( \psi \colon F^{\sharp} \to F' \) and
\(\varphi_{A} \colon A \to A' \) by Lemma~\ref{lem:diagonal}. So
we have only to show that the birational map \( F \times A \ratmap
V^{\sharp} \isom F^{\sharp} \times A \) is the product
of a birational map \( F \ratmap F^{\sharp} \) and the identity map
\( A \to A \). This is done by Lemma~\ref{lem:bir diag1}, since
the irregularities of nonsingular models of \( F \) and \( F^{\sharp} \)
are both zero.
\end{proof}

\subsection{Proof of Theorem B}
\label{subsect:pf ThB}

As in Proposition \ref{prop:bir diagonal},
\(h\) is the composite of a birational map \( V \ratmap V^{\sharp} \)
and an \'etale covering \( \lambda \colon V^{\sharp} \to V \).
Let \( \delta \colon V\sptilde \to
V \) be the Albanese closure of \( V \) and let \( U \) be
a connected component of the fiber product \( V^{\sharp} \times_{V}
V\sptilde \). Then \( q(U) = q(V\sptilde) = q^{\max}(V) \), and
hence \( U \to V^{\sharp} \) factors through the Albanese closure
\((V^{\sharp})\sptilde \to V^{\sharp} \) of \( V^{\sharp} \). On the
other hand, \( (V^{\sharp})\sptilde \) is birational to \( V\sptilde
\) by the birational map \( V \ratmap V^{\sharp} \). Hence we have \( U =
V^{\sharp} \times_{V} V\sptilde \isom (V^{\sharp})\sptilde \) by comparing
the mapping degrees.
So there is a nearly \'etale rational map
\( h\sptilde \colon V\sptilde \ratmap V\sptilde \) such that
\( \delta \circ h\sptilde = h \circ \delta\).

Let \( \alpha \colon V\sptilde \to A \) be the Albanese map of
\(V\sptilde \). Then \( h\sptilde \) induces an \'etale endomorphism
\( h_{A} \colon A \to A \) such that  \( \alpha \circ h\sptilde =
h_{A} \circ \alpha \). By \cite[Theorem 8.3, Corollary 8.4]{Ka85},
there are an isogeny $A' \to A$ of abelian varieties
and an isomorphism \( V\sptilde \times_{A} A' \isom F \times A'\) over \(A'\)
for a normal projective variety \( F \) with only canonical singularities and
\(q(F) = 0\).
By replacing the isogeny \( A' \to A\), we may assume that \(A' = A \) and
\( A' \to A \) is the
multiplication map by \( m > 0 \). By Lemma~\ref{lem:mu} below, there
is an \'etale endomorphism \( \varphi_{A} \) of \( A \) such that
\(h_{A}(ma) = m\varphi_{A}(a) \) for any \( a \in A \). So
there is a nearly \'etale rational map \( \varphi \colon F \times A
\ratmap F \times A \) such that \( p_{2} \circ \varphi = \varphi_{A}
\circ p_{2} \) and \( \theta \circ \varphi = h\sptilde \circ
\theta \), where \( \theta \colon F \times A \to V\sptilde \) is the
composite of the isomorphism \( F \times A' \isom V\sptilde
\times_{A} A' \) and the projection
\( V \sptilde \times_{A} A' \to V\sptilde \).
Then \( \varphi = \varphi_{F}
\times \varphi_{A} \) for a nearly \'etale rational map
\(\varphi_{F} \colon F \ratmap F \), by Proposition~\ref{prop:bir diagonal}.
Applying the same argument above to
\(\varphi_F \colon F \ratmap F\) instead of
\(h \colon V \ratmap V\),
we complete the proof of Theorem~B by induction on \(\dim V\).
\hfill \qedsymbol

\vspace{2ex}

\begin{lem}\label{lem:mu}
Let \( \mu \colon A \to A \) be the multiplication map \( a \mapsto
ma \) by a positive integer \( m \) for an abelian variety \( A \)
with a given abelian group structure. Then,
for a morphism \( h \colon A \to A \),
there is a morphism \( h' \colon A \to A \) such that
\( \mu \circ h' = h \circ \mu\).
\end{lem}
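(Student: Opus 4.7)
The plan is to exploit the standard fact that every morphism of abelian varieties is the composition of a group homomorphism with a translation, and then to invert the multiplication-by-$m$ map on a single point.

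First I would write $h = t_{c} \circ \psi$, where $\psi \colon A \to A$ is a group homomorphism (an isogeny or a constant, in general) and $t_{c}$ denotes translation by the point $c = h(0) \in A$. Recall that any morphism between abelian varieties sending $0$ to $0$ is automatically a homomorphism, which gives this decomposition. Under this decomposition, $h(ma) = \psi(ma) + c = m\psi(a) + c$, since $\psi$ commutes with $[m]$.

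Next, since $\mu = [m]$ is a surjective isogeny (the kernel $A[m]$ being finite), there exists a point $c' \in A$ with $mc' = c$. I would then define
\[
h'(a) := \psi(a) + c'.
\]
This is a morphism $A \to A$, and one checks directly that
\[
\mu(h'(a)) = m\bigl(\psi(a) + c'\bigr) = m\psi(a) + mc' = \psi(ma) + c = h(ma) = h(\mu(a)),
\]
which is the desired relation $\mu \circ h' = h \circ \mu$.

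There is no real obstacle here: the only non-trivial ingredients are the rigidity statement that morphisms of abelian varieties are translates of homomorphisms, and the surjectivity of $[m]$ on $A$, both standard. The construction moreover shows that $h'$ is unique up to translation by a point of the finite group $A[m]$.
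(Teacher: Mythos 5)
Your proof is correct and follows essentially the same route as the paper: decompose $h$ as a group homomorphism plus translation by $c = h(0)$, use surjectivity of $[m]$ (equivalently, divisibility of $A$) to find $c'$ with $mc' = c$, and set $h'(a) = \psi(a) + c'$. The added observation that $h'$ is unique up to translation by an element of $A[m]$ is a nice remark not in the paper's proof, but the core argument is the same.
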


\begin{proof}
There exist a homomorphism \( \varphi \colon A \to A \) of abelian
group and a point \( c \in A \) such that \( h(a) = \varphi(a) + c \)
for any \( a \in A \). There is a point \( c' \in A \) with \(mc' = c \),
since \( A \) is divisible. We define \( h' \colon A \to A\) by
\( h'(a) = \varphi(a) + c' \) for \( a \in A \). Then
\[ mh'(a) = m\varphi(a) + mc' = \varphi(ma) + c = h(ma). \qedhere\]
\end{proof}

\subsection{Conjectural discussion}
\label{subsect:conj_discuss}

We shall pose the following:

\begin{conj}\label{conj:Bo}
Let \( X \) be a nonsingular projective variety such that
\( \kappa(X) = q^{\max}(X) = 0 \). Then \( \pi_{1}(X) \) is finite.
\end{conj}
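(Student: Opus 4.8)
\textbf{Proof proposal for Conjecture~\ref{conj:Bo}.}
The plan is to reduce to Bogomolov's decomposition theorem by passing to a minimal model and controlling the fundamental group along the way. First I would replace $X$ by a good minimal model: granting the good minimal model conjecture, $\kappa(X)=0$ gives a normal projective variety $V$ with only terminal (or canonical) singularities, birational to $X$, with $K_V\sim_{\BQQ}0$. Since canonical singularities are rational, hence (by \cite{Ta}, as recalled in Remark~\ref{rem:apr->pi1alg}) $\pi_1$-rational, one has $\pi_1(V)\isom\pi_1(X)$; likewise $q(V)=q(X)$ and $q^{\max}(V)=q^{\max}(X)=0$, so $V$ is a weak Calabi--Yau variety. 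Thus it suffices to prove that $\pi_1$ of a weak Calabi--Yau variety is finite.

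Next I would invoke Kawamata's weak decomposition \cite[Corollary 8.4]{Ka85}, already cited in the excerpt: there is a finite \'etale cover $F\times A\to V$ with $F$ weak Calabi--Yau and $A$ an abelian variety. Because $q^{\max}(V)=0$, every finite \'etale cover of $V$ still has vanishing irregularity, which forces $\dim A=0$; hence $V$ itself, and every finite \'etale cover of it, has $q=0$. So the problem becomes: a weak Calabi--Yau variety $F$ with the property that \emph{all} finite \'etale covers have $q=0$ must have finite $\pi_1$. Passing to a resolution $\widehat F\to F$ (which by rationality of the singularities does not change $\pi_1$, $q$, or $\kappa$, and preserves $c_1=0$ in the sense that a small resolution / crepant-type model carries $c_1(\widehat F)_{\BRR}=0$ after a further finite \'etale base change), one lands on a compact K\"ahler manifold with $c_1=0$, to which Bogomolov's decomposition theorem applies: a finite \'etale cover $\widetilde M\to\widehat F$ splits as $T\times P$ with $T$ a complex torus and $P$ simply connected with $c_1(P)=0$. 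Since $q^{\max}=0$, the torus factor $T$ must be a point, so $\widetilde M=P$ is simply connected; therefore $\pi_1(\widehat F)$, and hence $\pi_1(X)$, is finite.

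The main obstacle is the passage from the singular minimal model $V$ to a smooth K\"ahler manifold with $c_1=0$ in a way compatible with Bogomolov's theorem: a resolution of a canonical Calabi--Yau need not be Calabi--Yau, so one cannot directly quote \cite{Bo,Be}. The cleanest fix is to work with $V$ directly via the orbifold/singular version of Bogomolov--Beauville decomposition (available for klt varieties with numerically trivial canonical class, e.g. in the spirit of \cite{Ka85} together with the structure theory referenced there), or alternatively to observe that the crepant terminalization plus a suitable finite \'etale base change yields a model with at worst quotient singularities for which the decomposition theorem is known. A secondary point to check carefully is that ``$q^{\max}=0$'' is genuinely inherited by the relevant \'etale covers appearing in Kawamata's and Bogomolov's decompositions --- this is where the definition of $q^{\max}$ via \emph{all} finite \'etale covers does the work, ruling out any abelian (or, more generally, any positive-irregularity) factor and thereby collapsing the decomposition to the simply connected piece.
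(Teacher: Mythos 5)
The statement you are trying to prove is stated in the paper as Conjecture~\ref{conj:Bo}, not as a theorem: the authors explicitly record that it is known only when \(K_{X}\) is numerically trivial (by Bogomolov's decomposition theorem) and when \(\dim X \leq 3\) (by Namikawa--Steenbrink \cite{NS}). There is no proof in the paper to compare against, and your argument does not close the gap; it contains a genuine one at exactly the point you flag yourself.

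Concretely: after passing to a good minimal model \(V\) (which already requires the good minimal model conjecture, an extra hypothesis not present in the statement) and applying Kawamata's splitting \cite[Corollary 8.4]{Ka85} to kill the abelian factor, you are left with showing that a weak Calabi--Yau variety --- a normal projective variety with canonical singularities, \(K_{F}\sim_{\BQQ}0\) and \(q^{\max}(F)=0\) --- has finite \(\pi_{1}\). But this \emph{is} the conjecture (modulo minimal models): the reduction is essentially circular, and the remaining content is precisely the hard part. Your proposed ways out do not work. A resolution of \(F\) is generally not Calabi--Yau (no crepant resolution exists), so \cite{Bo}, \cite{Be} do not apply; the claim that a crepant terminalization plus an \'etale base change yields a model with only quotient singularities is false in dimension \(\geq 3\) (terminal and canonical singularities are not quotient singularities in general); and the ``singular/orbifold Bogomolov--Beauville decomposition for klt varieties with \(K\equiv 0\)'' that you invoke was not available at the time of this paper --- Kawamata's \cite[Corollary 8.4]{Ka85} controls only the Albanese/\(\pi_{1}^{\alg}\)-level splitting, not the finiteness of the full topological fundamental group of the Calabi--Yau factor. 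So the argument, as written, does not constitute a proof; it reduces the conjecture to itself on a (possibly singular) minimal model.
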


This is true for \( X \) with \( K_{X} \) numerically trivial, by
Bogomolov's decomposition theorem.
Furthermore, it is true when \( \dim X \leq 3 \) (cf.\ \cite{NS}).

\begin{lem}
Let \( V \) be a normal projective variety such that
\( \pi_{1}^{\alg}(M) \) is finite for
a resolution \( M \to V \) of singularities of \(V\).
Then any nearly \'etale rational endomorphism
\( h \colon V \ratmap V \) is birational.
\end{lem}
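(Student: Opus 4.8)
The plan is to show that a nearly \'etale rational endomorphism $h \colon V \ratmap V$ has degree $1$, which forces birationality. Write $h$ as the composite of a birational map $V \ratmap V^\sharp$ and a nearly \'etale \emph{finite} morphism $\lambda \colon V^\sharp \to V$, using the last bullet in the remark following Definition~\ref{dfn:nearlyEtale}. Since degree is multiplicative and the birational factor has degree $1$, it suffices to prove $\deg \lambda = 1$, i.e.\ that $\lambda$ is an isomorphism.

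The key point is that $\lambda$ corresponds, via the \'etale covering data in Definition~\ref{dfn:nearlyEtale}, to a finite-index subgroup of a fundamental group that we can control. Concretely, let $\mu \colon Y \to V$ be a resolution of singularities and apply the nearly \'etale hypothesis: there is a finite \'etale covering $f \colon X \to Y$ and a proper birational morphism $\nu \colon X \ratmap V^\sharp$ such that $\mu \circ f$ factors through $\lambda$. Now $X$ is a nonsingular projective variety birational to $Y$, hence birational to $M$; since $\pi_1$ is a proper birational invariant for nonsingular varieties, $\pi_1^{\alg}(X) \isom \pi_1^{\alg}(Y) \isom \pi_1^{\alg}(M)$ is finite, and by the same token $\pi_1^{\alg}(Y) \isom \pi_1^{\alg}(M)$ is finite. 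The covering $X \to Y$ thus corresponds to a finite-index subgroup of the finite group $\pi_1^{\alg}(Y)$, so $\deg f = [\pi_1^{\alg}(Y) : f_*\pi_1^{\alg}(X)]$ divides $|\pi_1^{\alg}(Y)|$; in particular $\deg f$ (hence $\deg h = \deg \lambda = \deg f$) is bounded by the fixed integer $N := |\pi_1^{\alg}(M)|$, independently of $h$.

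To upgrade the bound on $\deg h$ to $\deg h = 1$, I would iterate: $h^k$ is again a nearly \'etale rational endomorphism of $V$ (the composition of two nearly \'etale maps is nearly \'etale, as noted after Lemma~\ref{lem:nearlyEt vs pi1}), so $\deg h^k = (\deg h)^k \le N$ for every $k \ge 1$. This is impossible unless $\deg h = 1$. A degree-one dominant generically finite rational map between varieties of the same dimension is birational, so $h$ is birational. The main obstacle to watch is making the identification $\deg h = \deg f$ precise: one must check that the birational factor $V \ratmap V^\sharp$ really is birational (degree $1$) and that $\nu \colon X \ratmap V^\sharp$ is birational too, so that all the degree is carried by $f$; this follows from the construction of the Stein factorization in the remark after Definition~\ref{dfn:nearlyEtale} together with Zariski's main theorem, and from the fact that $X$ is proper birational to $V^\sharp$.
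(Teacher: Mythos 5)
Your proposal is correct and follows essentially the same route as the paper: the paper reduces to $V$ nonsingular, takes the Stein factorization of $h^{l}$ to get an \'etale cover of $V$ of degree $(\deg h)^{l}$ via Lemma~\ref{lem:nearlyEt vs pi1}, and concludes $\deg h = 1$ from the finiteness of $\pi_{1}^{\alg}(V)$ — the same bound-then-iterate argument you give.
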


\begin{proof}
We may assume that \(V\) is nonsingular.
For an arbitrary positive integer \(l\), let
\( V \ratmap V^{\sharp (l)} \to V \) be the Stein factorization
of the \( l \)-th power \( h^{l} = h \circ \cdots \circ h \).
Then \( V^{\sharp (l)} \to V \) is an \'etale morphism of degree
\( (\deg h)^{l} \) by Lemma~\ref{lem:nearlyEt vs pi1}.
Now, \( \pi_{1}^{\alg}(V) \) is finite, since
\( \pi_{1}(M) \isom \pi_{1}(V) \).
Hence \( \deg h = 1 \).
\end{proof}

So, if Conjecture~\ref{conj:Bo} is true, then a nearly \'etale
rational endomorphism of a weak Calabi-Yau variety is birational.
In particular, the\emph{ building blocks} of the \'etale endomorphisms of
projective varieties with \( \kappa = 0 \) would then turn out to be
the endomorphisms of abelian varieties and the birational automorphisms
of weak Calabi-Yau varieties.

\section{The uniruled case}%; proof of Theorem C
\label{sect:mrc}

\subsection{Maximal rationally connected fibration}%Preparation to Theorem C
\label{subsect:pre ThC}

A projective variety \(X\) is called \emph{uniruled} if there is
a dominant rational map \( \BPP^{1} \times Y \ratmap X \) for a
variety \( Y \) with \( \dim Y = \dim X - 1 \).
For a nonsingular projective variety \( X \), the following
three conditions are all equivalent (cf.\ \cite[\S3]{Cp91}, \cite[\S 2]{KoMM}):
\begin{enumerate}
\item  Any two points of \( X \) are connected by a chain of rational
curves.

\item  Any two general points of \( X \) are contained in one
and the same rational curve.

\item  There is a nonsingular rational curve on \( X \)
with ample normal bundle.
\end{enumerate}
If one of the conditions above is satisfied,
then \( X \) is called \emph{rationally connected}.

\begin{rem}[{cf.\ \cite[\S3]{Cp91}, \cite[Proposition (2.5)]{KoMM}}]%
\label{rem:ratCon}
A nonsingular rationally connected variety \( X \) has the
following properties:
\begin{itemize}
\item  \( X \) is simply connected.

\item  \( \OH^{i}(X, \SO_{X}) = 0 \) for \( i > 0 \).

\item  \( \OH^{0}(X, (\Omega_{X}^{1})^{\otimes m}) = 0 \) for any
\( m > 0 \).
\end{itemize}
\end{rem}

If \(X\) is a uniruled nonsingular
projective variety, then there exists uniquely up to birational
equivalence, a rational fiber space
\( \pi \colon X \ratmap Y \) over a nonsingular projective variety
\( Y \) satisfying the following conditions,
by \cite{Cp}, \cite{KoMM}, \cite{GHS}:
\begin{enumerate}
\item \( \pi \) is weakly holomorphic, i.e.,
there exists open dense subsets \( U \subset X\), \( V \subset Y \)
such that \( \pi \) induces a proper surjective morphism \( U \to V \).

\item  For a general point \( P \) of the open set \( U \) above,
the fiber over \( \pi(P) \) is a maximal rationally connected
submanifold of \( X \) containing \( P \).

\item  \(Y\) is not uniruled (cf.\ \cite[Corollary 1.4]{GHS}).
\end{enumerate}
The fibration \( \pi \) is called the
\emph{maximal rationally connected fibration} of \( X \).

\begin{lem}\label{lem:1}
Let \( f \colon X \to X\) be an \'etale endomorphism of a uniruled
projective variety \( X \).
Then there exist a proper birational
morphism \( \mu \colon M \to X \), a proper surjective morphism
\( \pi \colon M \to Y \), an \'etale endomorphism \( f_{M} \) of \( M \),
and an endomorphism \( h \) of \( Y \) satisfying the following conditions\emph{:}
\begin{enumerate}
\item  \( M \) is a nonsingular projective variety.
\item \( Y \) is a normal and non-uniruled projective variety.
\item \( \pi \) is birational to the maximal rationally connected
fibration of \( M \).
\item  \( \mu \circ f_{M} = f \circ \mu \) and \( \pi \circ f_{M} = h \circ \pi \)
\emph{(cf.\ the diagram in Theorem~C)}.
\end{enumerate}
\end{lem}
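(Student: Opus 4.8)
The plan is to build everything on top of the (essentially canonical) maximal rationally connected fibration $\pi_{0}\colon X \ratmap Y_{0}$, using the functoriality of the construction together with the equivariant resolution of singularities in the sense of Section~\ref{subsect:equiv resol}. First I would invoke the existence of the maximal rationally connected fibration $\pi_{0}\colon X \ratmap Y_{0}$ over a non-uniruled normal projective variety $Y_{0}$ (using \cite{Cp}, \cite{KoMM}, \cite{GHS} as recalled above). Since $f\colon X\to X$ is a dominant morphism, a general fiber of $\pi_{0}\circ f$ is again a maximal rationally connected submanifold of $X$, so by the (birational) uniqueness of the maximal rationally connected fibration, $f$ descends to a dominant rational self-map $h_{0}\colon Y_{0}\ratmap Y_{0}$ with $\pi_{0}\circ f = h_{0}\circ\pi_{0}$. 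Because $Y_{0}$ is non-uniruled and $f$ has finite degree, $h_{0}$ is generically finite; I would then argue that $h_{0}$ is in fact a morphism on a suitable birational model and even finite, comparing it with the structure near a general fiber (the degree of $f$ restricted to a general MRC fiber is $1$ since such a fiber is rationally connected hence simply connected, so $\deg h_{0} = \deg f$).

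Next I would make the picture holomorphic. Take the graph of the rational fibration $X\ratmap Y_{0}$ and resolve it equivariantly with respect to $f$: since $f$ is \'etale, the functorial resolution method (Bierstone--Milman or Villamayor) applied to the normal variety that dominates both $X$ and $Y_{0}$, compatibly with the $f$-action, produces a nonsingular projective variety $M$, a birational morphism $\mu\colon M\to X$, a surjective morphism $\pi\colon M\to Y$ onto a normal projective $Y$ birational to $Y_{0}$, and an \'etale endomorphism $f_{M}$ of $M$ with $\mu\circ f_{M} = f\circ\mu$ and $\pi\circ f_{M} = h\circ\pi$ for the induced endomorphism $h\colon Y\to Y$. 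The key point here is that equivariant resolution preserves \'etaleness of $f_{M}$: one resolves the graph of the Iitaka/MRC fibration over $X$ so that the base change along the smooth (in fact \'etale) morphism $f$ yields an isomorphism $M\times_{X}X\isom M$ via functoriality, exactly as indicated in the Remark after Theorem~A. After a further equivariant blow-up one may also assume $Y$ is chosen so that $\pi$ is a genuine morphism and remains birational to the maximal rationally connected fibration of $M$ (the MRC fibration of $M$ agrees with that of $X$ up to birational equivalence since $\mu$ is birational).

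It remains to check the four listed conditions. Condition (1) is the construction of $M$; condition (2) follows because $Y$ is birational to $Y_{0}$, which is non-uniruled and normal, and non-uniruledness is a birational invariant (and one may take $Y$ normal, e.g.\ as the normalization of the image of $M$ in an appropriate Chow/Hilbert scheme, or the Stein factorization of $M\to Y_{0}$); condition (3) holds by the birational uniqueness of the maximal rationally connected fibration together with $\mu$ being birational; and condition (4) is built into the equivariant construction. I expect the main obstacle to be establishing that $h$ is a well-defined \emph{morphism} $Y\to Y$ (not merely a rational map) and that $f_{M}$ remains \'etale after the resolution --- both of which hinge on choosing $M$ and $Y$ via a functorial, $f$-equivariant resolution of the graph of the fibration, so that base change by the \'etale map $f$ is compatible with the resolution; once that compatibility is in place, the \'etaleness of $f_{M}$ and the descent of $f_{M}$ to the morphism $h$ are formal.
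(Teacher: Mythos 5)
Your overall strategy (descend $f$ along the maximal rationally connected fibration, then make everything holomorphic by an $f$-equivariant functorial resolution of the graph) is the same as the paper's, and your degree computation on a general MRC fiber is fine. But there is a genuine gap exactly at the point you yourself flag as "the main obstacle": the lemma asserts that $h$ is an honest \emph{endomorphism} of a single normal projective variety $Y$, whereas your construction only yields a dominant rational self-map $h_{0}\colon Y_{0}\ratmap Y_{0}$, and the claim that it "is in fact a morphism on a suitable birational model" is unsupported. A dominant rational self-map of a variety does not in general become holomorphic on any birational model (one only gets a morphism between two \emph{different} models), and "comparing with the structure near a general fiber" does not produce the required extension over all of $Y_{0}$. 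The paper circumvents this by a canonical choice of $Y$: push-forward of cycles gives a genuine morphism $f_{c}\colon \Chow(X)\to\Chow(X)$ with $\Psi\circ f=f_{c}\circ\Psi$ for the MRC map $\Psi\colon X\ratmap\Chow(X)$ (using that $f$ is \'etale so that $f_{*}Z$ is reduced), and $Y$ is defined as the Stein factorization of $\Psi$, i.e.\ the normalization of the image of $X$ in $\Chow(X)$; this model automatically carries the finite endomorphism $h$ induced by $f_{c}$. You mention the Chow/Hilbert scheme only as a device to make $Y$ normal, not as the source of the endomorphism, which is where it is actually indispensable.

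A second, related omission: before the $f$-equivariant functorial resolution can be applied to the graph $\Gamma\subset X\times Y$, you must know that the induced endomorphism $f_{T}$ of the normalization $T$ of $\Gamma$ is \'etale --- functoriality of the resolution only transports \'etaleness that is already present. This is not automatic from $f$ being \'etale. The paper proves it by noting that $p_{1}\circ f_{T}=f\circ p_{1}$ makes $T$ map finitely and birationally onto the fiber product of $f$ and $p_{1}$ over $X$; that fiber product is normal because it is \'etale over the normal variety $T$, so by Zariski's main theorem $T$ is isomorphic to it and $f_{T}$ is the \'etale projection. Your sentence about "$M\times_{X}X\isom M$ via functoriality" gestures at this but does not contain the argument. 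Both missing steps are short once identified, but they are precisely the nontrivial content of the lemma.
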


\begin{proof}
We may assume that \( X \) is nonsingular, by replacing it with
an equivariant resolution of singularities with respect to \( f \)
(cf.\ Section~\ref{subsect:equiv resol}).
Let \( \Psi \colon X \ratmap \Chow(X) \) be the rational map to the Chow variety of
\( X \) which defines the maximal rationally connected fibration of \(X\).
By associating a cycle \( Z  \)
of \( X \) with the push-forward \( f_{*}Z \), we have a functorial
morphism \( f_{c} \colon \Chow(X) \to \Chow(X) \) since \(f\) is finite
(cf.\ \cite[Chapter IV \S 2 Th\'eor\`eme 6]{Ba},
\cite[Th\'eor\`eme 6.3.1]{An}, \cite[Theorem I.6.8]{Ko96}).
Since \(f\) is \'etale, \(f_*Z\) is reduced for a rationally connected submanifold \(Z\).
Thus, \(\Psi \circ f = f_c \circ \Psi\).
Let \( X \ratmap Y \to \Chow(X) \) be the
Stein factorization of \(\Psi\) and \( h \colon Y \to Y \) the
endomorphism induced from \( f_{c} \) and \( f \).
Note that \( Y \) is not uniruled.
An endomorphism of the graph
\(\Gamma \subset X \times Y \) of the rational map \( X \ratmap Y \)
is induced from \( f \times h \).
Let \( T \to \Gamma \) be the normalization, \( f_{T} \) the induced
endomorphism of \( T \), and
\(p_1 \colon T \to X\) the morphism
induced from the first projection \(\Gamma \subset X \times T \to T\).
Then, \(p_1 \circ f_T = f \circ p_1\). Thus, \(p_1\) factors
through a finite birational morphism \(T \to T_1\) to
the fiber product \(T_1 := X \times_{X} T\) of \(f \)
and \(p_1\) over \(X\). Since \(f\) is \'etale,
\(T \isom T_1\), and hence \(f_T\) is \'etale.
Let \( M \to T \) be an equivariant resolution of singularities with
respect to \( f_{T} \), and \( f_{M}  \) the lift of
\( f_{T} \) to \( M \) as an \'etale endomorphism.
Then, these data satisfy the required conditions.
\end{proof}

The following is proved essentially in \cite[Theorem 5.2]{Ko93},
where \( F \) is assumed to be a rationally connected manifold.
We present a slightly different proof.

\begin{lem}\label{lem:pi1}
Let \( g \colon M \to N \) be a proper surjective morphism between
nonsingular varieties. For a general fiber \( F \) of \( g \),
suppose that
\begin{enumerate}
    \item \label{lem:pi1:conn} \( F \) is connected,

    \item \label{lem:pi1:sconn} \( F \) is simply connected, and

    \item \label{lem:pi1:vanish} \( \OH^{i}(F, \SO_{F}) = 0 \) for
    any \( i > 0 \).
\end{enumerate}
Then \( g_{*} \colon \pi_{1}(M) \to \pi_{1}(N) \) is isomorphic.
\end{lem}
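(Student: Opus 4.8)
The plan is to prove surjectivity and injectivity of $g_{*}\colon \pi_{1}(M)\to\pi_{1}(N)$ separately. For surjectivity only the connectedness of a general fibre is needed: choose a Zariski-open dense $V\subseteq N$ over which $g$ is smooth and proper with all fibres connected and diffeomorphic to $F$, so that $g\colon g^{-1}(V)\to V$ is a locally trivial $C^{\infty}$-bundle by Ehresmann's theorem. The homotopy exact sequence $\pi_{1}(F)\to\pi_{1}(g^{-1}(V))\to\pi_{1}(V)\to\pi_{0}(F)=\{*\}$ shows that $\pi_{1}(g^{-1}(V))\to\pi_{1}(V)$ is surjective, while $\pi_{1}(V)\to\pi_{1}(N)$ is surjective because $N$ is nonsingular and $N\setminus V$ has real codimension at least $2$; since the composite factors through $\pi_{1}(M)$ via $g^{-1}(V)\hookrightarrow M$, the map $g_{*}$ is surjective.

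For injectivity, let $\tau\colon\widetilde N\to N$ be the universal covering and set $\widehat M:=M\times_{N}\widetilde N$. By the surjectivity just proved, $\widehat M$ is connected, and it is precisely the covering of $M$ attached to $\Ker g_{*}\subseteq\pi_{1}(M)$, so it suffices to show that $\widehat M$ is simply connected. The second projection $\widehat g\colon\widehat M\to\widetilde N$ is proper and surjective with general fibre again $F$ (all of this being carried out in the complex-analytic category, since $\widetilde N$ need not be algebraic). I would take an arbitrary connected covering $W\to\widehat M$ and pass to the Stein factorization $W\to\widetilde N^{\sharp}\to\widetilde N$ of $W\to\widehat M\xrightarrow{\widehat g}\widetilde N$; over a general point of $\widetilde N$ the covering $W\to\widehat M$ restricts to a trivial covering of $F$ because $F$ is simply connected, so $W\to\widetilde N^{\sharp}$ has connected general fibre while $\widetilde N^{\sharp}\to\widetilde N$ is finite. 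After replacing $M$, and hence $\widehat M$, by a suitable proper birational model so that $R^{i}\widehat g_{*}\SO_{\widehat M}=0$ for $i>0$ — which the hypothesis $\OH^{i}(F,\SO_{F})=0$ should make possible, and which leaves the fundamental groups in play unchanged — Lemma~\ref{lem:simpconn} applies to the square formed by the fibrations $W\to\widetilde N^{\sharp}$, $\widehat M\to\widetilde N$ and the finite morphisms $W\to\widehat M$, $\widetilde N^{\sharp}\to\widetilde N$, and yields that $\widetilde N^{\sharp}\to\widetilde N$ is étale. A connected finite étale cover of the simply connected $\widetilde N$ is an isomorphism, so $W\to\widehat M$ is an isomorphism; thus $\widehat M$ has no nontrivial connected covering, i.e.\ $\Ker g_{*}=1$.

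The subtleties both lie in the injectivity step. One is the reduction securing $R^{i}\widehat g_{*}\SO_{\widehat M}=0$ for all $i>0$: Lemma~\ref{lem:simpconn} demands this of every fibre, not just the general one, so it must be arranged by a careful birational modification of $M$ (in the spirit of the reductions used inside the proof of Lemma~\ref{lem:simpconn}), chosen so as not to perturb $\pi_{1}(\widehat M)$, which is legitimate since $M$ is nonsingular projective and the modification is proper birational. The other, which I expect to be the main obstacle, is that Lemma~\ref{lem:simpconn} is formulated for finite morphisms, so the argument as written disposes only of \emph{finite} coverings $W\to\widehat M$ and hence directly gives $\pi_{1}^{\alg}(\widehat M)=1$; to obtain the full topological conclusion $\pi_{1}(\widehat M)=1$ one must carry the same Stein-factorization reasoning through for coverings of arbitrary degree, performing the factorization locally over $\widetilde N$ where $\widehat g$ is still proper, or passing to a limit over the finite quotients of $\pi_{1}(\widehat M)$.
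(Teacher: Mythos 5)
Your surjectivity argument is fine and is implicit in the paper's Steps 1--3. The injectivity argument, however, has two gaps, and you have correctly located both of them --- but neither admits the repair you suggest, and together they are exactly what forces the paper onto a different route.

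First, the hypothesis $\OR^{i}\widehat{g}_{*}\SO_{\widehat{M}}=0$ cannot be ``arranged by a careful birational modification of $M$''. For any proper bimeromorphic morphism $\mu\colon M'\to M$ between nonsingular varieties one has $\mu_{*}\SO_{M'}=\SO_{M}$ and $\OR^{j}\mu_{*}\SO_{M'}=0$ for $j>0$, so the Leray spectral sequence gives $\OR^{i}(g\circ\mu)_{*}\SO_{M'}\isom \OR^{i}g_{*}\SO_{M}$: the obstruction is a birational invariant of the total space over a fixed base. The hypothesis $\OH^{i}(F,\SO_{F})=0$ only says that $\OR^{i}g_{*}\SO_{M}$ is a torsion sheaf on $N$, and over a base of dimension $\geq 2$ there is no reason for this torsion to vanish, nor any modification of $M$ that kills it. Second, the covering-space argument via Lemma~\ref{lem:simpconn} only handles \emph{finite} covers $W\to\widehat{M}$ and hence only yields $\pi_{1}^{\alg}(\widehat{M})=1$; ``passing to a limit over the finite quotients'' cannot upgrade this to $\pi_{1}(\widehat{M})=1$, since $\Ker g_{*}$ could a priori be an infinite group with no nontrivial finite quotients. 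Nothing in your global setup bounds $\pi_{1}(\widehat{M})$.

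The paper's proof is designed to dissolve both difficulties simultaneously by reducing to a one-dimensional base. After discarding a codimension-$\geq 2$ subset of $N$ it identifies $\Ker\bigl(\pi_{1}(N\setminus D)\to\pi_{1}(N)\bigr)$ as normally generated by small loops $\delta_{i}$ around the components $D_{i}$ of the discriminant divisor, transfers them through the isomorphism $\pi_{1}(M\setminus g^{-1}D)\isom\pi_{1}(N\setminus D)$ (your conditions \eqref{lem:pi1:conn} and \eqref{lem:pi1:sconn}), and kills each one on the restriction $X_{i}=M\times_{N}C_{i}$ over a disc $C_{i}$ transverse to $D_{i}$. Over that disc, (a) a multisection shows $\pi_{1}(X_{i})$ is a quotient of $\pi_{1}(C_{i}\setminus\{P_{i}\})\isom\BZZ$ with a finite-index subgroup dying, hence is finite cyclic, so its universal cover is a finite cover and Lemma~\ref{lem:simpconn} applies; and (b) $\OR^{i}g_{*}\SO$ is torsion-free over a nonsingular curve, so the generic vanishing coming from $\OH^{i}(F,\SO_{F})=0$ becomes vanishing everywhere, which supplies condition \eqref{lem:simpconn:cond5} of Lemma~\ref{lem:simpconn}. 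You need an analogue of this dimension reduction; without it the argument does not close.
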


\begin{proof}
\emph{Step}~1. If \( N^{\circ} \subset N \) is a Zariski open subset
with the codimension of \( N \setminus N^{\circ} \) bigger than one,
then \( \pi_{1}(N^{\circ}) \isom \pi_{1}(N) \), and
\(\pi_{1}(g^{-1}(N^{\circ})) \to \pi_{1}(M) \) is surjective. Thus,
if \( \pi_{1}(g^{-1}(N^{\circ})) \to \pi_{1}(N^{\circ}) \) is
an isomorphism, then so is \( \pi_{1}(M) \to \pi_{1}(N) \). Hence, we
may replace \( N \) with such an open subset \( N^{\circ} \). In
particular, we may assume that \( g \colon M \to N \) is smooth
outside a nonsingular divisor \( D = \sum D_{i} \) of \(N\), where
\( D_{i}\) is an irreducible component.

\emph{Step}~2. Let \( \SU_{i} \) be an analytic open neighborhood of
a point \( P_{i} \in D_{i} \) of \(D\) such that \( \SU_{i} \) is
biholomorphic to a unit polydisc
\( \{(t_{1},t_{2}, \ldots, t_{n}) \in \BCC^{n} \, ; \, |t_{j}| < 1
\text{ for any } j \} \)
in which \( P_{i} \) is mapped to the origin
\( 0 = (0, 0, \ldots, 0) \) and \( \SU_{i} \cap D \) is mapped to
the coordinate hypersurface \( \{t_{1} = 0\} \). Since \( \SU_{i}
\setminus D = \SU_{i} \setminus D_{i}\) is homotopic to a circle, we
have a generator \( \delta_{i} \) of \( \pi_{1}(\SU_{i} \setminus D)
\). By van Kampen's theorem, or other topological argument, we infer
that the kernel of the surjection \( \pi_{1}(N \setminus D) \to
\pi_{1}(N) \) is generated by the conjugacy classes of the images
\( \overline{\delta}_{i} \) of \( \delta_{i} \) under the
homomorphisms
\( \pi_{1}(\SU_{i} \setminus D) \to \pi_{1}(N \setminus D) \).

\emph{Step}~3. By the assumptions \eqref{lem:pi1:conn} and \eqref{lem:pi1:sconn},
and by the homotopy exact sequence, we infer that the natural
homomorphism \( \pi_{1}(M \setminus g^{-1}D) \to \pi_{1}(N \setminus
D) \) is an isomorphism. Let \( \hat{\delta}_{i} \in \pi_{1}(M
\setminus g^{-1}D)\) be the element corresponding to
\(\overline{\delta}_{i} \in \pi_{1}(N \setminus D)\). In order to show
the homomorphism \( \pi_{1}(M) \to \pi_{1}(N) \) to be isomorphic,
it is enough to show that \( \hat{\delta}_{i} \) is contained in the kernel of
\(\pi_{1}(M \setminus g^{-1}D) \to \pi_{1}(M) \). Let \( C_{i}
\subset \SU_{i} \) be a curve corresponding to an axis with respect to
the coordinate system \( (t_{1}, \ldots, t_{n}) \) of the polydisc
such that \( C_{i} \cap D = \{P_i\} \) and let
\(X_{i} \) be the fiber product \( M \times_{N} C_{i} \). By changing
\( P_{i} \), \( \SU_{i} \), and coordinates \( (t_{1}, \ldots,
t_{n}) \) slightly, we may assume that \( X_{i} \) is nonsingular.
Then \( \hat{\delta}_{i} \) comes from \( \pi_{1}(X_{i} \setminus
g^{-1}(P_{i})) \isom \pi_{1}(C_{i} \setminus \{P_{i}\}) = \BZZ
\delta_{i}\). Thus, we have only to show that \( \pi_{1}(X_{i}) = 0
\), or equivalently, \( \pi_{1}(g^{-1}(P_{i})) = 0 \), since
\(g^{-1}(P_{i}) \) is a deformation retract of \( X_{i} \).

\emph{Step}~4. By \emph{Step}~3, the proof of Lemma~\ref{lem:pi1} is
reduced to the case where \( N \) is a unit disc and \( g \) is
smooth outside the origin \( 0 \). We shall show
\(\pi_{1}(g^{-1}(0)) = 0 \) in this case. By shrinking \( N \) if
necessary, we have a holomorphic curve \( T \subset M \) such that
\( T \) is biholomorphic to a unit disc and \( T \to N \) is
a finite surjective morphism branched only at \( 0 \in N \). We have:
\begin{itemize}
\item  \( \pi_{1}(M \setminus g^{-1}(0)) \isom \pi_{1}(N
\setminus \{0\}) \isom \BZZ\),

\item  \( \pi_{1}(T \setminus g^{-1}(0)) \to
\pi_{1}(N \setminus \{0\})\) is an injection into a finite-index
subgroup, and
\item \( \pi_{1}(M \setminus g^{-1}(0)) \to \pi_{1}(M) \)
is surjective.
\end{itemize}
Therefore, \( \pi_{1}(M) \) is a finite cyclic group.
Let \( \lambda \colon \widetilde{M} \to M \) be the universal
covering map and let
\( \widetilde{M} \to \widetilde{N} \to N \)
be the Stein factorization of
\( g \circ \lambda \colon \widetilde{M} \to M \to N \).
Then the induced morphism
\(\widetilde{M} \to M \times_{N} \widetilde{N} \) is isomorphic over
\( (N \setminus \{0\}) \times_{N} \widetilde{N}  \) by
\eqref{lem:pi1:sconn}.
Applying Lemma~\ref{lem:simpconn} to the commutative diagram
corresponding to the Stein factorization of \(g \circ \lambda\),
we infer that \(\widetilde{N} \to N \) is \'etale.
In fact, the condition \eqref{lem:simpconn:cond5}
of Lemma~\ref{lem:simpconn} over \( N \) and
\( \widetilde{N} \) are both satisfied, since the higher
direct image sheaves \( \OR^{i}g_{*}\SO_{M} \)
are torsion free over the nonsingular curves \( N \) and
the same thing holds for the higher direct image sheaves for
\(\widetilde{M} \to \widetilde{N}\).
Hence, \( \widetilde{N} \isom N \)
and \( \widetilde{M} \isom M \). Therefore, \( M \) and \(g^{-1}(0) \)
are simply connected.
\end{proof}

The following gives a sufficient condition for a finite morphism
to be nearly \'etale.

\begin{prop}\label{prop:2}
Let \( h \colon \widetilde{Y} \to Y \) be a finite surjective
morphism between normal varieties with \( \deg h > 1 \).
Then \( h \) is nearly \'etale
if there exist proper surjective morphisms \( \pi \colon M \to Y \)
and  \( \tilde{\pi} \colon \widetilde{M} \to \widetilde{Y} \),
and a finite \'etale covering \( f \colon
\widetilde{M} \to M \) satisfying
the following conditions\emph{:}
\begin{enumerate}
\item  \( M \) and \( \widetilde{M} \) are nonsingular, and
\( \pi \circ f = h \circ \tilde{\pi} \).

\item A general fiber \( F \) of \( \pi \)
is connected and simply connected with \( \OH^{i}(F, \SO_{F})
= 0 \) for any \( i > 0 \).
\end{enumerate}
\end{prop}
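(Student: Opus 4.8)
The plan is to verify Definition~\ref{dfn:nearlyEtale} for $h$ by exhibiting, over a resolution $\mu\colon\bar Y\to Y$ of singularities, a finite \'etale cover of $\bar Y$ that is proper birational to $\widetilde Y$. (If $\deg h=1$ then $h$ is an isomorphism of normal varieties and the statement is trivial, so the hypothesis $\deg h>1$ plays no essential role below.) Fix such a $\mu$ and let $\widetilde Y_1$ be the normalization of the unique irreducible component of $\widetilde Y\times_Y\bar Y$ dominating $\widetilde Y$; then $\nu\colon\widetilde Y_1\to\widetilde Y$ is proper birational, $h_1\colon\widetilde Y_1\to\bar Y$ is finite surjective with $\mu\circ h_1=h\circ\nu$, and, because $h$ is \'etale over a dense open subset of $Y$ (a finite surjective morphism of normal varieties in characteristic zero being generically \'etale), $h_1$ is \'etale over a dense open subset $\bar Y^{\circ}\subset\bar Y$ (take $\bar Y^{\circ}$ inside the locus where $\mu$ is an isomorphism). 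It therefore suffices to prove that $h_1$ is \'etale everywhere: the triple $(\mu,\nu,h_1)$ is then precisely the data required by Definition~\ref{dfn:nearlyEtale}.

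Next I would bring in the hypothesis through Lemma~\ref{lem:pi1}. Let $M_1$ be a resolution of the component of $M\times_Y\bar Y$ dominating $\bar Y$, with induced proper surjective $\pi'\colon M_1\to\bar Y$ and birational $M_1\to M$. A general fibre $F'$ of $\pi'$ is smooth projective and birational to the general fibre $F$ of $\pi$, hence still connected, simply connected, and with $\OH^{i}(F',\SO_{F'})=0$ for $i>0$, these being birational invariants of smooth projective varieties; so $\pi'_{*}\colon\pi_1(M_1)\to\pi_1(\bar Y)$ is an isomorphism by Lemma~\ref{lem:pi1}, and $\pi'$ has connected fibres (its general fibre is connected and $\bar Y$ is normal). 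Set $\widetilde M_1:=\widetilde M\times_M M_1$, a smooth variety finite \'etale over $M_1$; pushing this cover across the isomorphism $\pi'_{*}$ produces a finite \'etale morphism $\bar X\to\bar Y$ together with an identification $\widetilde M_1\isom M_1\times_{\bar Y}\bar X$ over $M_1$. The morphisms $\widetilde M_1\to\widetilde M\xrightarrow{\tilde\pi}\widetilde Y$ and $\widetilde M_1\to M_1\xrightarrow{\pi'}\bar Y$ agree over $Y$, so they induce a morphism $\widetilde M_1\to\widetilde Y_1$ over $\bar Y$.

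Now I would restrict everything over $\bar Y^{\circ}$, writing $(-)^{\circ}$ for the restriction. On one hand $\bar X^{\circ}\to\bar Y^{\circ}$ is pulled back from $\bar Y$, so the kernel of $\pi_1(\bar Y^{\circ})\twoheadrightarrow\pi_1(\bar Y)$ acts trivially on the $\pi_1(\bar Y^{\circ})$-set classifying $\bar X^{\circ}$. On the other hand $\widetilde M_1^{\circ}\isom M_1^{\circ}\times_{\bar Y^{\circ}}\bar X^{\circ}\to\bar X^{\circ}$ has connected fibres (base change of $\pi'$ along the \'etale $\bar X\to\bar Y$), so $\widetilde M_1^{\circ}\to\bar X^{\circ}\to\bar Y^{\circ}$ is the Stein factorization of $\widetilde M_1^{\circ}\to\bar Y^{\circ}$; comparing it with the factorization $\widetilde M_1^{\circ}\to\widetilde Y_1^{\circ}\to\bar Y^{\circ}$ through the finite morphism $\widetilde Y_1^{\circ}\to\bar Y^{\circ}$, the universal property of Stein factorization yields a finite surjective morphism $\bar X^{\circ}\to\widetilde Y_1^{\circ}$ over $\bar Y^{\circ}$, which is \'etale since source and target are \'etale over $\bar Y^{\circ}$. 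Passing to $\pi_1(\bar Y^{\circ})$-sets, the classifying set of $\widetilde Y_1^{\circ}$ is a quotient of that of $\bar X^{\circ}$, hence the kernel of $\pi_1(\bar Y^{\circ})\to\pi_1(\bar Y)$ acts trivially on it as well. Consequently $\widetilde Y_1^{\circ}\to\bar Y^{\circ}$ extends to a finite \'etale cover $\bar Y^{\natural}\to\bar Y$; since $\bar Y^{\natural}$ is then a normal variety finite over $\bar Y$ with the same function field as $\widetilde Y_1$, uniqueness of the normalization of $\bar Y$ in a finite extension of $\BCC(\bar Y)$ forces $\widetilde Y_1\isom\bar Y^{\natural}$, so $h_1$ is \'etale, which completes the argument.

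The crux — and the only place the hypotheses on $\pi$ are genuinely used — is the identification $\pi_1(M_1)\isom\pi_1(\bar Y)$: the real work is checking that after base change to the resolution $\bar Y$ and a further resolution the general fibre still meets the three conditions of Lemma~\ref{lem:pi1}, which is where the birational invariance of simple connectedness and of $\OH^{i}(\,\cdot\,,\SO)$ enters. A secondary subtlety, which the scheme above is arranged to sidestep, is that $\tilde\pi$ need not have connected fibres, so $\deg(\bar X/\bar Y)$ may strictly exceed $\deg h$ and $\bar X$ itself need not be birational to $\widetilde Y$; this is why I compare $\widetilde Y_1$ with $\bar X$ only over the \'etale locus $\bar Y^{\circ}$ and then invoke uniqueness of the normalization, rather than attempting to identify $\bar X$ with $\widetilde Y_1$ directly.
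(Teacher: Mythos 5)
Your argument is correct and rests on the same two pillars as the paper's proof: transporting the \'etale cover $f\colon\widetilde{M}\to M$ across the isomorphism $\pi_{1}(M_{1})\isom\pi_{1}(\bar Y)$ supplied by Lemma~\ref{lem:pi1} (the paper's $\pi_{1}(M')\isom\pi_{1}(N)$), and then identifying the resulting \'etale cover of the resolution with a birational model of $\widetilde{Y}$ via a Stein-factorization comparison. The one real difference is at that identification step. The paper takes the cover $\widetilde{N}\to N$ corresponding to the image of $f_{*}\pi_{1}(\widetilde{M})$ and asserts that the Stein factorization of $\widetilde{N}\to N\to Y$ yields a proper birational $\widetilde{N}\to\widetilde{Y}$; as written this presupposes $\deg f=\deg h$, i.e.\ that $\tilde\pi$ has connected fibres --- true in the application to Theorem~C, where $\tilde\pi=\pi$, but not among the stated hypotheses. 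You instead fix the strict transform $\widetilde{Y}_{1}$ of $\widetilde{Y}$ over the resolution, compare it with $\bar X$ only over the \'etale locus $\bar Y^{\circ}$ (where the Stein-factorization argument is unproblematic), descend the triviality of the monodromy kernel through the equivariant surjection of fibres, and finish by uniqueness of normalization. This buys you the statement in the generality in which it is formulated, at the cost of a slightly longer final step; there is no gap in your write-up.
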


\begin{rem}\label{rem:normalization}
Let \( M_{1} \) be the fiber product
\( \widetilde{Y} \times_{Y} M\).
Then \( f \) induces a finite morphism
\( \lambda \colon \widetilde{M} \to M_{1} \) over \( M \)
with the commutative diagram below:
\[ \begin{CD}
\widetilde{M} @>{\lambda}>> M_{1} @>>> M \\
@V{\tilde{\pi}}VV @VVV @V{\pi}VV\\
\widetilde{Y} @= \widetilde{Y} @>h>> \rule[-1.5ex]{0ex}{1ex} Y .
\end{CD} \]
\noindent
We can show that  \( \lambda \) is just the normalization of the
reduced structure \( M_{1, \red} \) of \( M_{1} \), as follows:
Note that \( M_{1} \) is irreducible since \( \pi \) has only
connected fibers and \( h \) is finite.
Since a general fiber of \( \pi \) is simply connected,
\(\lambda \) is an isomorphism over \( h^{-1}(U) \times_{Y} M \subset
\widetilde{Y} \times_{Y} M = M_{1} \) for a dense open subset \( U \subset Y \).
Then \(\lambda \) induces a finite and birational morphism
from \(\widetilde{M}\) to the normalization of \(M_{1, \red}\),
which is turned to be an isomorphism.
\end{rem}

\begin{proof}[Proof of \emph{Proposition~\ref{prop:2}}]
Let \( \nu \colon N \to Y \) be a resolution
of singularities and \( M' \to M \) a proper birational
morphism from a nonsingular variety \( M' \) such that \( M' \to M
\ratmap N \) is a morphism.
Then, \(\pi_{1}(M') \isom \pi_{1}(N) \) by Lemma~\ref{lem:pi1}.
Let \( \Pi \subset \pi_{1}(N) \) be the image
of \( f_{*}(\pi_{1}(\widetilde{M})) \subset \pi_{1}(M) \) via
\( \pi_{1}(M) \isom \pi_{1}(M') \isom \pi_{1}(N) \), and
\( \phi \colon \widetilde{N} \to N \) the finite \'etale
covering corresponding to the finite-index
subgroup \( \Pi \).
Let
\(\widetilde{M'} \to M'\) be the \'etale covering obtained as the
pullback of \( f \) by \( M' \to M \). Then we have a commutative
diagram
\[ \begin{CD}
\widetilde{M} @<<< \widetilde{M'} @>>> \widetilde{N} \\
@V{f}VV @VVV @V{\phi}VV\\
M @<<< M' @>>> \rule[-2ex]{0ex}{1ex}\phantom{.}N \phantom{.}
\end{CD} \]
in which each square is Cartesian.
By considering the Stein factorization of \( \nu \circ \phi \), we
have a proper birational morphism \( \tilde{\nu} \colon
\widetilde{N} \to \widetilde{Y} \) such that \( h \circ \tilde{\nu}
= \nu \circ \phi\). Thus, the diagram
\[ \begin{CD}
\widetilde{M} @>{\tilde{\pi}}>>\widetilde{Y} @<{\tilde{\nu}}<< \widetilde{N} \\
@V{f}VV @V{h}VV @V{\phi}VV\\
M @>{\pi}>> Y @<{\nu}<< \rule[-2ex]{0ex}{1ex}N
\end{CD} \]
is also commutative. Therefore, \( h \) is nearly \'etale.
\end{proof}

\subsection{Proof of Theorem C}
\label{subsect:pf ThC}
We apply Lemma~\ref{lem:1} to the given \'etale endomorphism \( f \)
of \( X \).
Let \( \mu \colon M \to X \), \( \pi \colon M \to Y \), \( f_{M} \),
and \( h \) be the same objects as in Lemma~\ref{lem:1}.
By Proposition~\ref{prop:2},
\( h \) is nearly \'etale. This completes the proof of Theorem~C.
\hfill \qedsymbol

\appendix
\renewcommand{\thethm}{\Alph{section}.\arabic{thm}}
\renewcommand{\theequation}{\Alph{section}--\arabic{equation}}

\section{Topological entropies and fiber spaces}
\label{append:ent}

The reduction (A) in the introduction is deduced from Theorem~A.
We have also a similar reduction of type (A) in the dynamical
study of endomorphisms. More precisely,
for the study of the first dynamical degree and the
topological entropies of holomorphic surjective
endomorphisms of compact K\"ahler manifolds,
the case of positive Kodaira dimension is reduced to the case of
zero Kodaira dimension by
Theorem~A, and by Theorem~D below.
The reduction of non-holomorphic meromorphic endomorphisms
is not treated here.

\begin{thmD}%\label{thm:d1}+\label{ThD}
Let \( \pi \colon X \to Y \) be a proper surjective morphism from a compact
K\"ahler manifold \( X \) to a compact complex
analytic variety \( Y \) and
\(f \colon X \to X \) a surjective endomorphism such that
a general fiber of \( \pi \) is connected and
\( \pi \circ f = \pi \). Let \( F \) be a smooth fiber of \( \pi \),
i.e., a fiber along which \( \pi \) is smooth. Then\emph{:}
\begin{enumerate}
\item  \label{ThD:1} The equality
\( d_{1}(f) = d_{1}(f|_{F})\) holds for the first dynamical degrees \(d_1\) of
\( f \colon X \to X \) and the restriction \( f|_{F} \colon F \to F \) of \( f \).

\item  \label{ThD:2} If \( f \) is \'etale, then the equality
\( h_{\topo}(f) = h_{\topo}(f|_{F}) \) holds for the topological
entropy \( h_{\topo} \).
\end{enumerate}
\end{thmD}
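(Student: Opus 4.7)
The plan is to prove both statements via the product formula for dynamical degrees of holomorphic self-maps semi-conjugate through a holomorphic fibration (Dinh--Nguy\^en), combined with Gromov--Yomdin's identity $h_{\topo}(\varphi) = \log \max_{k} d_{k}(\varphi)$ for surjective holomorphic endomorphisms of compact K\"ahler manifolds. Write $n = \dim X$, $s = \dim Y$, and $r = n - s = \dim F$. First I would replace $Y$ by an equivariant resolution and accordingly modify $X$, so that we may assume $Y$ itself is a compact K\"ahler manifold; the identity $\mathrm{id}_{Y}$ lifts to the identity on the resolution, and both $d_{1}$ and $h_{\topo}$ are invariant under this sort of bimeromorphic replacement of $(X, f)$.

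For part \eqref{ThD:1}, I would apply the Dinh--Nguy\^en product formula to the semi-conjugate pair $(f, \mathrm{id}_{Y})$:
\[
d_{p}(f) \;=\; \max_{\max\{0,\, p - r\} \leq j \leq \min\{p,\, s\}} d_{j}(\mathrm{id}_{Y}) \cdot d_{p - j}(f \mid \pi),
\]
where $d_{k}(f \mid \pi)$ denotes the $k$-th relative dynamical degree of $f$ over $\pi$. A direct Fubini-type computation with the mixed intersection number defining $d_{k}(f \mid \pi)$ identifies it with $d_{k}(f|_{F})$ for any smooth fiber $F$ (the horizontal and vertical parts of $\omega_{X}^{j}$ separate because $\pi \circ f = \pi$), and $d_{j}(\mathrm{id}_{Y}) = 1$ throughout the admissible range. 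Specializing to $p = 1$ and using $d_{0}(f|_{F}) = 1 \leq d_{1}(f|_{F})$ collapses the formula to $d_{1}(f) = d_{1}(f|_{F})$.

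For part \eqref{ThD:2}, Gromov--Yomdin gives $h_{\topo}(f) = \log \max_{k} d_{k}(f)$ and $h_{\topo}(f|_{F}) = \log \max_{i} d_{i}(f|_{F})$. The same product formula shows that as $k$ ranges over $0 \leq k \leq n$ and $j$ over its admissible range, the index $k - j$ sweeps out all of $\{0, 1, \ldots, r\}$, so $\max_{k} d_{k}(f) = \max_{i} d_{i}(f|_{F})$, and therefore $h_{\topo}(f) = h_{\topo}(f|_{F})$. The \'etale assumption enters precisely here: it guarantees that $f|_{F}$ is itself a finite \'etale surjective self-map of the compact K\"ahler manifold $F$, so Gromov--Yomdin applies cleanly on the fiber, and Yomdin's lower bound on $X$ is accessible without indeterminacy complications.

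The main obstacle is the precise formulation and justification of the product formula in our setting, particularly the identification $d_{k}(f \mid \pi) = d_{k}(f|_{F})$ via fiber integration combined with uniform boundedness of fiberwise mixed volumes as $y \in Y$ varies, and the reduction from possibly singular $Y$ via equivariant resolution. Once the product formula and Gromov--Yomdin are in hand, both equalities follow by routine manipulation; the bulk of the appendix would be devoted to establishing these two ingredients in the form required here.
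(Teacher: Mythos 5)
Your proposal takes a genuinely different route from the paper. The paper proves \eqref{ThD:1} by exhibiting an eigenvector $v \in \overline{P^1(X)}$ of $f^*$ with eigenvalue $d_1(f)$ and showing $v|_F \neq 0$ via a lemma of Dinh--Sibony, and proves \eqref{ThD:2} by a direct cohomological analysis: a compactly-supported long exact sequence, the Leray spectral sequence (Lemma~\ref{lem:incl}), a stratification of the singular fibers (Proposition~\ref{prop:entreducible}), and an induction on dimension. It never invokes a product formula for dynamical degrees. Your plan --- the Dinh--Nguy\^en product formula for semi-conjugate maps together with Gromov--Yomdin --- is cleaner and reduces essentially everything to one fibered identity $d_k(f \mid \pi) = d_k(f|_F)$; the trade-off is reliance on heavy machinery that postdates this paper.

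There is, however, a genuine gap beyond the acknowledged obstacle, and it concerns where the \'etale hypothesis actually does its work. In your opening reduction you replace $Y$ by a resolution $Y' \to Y$ and $X$ by a modification, and you assert that ``both $d_1$ and $h_{\topo}$ are invariant under this sort of bimeromorphic replacement of $(X, f)$.'' For $d_1$ this is fine (bimeromorphic invariance of dynamical degrees). For $h_{\topo}$ it is \emph{not} automatic: Gromov--Yomdin applies only to surjective \emph{holomorphic} self-maps of compact K\"ahler manifolds, and after resolving the main component of $X \times_Y Y'$ the endomorphism $f$ becomes holomorphic on the new smooth model only if one can carry out an \emph{equivariant} resolution with respect to the induced endomorphism of that (possibly singular) main component. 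This is precisely what the \'etale hypothesis buys (cf.\ Section~\ref{subsect:equiv resol}): the functorial resolution methods commute with smooth, hence \'etale, morphisms, and the paper exploits this in Step~1 of its proof of \eqref{ThD:2}. Your stated reason for \'etale --- that it makes ``$f|_F$ a finite \'etale surjective self-map of $F$, so Gromov--Yomdin applies cleanly on the fiber'' --- is a misplacement: Gromov--Yomdin requires no \'etaleness on $F$, only that $f|_F$ be a surjective holomorphic self-map of a compact K\"ahler manifold, which it already is whenever $F$ is a smooth fiber. Without \'etale, after your reduction $f$ would be merely meromorphic on the resolved total space, where only Gromov's upper bound $h_{\topo} \leq \log \max_k d_k$ is available, and the chain of equalities in \eqref{ThD:2} collapses.

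Finally, the identification $d_k(f \mid \pi) = d_k(f|_F)$ that you flag as the ``main obstacle'' is indeed the technical heart of the statement, and it is worth noting that the paper's Steps~3--4 of the entropy proof (passing to the boundary divisors $G_j$, showing $h_{\topo}(h_j) \le h_{\topo}(f|_F)$ by a descending induction, and comparing $\class(\Gamma)$ with $\class(\varpi^{-1}(P))$) are exactly the content hiding inside that identification --- the singular fibers must be shown not to contribute more than a smooth fiber. So while the product-formula route organizes the argument differently, it does not avoid the real work, and your sketch of that step (``a direct Fubini-type computation'' plus ``uniform boundedness of fiberwise mixed volumes'') would need to be upgraded to an argument that controls the degenerate fibers.
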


The proof is based on basic properties of spectral radii, a
generalized notion of K\"ahler cone, a simple calculation of dynamical
degrees, and the results of Gromov \cite{G} and Yomdin \cite{Y} on
topological entropies.
The proof of the first assertion \eqref{ThD:1} is given at the end of
Section~\ref{subsect:dynamicaldegree}
and the proof of the second assertion \eqref{ThD:2} at the end of
Section~\ref{subsect:topent}. Some well-known properties on dynamical degrees
are prepared in Sections \ref{subsect:sprad} and \ref{subsect:dynamicaldegree}.

\subsection{Spectral radii and K\"ahler cones}
\label{subsect:sprad}

We recall some basic properties of spectral radii,
especially a generalization of the Perron-Frobenius theorem.
Furthermore, we introduce a notion of K\"ahler \( (k, k) \)-forms and the
K\"ahler cones in \( \OH^{k, k}(M, \BRR) \) for a compact K\"ahler
manifold \( M \).

The spectral radius \( \rho(\varphi) = \rho(V, \varphi) \)
of an endomorphism \( \varphi \colon V \to V \) of a
finite-dimensional \( \BCC \)-vector space \( V \) is defined to be
the maximum of the absolute values of the eigenvalues of \( \varphi \).
The spectral radius of an endomorphism
\( \varphi_{\BRR} \colon V_{\BRR} \to V_{\BRR} \) of a
finite-dimensional real vector space \( V_{\BRR} \) is defined as that
of its complexification
\( \varphi_{\BRR} \otimes_{\BRR} \BCC \colon V_{\BRR} \otimes_{\BRR} \BCC
\to V_{\BRR} \otimes_{\BRR} \BCC \).

\begin{remn}\label{lem:norm_spr}
Let \( \norm{\cdot} \) be any norm of \( V \) and let
\( \norm{\cdot}_{1} \) be the \( L^{1} \)-norm of
\( \operatorname{End}_{\BCC}(V) \) defined by
\( \norm{\varphi}_{1} : = \sup \{\norm{\varphi(v)} \, ; \, \norm{v} = 1\} \).
Then
\( \rho(\varphi) = \lim\nolimits_{m \to \infty}
\left(\norm{\varphi^{m}}_{1}\right)^{1/m}\).
\end{remn}

\begin{nottn}
Let \( V_{\BRR} \) be a finite-dimensional real vector space.
A subset \( \SC \subset V_{\BRR} \) is called a \emph{convex cone}
if \( \SC + \SC \subset \SC \) and \( \BRR_{+}\SC \subset \SC \),
where \( \BRR_{+} \) denotes the set of positive real numbers.
If \( \SC \cap (-\SC) \subset \{0\} \) in addition, then \( \SC \) is
called \emph{strictly convex}.
\end{nottn}

\begin{remn}
A convex cone \( \SC \subset V_{\BRR} \) is strictly convex if and
only if there exists a linear form \( \chi \colon V_{\BRR} \to \BRR
\) such that \( \chi > 0 \) on \( \SC \setminus \{0\} \).
\end{remn}

The following is known as a generalization of the
Perron-Frobenius theorem on real \( n \times n \)
matrices \( A = (a_{ij}) \) of positive entries \( a_{ij} \):

\begin{thm}[{cf.\ \cite{Bi}}]\label{thm:Birkhoff}
Let \( \SC \) be a strictly convex closed
cone of a finite-dimensional real vector space \( V_{\BRR} \)
such that \( \SC \) generates \( V_{\BRR} \) as a vector space.
Let \( \varphi \colon V_{\BRR} \to V_{\BRR} \) be an endomorphism
such that \( \varphi(\SC) \subset \SC \).
Then the spectral radius \( \rho(\varphi) \) is
an eigenvalue of \( \varphi \) and there is an eigenvector in \( \SC \)
with the eigenvalue \( \rho(\varphi) \).
\end{thm}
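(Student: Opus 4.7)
The plan is a classical Krein--Rutman-style perturbation argument: I would approximate $\varphi$ by a family of endomorphisms that send $\SC\setminus\{0\}$ strictly into $\operatorname{int}\SC$, produce an eigenvector in the interior of $\SC$ for each approximation by applying Brouwer's fixed point theorem on a compact base of the cone, identify its eigenvalue with the spectral radius using an order-unit norm, and finally pass to the limit.

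First, using that $\SC$ is strictly convex, I would fix a linear form $\chi\colon V_{\BRR}\to\BRR$ strictly positive on $\SC\setminus\{0\}$ and form $K:=\SC\cap\chi^{-1}(1)$, which is nonempty, closed, convex, and compact (otherwise a sequence in $K$ escaping to infinity would, after renormalization, limit to a nonzero point of $\SC$ annihilated by $\chi$). Because $\SC$ generates $V_{\BRR}$, it contains a basis of $V_{\BRR}$ and hence the open positive cone spanned by that basis, so $\operatorname{int}\SC\ne\emptyset$; pick $u_{0}\in\operatorname{int}\SC\cap K$. For every $\epsilon>0$ put $\varphi_{\epsilon}(v):=\varphi(v)+\epsilon\chi(v)u_{0}$; then for $v\in\SC\setminus\{0\}$ the first summand lies in $\SC$ while the second lies in $\operatorname{int}\SC$, so $\varphi_{\epsilon}(v)\in\operatorname{int}\SC$. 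In particular $\varphi_{\epsilon}$ is nonvanishing on $K$, and $T_{\epsilon}(v):=\varphi_{\epsilon}(v)/\chi(\varphi_{\epsilon}(v))$ is a continuous self-map of the compact convex set $K$, so Brouwer's theorem yields $v_{\epsilon}\in K$ with $\varphi_{\epsilon}(v_{\epsilon})=\lambda_{\epsilon}v_{\epsilon}$, $\lambda_{\epsilon}>0$, and necessarily $v_{\epsilon}=\lambda_{\epsilon}^{-1}\varphi_{\epsilon}(v_{\epsilon})\in\operatorname{int}\SC$.

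Second, I would identify $\lambda_{\epsilon}$ with $\rho(\varphi_{\epsilon})$ via the order-unit functional $\norm{v}_{\epsilon}:=\inf\{s>0 : sv_{\epsilon}\pm v\in\SC\}$, which is a genuine norm on $V_{\BRR}$ precisely because $v_{\epsilon}\in\operatorname{int}\SC$ (finiteness), $\SC$ is strictly convex (definiteness), and $\SC$ is a closed convex cone (homogeneity and triangle inequality). If $sv_{\epsilon}\pm v\in\SC$, then $\varphi_{\epsilon}(\SC)\subset\SC$ together with the eigenvalue equation gives $s\lambda_{\epsilon}v_{\epsilon}\pm\varphi_{\epsilon}(v)\in\SC$, whence $\norm{\varphi_{\epsilon}v}_{\epsilon}\leq\lambda_{\epsilon}\norm{v}_{\epsilon}$; thus the operator norm of $\varphi_{\epsilon}$ in $\norm{\cdot}_{\epsilon}$ is at most $\lambda_{\epsilon}$, forcing $\rho(\varphi_{\epsilon})\leq\lambda_{\epsilon}$, and the reverse inequality is immediate since $\lambda_{\epsilon}$ is an eigenvalue. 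Finally, letting $\epsilon\downarrow 0$, extract a subsequence with $v_{\epsilon_{n}}\to v_{0}\in K$; by continuity of the spectral radius on $\operatorname{End}(V_{\BRR})$ (the characteristic polynomial depends continuously on the matrix and its roots depend continuously on the polynomial), $\lambda_{\epsilon_{n}}=\rho(\varphi_{\epsilon_{n}})\to\rho(\varphi)$, and the limit of $\varphi_{\epsilon_{n}}(v_{\epsilon_{n}})=\lambda_{\epsilon_{n}}v_{\epsilon_{n}}$ reads $\varphi(v_{0})=\rho(\varphi)v_{0}$ with $v_{0}\in K\subset\SC\setminus\{0\}$.

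The main obstacle is twofold. A direct Brouwer argument on $T(v)=\varphi(v)/\chi(\varphi(v))$ may fail because $\varphi$ can vanish on points of $K$; and even if an eigenvector in $\SC$ is produced this way, there is no reason its eigenvalue should equal $\rho(\varphi)$ rather than some smaller nonnegative eigenvalue. The linear perturbation $\epsilon\chi(v)u_{0}$ is designed precisely to resolve both issues simultaneously: it pushes $\varphi_{\epsilon}$ strictly into $\operatorname{int}\SC$ so that both Brouwer and the order-unit-norm argument apply, and it depends continuously (in fact linearly) on $\epsilon$ so that the finite-dimensional continuity of the spectral radius delivers $\rho(\varphi)$ in the limit.
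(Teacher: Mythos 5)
The paper does not actually prove this theorem: it is stated with the citation ``cf.\ \cite{Bi}'' (Birkhoff, \emph{Linear transformations with invariant cones}, 1967) and then used directly, so there is no internal proof to compare against. Your argument is a correct, self-contained proof via the classical perturbation-plus-Brouwer route, which is one of the standard ways to establish this finite-dimensional Krein--Rutman/cone Perron--Frobenius statement. All the steps hold: compactness of $K=\SC\cap\chi^{-1}(1)$ follows exactly as you argue from closedness of $\SC$ and strict positivity of $\chi$; $\operatorname{int}\SC\ne\emptyset$ because $\SC$ contains a basis; the perturbation $\varphi_\epsilon$ maps $\SC\setminus\{0\}$ into $\operatorname{int}\SC$, so $T_\epsilon$ is a well-defined self-map of $K$; the order-unit norm is a genuine norm (one should note that definiteness uses closedness of $\SC$ together with strict convexity, not strict convexity alone, and homogeneity/triangle inequality only use that $\SC$ is a convex cone); the operator-norm bound yields $\lambda_\epsilon=\rho(\varphi_\epsilon)$; and continuity of the spectral radius on $\operatorname{End}(V_\BRR)$ closes the limit. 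One additional observation worth making is that the paper's own Proposition~\ref{prop:B} uses exactly the same order-unit norm $\norm{\cdot}_u$ with $u\in\operatorname{int}\SC$ (there to extract $\rho(\varphi)$ from $\chi(\varphi^m(u))^{1/m}$ rather than to prove existence of the eigenvector), so your construction is very much in the spirit of the surrounding text.
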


We add another property of spectral radius which is proved by using
Theorem~\ref{thm:Birkhoff}:

\begin{prop}\label{prop:B}
Let \( \varphi \colon V_{\BRR} \to V_{\BRR}\) be an endomorphism
of a finite-dimensional real vector space \(V_{\BRR}\).
Suppose that \( \varphi(\SC) \subset \SC \) for a strictly convex
closed cone \( \SC \subset V_{\BRR} \) with \( \SC + (-\SC) = V_{\BRR} \).
Let \( \norm{\cdot} \) be a norm on \( V_{\BRR} \) and
let \( \chi \colon V_{\BRR} \to \BRR \) be a linear form such that
\( \chi > 0 \) on \( \SC \setminus \{0\}\).
If \( u \) is contained in the interior of \( \SC \), then
\[ \rho(\varphi) = \lim\nolimits_{m \to \infty}
\norm{\varphi^{m}(u)}^{1/m} = \lim\nolimits_{m \to \infty}
\chi(\varphi^{m}(u))^{1/m}.\]
\end{prop}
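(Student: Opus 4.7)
\textbf{Proof proposal for Proposition~\ref{prop:B}.}
The plan is to pinch the two limits between $\rho(\varphi)$ on both sides. For the upper bound, observe that $\varphi^{m}(u) \in \SC$ since $u \in \SC$ and $\varphi(\SC) \subset \SC$, so $\chi(\varphi^{m}(u)) \geq 0$. Using the continuity of $\chi$ on $V_{\BRR}$, there is a constant $C > 0$ with $|\chi(w)| \leq C \norm{w}$ for all $w$. Combined with the submultiplicative estimate $\norm{\varphi^{m}(u)} \leq \norm{\varphi^{m}}_{1}\norm{u}$ and the formula from the remark after the norm definition, $\rho(\varphi) = \lim_{m \to \infty}\norm{\varphi^{m}}_{1}^{1/m}$, one obtains
\[
\limsup_{m \to \infty}\chi(\varphi^{m}(u))^{1/m} \leq \limsup_{m \to \infty}\norm{\varphi^{m}(u)}^{1/m} \leq \rho(\varphi).
\]

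For the lower bound, I would invoke Theorem~\ref{thm:Birkhoff} applied to $\SC$ (which generates $V_{\BRR}$ as a vector space by the assumption $\SC + (-\SC) = V_{\BRR}$) to produce a nonzero eigenvector $v \in \SC$ with $\varphi(v) = \rho(\varphi)\, v$. Since $u$ lies in the interior of $\SC$, there is a ball $B(u, r) \subset \SC$; scaling shows that for any $w \in V_{\BRR}$ one has $(\norm{w}/r)\, u - w \in \SC$. Applying this with $w = v$ produces a constant $c > 0$ with $cu - v \in \SC$. Since $\varphi$ preserves $\SC$, iteration yields
\[
c\,\varphi^{m}(u) - \rho(\varphi)^{m} v \;=\; \varphi^{m}(cu - v) \;\in\; \SC
\]
for every $m \geq 0$, and evaluating the positive linear form $\chi$ gives $c\,\chi(\varphi^{m}(u)) \geq \rho(\varphi)^{m}\chi(v)$. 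Because $v \in \SC \setminus \{0\}$, we have $\chi(v) > 0$, and taking $m$-th roots yields
\[
\liminf_{m \to \infty}\chi(\varphi^{m}(u))^{1/m} \geq \rho(\varphi)\cdot\lim_{m \to \infty}(\chi(v)/c)^{1/m} = \rho(\varphi).
\]
Combining the two bounds collapses all the inequalities to equalities, proving both limit formulas at once.

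There is no real obstacle here beyond packaging the Birkhoff--Perron--Frobenius type statement (Theorem~\ref{thm:Birkhoff}) and the standard spectral-radius identity into the cone setting. The only point that requires a short verification is the ``absorption'' estimate that $u$ in the interior of $\SC$ dominates every vector (in particular the eigenvector $v$) by a positive multiple of itself modulo $\SC$; this is an elementary consequence of the existence of an open ball around $u$ contained in $\SC$. Once this is in place, the iteration $c\,\varphi^{m}(u) \geq \rho(\varphi)^{m} v$ (with ordering induced by $\SC$) and the positivity of $\chi$ do all the work.
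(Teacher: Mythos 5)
Your proof is correct and follows essentially the same strategy as the paper: produce the eigenvector $v\in\SC$ via Theorem~\ref{thm:Birkhoff}, exploit the interior point $u$ to absorb $v$ (i.e.\ $cu-v\in\SC$), iterate under $\varphi$, and apply the positive linear form $\chi$ to squeeze all three quantities between $\rho(\varphi)$ on both sides. The only cosmetic difference is in the bookkeeping: the paper first replaces the given norm by the cone-adapted norm $\norm{x}_u=\inf\{r\geq 0\mid x+ru\in\SC,\ -x+ru\in\SC\}$, so that the absorption constant appears as $\norm{v}_u$ and the continuity estimate for $\chi$ appears as $\norm{\varphi^m(u)}_u\chi(u)\geq\chi(\varphi^m(u))$, while you work with the original norm and obtain the absorption constant $c=\norm{v}/r$ from an open ball $B(u,r)\subset\SC$ and the upper estimate from $|\chi(w)|\leq C\norm{w}$. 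These are the same inequalities expressed in different normalizations, so nothing substantive is gained or lost either way.
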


\begin{proof}
We define \( \norm{\cdot}_{u} \colon V_{\BRR} \to \BRR_{\geq 0}\) by
\( \norm{x}_{u} := \inf\{ r \geq 0 \mid x + ru \in \SC,
-x + ru \in \SC\}\).
Then \( \norm{\cdot}_{u} \) is a norm of \( V_{\BRR} \).
There exist positive constants \( C_{1} \), \( C_{2} \) such that
\( C_{1}\norm{x} \leq \norm{x}_{u} \leq C_{2}\norm{x} \)
for any \( x \in V \). Hence, we may assume that \( \norm{\cdot} =
\norm{\cdot}_{u} \).
For the induced \( L^{1} \)-norm \( \norm{\cdot}_{u, 1} \), we have
\( \norm{\varphi^{m}}_{u, 1} \geq \norm{\varphi^{m}(u)}_{u} \) for
\( m \geq 1 \), since
\( \norm{u}_{u} = 1 \). Applying \( \chi \) to
\( \norm{\varphi^{m}(u)}_{u}u - \varphi^{m}(u) \in \SC \), we have
\( \norm{\varphi^{m}(u)}_{u}\chi(u) \geq \chi(\varphi^{m}(u))\).
We have an eigenvector \( v \in \SC \) with the eigenvalue \(\rho(\varphi)\)
by Theorem~\ref{thm:Birkhoff}.
Applying \( \chi \circ \varphi^{m} \) to
\( \norm{v}_{u} u - v \in \SC \), we have
\(\norm{v}_{u} \chi(\varphi^{m}(u)) \geq \rho(\varphi)^{m}\chi(v)\).
Therefore,
\[ \norm{\varphi^{m}}_{u, 1} \geq \norm{\varphi^{m}(u)}_{u} \geq
\frac{\chi(\varphi^{m}(u))}{\chi(u)} \geq
\frac{\chi(v)}{\chi(u)\norm{v}_{u}} \rho(\varphi)^{m}. \]
Hence, we complete the proof by
\[ \rho(\varphi) = \lim\nolimits_{m \to \infty}
\norm{\varphi^{m}}_{u, 1}^{1/m} \geq
\lim\nolimits_{m \to \infty} \norm{\varphi^{m}(u)}_{u}^{1/m}
\geq \lim\nolimits_{m \to \infty} \chi(\varphi^{m}(u))^{1/m}
\geq \rho(\varphi). \qedhere\]
\end{proof}

Let \( M \) be a compact K\"ahler manifold of dimension \( n \).
Let \( \omega \) be a \( C^{\infty} \)-\( (k, k) \)-form on \( M \)
for an integer \( 1 \leq k \leq n \).
For a local coordinate system \( (z_{1}, z_{2}, \ldots, z_{n}) \) of \( M \),
\( \omega \) is locally expressed as
\[ \omega = (\sqrt{-1})^{k^{2}}
\sum\nolimits_{I, J \subset \{1, 2, \ldots, n\}}
a_{I, J} d z_{I} \wedge d \bar{z}_{J} \]
for \( C^{\infty} \)-functions \( a_{I, J} \), where
\( \sharp I = \sharp J = k \) and
\[ d z_{I} := d z_{i_{1}} \wedge d z_{i_{2}} \wedge \cdots \wedge d
z_{i_{k}} \]
when \( I = \{ i_{1}, i_{2}, \ldots, i_{k}\} \) with \( i_{1} <
i_{2} < \cdots < i_{k} \).
The \( \omega \) is called a \emph{K\"ahler} \( (k, k) \)-form if
\( \omega \) is \( d \)-closed (\( d \omega = 0 \)) and
real (\( \overline{\omega} = \omega \)), and if the matrix
\( (a_{I, J}) \) is positive-definite everywhere in \( M \).
Note that this is just the usual definition of K\"ahler
forms in case \( k = 1 \). The following is easily shown:

\begin{lem}\label{lem:Kaehler}\hfill
\begin{enumerate}
\item \label{lem:Kaehler:item1}
If \( \eta \) is a \emph{(}usual\emph{)} K\"ahler form
\emph{(}\( (1, 1) \)-form\emph{)},
then \( \eta^{k} = \eta \wedge \cdots \wedge \eta \) is
a K\"ahler \( (k, k) \)-form for \( 1 \leq k \leq n \).

\item \label{lem:Kaehler:item2}
Let \( \omega \) be a K\"ahler \( (k, k) \)-form.
If \( \omega' \) is a K\"ahler \( (n - k, n - k) \)-form, then
\( \int_{M}\omega \wedge \omega' > 0\).
If \( T \) is a \( d \)-closed positive \( (n-k, n-k) \)-current, then
\( \int_{M} \omega \wedge T \geq 0 \).
\end{enumerate}
\end{lem}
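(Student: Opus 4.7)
The plan is to prove both parts pointwise using local coordinate computations, exploiting the fact that positive-definiteness of the coefficient matrix is an intrinsic property of the induced Hermitian sesquilinear form on $\Lambda^{k,0}T_p^{*}M$, hence can be checked in any convenient basis at each point.

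For part \eqref{lem:Kaehler:item1}, I would fix a point $p \in M$ and choose holomorphic coordinates in which $\eta$ is diagonal at $p$, i.e., $\eta(p) = \sqrt{-1}\sum_{i}\lambda_{i}\,dz_{i}\wedge d\bar{z}_{i}$ with all $\lambda_{i}>0$. Since the forms $dz_{i}\wedge d\bar{z}_{i}$ commute, a direct expansion gives $\eta^{k}(p) = k!\,(\sqrt{-1})^{k}\sum_{|I|=k}\lambda_{I}\,dz_{i_{1}}\wedge d\bar{z}_{i_{1}}\wedge \cdots \wedge dz_{i_{k}}\wedge d\bar{z}_{i_{k}}$ where $\lambda_{I}=\prod_{i\in I}\lambda_{i}$. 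Reordering each summand into the form $dz_{I}\wedge d\bar{z}_{I}$ produces the sign $(-1)^{k(k-1)/2}$, and the identity $(\sqrt{-1})^{k}(-1)^{k(k-1)/2}=(\sqrt{-1})^{k^{2}}$ then yields $\eta^{k}(p)=k!\,(\sqrt{-1})^{k^{2}}\sum_{|I|=k}\lambda_{I}\,dz_{I}\wedge d\bar{z}_{I}$, whose coefficient matrix is diagonal with positive entries. Closedness and realness of $\eta^{k}$ are inherited from $\eta$.

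For the first assertion of part \eqref{lem:Kaehler:item2}, the statement reduces to a pointwise inequality: at each $p\in M$, the top form $\omega\wedge\omega'$ is a nonnegative multiple of any chosen volume form, and strictly positive. To see this, diagonalize the Hermitian matrices of $\omega$ and $\omega'$ by unitary changes on $\Lambda^{k,0}T_{p}^{*}M$ and $\Lambda^{n-k,0}T_{p}^{*}M$ respectively, writing $\omega(p)=(\sqrt{-1})^{k^{2}}\sum_{K}d_{K}\,\alpha_{K}\wedge\bar{\alpha}_{K}$ and $\omega'(p)=(\sqrt{-1})^{(n-k)^{2}}\sum_{L}d'_{L}\,\beta_{L}\wedge\bar{\beta}_{L}$ with all $d_{K},d'_{L}>0$, where $\{\alpha_{K}\}$ and $\{\beta_{L}\}$ are bases of $\Lambda^{k,0}$ and $\Lambda^{n-k,0}$ respectively. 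Expanding $\alpha_{K}\wedge\beta_{L}=\gamma_{K,L}\,dz_{1}\wedge\cdots\wedge dz_{n}$, moving the $\bar{\alpha}_{K}$ past $\beta_{L}$ contributes a sign $(-1)^{k(n-k)}$, and the identity $(\sqrt{-1})^{k^{2}+(n-k)^{2}}(-1)^{k(n-k)}=(\sqrt{-1})^{n^{2}}$ gives $\omega\wedge\omega'(p)=\bigl(\sum_{K,L}d_{K}d'_{L}|\gamma_{K,L}|^{2}\bigr)(\sqrt{-1})^{n^{2}}dz_{1}\wedge\cdots\wedge d\bar{z}_{n}$. Since $\{\alpha_{K}\}$ and $\{\beta_{L}\}$ are bases and the wedge product $\Lambda^{k,0}\otimes\Lambda^{n-k,0}\to\Lambda^{n,0}$ is surjective, some $\gamma_{K,L}\neq 0$, yielding strict positivity. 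Integration over the compact manifold gives $\int_{M}\omega\wedge\omega'>0$.

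For the current assertion, I would use the same pointwise decomposition of $\omega$ as a nonnegative linear combination $\omega=\sum_{K}d_{K}(\sqrt{-1})^{k^{2}}\alpha_{K}\wedge\bar{\alpha}_{K}$ valid on any local open set on which a smooth eigenbasis exists (using a partition of unity to globalize). Then $\omega\wedge T = \sum_{K}d_{K}(\sqrt{-1})^{k^{2}}\alpha_{K}\wedge\bar{\alpha}_{K}\wedge T$, and by the defining property of positive currents, each term $(\sqrt{-1})^{k^{2}}\alpha_{K}\wedge\bar{\alpha}_{K}\wedge T$ is a nonnegative measure. Integrating yields $\int_{M}\omega\wedge T\geq 0$. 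The main technical obstacle is justifying the smooth local decomposition of $\omega$ into eigenpieces when eigenvalues may collide; this is handled by a standard partition-of-unity argument together with the fact that on each small open set one can choose a smooth unitary frame of the matrix bundle $(a_{I,J})$ that need not diagonalize but suffices to write $\omega$ as a locally finite sum $\sum d_{K}(x)\alpha_{K}(x)\wedge\bar{\alpha}_{K}(x)$ with $d_{K}\geq 0$ smooth. Alternatively, the inequality can be established via a density/approximation argument reducing to the case where $\omega$ is an average of powers of K\"ahler $(1,1)$-forms, for which wedging with a positive current gives a nonnegative measure directly.
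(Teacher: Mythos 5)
The paper states this lemma without proof (``The following is easily shown''), so there is no proof in the paper to compare against; your argument is essentially the intended one and is correct in its main lines. Part \eqref{lem:Kaehler:item1} and the first claim of part \eqref{lem:Kaehler:item2} are fine: the sign bookkeeping $(\sqrt{-1})^{k}(-1)^{k(k-1)/2}=(\sqrt{-1})^{k^{2}}$ and $(\sqrt{-1})^{k^{2}+(n-k)^{2}}(-1)^{k(n-k)}=(\sqrt{-1})^{n^{2}}$ checks out, the use of a unitary change of frame to diagonalize the Hermitian coefficient matrices at a point is legitimate because positive-definiteness is preserved under congruence, and the observation that the wedge map $\Lambda^{k,0}\otimes\Lambda^{n-k,0}\to\Lambda^{n,0}$ is surjective gives some $\gamma_{K,L}\neq 0$, hence pointwise strict positivity of $\omega\wedge\omega'$.

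For the current inequality you should be more careful about the strong vs.\ weak positivity distinction. A form $(\sqrt{-1})^{k^{2}}\alpha_{K}\wedge\bar{\alpha}_{K}$ with $\alpha_{K}$ a \emph{general} $(k,0)$-form (coming from diagonalizing $(a_{I,J})$) is \emph{positive} but not \emph{strongly positive} when $2\leq k\leq n-2$, because $\alpha_{K}$ need not be decomposable. Under the standard convention (a positive $(n-k,n-k)$-current pairs nonnegatively with \emph{strongly} positive test $(k,k)$-forms), your step ``by the defining property of positive currents, each term $\ldots\wedge T$ is a nonnegative measure'' is not justified; it does become valid if $T$ is a \emph{strongly positive} current, which pairs nonnegatively with all (weakly) positive forms, and $(\sqrt{-1})^{k^{2}}\alpha_{K}\wedge\bar{\alpha}_{K}$ is weakly positive since it has positive semidefinite coefficient matrix. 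In the paper's applications (Lemmas~\ref{lem:positive}, \ref{lem:indep}, Proposition~\ref{prop:subvar}) the currents that actually occur are wedge products of positive $(1,1)$-forms and $(1,1)$-currents, or currents of integration on analytic sets, all of which are strongly positive; so the conclusion holds there, but you should say explicitly that you are using strong positivity of $T$. Note that once this is said, the local decomposition is unnecessary: a K\"ahler $(k,k)$-form has positive (semi)definite coefficient matrix, hence is a positive $(k,k)$-form in the sense of Demailly, and a strongly positive current pairs nonnegatively with any positive test form by duality of the two cones.

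Two smaller points. The smooth local decomposition is better obtained not from a ``unitary frame'' (eigenframes need not depend smoothly on the point where eigenvalues collide) but from the smooth square root $B=A^{1/2}$ of the positive-definite matrix $A=(a_{I,J})$, giving $\omega=(\sqrt{-1})^{k^{2}}\sum_{K}\tilde{\alpha}_{K}\wedge\bar{\tilde{\alpha}}_{K}$ with $\tilde{\alpha}=B\,dz$; this is smooth with no partition of unity needed. Finally, your alternative ``density/approximation'' suggestion is incorrect as stated: a general K\"ahler $(k,k)$-form is \emph{not} a limit of nonnegative combinations of $k$-th powers of K\"ahler $(1,1)$-forms, because such limits are strongly positive, and for $2\leq k\leq n-2$ the strongly positive cone is a proper subcone of the cone of $(k,k)$-forms with positive semidefinite coefficient matrix.
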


Inside of the real vector space
\( \OH^{k, k}(M, \BRR) := \OH^{k, k}(M) \cap \OH^{2k}(M, \BRR) \),
the set \( P^{k}(M) \) of the classes \( [\omega] \) of
K\"ahler \( (k, k) \)-forms \( \omega \) on \( M \)
is a strictly convex open cone. It is called the
\emph{K\"ahler cone} of degree \( k \).
Its closure in \( \OH^{k, k}(M, \BRR) \) is
denoted by \( \overline{P^{k}(M)} \).

\begin{lem}\label{lem:positive}
\begin{enumerate}
\item \label{lem:positive:1}
\( \int_{M} \xi \cup [\omega] > 0\) for
any \( \xi \in \overline{P^{k}(M)} \setminus \{0\} \) and
for any K\"ahler \( (n - k, n - k) \)-form \( \omega \).

\item \label{lem:positive:2}
If \( \theta \in \overline{P^{1}(M)} \) and
if \( \theta - [T] \in P^{1}(M) \) for a \( d \)-closed positive
\( (1, 1) \)-current \( T \), then for any \( 1 \leq l \leq n \),
there exists an element \( z \in \overline{P^{l-1}(M)} \) such that
\( \theta^{l} - z \cup [T] \in P^{l}(M)  \) and \( z \cup [T] \)
is represented by a \( d \)-closed positive \( (l, l) \)-current.
In particular, \( \int_{M} \xi \cup \theta^{n - k} > 0 \)
for any \( \xi \in \overline{P^{k}(M)} \setminus \{0\} \).
\end{enumerate}
\end{lem}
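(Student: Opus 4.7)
My plan for Part~\eqref{lem:positive:1} is to combine Poincar\'e duality with the openness of the K\"ahler cone. I would introduce the linear functional $L_\xi(\alpha) := \int_M \xi \cup \alpha$ on $\OH^{n-k,n-k}(M,\BRR)$. Since $\xi \neq 0$, the non-degeneracy of the cup-product pairing $\OH^{k,k}(M,\BRR) \times \OH^{n-k,n-k}(M,\BRR) \to \BRR$ forces $L_\xi \not\equiv 0$. Approximating $\xi$ by classes of K\"ahler $(k,k)$-forms and applying Lemma~\ref{lem:Kaehler}\eqref{lem:Kaehler:item2} in the limit shows $L_\xi \geq 0$ on the whole closure $\overline{P^{n-k}(M)}$. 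If $L_\xi$ were to vanish at a K\"ahler class $[\omega] \in P^{n-k}(M)$, then since $P^{n-k}(M)$ is an open neighborhood of $[\omega]$ in $\OH^{n-k,n-k}(M,\BRR)$, perturbing by $\pm tv$ for any $v$ would yield $\pm tL_\xi(v) \geq 0$ and hence $L_\xi(v) = 0$; so $L_\xi$ would vanish identically, a contradiction.

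For Part~\eqref{lem:positive:2}, I would fix a K\"ahler $(1,1)$-form $\omega_0$ representing $\theta - [T]$ and set
\[
z := \sum_{i=0}^{l-1} \theta^i \cup [\omega_0]^{l-1-i}.
\]
Each summand is a limit of products of K\"ahler $(1,1)$-classes, which by Lemma~\ref{lem:Kaehler}\eqref{lem:Kaehler:item1} are K\"ahler $(l-1,l-1)$-classes, so $z \in \overline{P^{l-1}(M)}$. The telescoping identity $\theta^l - [\omega_0]^l = (\theta - [\omega_0]) \cup z = [T] \cup z$ then gives $\theta^l - z \cup [T] = [\omega_0]^l \in P^l(M)$ by Lemma~\ref{lem:Kaehler}\eqref{lem:Kaehler:item1}. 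To realize $z \cup [T]$ by an actual $d$-closed positive $(l,l)$-current, I would approximate $\theta$ by the K\"ahler classes $\theta_j := \theta + (1/j)[\omega_0]$ with smooth K\"ahler $(1,1)$-form representatives $\eta_j$, and form
\[
S_j := \sum_{i=0}^{l-1} \eta_j^i \wedge \omega_0^{l-1-i} \wedge T,
\]
each a $d$-closed positive $(l,l)$-current (smooth positive form wedged with the positive current $T$). Their cohomology classes converge to $z \cup [T]$, and their masses (measured against $[\omega_0]^{n-l}$) are uniformly bounded by the convergent quantity $[S_j] \cup [\omega_0]^{n-l}$. Standard compactness of positive currents of bounded mass then yields a weak subsequential limit $S$, which is $d$-closed (as a weak limit of $d$-closed currents) and positive, with $[S] = z \cup [T]$.

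The final ``in particular'' assertion follows by applying Part~\eqref{lem:positive:2} with $l = n-k$ (the case $k = n$ being immediate since $\overline{P^n(M)} \setminus \{0\}$ corresponds to strictly positive reals under $\OH^{n,n}(M,\BRR) \isom \BRR$) to write $\theta^{n-k} = [\omega] + [S]$ with $\omega$ a K\"ahler $(n-k,n-k)$-form and $S$ a $d$-closed positive $(n-k,n-k)$-current. Then $\int_M \xi \cup [\omega] > 0$ by Part~\eqref{lem:positive:1}, while $\int_M \xi \cup [S] \geq 0$ by Lemma~\ref{lem:Kaehler}\eqref{lem:Kaehler:item2} extended to $\xi \in \overline{P^k(M)}$ by approximation, so the sum is strictly positive. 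The main obstacle I expect is the current-theoretic convergence step in Part~\eqref{lem:positive:2}: namely, verifying the uniform mass bound on $S_j$ and the compatibility between weak convergence of positive currents and convergence of cohomology classes, which together force the weak limit $S$ to represent $z \cup [T]$. The algebraic telescoping and the openness-of-cone argument in Part~\eqref{lem:positive:1} are routine by comparison.
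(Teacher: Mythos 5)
Your argument is correct and follows the same route as the paper: Part~\eqref{lem:positive:1} via non-degeneracy of the Poincar\'e pairing plus openness of the K\"ahler cone, and Part~\eqref{lem:positive:2} via the telescoping identity $\theta^l - \alpha^l = z\cup[T]$ with $z = \sum_{i=0}^{l-1}\theta^i\cup\alpha^{l-1-i}$ and $\alpha := \theta - [T] = [\omega_0]$. The one genuine addition you make is the justification, via the regularization $\theta_j := \theta + (1/j)[\omega_0]$, a uniform mass bound furnished by the convergent quantity $[S_j]\cup[\omega_0]^{n-l}$, and weak compactness of positive currents, that $z\cup[T]$ really is represented by a $d$-closed positive $(l,l)$-current; the paper asserts this without argument, and your caution is warranted, since $z$ only lies in $\overline{P^{l-1}(M)}$ and so need not have a smooth positive representative to wedge against $T$, while a naive expansion of $\theta^i = (\alpha + [T])^i$ produces undefined higher wedge powers of $T$. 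Your approximation avoids both problems by wedging $T$ only against smooth products of K\"ahler $(1,1)$-forms, and the weak limit inherits closedness, positivity, and, since $[S_j]\to z\cup[T]$, the correct class. Your handling of Part~\eqref{lem:positive:1} is phrased differently from the paper's (you perturb at a would-be zero of $L_\xi$ to conclude $L_\xi\equiv 0$, whereas the paper compares $\omega$ with an auxiliary $\omega_0$ via $C\omega - \omega_0$ being K\"ahler for $C \gg 0$), but both rest on the same idea that a non-negative linear form vanishing at an interior point of a generating open cone must vanish identically.
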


\begin{proof}
\eqref{lem:positive:1}:
Let \( (x, y) \) denote \( \int_{M} x \cup y \) for
\( x \in \OH^{k, k}(M, \BRR)  \) and
\( y \in \OH^{n-k, n - k}(M, \BRR) \). Then
\( \OH^{n-k, n - k}(M, \BRR) \) is dual to \( \OH^{k, k}(M, \BRR) \) by
\( (*, *) \).
Since \( P^{n - k}(M) \) generates \( \OH^{n-k, n - k}(M, \BRR)  \) as
the vector space, we can find a K\"ahler \( (n - k, n - k) \)-form
\( \omega_{0} \) such that \( (\xi, [\omega_{0}]) \ne 0 \).
By Lemma~\ref{lem:Kaehler}, we have \( (\xi, [\omega_{0}]) > 0 \).
There is a positive constant \( C \)
such that \( C \omega - \omega_{0} \) is also a
K\"ahler \( (k, k) \)-form, since \( M \) is compact. Thus, by
Lemma~\ref{lem:Kaehler},
\[ (\xi, [\omega]) \geq C^{-1}(\xi, [\omega_{0}]) > 0.\]

\eqref{lem:positive:2}:
We set \( \alpha = \theta - [T] \in P^{1}(M)\).
Then, we have
\( \theta^{l} = \alpha^{l} + z \cup [T] \),
where \( z = 1 \in \OH^{0}(M, \BRR)\) when \( l = 1 \) and
\begin{equation}\label{eq:z}
z = \sum\nolimits_{i = 0}^{l - 1} \theta^{i} \cup \alpha^{l - 1 - i}
\end{equation}
when \( l \geq 2 \). Thus, \( z \in \overline{P^{l - 1}(M)} \) and
\( z \cup [T] \) is represented by a positive current.
In particular,
\( (\xi, \theta^{n - k}) \geq (\xi, \alpha^{n - k}) > 0 \)
by Lemma~\ref{lem:Kaehler} and \eqref{lem:positive:1}.
\end{proof}

The following is a property of K\"ahler \((1,1)\)-forms.

\begin{lem}\label{lem:genfin}
Let \( \psi \colon M \to X \) be a generically finite morphism into
another compact K\"ahler manifold \( X \).
If \( \eta \) is a K\"ahler form on \( X \), then
\( [\psi^{*}\eta] - [T] \in P^{1}(M) \) for
a \( d \)-closed real positive \( (1, 1) \)-current \( T \) on \( M \).
\end{lem}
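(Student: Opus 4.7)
The plan is to exhibit a $d$-closed positive real $(1,1)$-current $T$ on $M$ whose cohomology class satisfies $[\psi^{*}\eta] - [T] \in P^{1}(M)$. Equivalently, I will find a \emph{K\"ahler current} in $[\psi^{*}\eta]$, that is, a $d$-closed positive $(1,1)$-current $S$ such that $S - \omega_{M} \geq 0$ for some K\"ahler form $\omega_{M}$ on $M$; the choice $T := S - \omega_{M}$ then gives $[\psi^{*}\eta] - [T] = [\omega_{M}] \in P^{1}(M)$, as required.

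First I would verify that $[\psi^{*}\eta]$ is nef. Since $\psi^{*}\eta$ is a smooth $d$-closed real semipositive $(1,1)$-form on $M$, the form $\psi^{*}\eta + \varepsilon \omega_{M}$ is strictly positive, hence K\"ahler, for every K\"ahler form $\omega_{M}$ on $M$ and every $\varepsilon > 0$. Letting $\varepsilon \searrow 0$ places $[\psi^{*}\eta]$ in $\overline{P^{1}(M)}$. Next I would check that the top self-intersection is strictly positive. Set $n := \dim M = \dim X$ and $d := \deg \psi > 0$. The identity $(\psi^{*}\eta)^{n} = \psi^{*}(\eta^{n})$ together with the change-of-variables formula yields
\[
\int_{M} [\psi^{*}\eta]^{n} \; = \; \int_{M} \psi^{*}(\eta^{n}) \; = \; d \int_{X} \eta^{n} \; > \; 0.
\]

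The final and crucial step is to invoke Demailly's regularization theorem: on a compact K\"ahler manifold, any nef real $(1,1)$-cohomology class $\alpha$ with $\alpha^{n} > 0$ is big and therefore contains a K\"ahler current. Applied to $\alpha = [\psi^{*}\eta]$, this produces the desired $S$, and hence $T$, finishing the proof.

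The main obstacle is the third step: the passage from the two numerical properties (nef and strictly positive top self-intersection) to the \emph{existence} of a K\"ahler current representative rests on Demailly's deep approximation theory of quasi-plurisubharmonic functions. A more hands-on, self-contained alternative would be to apply Hironaka to blow up $M$ along the ramification locus of $\psi$, producing a bimeromorphic model $\rho \colon \tilde{M} \to M$ on which the ramification divisor of $\psi \circ \rho$ becomes simple normal crossings; one then uses the explicit K\"ahler classes on blow-ups of smooth centers to write $[\rho^{*}\psi^{*}\eta] = [\omega_{\tilde{M}}] + \sum a_{i}[\tilde{D}_{i}]$ with $\omega_{\tilde{M}}$ K\"ahler and $a_{i} \geq 0$, and finally pushes the positive current $\sum a_{i}[\tilde{D}_{i}]$ down to $M$. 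This route is substantially more laborious, which is why the Demailly regularization argument is preferable.
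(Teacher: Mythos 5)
Your proof is correct, but it takes a genuinely different route from the paper's. The paper avoids any appeal to the bigness criterion for nef classes: it takes the Stein factorization $M \to V \to X$ of $\psi$ (so $M \to V$ is bimeromorphic and $V \to X$ is finite), resolves $V$ by a projective bimeromorphic $\nu \colon Z \to V$ chosen so that $\mu \colon Z \to V \ratmap M$ is holomorphic, and uses the elementary fact that $\SO_{Z}(-E)$ is $\nu$-ample for some $\nu$-exceptional effective divisor $E$. Since $V \to X$ is finite, $-E$ is also relatively ample over $X$, so $[\mu^{*}\psi^{*}\eta] - \ep[E]$ is a K\"ahler class on $Z$ for small $\ep > 0$ (citing Fujiki); subtracting a small pullback $[\mu^{*}\xi]$ of a K\"ahler form on $M$ keeps it in $P^{1}(Z)$, and pushing forward by $\mu$ yields $[\psi^{*}\eta] - [\xi]$ as the class of a positive current, which is exactly the desired conclusion. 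Your argument instead verifies that $[\psi^{*}\eta]$ is nef (as a semipositive form it is a decreasing limit of K\"ahler forms) and has positive top self-intersection $\int_{M}[\psi^{*}\eta]^{n} = (\deg\psi)\int_{X}\eta^{n} > 0$, then invokes the Demailly--P\u{a}un theorem to conclude bigness, i.e.\ the existence of a K\"ahler current $S$ in the class; setting $T := S - \ep\omega_{M}$ for suitable small $\ep$ (note $T = S - \omega_{M}$ requires $S \geq \omega_{M}$, so one should scale) finishes the argument. Both routes are valid. The trade-off is that yours is shorter and conceptually cleaner, but rests on the deep Demailly--P\u{a}un numerical characterization of the K\"ahler cone (regularization of quasi-psh currents, Monge--Amp\`ere mass concentration), whereas the paper's argument needs only Hironaka and elementary relative ampleness, staying within the toolkit the rest of the appendix already uses. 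Your sketched ``hands-on alternative'' of blowing up the ramification locus of $\psi$ is in the same spirit as the paper's argument, but the paper's choice to resolve the Stein factorization $V$ (rather than the ramification locus of $\psi$ itself) is what makes the relative ampleness over $X$, and hence the decomposition, fall out cleanly.
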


\begin{proof}
Let \( M \to V \to X \) be the Stein factorization of \(\psi\).
By Hironaka's blowing up,
we have a projective bimeromorphic morphism
\( \nu \colon Z \to V \) from another compact K\"ahler manifold \( Z \)
such that \( \mu \colon Z \to V \ratmap M \) is holomorphic.
Here, \( \SO_{Z}(-E) \) is \( \nu \)-ample for
a \( \nu \)-exceptional effective divisor \( E \).
Then \( \SO_{Z}(-E) \) is also relatively ample over \( X \), and
\( [\mu^{*}\psi^{*}\eta] - \ep[E] \) is
represented by a K\"ahler form on \( Z \) for some \( \ep > 0 \)
(cf.\ \cite[Lemma 4.4]{Fujiki_C}, \cite[Lemma 2]{Fub}).
Thus
\[ [\mu^{*}\psi^{*}\eta] - \ep[E] - [\mu^{*}\xi] \in P^{1}(Z) \]
for a K\"ahler form  \( \xi \) on \( M \).
Hence, \( \psi^{*}[\eta] - [\xi] \) is represented by a
\( d \)-closed real positive \( (1, 1) \)-current on \( M \).
\end{proof}

\subsection{Dynamical degrees}
\label{subsect:dynamicaldegree}

Let \( f \colon M \to M \) be a surjective endomorphism
of a compact K\"ahler manifold \( M \) of dimension \( n \).
Then \( f \) induces natural homomorphisms
\[ f^{*} \colon \OH^{i}(M, \BZZ) \to \OH^{i}(M, \BZZ) \quad
\text{and} \quad f_{*} \colon \OH_{i}(M, \BZZ) \to \OH_{i}(M, \BZZ) \]
for \( 0 \leq i \leq 2n \).
The composite
\[ \OH^{i}(M, \BZZ) \xrightarrow{f^{*}} \OH^{i}(M, \BZZ) \isom
\OH_{2n - i}(M, \BZZ) \xrightarrow{f_{*}} \OH_{2n - i}(M, \BZZ) \isom
\OH^{i}(M, \BZZ) \]
is just the multiplication map by \( \deg f \), where the isomorphism
above is induced from the Poincar\'e duality.
Thus \( f^{*} \colon \OH^{i}(M, \BRR) \to \OH^{i}(M, \BRR) \) is
isomorphic. Moreover, \( f^{*} \) preserves the Hodge structure,
i.e., \( f^{*}\OH^{p, q}(M) = \OH^{p, q}(M) \).
Note that \( f \) is a finite morphism, since
\( f^{*} \colon \OH^{1, 1}(M, \BRR) \to \OH^{1, 1}(M, \BRR) \)
is an isomorphism.

\begin{dfn}[Dynamical degree]
Let \( \eta \) be a K\"ahler form on \( M \).
For an integer \( 1 \leq l \leq n \), we set
\[ \delta_{l}(f, \eta) :=
\int_{M} f^{*}\eta^{l} \wedge \eta^{n - l}, \]
where \( \eta^{i} \) denotes the \( i \)-th power
\( \eta \wedge \cdots \wedge \eta \) for \( 1 \leq i \leq n \),
and \( \eta^{0} := 1 \).
The \( l \)-th \emph{dynamical degree} of \( f \) is defined to be
\[ d_{l}(f) := \lim\nolimits_{m \to \infty}
\left(\delta_{l}(f^{m}, \eta)\right)^{1/m}. \]
\end{dfn}

Note that \( d_{0}(f) = 1 \) and \( d_{n}(f) = \deg f \)
by the definition.

\begin{lem}\label{lem:indep}
Let \( x \) be an element of \( \overline{P^{1}(M)}\) and
\( y \) an element of \( \overline{P^{k}(M)} \) such that
\( x - [T_{1}] \in P^{1}(M) \) and
\( y - [T_{k}] \in P^{k}(M) \)
for certain \( d \)-closed real positive currents \( T_{1} \)
of type \( (1, 1) \) and \( T_{k} \) of type \( (k, k) \)
for \( 1 \leq k \leq n \).
Then
\[ d_{n - k}(f) = \lim\nolimits_{m \to \infty}
\left(\int_{M} (f^{m})^{*}(x^{n - k}) \cup y\right)^{1/m}. \]
\end{lem}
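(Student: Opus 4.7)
My plan is to sandwich $Q_m := \int_M (f^m)^*(x^{n-k}) \cup y$ between constant multiples of $\delta_{n-k}(f^m, \eta)$ for a fixed K\"ahler form $\eta$ on $M$, so that taking $m$-th roots forces $Q_m^{1/m} \to d_{n-k}(f)$. Set $\alpha_1 := x - [T_1] \in P^1(M)$ and $\alpha_k := y - [T_k] \in P^k(M)$; both are K\"ahler classes by hypothesis.

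For the upper bound, I would use openness of the K\"ahler cones to pick constants $C_1, C_2 > 0$ with $C_1[\eta] - x \in P^1(M)$ and $C_2[\eta]^k - y \in P^k(M)$. Factoring
\[
C_1^{n-k}[\eta]^{n-k} - x^{n-k} = (C_1[\eta] - x) \cup \sum\nolimits_{j=0}^{n-k-1} (C_1[\eta])^{n-k-1-j} \cup x^j,
\]
the right-hand side is a cup product of a K\"ahler class with a class in $\overline{P^{n-k-1}(M)}$, hence lies in $\overline{P^{n-k}(M)}$. Since $f^*$ preserves $\overline{P^{k}(M)}$ for every $k$ (the pullback of a K\"ahler $(k,k)$-form is smooth, $d$-closed, and weakly positive, and becomes K\"ahler after adding $\varepsilon\omega_0^k$ for any K\"ahler $\omega_0$), I can invoke Lemma~\ref{lem:positive}\eqref{lem:positive:1} twice: first pair the non-negative class $(f^m)^*(C_1^{n-k}[\eta]^{n-k} - x^{n-k})$ with $[\eta]^k$, then pair $(f^m)^*(x^{n-k}) \in \overline{P^{n-k}(M)}$ with the K\"ahler class $C_2[\eta]^k - y$. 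This yields $Q_m \leq C_2 C_1^{n-k}\, \delta_{n-k}(f^m, \eta)$.

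For the lower bound, I pick $c_1, c_2 > 0$ small enough that $\alpha_1 - c_1[\eta] \in P^1(M)$ and $\alpha_k - c_2[\eta]^k \in P^k(M)$, and then chain
\[
Q_m \;\geq\; \int_M (f^m)^*(x^{n-k}) \cup \alpha_k \;\geq\; \int_M (f^m)^*(\alpha_1^{n-k}) \cup \alpha_k \;\geq\; c_1^{n-k} c_2 \, \delta_{n-k}(f^m, \eta).
\]
The first inequality removes the $[T_k]$-piece and follows from Lemma~\ref{lem:Kaehler}\eqref{lem:Kaehler:item2} after approximating $(f^m)^*(x^{n-k}) \in \overline{P^{n-k}(M)}$ by K\"ahler $(n-k,n-k)$-classes. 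The second writes $x^{n-k} - \alpha_1^{n-k} = [T_1] \cup \beta$ with $\beta = \sum_{j=0}^{n-k-1} x^{n-k-1-j} \cup \alpha_1^j \in \overline{P^{n-k-1}(M)}$, pulls back, and uses that $(f^m)^*T_1$ remains a $d$-closed positive $(1,1)$-current (its local plurisubharmonic potential pulls back to one for $T_1\circ f^m$), paired with the class $(f^m)^*\beta \cup \alpha_k \in \overline{P^{n-1}(M)}$ to give a non-negative integral. The third inequality mirrors the upper-bound factorization applied to the K\"ahler differences $\alpha_1 - c_1[\eta]$ and $\alpha_k - c_2[\eta]^k$.

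The main obstacle I anticipate is the telescoping bookkeeping in the lower bound: replacing $x^{n-k}$ by $\alpha_1^{n-k}$ and $y$ by $\alpha_k$ requires carefully recognising each residual term as a cup product of a pulled-back positive current with a class in $\overline{P^{\bullet}(M)}$ that pairs non-negatively with K\"ahler, rather than merely as the class of a smooth K\"ahler form. Once both bounds are secured, $Q_m^{1/m}$ is pinched between two sequences converging to $d_{n-k}(f) = \lim_m \delta_{n-k}(f^m, \eta)^{1/m}$, proving the claim.
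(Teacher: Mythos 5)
Your proof is correct and follows essentially the same approach as the paper: both arguments sandwich the integral $\int_M (f^m)^*(x^{n-k}) \cup y$ between constant multiples of $\delta_{n-k}(f^m, \eta)$ by exploiting that $f^*$ preserves $\overline{P^\bullet(M)}$, that pullbacks of positive currents remain positive, and the telescoping identity $u^l - v^l = (u-v)\cup\sum u^{l-1-j}\cup v^j$ from Lemma~\ref{lem:positive}. The only cosmetic difference is in the bookkeeping of the lower bound: the paper scales $x$ and $y$ up by a single constant $a$ so that $ax-[\eta]$ and $ay-[\eta^k]$ are represented by positive currents and then expands the product $f^*(a^lx^l)\cup(ay)$ in one step, whereas you telescope in three stages through the K\"ahler classes $\alpha_1 = x-[T_1]$ and $\alpha_k = y-[T_k]$; both lead to the same sandwich and hence the same limit.
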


\begin{proof}
There exist \( d \)-closed positive currents \( S_{1} \) of type
\( (1, 1) \) and \( S_{k} \) of type \( (k, k) \), and a constant \( a > 0 \)
such that
\( ax - [\eta] = [S_{1}]\) and \( ay - [\eta^{k}] = [S_{k}]\).
We set \( l := n - k \). Then
\( a^{l}x^{l} - [\eta]^{l} = z \cup [S_{1}] \) for
an element \( z \in \overline{P^{l-1}(M)} \) by
Lemma~\ref{lem:positive}, \eqref{lem:positive:2}.
Thus \( f^{*}(a^{l}x^{l} - [\eta]^{l}) =
f^{*}z \cup f^{*}[S_{1}] \) is represented by a \( d \)-closed
positive \( (l, l) \)-current, since \( f^{*}[S_{1}] = [f^{*}S_{1}] \)
for the positive \( (1, 1) \)-current \( f^{*}S_{1} \).
Therefore,
\[f^{*}(a^{l}x^{l}) \cup (ay) - f^{*}[\eta]^{l} \cup [\eta]^{k}
= f^{*}z \cup f^{*}[S_{1}] \cup (ay)
+ f^{*}[\eta]^{l} \cup [S_{k}] \]
is represented by a positive \( (n, n) \)-current. Hence, for any \(
m \geq 1 \),
\begin{equation}\label{eq:indep1}
a^{l+1}\int_{M} (f^{m})^{*}(x^{l}) \cup y \geq \int_{M} (f^{m})^{*}\eta^{l}
\wedge \eta^{k} = \delta_{l}(f^{m}, \eta).
\end{equation}
Conversely,
there exists a constant \( b > 0 \) such that
\( b[\eta] - x \in P^{1}(M)\) and
\( b[\eta^{k}] - y \in P^{k}(M)\).
Then \( b^{l}[\eta]^{l} - x^{l} \in \overline{P^{l}(M)} \)
(cf.\ \eqref{eq:z}) and hence
\( f^{*}(b^{l}[\eta]^{l} - x^{l}) \in \overline{P^{l}(M)} \).
Therefore
\[ f^{*}(b^{l+1}[\eta]^{l}) \cup [\eta]^{k} - f^{*}(x^{l}) \cup y
\in \overline{P^{n}(M)}.\]
Thus, for any \( m \geq 1 \), we have
\begin{equation}\label{eq:indep2}
b^{l+1} \delta_{l}(f^{m}, \eta) = b^{l+1}\int_{M} (f^{m})^{*}\eta^{l}
\wedge \eta^{k} \geq \int_{M} (f^{m})^{*}(x^{l}) \cup y.
\end{equation}
Hence, we have the expected equality by applying \( m \to \infty\) to
the \( m \)-th roots of the both sides of the inequalities \eqref{eq:indep1}
and \eqref{eq:indep2}
\end{proof}

\begin{lem}\label{lem:d=genfin}
Suppose that there exist a generically finite surjective morphism \(
\mu \colon M \to X \) into another compact K\"ahler manifold \( X \)
and an endomorphism \( g \colon X \to X \) satisfying
\( \mu \circ f = g \circ \mu \).
Then \( d_{l}(f) = d_{l}(g) \) for any \( l \).
\end{lem}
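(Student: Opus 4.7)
The plan is to apply Lemma~\ref{lem:indep} with cohomology classes pulled back via $\mu$, and then transfer the resulting integral on $M$ back to $X$ using the intertwining relation $\mu \circ f^m = g^m \circ \mu$ together with the projection formula.

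Fix a K\"ahler form $\eta$ on $X$ and set $x := \mu^*[\eta] \in \OH^{1,1}(M,\BRR)$. First I would check that the classes $x$ and $y := x^k$ fulfill the hypotheses of Lemma~\ref{lem:indep}. Lemma~\ref{lem:genfin} supplies a $d$-closed positive $(1,1)$-current $T$ on $M$ with $x - [T] \in P^1(M)$; in particular $x \in \overline{P^1(M)}$ and dominates a positive class. Then Lemma~\ref{lem:positive}\eqref{lem:positive:2} applied to $\theta = x$ produces, for each $1 \le k \le n$, a class $z \in \overline{P^{k-1}(M)}$ such that $y - z \cup [T] = x^k - z \cup [T] \in P^k(M)$ and $z \cup [T]$ is represented by a $d$-closed positive $(k,k)$-current. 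Thus both hypotheses of Lemma~\ref{lem:indep} are satisfied.

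Setting $l = n - k$, Lemma~\ref{lem:indep} yields
\[
d_l(f) \;=\; \lim_{m \to \infty}\left(\int_M (f^m)^*(x^l) \cup y\right)^{1/m}.
\]
Since $\mu \circ f^m = g^m \circ \mu$, at the level of cohomology $(f^m)^* \circ \mu^* = \mu^* \circ (g^m)^*$, and therefore
\[
\int_M (f^m)^*(x^l) \cup y \;=\; \int_M \mu^*\bigl((g^m)^*[\eta]^l \cup [\eta]^k\bigr) \;=\; (\deg \mu)\int_X (g^m)^*\eta^l \wedge \eta^k \;=\; (\deg \mu)\,\delta_l(g^m,\eta),
\]
where the second equality is the projection formula for the generically finite surjective morphism $\mu$. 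Taking $m$-th roots and letting $m \to \infty$, the factor $\deg \mu$ disappears and one obtains $d_l(f) = d_l(g)$ for every $1 \le l \le n-1$. The boundary cases are routine: $d_0 \equiv 1$, and $d_n(f) = \deg f = \deg g = d_n(g)$, where the middle equality is forced by $\mu \circ f = g \circ \mu$ together with $\mu$ being generically finite of finite degree.

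I do not anticipate a serious obstacle: the whole point of Lemma~\ref{lem:indep} is to decouple the computation of $d_l$ from any particular choice of K\"ahler class on $M$, so the pulled-back class $\mu^*[\eta]$ is available to us even though it may lie only on the boundary $\overline{P^1(M)}$ of the K\"ahler cone. Once Lemma~\ref{lem:genfin} and Lemma~\ref{lem:positive} are used to verify the positivity hypotheses, the remaining computation is a one-line application of the projection formula.
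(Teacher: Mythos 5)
Your proof is correct and follows essentially the same route as the paper's: verify via Lemma~\ref{lem:genfin} and Lemma~\ref{lem:positive}\eqref{lem:positive:2} that $\mu^*[\eta]$ and its powers satisfy the positivity hypotheses of Lemma~\ref{lem:indep}, then invoke that lemma and transfer the integral to $X$ by the projection formula, where the factor $\deg\mu$ vanishes in the $m$-th-root limit. The only cosmetic difference is that the paper scales by a constant $a$ when checking the positivity condition, which is immaterial since scaling also disappears in the limit.
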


\begin{proof}
Let \( \xi \) be a K\"ahler form on \( X \).
By Lemma~\ref{lem:genfin},
\( [a\mu^{*}\xi - \eta] \) is
represented by a \( d \)-closed real positive \( (1, 1) \)-current on
\( M \) for a certain positive constant \( a \).
Then \( [a^{l}\mu^{*}\xi^{l} - \eta^{l}] \) is represented by
a \( d \)-closed real positive \( (l, l) \)-current
by Lemma~\ref{lem:positive}, \eqref{lem:positive:2}.
Thus, by Lemma~\ref{lem:indep},
\begin{align*}
d_{l}(f) &= \lim\nolimits_{m \to \infty}
\left(\int_{M} (f^{m})^{*}(\mu^{*}\xi^{l}) \wedge
\mu^{*}\xi^{n - l}\right)^{1/m} \\
&= \lim\nolimits_{m \to \infty}
\left( (\deg \mu) \int_{X} (g^{m})^{*}\xi^{l} \wedge \xi^{n - k}
\right)^{1/m} = d_{l}(g).  \qedhere
\end{align*}
\end{proof}

\begin{lem}[{cf.\ \cite{DS05}, \cite{Guedj06}}]\label{lem:fact:sr}
The \( k \)-th dynamical degree
\( d_{k}(f) \) equals the spectral radius of the following
endomorphism induced from \(f^{*}\)\emph{:}
\[ (f^{*})^{(k, k)} \colon \OH^{k, k}(M, \BRR) \to
\OH^{k, k}(M, \BRR). \]
\end{lem}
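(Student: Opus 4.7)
The plan is to apply Proposition~\ref{prop:B} to $\varphi := (f^{*})^{(k,k)}$ acting on $V_{\BRR} := \OH^{k,k}(M, \BRR)$, taking the cone $\SC := \overline{P^{k}(M)}$ and the interior point $u := [\eta^{k}]$ for a fixed K\"ahler form $\eta$ on $M$; the spectral radius of $\varphi$ will then be read off from the sequence $\chi(\varphi^{m}(u))$ for a suitable linear functional $\chi$.

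First I would verify the hypotheses of Proposition~\ref{prop:B} in this set-up. The cone $\overline{P^{k}(M)}$ is closed and convex by construction, and has non-empty interior (it contains $[\eta^{k}]$ by Lemma~\ref{lem:Kaehler}, \eqref{lem:Kaehler:item1}), hence spans $V_{\BRR}$. Setting $\chi(\xi) := \int_{M} \xi \cup [\eta^{n-k}]$, strict convexity of $\overline{P^{k}(M)}$ follows from Lemma~\ref{lem:positive}, \eqref{lem:positive:1}, which gives $\chi(\xi) > 0$ on $\overline{P^{k}(M)} \setminus \{0\}$. For the invariance $\varphi(\SC) \subset \SC$, I take a K\"ahler $(k,k)$-form $\omega$ on $M$: its pullback $f^{*}\omega$ is $d$-closed and real, and its local coefficient matrix, being the compound-Jacobian conjugate of a positive-definite Hermitian matrix, is positive semi-definite at every point of $M$ (positive-definite outside the ramification locus of $f$). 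Adding $\epsilon \eta^{k}$ for any $\epsilon > 0$ produces a positive-definite coefficient matrix, so $f^{*}\omega + \epsilon \eta^{k}$ is a K\"ahler $(k,k)$-form and hence $[f^{*}\omega] + \epsilon[\eta^{k}] \in P^{k}(M)$; letting $\epsilon \to 0$ yields $[f^{*}\omega] \in \overline{P^{k}(M)}$. Continuity of $\varphi$ then extends this to all of $\overline{P^{k}(M)}$.

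Once the hypotheses are in place, Proposition~\ref{prop:B} gives
$$\rho(\varphi) \;=\; \lim_{m \to \infty} \chi(\varphi^{m}(u))^{1/m} \;=\; \lim_{m \to \infty} \left(\int_{M} (f^{m})^{*}\eta^{k} \wedge \eta^{n-k}\right)^{1/m} \;=\; \lim_{m \to \infty} \delta_{k}(f^{m}, \eta)^{1/m} \;=\; d_{k}(f),$$
where I used $(f^{*})^{m} = (f^{m})^{*}$ and the definition of the dynamical degree.

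The only substantive obstacle is verifying the cone-invariance $\varphi(\overline{P^{k}(M)}) \subset \overline{P^{k}(M)}$: a K\"ahler $(k,k)$-form can genuinely degenerate to only semi-positive under pullback across the ramification locus of $f$, so $[f^{*}\omega]$ need not lie in the open cone $P^{k}(M)$ itself, and one must exploit the closure by a perturbation argument as above. Beyond this, the proof is a mechanical application of the Perron--Frobenius-type Proposition~\ref{prop:B}.
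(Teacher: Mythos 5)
Your proposal is correct and follows essentially the same route as the paper: both set $\chi(x)=\int_M x\cup[\eta^{n-k}]$, invoke Lemma~\ref{lem:positive} for strict positivity of $\chi$ on $\overline{P^{k}(M)}\setminus\{0\}$, observe the invariance $(f^*)^{(k,k)}P^{k}(M)\subset\overline{P^{k}(M)}$, and then read off $\rho$ from Proposition~\ref{prop:B} using $u=[\eta^{k}]$. The only difference is that you flesh out the cone-invariance step (the compound-Jacobian positivity plus $\epsilon\eta^{k}$-perturbation across the ramification locus of the finite map $f$), which the paper states without elaboration.
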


\begin{proof}
For a K\"ahler form \( \eta \), \( [\eta^{k}] \in \OH^{k, k}(M, \BRR)\)
is contained in \( P^{k}(M) \).
We define a linear form
\( \chi \colon \OH^{k, k}(M, \BRR) \to \BRR \) by
\( \chi(x) = \int_{M} x \cup [\eta^{n - k}] \).
Then \( \delta_{k}(f, \eta) = \chi([f^{*}\eta^{k}]) \).
We have
\( \chi > 0 \) on \( \overline{P^{k}(M)} \setminus \{0\} \) by
Lemma~\ref{lem:positive}.
Moreover,
\( (f^{*})^{(k, k)}P^{k}(M) \subset \overline{P^{k}(M)} \).
Therefore, by Proposition~\ref{prop:B},
\[ \rho((f^{*})^{(k, k)}) = \lim\nolimits_{m \to \infty}
\chi([(f^{m})^{*}\eta^{k}])^{1/m} = \lim\nolimits_{m \to \infty}
\delta_{k}(f^{m}, \eta)^{1/m} = d_{k}(f). \qedhere\]
\end{proof}

\begin{fact}\label{fact:sr}
We have \( d_{l - 1}(f)d_{l + 1}(f) \leq d_{l}(f)^{2} \)
for any \( 1 \leq l < n \) (cf.\ \cite[Proposition 1.2]{Gu}).
In particular, \( d_{n}(f) = \deg f \leq d_{1}(f)^{n} \).
\end{fact}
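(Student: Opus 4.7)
The plan is to obtain the log-concavity inequality as the asymptotic consequence of a Khovanskii--Teissier type inequality applied to each iterate $f^m$, and then derive $d_n(f) \leq d_1(f)^n$ by a purely formal telescoping from log-concavity and $d_0(f) = 1$.

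First, I would record the Khovanskii--Teissier inequality in the form needed: for any two nef $(1,1)$-classes $\alpha, \beta$ on a compact K\"ahler manifold $M$ of dimension $n$ and for any $1 \leq l < n$,
\[ \Bigl(\int_M \alpha^l \wedge \beta^{n-l}\Bigr)^2 \geq
\Bigl(\int_M \alpha^{l-1} \wedge \beta^{n-l+1}\Bigr)
\Bigl(\int_M \alpha^{l+1} \wedge \beta^{n-l-1}\Bigr). \]
This is classical for K\"ahler classes and extends to nef classes by approximation (Demailly regularisation); alternatively, one may approximate $\alpha + \ep \beta$ by K\"ahler classes and let $\ep \to 0$.

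Next, for each $m \geq 1$, apply the inequality with $\alpha = [(f^m)^*\eta]$ and $\beta = [\eta]$. Although $(f^m)^*\eta$ may degenerate along the ramification locus of $f^m$, its cohomology class is nef (equivalently, it lies in $\overline{P^1(M)}$, as in Lemma~\ref{lem:genfin} applied to $\psi = f^m$). The inequality therefore reads
\[ \delta_l(f^m, \eta)^2 \geq \delta_{l-1}(f^m, \eta)\,\delta_{l+1}(f^m, \eta). \]
Taking $(2m)$-th roots and letting $m \to \infty$, the definition of $d_l(f)$ immediately yields
\[ d_l(f)^2 \geq d_{l-1}(f)\,d_{l+1}(f), \]
which is the first assertion of the fact.

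For the second assertion, set $a_l := \log d_l(f)$ (with $a_l := -\infty$ if $d_l(f) = 0$, in which case the inequality is trivial). The just-established log-concavity says that the sequence of forward differences $a_l - a_{l-1}$ is non-increasing in $l$. Since $d_0(f) = 1$, we have $a_0 = 0$, so $a_l - a_{l-1} \leq a_1 - a_0 = a_1$ for every $1 \leq l \leq n$. Summing telescopically from $l = 1$ to $n$ gives $a_n \leq n a_1$, i.e.\ $d_n(f) \leq d_1(f)^n$; combined with $d_n(f) = \deg f$, this is the stated inequality.

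The main obstacle is the justification of Khovanskii--Teissier for the nef (but generally not K\"ahler) pullback class $[(f^m)^*\eta]$; the rest of the argument is formal. One clean way to avoid the subtlety is to replace $\alpha$ by $\alpha + \ep [\eta] \in P^1(M)$ (which is genuinely K\"ahler by Lemma~\ref{lem:Kaehler}, \eqref{lem:Kaehler:item1}), apply the K\"ahler Khovanskii--Teissier inequality, and then take $\ep \to 0^+$ using the continuity of intersection numbers. This reduces the proof to the well-established case of K\"ahler classes already encoded in the K\"ahler cone formalism of Section~\ref{subsect:sprad}.
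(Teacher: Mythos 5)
The paper does not prove Fact~\ref{fact:sr}; it simply cites \cite[Proposition 1.2]{Gu}, so there is no in-text argument for you to deviate from. Your proof is correct and is the standard derivation: Khovanskii--Teissier applied to $\alpha = [(f^m)^*\eta]$ and $\beta = [\eta]$, followed by taking $m$-th roots and using the formal telescoping from log-concavity and $d_0(f)=1$. This is essentially the content of the cited reference. Two small remarks: first, since $f$ is a finite holomorphic self-map of the compact K\"ahler manifold (noted in Section~\ref{subsect:dynamicaldegree}), the form $(f^m)^*\eta$ is a smooth semipositive $(1,1)$-form whose class is therefore nef for elementary reasons; invoking Lemma~\ref{lem:genfin} is more machinery than you need here, though your $\ep$-perturbation $[(f^m)^*\eta] + \ep[\eta] \in P^1(M)$ cleanly settles the matter in any case. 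Second, it is worth noting explicitly that all $d_l(f)$ are strictly positive (indeed $\geq 1$, since $d_0 = 1$, $d_n = \deg f \geq 1$, and the concave sequence $a_l$ is then nonnegative), so the log-concavity statement and the telescoping argument never touch the degenerate case $a_l = -\infty$ that you parenthetically allowed for.
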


For an element \( x \in \OH^{k, k}(M, \BRR) \) and
for \( l \leq n - k \),
we set
\[ \delta_{l}(f, \eta; x) :=
\int_{M} [f^{*}\eta^{l} \wedge \eta^{n - k - l}] \cup x . \]
Then
\( \delta_{l}(f, \eta; [\eta^{k}]) = \delta_{l}(f, \eta) \).
Let \( C > 0 \) be a constant such that
\( C[\eta^{k}] - x \in P^{k}(M) \).
Then,
\( C \delta_{l}(f, \eta) \geq \delta_{l}(f, \eta; x) \)
for any \( f \colon M \to M \).
Hence, we have
\begin{equation}\label{eq:deltax}
d_{l}(f) \geq \varlimsup\nolimits_{m \to \infty}
\left(\delta_{l}(f^{m}, \eta; x)\right)^{1/m}.
\end{equation}

\begin{prop}\label{prop:subvar}
Let \( F \) be a compact K\"ahler manifold of dimension \( k \) and let
\( \phi \colon F \to M \) be a generically finite morphism such that
\( \phi \circ h = f \circ \phi \) for
a surjective endomorphism \( h \colon F \to F \).
Then for any \( 1 \leq l \leq k\), one has
\[ d_{l}(f) \geq d_{l}(h) \quad \text{ and } \quad
d_{l + n - k}(f) \geq \deg(f)\deg(h)^{-1} d_{l}(h) . \]
\end{prop}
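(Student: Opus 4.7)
The plan is to derive both inequalities from a single intertwining relation on cohomology and one application of Lemma~\ref{lem:indep} to $h\colon F\to F$ with the semi-positive class $[\phi^*\eta] \in \overline{P^1(F)}$. Since $[\phi^*\eta]$ dominates a $d$-closed positive current by Lemma~\ref{lem:genfin}, and its $(k-l)$-th power dominates one by Lemma~\ref{lem:positive}\,\eqref{lem:positive:2}, the hypotheses of Lemma~\ref{lem:indep} on $F$ are met and yield
\[
d_l(h) = \lim_{m\to\infty}\left(\int_F (h^m)^*\phi^*\eta^l \wedge \phi^*\eta^{k-l}\right)^{1/m}.
\]
This identity will be the target of both computations on the $M$-side.

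For the first inequality $d_l(f) \geq d_l(h)$, set $x := \phi_*(1_F) \in \OH^{n-k,n-k}(M,\BRR)$. The dual of $\phi\circ h = f\circ\phi$ is $(h^m)^*\phi^* = \phi^*(f^m)^*$, and the projection formula $\int_F \phi^*\gamma = \int_M \gamma \cup \phi_*(1_F)$ yields
\[
\delta_l(f^m,\eta;x) = \int_M (f^m)^*\eta^l \wedge \eta^{k-l} \cup \phi_*(1_F) = \int_F (h^m)^*\phi^*\eta^l \wedge \phi^*\eta^{k-l}.
\]
The bound \eqref{eq:deltax} then gives $d_l(f) \geq \varlimsup_m \delta_l(f^m,\eta;x)^{1/m} = d_l(h)$.

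For the second inequality, the key extra input is the \emph{twisting identity}
\[
f^*\phi_* = \frac{\deg f}{\deg h}\,\phi_*h^* \qquad \text{on } \OH^*(F,\BRR) \to \OH^*(M,\BRR).
\]
I would derive it as follows. From $f\circ\phi = \phi\circ h$ we get $f_*\phi_* = \phi_*h_*$ immediately. Since $f^*$ and $h^*$ are invertible on cohomology (as recorded at the start of Section~\ref{subsect:dynamicaldegree}) and the projection formula gives $f_*f^* = (\deg f)\id$ and $h_*h^* = (\deg h)\id$, we may substitute $f_* = (\deg f)(f^*)^{-1}$ and $h_* = (\deg h)(h^*)^{-1}$ and rearrange to obtain the twisting identity. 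Iterating yields $(f^m)^*\phi_* = (\deg f/\deg h)^m\,\phi_*(h^m)^*$. Now take $u := \phi_*[\phi^*\eta^l] = [\eta^l]\cup\phi_*(1_F) \in \OH^{l+n-k,l+n-k}(M,\BRR)$ (the equality by the projection formula) and the linear form $\chi(\beta) := \int_M \beta \cup [\eta^{k-l}]$. Combining the iterated twisting identity with the projection formula, one computes
\[
\chi((f^m)^*u) = \left(\frac{\deg f}{\deg h}\right)^m \int_F (h^m)^*\phi^*\eta^l \wedge \phi^*\eta^{k-l}.
\]
The elementary spectral-radius bound $\varlimsup_m \chi((f^m)^*u)^{1/m} \leq \rho(f^*|_{\OH^{l+n-k,l+n-k}(M)}) = d_{l+n-k}(f)$, combined with the displayed formula for $d_l(h)$, yields $d_{l+n-k}(f) \geq (\deg f/\deg h)\,d_l(h)$.

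The main obstacle is establishing the twisting identity cleanly; it is the one ingredient that is not merely the intertwining of pullbacks, and it rests on the invertibility of $f^*$ and $h^*$ on $\OH^*(-,\BRR)$ together with the standard identity $f_*f^* = \deg f$. Once it is in hand, both inequalities follow from the same circle of ideas (Lemma~\ref{lem:indep}, Lemma~\ref{lem:genfin}, the projection formula, and \eqref{eq:deltax}), and the bulk of the remaining work is routine algebraic manipulation of pullbacks and pushforwards in cohomology.
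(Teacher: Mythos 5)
Your proof is correct. The first inequality follows essentially the same path as the paper: you use $\phi_*(1_F) = \class(\phi_*F)$, the projection formula, Lemma~\ref{lem:genfin}, Lemma~\ref{lem:indep}, and \eqref{eq:deltax}. The interesting divergence is in the second inequality. The paper derives the same twisting identity $f^*\phi_* = (\deg f/\deg h)\,\phi_*h^*$ (written there as $(\deg h)^{-1}G_i\circ(h^*)^{(i,i)} = (\deg f)^{-1}(f^*)^{(n-k+i,n-k+i)}\circ G_i$) from exactly the same ingredients — $\phi_*h_* = f_*\phi_*$ plus $f_*f^* = \deg f\cdot\id$ and $h_*h^* = \deg h\cdot\id$ — but then invokes Theorem~\ref{thm:Birkhoff} to produce an actual eigenvector $w\in\overline{P^l(F)}$ of $(h^*)^{(l,l)}$ with eigenvalue $d_l(h)$, checks that $G_l(w)\neq 0$ via $\int_F w\cup[\phi^*\eta]^{k-l} > 0$, and concludes that $(\deg f/\deg h)d_l(h)$ is literally an eigenvalue of $(f^*)^{(l+n-k,l+n-k)}$. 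You instead bypass Perron--Frobenius entirely and use the elementary Gelfand-type estimate $\varlimsup_m|\chi((f^m)^*u)|^{1/m}\leq\rho(f^*)$ applied to $u=\phi_*[\phi^*\eta^l]$ and $\chi=\int_M(\cdot)\cup[\eta^{k-l}]$, together with the iterated twisting identity. Both routes are valid; the paper's yields the structurally sharper statement that the quantity is an eigenvalue (which is not needed for the inequality), while yours is more self-contained in that it avoids the cone-theoretic fixed-vector argument. The one thing to keep explicit in your write-up is that the sequence $\chi((f^m)^*u)$ is nonnegative (which it is, being $(\deg f/\deg h)^m$ times an integral of a nonnegative $(k,k)$-form against $F$), so the limsup bound applies directly without absolute values.
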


\begin{proof}
Let \( G_{i} \colon \OH^{2i}(F, \BCC) \to \OH^{2(n - k + i)}(M, \BCC) \)
be the Gysin homomorphism
\[ \OH^{2i}(F, \BCC) \isom \OH_{2k - 2i}(F, \BCC) \xrightarrow{\phi_{*}}
\OH_{2k - 2i}(M, \BCC) \isom \OH^{2n -2k + 2i}(M, \BCC) \]
for \( 0 \leq i \leq k \),
where the isomorphisms in both sides are induced
from the Poincar\'e duality. Note that
\( G_{i}\OH^{p, q}(F) \subset
\OH^{p + n - k, q + n - k}(M) \).
We have the ``projection formula'' \( G_{p+q}(x \cup \phi^{*}(y)) =
G_{p}(x) \cup y \) for any \( x \in \OH^{p}(F, \BCC) \) and
\( y \in \OH^{q}(M, \BCC) \).
The element \( G_{0}(1) \in \OH^{2(n - k)}(M, \BRR)\) is just the
cohomology class \( \class(\phi_{*}F) \) of the cycle
\( \phi_{*}F = \deg(F \to \phi(F))\phi(F)\).
Thus,
\[ \int_{F} (h^{m})^{*}(\phi^{*}\eta^{l}) \wedge
\phi^{*}\eta^{k - l} =
\int_{F} \phi^{*}((f^{m})^{*}\eta^{l}) \wedge \phi^{*}\eta^{k - l}
= \int_{M} [(f^{m})^{*}\eta^{l}) \wedge \eta^{k - l}] \cup
\class(\phi_{*}F) \]
for any K\"ahler form \(\eta\) and for any \( m \geq 1 \).
Since \( \phi \) is generically finite,
\( [\phi^{*}\eta] - [T] \in P^{1}(F) \)
for a \( d \)-closed real positive \( (1, 1) \)-current \( T \)
by Lemma~\ref{lem:genfin}.
Thus, we have
\begin{align}
d_{l}(h) &= \lim\nolimits_{m \to \infty} \left(
\int_{M} [(f^{m})^{*}\eta^{l}) \wedge \eta^{k - l}] \cup
\class(\phi_{*}F) \right)^{1/m} \label{eq:prop:subvar}\\
&= \lim\nolimits_{m \to \infty}
\left(\delta_{l}(f^{m}, \eta; \class(\phi_{*}F))\right)^{1/m}
\leq d_{l}(f) \notag
\end{align}
by \eqref{lem:positive:2} of Lemma~\ref{lem:positive},
Lemma~\ref{lem:indep}, and \eqref{eq:deltax}.
From \( \phi_{*} \circ h_{*} = f_{*} \circ \phi_{*} \),
we have
\[ (\deg h)^{-1} G_{i} \circ (h^{*})^{(i, i)}  =
(\deg f)^{-1} (f^{*})^{(n - k + i, n - k + i)} \circ G_{i} \]
for any \( i\),
since \( (\deg h)^{-1} h^{*} \) is the inverse of
\[ \OH^{2i}(F, \BCC) \isom \OH_{2k - 2i}(F, \BCC) \xrightarrow{h_{*}}
\OH_{2k - 2i}(F, \BCC) \isom \OH^{2i}(F, \BCC) \]
and \( (\deg f)^{-1}f^{*} \) is the inverse of
\[ \OH^{2(n - k +i)}(M, \BCC) \isom \OH_{2k - 2i}(M, \BCC)
\xrightarrow{f_{*}}
\OH_{2k - 2i}(M, \BCC) \isom \OH^{2(n - k +i)}(M, \BCC). \]
We can find an eigenvector \( w \) of \( (h^{*})^{(l, l)} \) with the
eigenvalue \( d_{l}(h) \) from the cone \( \overline{P^{l}(F)} \) by
Theorem~\ref{thm:Birkhoff}.
Thus, \( f^{*}G_{l}(w) = \deg(f)\deg(h)^{-1}d_{l}(h)G_{l}(w)  \).
We have
\[ \int_{M} G_{l}(w) \cup [\eta]^{k - l}
= \int_{F} w \cup [\phi^{*}\eta]^{k - l} > 0 \]
for the K\"ahler form \( \eta \) above
by the projection formula for \( G_{l} \) and by Lemma~\ref{lem:positive}.
Thus, \( G_{l}(w) \ne 0 \) and
\(\deg(f)\deg(h)^{-1}d_{l}(h) \) is an eigenvalue
of \((f^{*})^{(n - k + l, n - k + l)} \).
Therefore, \( d_{n - k + l}(f) \geq \deg(f)\deg(h)^{-1}d_{l}(h)  \).
\end{proof}

Now, we are ready to prove the first assertion of Theorem~D.

\begin{proof}[Proof of \eqref{ThD:1} of \emph{Theorem~D}]
We set \( d = \dim Y > 0 \).
We have \( d_{1}(f) \geq d_{1}(f|_{F}) \) by
Proposition~\ref{prop:subvar}.
If \( d_{1}(f) \leq 1 \), then \( \deg f = d_{1}(f) = d_{1}(f|_{F}) = 1 \) by
Fact~\ref{fact:sr}.
Thus, we may assume that \( d_{1}(f) > 1 \).
Let \( v \in \overline{P^{1}(X)} \subset \OH^{1, 1}(X, \BRR) \) be an
eigenvector of \( f^{*} \) with the eigenvalue \( d_{1}(f) \).
If \( v|_{F} \in \OH^{1, 1}(F, \BRR) \) is not zero, then
\( d_1(f) \leq d_1(f|_F) \); hence
\( d_{1}(f) = d_{1}(f|_{F}) \) by Proposition~\ref{prop:subvar}.
Thus, it is enough to prove \( v|_{F} \ne 0 \).

There is a bimeromorphic morphism \( \nu \colon S \to Y \)
from a compact K\"ahler manifold \( S \) by \cite{Va}.
Let \( M \to X \times_{Y} S \) be a proper morphism from a compact
K\"ahler manifold \(M\),
giving rise to a bimeromorphic morphism to the main component of
\( X \times_{Y} S \), i.e., the unique component
dominating \( S \).
Let \( \mu \colon M \to X \) be the induced bimeromorphic morphism and
let \( \varpi \colon M \to S \) be the induced fiber space.
We may replace \( F \) with a general fiber of \( \pi \), since
\( d_{1}(f|_{F}) \) depends only on the cohomology class
\( \class(F) \in \OH^{d, d}(X, \BRR) \) (cf.\ \eqref{eq:prop:subvar}).
Hence we may assume that \( \nu \) is an isomorphism over a neighborhood of
\( \pi(F) \) and that \( \mu \) is an isomorphism along \( \mu^{-1}(F) \).
Let \( f_{M} = \mu^{-1} \circ f \circ \mu
\colon M \ratmap M \) be the meromorphic endomorphism and
let \( \varphi \colon Z \to M \) be a bimeromorphic morphism from
another compact K\"ahler manifold \( Z \) such that
\( g := f_{M} \circ \varphi \colon Z \to M\) is holomorphic.
Let \( (f_{M}^{*})^{(i, i)} \) be an endomorphism of \(\OH^{i, i}(M, \BRR)
\) for \( 0 \leq i \leq n = \dim X\) defined as
\[ (f_{M}^{*})^{(i, i)} \colon \OH^{i, i}(M, \BRR) \xrightarrow{g^{*}}
\OH^{i, i}(Z, \BRR) \xrightarrow{\varphi_{*}} \OH^{i, i}(M, \BRR). \]
Since \( \varphi_{*} \circ \varphi^{*} = \id\),
we have a commutative diagram
\[ \begin{CD}
\OH^{i, i}(X, \BRR) @>{f^{*}}>> \OH^{i, i}(X, \BRR) \\
@V{\mu^{*}}VV @V{\mu^{*}}VV \\
\OH^{i, i}(M, \BRR) @>{(f_{M}^{*})^{(i, i)}}>> \phantom{.}\OH^{i, i}(M, \BRR).
\end{CD} \]
Thus, \( \mu^{*}v \in \overline{P^{1}(M)}\) is also an eigenvector of
\( (f_{M}^{*})^{(1, 1)} \) with the eigenvalue \( d_{1}(f) \).
We have also \( f_{M}^{*} \circ \varpi^{*} = \varpi^{*}\) from
\( \varpi \circ f_{M} = \varpi \).
For any \( \alpha \in \OH^{1, 1}(X, \BRR) \),
\( \beta \in \OH^{1, 1}(S, \BRR) \), and
\( \xi \in \OH^{i, i}(M, \BRR)\), we have the equalities
\begin{align}
(f_{M}^{*})^{(i+1, i+1)}(\xi \cup \mu^{*}\alpha) &=
(f_{M}^{*})^{(i, i)}(\xi) \cup \mu^{*}f^{*}\alpha, \label{eq:fMprojformula}\\
(f_{M}^{*})^{(i+1, i+1)}(\xi \cup \varpi^{*}\beta) &=
(f_{M}^{*})^{(i, i)}\xi \cup \varpi^{*}\beta \notag
\end{align}
by applying the projection formula to
\[ \varphi_{*}(g^{*}(\xi \cup \mu^{*}\alpha)) =
\varphi_{*}(g^{*}\xi \cup \varphi^{*}\mu^{*}f^{*}\alpha),
\quad \text{ and } \quad
\varphi_{*}(g^{*}(\xi \cup \varpi^{*}\beta)) =
\varphi_{*}(g^{*}\xi \cup \varphi^{*}\varpi^{*}\beta). \]
Assuming \( v|_{F} = 0 \), we shall derive a contradiction
by an argument similar to \cite[Section 2.1, Remark (11)]{Zh}.
We set \( x := \mu^{*}\alpha \) and
\( y := \varpi^{*}\beta \in \OH^{1, 1}(M, \BRR) \)
for \( \alpha \in P^{1}(X) \) and \( \beta \in P^{1}(S) \). Then
\( y^{d} = c \class(\mu^{-1}F) \in \OH^{d, d}(M, \BRR) \)
for some \( c > 0 \).
Thus, \( \mu^{*}v \cup y^{d} = 0 \).
Let \( 1 \leq s \leq d\) be the minimum integer such that
\( \mu^{*}v \cup y^{s} = 0 \).
By \cite[Corollaire 3.5]{DS}, there is
a constant \( b \) such that
\[ (b \mu^{*} v + y) \cup y^{s - 1} \cup
\mu^{*}(\alpha_{1} \cup \cdots \cup \alpha_{n - s - 1})
= 0 \in \OH^{n - 1, n - 1}(M, \BRR)\]
for any \( \alpha_{i} \in \OH^{1, 1}(X, \BRR) \),
where \( n = \dim X \).
Taking \( f_{M}^{*} \), by \eqref{eq:fMprojformula}, we have
\[ (b d_{1}(f) \mu^{*}v + y) \cup y^{s - 1} \cup
\mu^{*}(f^{*}\alpha_{1} \cup \cdots \cup f^{*}\alpha_{n - s - 1}) = 0. \]
In particular,
\[ (b \mu^{*} v + y) \cup y^{s - 1} \cup x^{n - s} =
(b d_{1}(f) \mu^{*}v + y) \cup y^{s - 1} \cup x^{n - s} = 0. \]
If \( b = 0 \), then \( y^{s} \cup x^{n - s} = 0 \)
contradicting Lemmas~\ref{lem:positive} and \ref{lem:genfin},
since \( y^{s} \ne 0 \). Therefore,
\( b \ne 0 \) and \( \mu^{*}v \cup y^{s - 1} \cup x^{n - s} =  0 \).
Then \( \mu^{*}v \cup y^{s-1} = 0 \) by
Lemmas~\ref{lem:positive} and \ref{lem:genfin}.
This contradicts the minimality of \( s \).
Thus, we have completed the proof of \eqref{ThD:1} of Theorem~D.
\end{proof}

\subsection{Topological entropies}
\label{subsect:topent}

Let \( f \colon M \to M \) be a surjective endomorphism of a compact
K\"ahler manifold \( M \).
We consider the properties of topological entropy \( h_{\topo}(f) \)
of \( f \).
Instead of giving the definition of \( h_{\topo} \), we use the
following:

\begin{fact}[Gromov \cite{G}, Yomdin \cite{Y} (cf.\ \cite{Fr})]
\label{fact:topent}
For the topological entropy
\( h_{\topo}(f) \) of \( f \), one has
\[ h_{\topo}(f) = \max\nolimits_{1 \leq i \leq n} \log d_{i}(f)
= \log \rho(\OH^{*}(M, \BCC), f^{*}). \]
\end{fact}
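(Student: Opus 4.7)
The plan is to treat the two equalities separately, since the second is essentially linear-algebraic given Lemma~\ref{lem:fact:sr}, while the first is an analytic statement due to Gromov and Yomdin.

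First I would establish \( \max_{1 \leq i \leq n} \log d_i(f) = \log \rho(\OH^*(M, \BCC), f^*) \). The pullback \( f^* \) respects both the grading \( \OH^*(M, \BCC) = \bigoplus_i \OH^i(M, \BCC) \) and the Hodge decomposition \( \OH^i(M, \BCC) = \bigoplus_{p+q=i} \OH^{p,q}(M) \), so \( \rho(\OH^*, f^*) \) is the maximum of \( \rho((f^*)^{(p,q)}) \) over all bi-degrees. Lemma~\ref{lem:fact:sr} gives \( \rho((f^*)^{(k,k)}) = d_k(f) \) on the diagonal pieces, so \( \rho(\OH^*, f^*) \geq \max_k d_k(f) \). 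For the reverse inequality I would dominate each off-diagonal \( (p,q) \)-piece by a diagonal one. Since \( (f^*)^{(p,q)} \) and \( (f^*)^{(q,p)} \) are complex conjugates they have equal spectral radius, so I may assume \( p \leq q \). The Hodge-Riemann/Khovanskii-Teissier type inequality \( \rho((f^*)^{(p,q)})^2 \leq \rho((f^*)^{(p,p)}) \cdot \rho((f^*)^{(q,q)}) = d_p(f) d_q(f) \), combined with the log-concavity \( d_{l-1}(f) d_{l+1}(f) \leq d_l(f)^2 \) of Fact~\ref{fact:sr}, yields \( \rho((f^*)^{(p,q)}) \leq \max_k d_k(f) \). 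This gives the desired equality, including the cases of odd-degree cohomology.

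Next I would prove \( h_{\topo}(f) = \log \rho(\OH^*, f^*) \) by combining two classical results. The upper bound \( h_{\topo}(f) \leq \log \rho(\OH^*, f^*) \) is Gromov's inequality: one estimates \( h_{\topo}(f) \) by the exponential growth rate of the volume of the graph \( \Gamma_m \subset M^{m+1} \) of \( (\id, f, f^2, \ldots, f^m) \) with respect to a product Kähler metric, expands \( \mathrm{vol}(\Gamma_m) = \int_{\Gamma_m} (\sum_j \mathrm{pr}_j^*\eta)^n / n! \) binomially, and uses the cohomological expression \( \mathrm{pr}_0^*\eta^{n-l} \cup \mathrm{pr}_j^*\eta^l = (f^j)^*\eta^l \cup \eta^{n-l} \) in \( \OH^*(M) \) together with Proposition~\ref{prop:B} to reduce volume growth to spectral growth. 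The lower bound \( h_{\topo}(f) \geq \log \rho(\OH^*, f^*) \) is Yomdin's theorem, which applies to any \( C^{\infty} \) self-map of a compact manifold and thus, a fortiori, to a holomorphic \( f \); its proof uses semi-algebraic techniques to control the distortion of iterates in terms of the action on homology.

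The hard part is of course Yomdin's inequality, whose semi-algebraic content lies well outside the complex-geometric arguments used elsewhere in this paper; Gromov's upper bound is more transparent but still requires the explicit volume estimate on graphs. By contrast the algebraic identification of \( \rho(\OH^*, f^*) \) with \( \max_k d_k(f) \) is essentially a bookkeeping exercise given Lemma~\ref{lem:fact:sr} and the log-concavity from Fact~\ref{fact:sr}.
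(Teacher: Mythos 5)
The paper offers no proof of this statement: it is presented as a ``Fact'' with citations to Gromov \cite{G}, Yomdin \cite{Y}, and Friedland \cite{Fr}, so there is no in-paper argument to compare against. Your reconstruction of the cited results is sound in outline: the equality \( h_{\topo}(f) = \log \rho(\OH^*(M,\BCC),f^*) \) is indeed the conjunction of Gromov's volume-of-graph upper bound and Yomdin's semi-algebraic lower bound, and the identification \( \rho(\OH^*,f^*) = \max_k d_k(f) \) does reduce, via \( f^* \)-equivariance of the Hodge decomposition and Lemma~\ref{lem:fact:sr}, to bounding the off-diagonal pieces \( \rho((f^*)^{(p,q)}) \). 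Two small remarks. First, the bound \( \rho((f^*)^{(p,q)})^2 \leq d_p(f)\,d_q(f) \) is not merely a ``Hodge-Riemann/Khovanskii-Teissier type'' formal consequence but a genuine theorem (due to Dinh) whose proof requires real work with positivity of currents and the Hodge-Riemann bilinear relations; you should cite it as such rather than gesture at it, since it is the one nontrivial ingredient in the first equality that is not already in the paper. Second, once you have \( \rho((f^*)^{(p,q)}) \leq \sqrt{d_p d_q} \), the conclusion \( \leq \max_k d_k(f) \) is immediate from \( \sqrt{d_p d_q} \leq \max(d_p,d_q) \); the appeal to the log-concavity of Fact~\ref{fact:sr} is superfluous there.
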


As a consequence of Proposition~\ref{prop:subvar}, we have:

\begin{cor}\label{cor:topentsubvar}
In the situation of \emph{Proposition~\ref{prop:subvar}},
\[ h_{\topo}(f) \geq \log \left( \deg(f)\deg(h)^{-1}\right) +
h_{\topo}(h) \geq h_{\topo}(h). \]
\end{cor}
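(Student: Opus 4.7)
The plan is to combine the two inequalities in Proposition~\ref{prop:subvar} with the Gromov--Yomdin formula recorded in Fact~\ref{fact:topent}. The first inequality of the corollary will come from the ``shifted'' estimate \(d_{l+n-k}(f) \geq \deg(f)\deg(h)^{-1} d_{l}(h)\), while the second (i.e., plain monotonicity \(h_{\topo}(f) \geq h_{\topo}(h)\)) will come from the unshifted estimate \(d_l(f) \geq d_l(h)\).

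For the first inequality, I would pick an index \(l_{0}\) with \(1 \leq l_{0} \leq k\) realizing the maximum
\[
h_{\topo}(h) = \max\nolimits_{1 \leq l \leq k} \log d_{l}(h) = \log d_{l_{0}}(h),
\]
via Fact~\ref{fact:topent} applied to \(h\) on \(F\). Set \(i_{0} := l_{0} + n - k\), where \(n = \dim M\); then \(n - k + 1 \leq i_{0} \leq n\), so \(i_{0}\) is a legal index for the dynamical degrees of \(f\). The second inequality in Proposition~\ref{prop:subvar} gives \(d_{i_{0}}(f) \geq \deg(f)\deg(h)^{-1}\,d_{l_{0}}(h)\). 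Taking logarithms and using \(h_{\topo}(f) \geq \log d_{i_{0}}(f)\) from Fact~\ref{fact:topent} applied to \(f\) yields
\[
h_{\topo}(f) \;\geq\; \log d_{i_{0}}(f) \;\geq\; \log\!\bigl(\deg(f)\deg(h)^{-1}\bigr) + \log d_{l_{0}}(h) \;=\; \log\!\bigl(\deg(f)\deg(h)^{-1}\bigr) + h_{\topo}(h).
\]

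For the second inequality of the corollary, I would \emph{not} try to deduce it from the first (since a priori \(\deg(f)\) could be smaller than \(\deg(h)\), making \(\log(\deg(f)\deg(h)^{-1})\) negative). Instead, I would invoke the unshifted comparison \(d_{l}(f) \geq d_{l}(h)\) for \(1 \leq l \leq k\) from Proposition~\ref{prop:subvar} directly, giving
\[
h_{\topo}(f) \;=\; \max\nolimits_{1 \leq i \leq n} \log d_{i}(f) \;\geq\; \max\nolimits_{1 \leq l \leq k} \log d_{l}(f) \;\geq\; \max\nolimits_{1 \leq l \leq k} \log d_{l}(h) \;=\; h_{\topo}(h).
\]

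There is essentially no obstacle here: the work is already done in Proposition~\ref{prop:subvar} and Fact~\ref{fact:topent}, and the corollary is a direct combinatorial consequence. The only point that requires a moment of care is the index-range check \(1 \leq l_{0} + n - k \leq n\), which uses \(1 \leq l_{0} \leq k\) and \(k \leq n\); both hold because \(\phi\colon F \to M\) is generically finite so \(\dim F = k \leq n = \dim M\).
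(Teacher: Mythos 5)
Your handling of the first inequality is exactly right and is precisely what the authors intend (the paper gives no written proof beyond ``as a consequence of Proposition~\ref{prop:subvar}''): pick $l_{0}$ realizing $h_{\topo}(h) = \log d_{l_{0}}(h)$ via Fact~\ref{fact:topent} applied to $h$, feed it into the shifted estimate $d_{l_{0}+n-k}(f) \geq \deg(f)\deg(h)^{-1}d_{l_{0}}(h)$, and bound $\log d_{l_{0}+n-k}(f)$ above by $h_{\topo}(f)$.

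However, your treatment of the second inequality has a genuine gap. The corollary is a chain of two inequalities, and its second link, $\log\bigl(\deg(f)\deg(h)^{-1}\bigr) + h_{\topo}(h) \geq h_{\topo}(h)$, is literally the assertion $\deg(f) \geq \deg(h)$. What you prove instead, using the unshifted comparison $d_{l}(f) \geq d_{l}(h)$, is the \emph{outer} inequality $h_{\topo}(f) \geq h_{\topo}(h)$. That is a correct consequence, but it is not the middle inequality claimed, and you explicitly leave open whether $\deg(f)$ might be smaller than $\deg(h)$. It cannot be, and the reason is elementary. Put $Z := \phi(F)$, an irreducible $k$-dimensional closed subvariety of $M$; from $\phi \circ h = f \circ \phi$ and surjectivity of $h$ one has $f(Z) = Z$, so $f|_{Z} \colon Z \to Z$ is a surjective finite endomorphism. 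Comparing degrees of the composite $\phi \circ h = (f|_{Z}) \circ \phi$ as maps $F \to Z$ gives $\deg(\phi)\deg(h) = \deg(f|_{Z})\deg(\phi)$, hence $\deg(h) = \deg(f|_{Z})$. For a general $z \in Z$ the fiber $(f|_{Z})^{-1}(z) = f^{-1}(z) \cap Z$ is contained in $f^{-1}(z)$, which has exactly $\deg(f)$ points since $f$ is finite; therefore $\deg(f|_{Z}) \leq \deg(f)$ and $\deg(h) \leq \deg(f)$. This closes the middle link of the chain, after which your extra argument via the unshifted comparison is redundant.
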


\begin{nota}\label{nota:eigenvalue} Let \( \varphi \colon V \to V \)
be an endomorphism of a finite-dimensional \( \BCC \)-vector
space \( V \). We denote by \( \Lambda(V, \varphi) \)
the set of eigenvalues of \( \varphi \).
For \( \lambda \in \BCC \), we set
\[
V_{\lambda} = V_{\lambda, \varphi} := \bigcup\nolimits_{l \geq 1}
\Ker(\lambda\id_{V} - \varphi)^{l}.\]
\end{nota}

\begin{remn}
If \( \lambda \in \Lambda(V, \varphi) \), then
\( V_{\lambda} \) is a generalized eigenspace.
We have the decomposition
\( V = \bigoplus\nolimits_{\lambda \in \Lambda(V, \varphi)}
V_{\lambda, \varphi}\),
which is functorial in the following sense:
Let \( h \colon V_{1} \to V_{2} \) be a \( \BCC \)-linear
map of finite-dimensional \( \BCC \)-vector spaces.
Let \( \varphi_{i} \colon V_{i} \to V_{i} \) be an endomorphism for \(
i = 1 \), \( 2 \) such that \( h \circ \varphi_{1} = \varphi_{2}
\circ h \). Then \( h = \bigoplus h_{\lambda} \) for
\( h_{\lambda} \colon (V_{1})_{\lambda, \varphi_{1}} \to
(V_{2})_{\lambda, \varphi_{2}}\).
\end{remn}

\begin{lem}\label{lem:irrdec}
Let \( Y \) be a reduced compact complex analytic space and
\( \mu \colon X \to Y \) a proper surjective morphism from another
reduced compact complex analytic space \( X \) such that the
restriction \( \mu^{-1}(U) \to U \) of \( \mu \) is a smooth K\"ahler morphism
for a dense Zariski-open subset \( U \subset Y \).
Let \( g \colon Y \to Y \) and \( f \colon X \to X \) be surjective
endomorphisms such that \( \mu \circ f = g \circ \mu \) and
\( g^{-1}(U) = U \).
Let \( g_{B} \colon B \to B\) and
\( f_{A} \colon A\to A \) be the induced endomorphisms
for the complement \( B = Y \setminus U \)
and \( A = \mu^{-1}(B) \), respectively.
Then one has the following inclusion for any
\( p \geq 0\)\emph{:}
\[ \Lambda(\OH^{p}(Y, \BCC), g^{*}) \subset
\Lambda(\OH^{p}(X, \BCC), f^{*}) \cup
\Lambda(\OH^{p}(B, \BCC), g_{B}^{*}) \cup
\Lambda(\OH^{p - 1}(A, \BCC), f_{A}^{*}). \]
\end{lem}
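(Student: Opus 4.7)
My plan is to exploit the long exact cohomology sequences of the pairs \((Y,B)\) and \((X,A)\), linked by \(\mu\) and its restriction \(\bar\mu \colon A \to B\) into the commutative ladder
\[ \begin{CD}
\OH^{p-1}(B) @>{\delta}>> \OH^p(Y,B) @>{j}>> \OH^p(Y) @>{r}>> \OH^p(B) \\
@V{\bar\mu^*}VV @V{\mu^*}VV @V{\mu^*}VV @V{\bar\mu^*}VV \\
\OH^{p-1}(A) @>{\tilde\delta}>> \OH^p(X,A) @>{\tilde j}>> \OH^p(X) @>{\tilde r}>> \OH^p(A).
\end{CD} \]
The top row is \((g_B^*,g^*,g^*,g_B^*)\)-equivariant, the bottom row is \((f_A^*,f^*,f^*,f_A^*)\)-equivariant, and the vertical arrows intertwine the two actions because \(\mu\circ f = g\circ\mu\) and \(\bar\mu\circ f_A = g_B\circ\bar\mu\). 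I would then chase this diagram generalized-eigenspace by generalized-eigenspace, using the functoriality of the decomposition \(V = \bigoplus_\lambda V_\lambda\) noted after Notation~\ref{nota:eigenvalue}.

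The crucial technical ingredient is the injectivity of the middle map
\[ \mu^* \colon \OH^p(Y,B) \longrightarrow \OH^p(X,A). \]
I would establish it by identifying \(\OH^p(Y,B)\isom \OH^p_c(U)\) and \(\OH^p(X,A)\isom \OH^p_c(\mu^{-1}(U))\) via excision, and then reducing to the statement that \(\OH^p_c(U) \to \OH^p_c(\mu^{-1}(U))\) is injective. By hypothesis \(\mu|_{\mu^{-1}(U)} \to U\) is proper, smooth, and K\"ahler, so Deligne's theorem supplies a direct-sum splitting
\[ \OR\mu_*\BCC_{\mu^{-1}(U)} \isom \bigoplus\nolimits_{i\geq 0}
\OR^i\mu_*\BCC_{\mu^{-1}(U)}[-i] \quad \text{in } D(U), \]
while the canonical inclusion \(\BCC_U \injmap \OR^0\mu_*\BCC_{\mu^{-1}(U)}\) is split by averaging over the (locally constant) set of connected components of the fibres. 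Combined with \(\OR\mu_! = \OR\mu_*\) (since \(\mu\) is proper), this exhibits \(\BCC_U\) as a direct summand of \(\OR\mu_!\BCC_{\mu^{-1}(U)}\), yielding the required injection on compactly-supported cohomology.

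Granted the injectivity, the finish is a short two-step diagram chase. For \(\lambda \in \Lambda(\OH^p(Y),g^*)\) with generalized eigenspace \(V_\lambda = \OH^p(Y)_{\lambda,g^*}\), either \(r(V_\lambda)\ne 0\)---in which case \(\lambda \in \Lambda(\OH^p(B),g_B^*)\)---or \(V_\lambda \subset \ker r = \mathrm{Im}\,j\), and functoriality of the eigenspace decomposition then forces \(V_\lambda = j\bigl(\OH^p(Y,B)_{\lambda,g^*}\bigr)\), so \(\OH^p(Y,B)_{\lambda,g^*} \ne 0\). Injectivity of \(\mu^*\) transports this to \(\OH^p(X,A)_{\lambda,f^*}\ne 0\), and applying the same dichotomy to \(\tilde r\) and \(\tilde j\) on the bottom row places \(\lambda\) either in \(\Lambda(\OH^p(X),f^*)\) (when \(\tilde j\) does not vanish on this eigenspace) or, via \(\mathrm{Im}\,\tilde\delta\), in \(\Lambda(\OH^{p-1}(A),f_A^*)\). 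The main obstacle I foresee is the injectivity step: verifying the Deligne decomposition and the identifications with compactly-supported cohomology in this singular analytic-space setting is where the real content lies, while the diagram chase itself is routine.
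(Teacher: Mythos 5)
Your proof is correct and follows essentially the same route as the paper: the ladder of long exact sequences (with $\OH^p(Y,B)\cong\OH^p_c(U)$ and $\OH^p(X,A)\cong\OH^p_c(\mu^{-1}(U))$ by excision), injectivity of $\mu^*$ on compactly-supported cohomology via Deligne's decomposition, and the generalized-eigenspace diagram chase. Your remark about splitting $\BCC_U\injmap\OR^0\mu_*\BCC_{\mu^{-1}(U)}$ by averaging over fibre components is a useful elaboration of a step the paper leaves implicit (and relevant, since the lemma is later applied to morphisms with disconnected fibres).
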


\begin{proof}
We have a commutative diagram
\[ \begin{CD}
\cdots @>>> \OH^{i}_{c}(U, \BZZ) @>>> \OH^{i}(Y, \BZZ) @>>>
\OH^{i}(B, \BZZ) @>>> \cdots \\
@. @V{\mu^{*}}VV @V{\mu^{*}}VV
@V{\mu|_{A}^{*}}VV \\
\cdots @>>> \OH^{i}_{c}(\mu^{-1}(U), \BZZ) @>>> \OH^{i}(X, \BZZ) @>>>
\OH^{i}(A, \BZZ) @>>> \cdots
\end{CD} \]
of exact sequences, which are compatible with the actions of \( g^{*} \)
and \( f^{*} \).
Since
\( \mu^{-1}(U) \to U \) is a smooth K\"ahler morphism,
we have a quasi-isomorphism
\[ \BRR \mu_{*}\BCC_{X}|_{U} \sim_{\qis} \bigoplus
\OR^{i}\mu_{*}\BCC_{X}|_{U}[-i]\]
in the derived category on \( U \),
by the hard Lefschetz theorem on fibers and by \cite{D}.
In particular, the homomorphism
\( \mu^{*} \colon \OH^{i}_{c}(U, \BCC) \to
\OH^{i}_{c}(\mu^{-1}(U), \BCC) \)
is injective for any \( i \).
For a complex number \( \lambda \),
if \( \OH^{p}(Y, \BCC)_{\lambda, g^{*}} \ne 0 \) and
if \( \OH^{p}(B, \BCC)_{\lambda, g_{B}^{*}} =
\OH^{p}(X, \BCC)_{\lambda, f^{*}} = 0 \), then
\( \OH^{p-1}(A, \BCC)_{\lambda, f_{A}^{*}} \ne 0 \)
by the commutative diagram above.
Thus, the assertion follows.
\end{proof}

\begin{cor}\label{cor:irrdec}
Let \( Y \) be a reduced compact complex analytic space with a
surjective endomorphism \( g \colon Y \to Y \).
Then there exist a finite set \( \{Z_{i}\}_{1 = 1}^{N} \)
of closed subvarieties and a positive integer \( k \) such that
\( Y = \bigcup Z_{i} \), \( (g^{k})^{-1}(Z_{i}) = Z_{i} \) for any \(
1 \leq i \leq N\), and that the following inclusion holds for
any \( p \geq 0 \)\emph{:}
\[ \Lambda(\OH^{p}(Y, \BCC), (g^{k})^{*}) \subset
\bigcup\nolimits_{i = 1}^{N}\bigcup\nolimits_{q = 0}^{p}
\Lambda(\OH^{q}(Z_{i}, \BCC), (g^{k}|_{Z_{i}})^{*}) . \]
\end{cor}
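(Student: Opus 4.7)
The plan is induction on $\dim Y$.  The base case $\dim Y = 0$ is immediate: $Y$ is a finite set of points permuted by $g$, so $g^{k} = \id_{Y}$ for some $k > 0$, and taking each $Z_{i}$ to be a single point of $Y$ verifies the corollary.

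For the inductive step, let $Y_{1}, \ldots, Y_{m}$ be the irreducible components of $Y$.  Since $g$ induces a permutation of this finite set, some power $g^{k_{0}}$ fixes each $Y_{i}$ setwise; replace $g$ by $g^{k_{0}}$.  Let $B := \bigcup_{i \ne j}(Y_{i} \cap Y_{j})$, a $g$-invariant closed subvariety of $Y$ with $\dim B < \dim Y$.  Iterated application of the Mayer--Vietoris long exact sequence to the decomposition $Y = Y_{1} \cup (Y_{2} \cup \cdots \cup Y_{m})$, iterating on the second summand, yields a finite collection of closed $g$-invariant subvarieties $C_{1}, \ldots, C_{s}$ of $B$ (appearing as intersections in the decomposition) such that $\Lambda(\OH^{p}(Y, \BCC), g^{*})$ is contained in the union of $\Lambda(\OH^{p}(Y_{i}, \BCC), g^{*})$ for $1 \le i \le m$ together with $\Lambda(\OH^{p-1}(C_{j}, \BCC), g^{*})$ for $1 \le j \le s$.

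Each $C_{j}$ is a closed $g$-invariant subvariety of $Y$ with $\dim C_{j} < \dim Y$, so the inductive hypothesis yields an exponent $k_{j}$ and closed subvarieties $W_{j, \ell} \subseteq C_{j} \subseteq Y$ satisfying the corollary for $C_{j}$.  Taking $k$ to be a common multiple of $k_{0}$ and of all the $k_{j}$, and setting $\{Z_{i}\} := \{Y_{1}, \ldots, Y_{m}\} \cup \bigcup_{j, \ell} \{W_{j, \ell}\}$, we obtain closed $g^{k}$-invariant subvarieties of $Y$ whose union is $Y$.  Combining the Mayer--Vietoris bound above with the inductive containment for each $\OH^{p-1}(C_{j}, \BCC)$ (which involves only $\OH^{q}$ for $q \le p - 1$) delivers the required inclusion for $\OH^{p}(Y, \BCC)$.

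The chief obstacle is the bookkeeping: one must simultaneously arrange all invariances under a single exponent $k$ and verify that every eigenvalue appearing on $\OH^{p}(Y, \BCC)$ is captured by the combined inductive decompositions of the intersection subvarieties $C_{j}$, carefully tracking the degree shift $p \mapsto p - 1$ along the Mayer--Vietoris connecting maps.
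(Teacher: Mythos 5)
Your proof takes a genuinely different route from the paper's, though it lands on the same kind of recursion. The paper passes to the canonical morphism $\tau \colon \bigsqcup Y_i \to Y$ (which is an isomorphism away from the non-normal locus $B = \bigcup_{i\neq j}(Y_i\cap Y_j)$) and applies Lemma~\ref{lem:irrdec}, i.e.\ the compactly-supported long exact sequences of $(Y,B)$ and $(\bigsqcup Y_i, \tau^{-1}B)$ linked by $\tau^*$, and then recurses on $B$ and on the $Y_i\cap B$. You instead iterate Mayer--Vietoris for the closed cover $Y=\bigcup Y_i$, which produces eigenvalue contributions from $\OH^p(Y_i)$ and from $\OH^{p-1}(C_j)$ with $C_j = Y_j\cap\bigcup_{k>j}Y_k$ (these also union to $B$), and you recurse on the $C_j$. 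The Mayer--Vietoris route is more elementary and avoids invoking Lemma~\ref{lem:irrdec} altogether; in the paper's application that lemma is invoked only in the degenerate situation where $\tau$ is an isomorphism over $Y\setminus B$, so nothing is lost.

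Two points, however, deserve closer attention --- and both are equally present in the paper's own proof, so this is not a defect specific to your write-up. First, passing to a power $g^{k_0}$ yields $g^{k_0}(Y_i)=Y_i$ (the permutation of irreducible components is trivialized), but the statement demands $(g^{k})^{-1}(Z_i)=Z_i$, which is strictly stronger. When $g$ is not \'etale, $g^{-1}(Y_i)$ can pick up extra lower-dimensional pieces lying in other components, and iterating $g$ need not stabilize this; the paper's assertion ``we may assume $g^{-1}Y_i = Y_i$'' is, as far as I can see, unjustified in this generality. What is in fact needed to make $(g^k|_{Z_i})^{*}$ well-defined and the Mayer--Vietoris (resp.\ excision) sequence $g^*$-equivariant is only the \emph{forward} invariance $g^k(Z_i)\subset Z_i$; with that reading, the eigenvalue inclusion --- the substantive part --- goes through. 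Second, to invoke the inductive hypothesis on $C_j$ you need $g|_{C_j}\colon C_j\to C_j$ to be \emph{surjective}, which does not follow from $g(C_j)\subset C_j$; surjectivity of $g$ on $Y$ does not descend to the intersection strata. Here too the eigenvalue argument does not actually use surjectivity, so the cleanest fix is to prove the inductive statement for arbitrary (not necessarily surjective) endomorphisms, or to replace $C_j$ by the stable set $\bigcap_{\ell\geq 0} g^\ell(C_j)$, on which $g$ does restrict surjectively, and to check that the eigenvalue sets are controlled under this replacement.
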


\begin{proof}
Let \( \{Y_{i}\} \)
be the set of the irreducible components of \( Y
\) and let \( \tau \colon X := \bigsqcup Y_{i} \to Y \) be the natural
morphism. Let \( B \subset Y \) be the minimum closed subset such
that \(  X \setminus \tau^{-1}B \to Y \setminus B\) is an
isomorphism. By replacing \( g \) with a power \( g^{k} \), we may
assume that \( g^{-1}Y_{i} = Y_{i} \) for any \( i \). Then,
an endomorphism \(f \colon X \to X\) with
\(\tau \circ f = g \circ \tau\) is induced,
where \(g^{-1}(B) = B\) by the minimality of \(B\) and
\(f^{-1}(\tau^{-1}(B)) = \tau^{-1}(B)\).
Applying Lemma~\ref{lem:irrdec} to
\(X \to Y\), \(f\), \(g\), \(B\), and \(\tau^{-1}(B)\),
we have the following inclusion
for any \( p \):
\[ \Lambda(\OH^{p}(Y, \BCC), g^{*}) \subset
\Lambda(\OH^{p}(B, \BCC), g^{*}) \cup
\bigcup\nolimits
\left( \Lambda(\OH^{p}(Y_{i}, \BCC), g^{*}) \cup
\Lambda(\OH^{p-1}(Y_{i} \cap B, \BCC), g^{*}) \right). \]
Applying the same argument to \( B \) and
\( Y_{i} \cap B \), and continuing, we have the assertion.
\end{proof}

\begin{prop}\label{prop:entreducible}
Let \( Y \) be a reduced compact complex analytic space and let
\( \phi \colon M \to Y \) be a proper surjective morphism from a
finite disjoint union \( M \) of compact K\"ahler manifolds \(
M_{\alpha} \) such that \( \phi(M_{\alpha}) \) is an irreducible
component of \( Y \).
Let \( g \colon Y \to Y \) and \( f \colon M \to M \)
be \'etale surjective endomorphisms such that
\( \phi \circ f = g \circ \phi \).
If \( f^{-1}(M_{\alpha}) = M_{\alpha} \)
for any \( \alpha \), then, for the induced endomorphisms \( f|_{M_{\alpha}}
\colon M_{\alpha} \to M_{\alpha} \), one has
\[ \rho(\OH^{*}(Y, \BCC), g^{*}) \leq
\max\nolimits_{\alpha}
\rho(\OH^{*}(M_{\alpha}, \BCC), (f|_{M_{\alpha}})^{*}) =
\max\nolimits_{\alpha, l}
d_{l}(f|_{M_{\alpha}}) . \]
\end{prop}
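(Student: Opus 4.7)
\emph{Plan.} The equality in the conclusion follows immediately from Fact \ref{fact:topent} applied to each compact K\"ahler manifold $M_\alpha$ with its endomorphism $f|_{M_\alpha}$ and taking maxima over $\alpha$. For the inequality $\rho(\OH^*(Y, \BCC), g^*) \leq \max_\alpha \rho(\OH^*(M_\alpha, \BCC), (f|_{M_\alpha})^*)$, I would proceed by induction on $\dim Y$; the base case $\dim Y = 0$ is immediate since $g$ permutes finitely many points of $Y$ and each $d_0(f|_{M_\alpha}) = 1$.

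For the inductive step, apply Corollary \ref{cor:irrdec} to $(Y, g)$, replacing $g$ by a suitable power $g^k$, to obtain closed $g^k$-invariant subvarieties $Z_1, \dots, Z_N \subset Y$ with $\bigcup Z_i = Y$ and the eigenvalue inclusion $\Lambda(\OH^*(Y), (g^k)^*) \subset \bigcup_i \Lambda(\OH^*(Z_i), (g^k|_{Z_i})^*)$. For each $Z_i$, the preimage $\phi^{-1}(Z_i) \subset M$ is closed analytic and $(f^k)^{-1}$-stable, so an equivariant resolution of singularities (in the sense of Section \ref{subsect:equiv resol}) yields a disjoint union $\tilde{N}_i = \bigsqcup_\beta \tilde{N}_{i,\beta}$ of compact K\"ahler manifolds (K\"ahler because resolutions of closed subvarieties of the K\"ahler manifold $M$ are K\"ahler), an \'etale endomorphism $\tilde{f}_i$, and a surjective morphism $\psi_i \colon \tilde{N}_i \to Z_i$ intertwining $\tilde{f}_i$ and $g^k|_{Z_i}$ and satisfying the hypotheses of the proposition. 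If $\dim Z_i < \dim Y$, the inductive hypothesis together with Proposition \ref{prop:subvar} (each $\tilde{N}_{i,\beta}$ maps generically finitely into some $M_\alpha$, giving $d_l(\tilde{f}_i|_{\tilde{N}_{i,\beta}}) \leq d_l(f|_{M_\alpha})^k$) and Fact \ref{fact:topent} yield the required bound on $\rho(\OH^*(Z_i), (g^k|_{Z_i})^*)$.

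The main obstacle is the top-dimensional case $\dim Z_i = \dim Y$, in which $Z_i = Y_j$ is an irreducible component of $Y$ and the induction on $\dim Y$ does not reduce directly. To handle it, take an equivariant K\"ahler resolution $\tau \colon \tilde{Y}_j \to Y_j$ with lifted \'etale endomorphism $\tilde{g}_j$; such a resolution exists because $Y_j$ lies in Fujiki's class $\SC$ as the surjective image of the K\"ahler manifold $M_{\alpha_0}$ under $\phi|_{M_{\alpha_0}}$ for some $\alpha_0$ with $\phi(M_{\alpha_0}) = Y_j$. Since $g^k$ is \'etale, the regular locus $U := (Y_j)_{\reg}$ is $g^k$-invariant, so Lemma \ref{lem:irrdec} applied to $\tau$ bounds the eigenvalues of $(g^k|_{Y_j})^*$ on $\OH^*(Y_j)$ by those of $\tilde{g}_j^*$ on $\OH^*(\tilde{Y}_j)$ together with contributions from the singular locus $(Y_j)_{\mathrm{sing}}$ and its preimage $\tau^{-1}((Y_j)_{\mathrm{sing}})$, both of strictly smaller dimension than $\dim Y$ and handled by the induction (the latter via a further K\"ahler resolution inside $\tilde{Y}_j$). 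The $\tilde{Y}_j$-contribution equals $\max_l d_l(\tilde{g}_j)$ by Fact \ref{fact:topent}, and this must be bounded by $\max_\alpha \max_l d_l(f|_{M_\alpha})^k$ via lifting $\phi|_{M_{\alpha_0}}$ to a morphism $\tilde{M}_{\alpha_0} \to \tilde{Y}_j$ from a bimeromorphic K\"ahler modification of $M_{\alpha_0}$ and invoking Lemma \ref{lem:d=genfin} (when $\dim M_{\alpha_0} = \dim Y_j$) or Proposition \ref{prop:subvar} combined with a cohomological fiber argument analogous to the proof of \eqref{ThD:1} of Theorem D (when $\dim M_{\alpha_0} > \dim Y_j$). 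The last comparison across a non-generically-finite morphism is the most delicate step.
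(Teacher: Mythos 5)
Your overall strategy is genuinely different from the paper's, and it contains a gap at exactly the point you flag as ``the most delicate step.''

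The paper inducts on $\dim M$, not $\dim Y$, and splits $Y$ along the maximal open $U$ over which $\phi$ is smooth rather than along the irreducible decomposition of $Y$. Applying Lemma~\ref{lem:irrdec} directly to $\phi \colon M \to Y$ over $U$ gives
\[
\rho(\OH^{*}(Y,\BCC),g^{*}) \leq \max\bigl\{\rho(\OH^{*}(M,\BCC),f^{*}),\ \rho(\OH^{*}(B,\BCC),(g|_{B})^{*}),\ \rho(\OH^{*}(A,\BCC),(f|_{A})^{*})\bigr\}
\]
with $B = Y\setminus U$, $A = \phi^{-1}(B)$. The first term is already the desired quantity $\max_{\alpha}\rho(\OH^{*}(M_{\alpha},\BCC),(f|_{M_{\alpha}})^{*})$ since $M$ is a disjoint union, and the other two are handled by induction because $\dim A < \dim M$ automatically. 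One then uses equivariant resolutions $Z_{\beta}\to A_{\beta}$ of the components of $A$ and Proposition~\ref{prop:subvar} (via Corollary~\ref{cor:topentsubvar}) applied to the generically finite maps $Z_{\beta}\to M_{\alpha}$ to close the induction. Crucially, a full-dimensional component case never arises.

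Your scheme, by contrast, inducts on $\dim Y$ and uses Corollary~\ref{cor:irrdec}, which can return $Z_i = Y_j$ of full dimension. You correctly observe that the comparison needed there is between $\rho(\OH^{*}(\widetilde Y_j,\BCC),\tilde g_j^{*}) = \max_{l}d_l(\tilde g_j)$ and $\max_{\alpha,l}d_l(f|_{M_{\alpha}})$, and that when $\dim M_{\alpha_0} > \dim Y_j$ the intermediate morphism $M_{\alpha_0}\to Y_j$ is a positive-dimensional fiber space. But Proposition~\ref{prop:subvar} cannot be invoked here: it compares $d_l$ for a generically finite morphism $F\to M$ \emph{from} the lower-dimensional space \emph{into} the higher-dimensional one, giving $d_l(f)\geq d_l(h)$; your situation has the arrow going the other way, from $M_{\alpha_0}$ onto $\widetilde Y_j$, and there is no statement in the paper comparing dynamical degrees across a non-generically-finite fiber space from total space down to base. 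Theorem~D\,(1) addresses only the first dynamical degree $d_1$ and requires the endomorphism to be \emph{over} the base (i.e.\ $\pi\circ f=\pi$, with trivial base endomorphism), so ``a cohomological fiber argument analogous to the proof of (1) of Theorem D'' is not a citation but a new claim requiring its own proof. The needed inequality $d_l(g)\leq d_l(f)$ for an equivariant fiber space is plausible (e.g.\ via injectivity of $\pi^{*}$ on cohomology and comparison of spectral radii on an $f^{*}$-invariant subspace), but as written it is neither proved nor correctly reduced to a result in the paper; the paper's choice of inducting on $\dim M$ and splitting along the smooth locus of $\phi$ is precisely what makes this comparison unnecessary.
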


\begin{proof}
We shall prove by induction on \( \dim M = \max \{\dim M_{\alpha}\} \).
If \( \dim M = 0 \), then the assertion holds by a trivial reason.
Assume that the assertion holds when the dimension is less
than \( \dim M \).
If the inequality of the spectral radius holds for a power \( g^{l}
\), then it holds also for \( g \);
in fact, \( \rho(\OH^{p}(Y, \BCC), (g^{l})^{*})
= \rho(\OH^{p}(Y, \BCC), (g^{})^{*})^{l}\).
Thus, we may replace \( g \) with any power \( g^{l} \).
Let \(U \subset Y\) be the maximal Zariski open subset
such that \( \phi^{-1}(U) \to U \) is smooth.
For the complement \( B = Y \setminus U \) and \( A = \phi^{-1}(B) \),
we have \( g^{-1}(B) = B \) and \( f^{-1}(A) = A \)
by the maximality of \( U \) and by the \'etaleness of
\( g \) and \( f \).
Thus, we can apply Lemma~\ref{lem:irrdec}, and obtain
\[ \rho(\OH^{*}(Y, \BCC), g^{*}) \leq
\max \{ \rho(\OH^{*}(M, \BCC), f^{*}),
\rho(\OH^{*}(B, \BCC), (f|_{B})^{*}),
\rho(\OH^{*}(A, \BCC), (f|_{A})^{*}) \}.\]
Let \( A = \bigcup A_{\beta} \) be the irreducible decomposition.
Replacing \( g \) with its power, we may assume that
\( f^{-1}(A_{\beta}) = A_{\beta} \) for any \( \beta \).
Let \( Z_{\beta} \to A_{\beta} \) be an equivariant resolution of
singularities of \( A_{\beta} \) with respect to the \'etale
endomorphism \( f|_{A_{\beta}} \). Let \( Z \) be the disjoint union
\( \bigsqcup Z_{\beta} \) and let \( h \colon Z \to Z \) be the induced
\'etale endomorphism, i.e., \( \psi \circ h = f|_{A} \circ \psi \)
for the induced morphism \( \psi \colon Z \to A \).
Then \( (Z \to A \to B, h, g|_{B}) \) and \( (Z \to A, h, f|_{A}) \)
satisfy the same condition of Proposition~\ref{prop:entreducible}
as \( (M \to Y, f, g) \).
By induction, we have
\[ \max \{ \rho(\OH^{*}(B, \BCC), (g|_{B})^{*}),
\rho(\OH^{*}(A, \BCC), (f|_{A})^{*}) \} \leq
\max\nolimits_{\beta}\rho(\OH^{*}(Z_{\beta}, \BCC), (h|_{Z_{\beta}})^{*})\]
after replacing \( g \) with a power \( g^{l} \). On the other hand,
\[ \rho(\OH^{*}(Z_{\beta}, \BCC), (h|Z_{\beta})^{*})
= \exp(h_{\topo}(h|_{Z_{\beta}})) \leq
\exp (h_{\topo}(f|_{M_{\alpha}})) =
\rho(\OH^{*}(M_{\alpha}, \BCC), (f|M_{\alpha})^{*}) \]
for \( A_{\beta} \subset M_{\alpha}  \),
by Proposition~\ref{prop:subvar}.
Hence,
\[ \rho(\OH^{*}(Y, \BCC), g^{*}) \leq \max\nolimits_{\alpha}
\rho(\OH^{*}(M_{\alpha}, \BCC), (f|M_{\alpha})^{*}). \qedhere\]
\end{proof}

\begin{lem}\label{lem:incl}
Let \( \pi \colon X \to Y \) be a smooth fiber space of
complex manifolds and \( f \colon X \to X \) a surjective
endomorphism such that \( \pi \circ f = \pi  \).
Let \( F \) be a fiber of \( \pi \).
Then, for any \( p \geq 0 \), one has an inclusion
\[  \Lambda(\OH^{p}_{c}(X, \BCC), f^{*}) \subset
\bigcup\nolimits_{q = 0}^{p}
\Lambda(\OH^{q}(F, \BCC), (f|_{F})^{*}). \]
\end{lem}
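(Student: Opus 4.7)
The plan is to use the Leray spectral sequence for \( \pi \) with compact supports and to exploit the hypothesis \( \pi \circ f = \pi \) in order to reduce eigenvalues of \( f^* \) on \( \OH^p_c(X, \BCC) \) to those of \( (f|_F)^* \) on the cohomology of a fiber.

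First, since \( \pi \) is proper, we have the Leray spectral sequence
\[ E_2^{s,t} = \OH^s_c(Y, \OR^t \pi_* \BCC_X) \Longrightarrow \OH^{s+t}_c(X, \BCC). \]
The hypothesis \( \pi \circ f = \pi \) says that \( f \) is a morphism of \( X \) over \( Y \), so \( f^* \) acts on this spectral sequence compatibly with the filtration on the abutment. Consequently, the eigenvalues of \( f^* \) on \( \OH^p_c(X, \BCC) \) lie in the union, over \( 0 \leq s \leq p \), of the eigenvalues of the induced action of \( f^* \) on \( E_2^{s, p-s} = \OH^s_c(Y, \OR^{p-s} \pi_* \BCC_X) \).

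Next I would use the smoothness of \( \pi \) to identify \( \OR^t \pi_* \BCC_X \) with a local system on \( Y \) whose stalk at any fiber \( F \) is \( \OH^t(F, \BCC) \), and on which the stalk of \( f^* \) is precisely \( (f|_F)^* \). The key point is that this \( f^* \) commutes with the monodromy action of \( \pi_1(Y, y) \), because it is a morphism of sheaves on \( Y \). Therefore the generalized eigenspace decomposition of the stalk under \( (f|_F)^* \) is monodromy-invariant and extends to a direct-sum decomposition
\[ \OR^t \pi_* \BCC_X = \bigoplus_{\lambda \in \Lambda(\OH^t(F, \BCC),\, (f|_F)^*)} (\OR^t \pi_* \BCC_X)_\lambda \]
into local systems on each of which \( f^* - \lambda \id \) is nilpotent. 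Hence \( f^* \) acts on \( \OH^s_c(Y, (\OR^t \pi_* \BCC_X)_\lambda) \) with \( \lambda \) as its only possible eigenvalue, so
\[ \Lambda(\OH^s_c(Y, \OR^t \pi_* \BCC_X),\, f^*) \subset \Lambda(\OH^t(F, \BCC),\, (f|_F)^*). \]
Taking the union over \( s + t = p \) with \( 0 \leq t \leq p \) yields the desired inclusion.

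The argument should be essentially routine once this strategy is adopted; the one step requiring care is the passage from the generalized eigenspace decomposition of a stalk to a genuine decomposition of the local system, which rests on the tautological observation that \( f^* \), as a sheaf morphism on \( Y \), commutes with parallel transport along loops in \( Y \). The existence of the compactly supported Leray spectral sequence and its compatibility with \( f^* \) are standard, since \( f \) is a morphism over \( Y \).
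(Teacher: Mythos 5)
Your proposal is correct and follows essentially the same route as the paper: both use the compactly supported Leray spectral sequence for $\pi$, the compatibility of $f^*$ with it coming from $\pi \circ f = \pi$, and the generalized-eigenspace decomposition of the local systems $\OR^t \pi_* \BCC_X$ under the sheaf endomorphism $f^*$ (your explicit remark that this decomposition is monodromy-invariant is exactly what makes the paper's subsheaves $(\OR^t \pi_* \BCC_X)_\lambda$ sub-local-systems). The only cosmetic difference is that the paper decomposes the entire spectral sequence into $\lambda$-pieces, whereas you invoke the subquotient principle to pass from the abutment to the $E_2$-page; both yield the stated inclusion.
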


\begin{proof}
We have an endomorphism \( f^{*} \) of the complex
\( \OR \pi_{*}\BCC_{X} \) in the derived category of sheaves of abelian
groups on \( Y \) such that the induced endomorphism
of \( \OR\Gamma_{c}(Y, \OR \pi_{*}\BCC_{X}) \)  coincides with that of
\( \OR\Gamma_{c}(X, \BCC) \).
In particular, the Leray spectral sequence
\[ E_{2}^{p, q} = \OH^{p}_{c}(Y, \OR^{q}\pi_{*}\BCC_{X}) \Rightarrow
E^{p+q} = \OH^{p+q}_{c}(X, \BCC)\]
admits the endomorphisms \( f^{*} \colon E_{r}^{p, q} \to E_{r}^{p, q}
\) compatible with \( f^{*} \) on \( E^{p+q} \).
For a complex number \( \lambda \),
let \( (\OR^{i}\pi_{*}\BCC_{X})_{\lambda} \) be
the union of the kernels of
\[ (f^{*} - \lambda \id)^{l} \colon
\OR^{i}\pi_{*}\BCC_{X} \to \OR^{i}\pi_{*}\BCC_{X} \]
for all \( l > 0\) (cf.\ Notation~\ref{nota:eigenvalue}).
Then,
\[ (E_{2}^{p, q})_{\lambda} \isom \OH^{p}_{c}(Y,
(\OR^{q}\pi_{*}\BCC_{X})_{\lambda}) \]
for any \( p \), \( q \), and \( \lambda \).
Since \( E_{2}^{p, q} \Rightarrow E^{p+q} \) is decomposed
into \((E_{2}^{p, q})_{\lambda}\Rightarrow (E^{p+q})_{\lambda}  \), we have
\[ \Lambda(\OH^{p}_{c}(X, \BCC), f^{*}) \subset
\bigcup\nolimits_{q = 0}^{p} \{\lambda \in \BCC \mid
(\OR^{q}\pi_{*}\BCC_{X})_{\lambda} \ne 0 \} =
\bigcup\nolimits_{q = 0}^{p}\Lambda(\OH^{q}(F, \BCC), (f|_{F})^{*}). \qedhere\]
\end{proof}

We are ready to prove the second assertion of Theorem~D.

\begin{proof}[Proof of \eqref{ThD:2} of \emph{Theorem~D}]
We may replace \( f \) with a power \( f^{k} \) since \(
h_{\topo}(f^{k}) = kh_{\topo}(f) \) and
\( h_{\topo}(f^{k}|_{F}) = k h_{\topo}(f|_{F}) \).

\emph{Step}~1. The first reduction:
Let \( \nu \colon Y' \to Y \) be a bimeromorphic morphism from a
compact K\"ahler manifold \( Y' \). Then \( f \) induces an \'etale
endomorphism \( f \times_{Y} Y' \colon X \times_{Y} Y' \to X
\times_{Y} Y' \).
Let \( X' \) be the main component of \( X \times_{Y} Y' \).
Thus, there exists an equivariant resolution of singularities
\( \widetilde{X} \to X' \) with respect to the \'etale endomorphism.
We may assume \(\widetilde{X}\) to be K\"ahler since
\(\widetilde{X} \to X'\) can be chosen as a projective morphism.
Let \( \tilde{f} \) be the induced \'etale endomorphism of
\( \widetilde{X} \). Then \( d_{l}(\tilde{f}) = d_{l}(f) \) by
Lemma~\ref{lem:d=genfin}.
Thus, we may assume that \( Y \) is also a compact K\"ahler manifold
and \( \pi \colon X \to Y \) is smooth over
the complement \( Y \setminus D \) for a divisor \( D \subset Y \).

\emph{Step}~2: A setting of the proof.
Let \( \sigma \colon S \to Y \) be a flattening of \( \pi \), i.e., the main
component of \( X \times_{Y} S \) is flat over \( S \) (cf.\ \cite{H}).
Here, we may assume that \( \sigma^{-1}D \) is also
a normal crossing divisor.
Let \( M \to X \times_{Y} S \) be a bimeromorphic morphism from a
compact K\"ahler manifold \( M \) which is given as an
equivariant resolution of singularities of the main component with
respect to the induced \'etale endomorphism
\( f \times_{Y} \id_{S} \colon X \times_{Y} S \to X \times_{Y} S \).
Let \( \mu \colon M \to X \) be the induced bimeromorphic morphism,
\( \varpi \colon M \to S \) the induced fiber space, and
\( g \colon M \to M \) the induced \'etale endomorphism, i.e.,
\( \mu \circ g = f \circ \mu \) and \( \varpi \circ g = \varpi \).
We set \( E = \pi^{*}(D)_{\red} \). For the prime
decomposition \( E = \sum\nolimits_{j = 1}^{l} E_{j} \),
there is a positive integer \( k
\) such that \( (f^{k})^{-1}E_{j} = E_{j} \).
Here, we may assume \( k = 1 \) by replacing \( f \) with \( f^{k} \);
hence \( f^{-1}(E_{j}) = E_{j} \).
Let \( G_{j} \) be the proper transform of \( E_{j} \) in \( M \)
for any \( j \). Then \( g^{-1}(G_{j}) = G_{j} \).
By a suitable choice of an equivariant embedded resolution of
singularities, we may assume that all \( G_{j} \) are nonsingular.
Then \( \varpi(G_{j}) \) is a prime component of
\( \sigma^{-1}D \) for any \( j \) by the flattening.
Let \( h_{j} \colon G_{j} \to G_{j} \) be the \'etale endomorphism
\( g|_{G_{j}} \).
In order to prove \eqref{ThD:2} of Theorem~D,
it is enough to show the following two inequalities:
\begin{gather}
h_{\topo}(f) \leq \max
\{h_{\topo}(f|_{F}), h_{\topo}(h_{j})\, (1 \leq j \leq l)\}.
\label{eq:final1}\\
h_{\topo}(f|_{F}) \geq h_{\topo}(h_{j}) \qquad \text{ for any } \quad
1 \leq j \leq l. \label{eq:final2}
\end{gather}
Indeed, we have already the other inequality
\( h_{\topo}(f) \geq h_{\topo}(f|_{F}) \) in Corollary~\ref{cor:topentsubvar}.
The inequality \eqref{eq:final1} is shown in \emph{Step}~3 below, while
the other inequality \eqref{eq:final2} is shown in \emph{Step}~4
by using induction on the dimension of \( X \).

\emph{Step}~3. Proof of \eqref{eq:final1}:
The natural long exact sequence
\[ \cdots \to \OH^{p}_{c}(X \setminus E, \BCC) \to
\OH^{p}(X, \BCC) \to \OH^{p}(E, \BCC) \to \cdots \]
admits an endomorphism \( f^{*} \) induced from \( f \).
Thus,
\begin{align*}
\Lambda(\OH^{*}(X, \BCC), f^{*}) &\subset
\Lambda(\OH^{*}_{c}(X \setminus E, \BCC), (f|_{X \setminus E})^{*}) \cup
\Lambda(\OH^{*}(E, \BCC), (f|_{E})^{*}) \\
&\subset
\Lambda(\OH^{*}(F, \BCC), (f|_{F})^{*}) \cup
\bigcup\nolimits_{j}
\Lambda(\OH^{*}(G_{j}, \BCC), h_{j}^{*})
\end{align*}
by Proposition~\ref{prop:entreducible} applied to \( \bigsqcup G_{j} \to E \)
and Lemma~\ref{lem:incl} applied to \( X \setminus E \to Y \setminus D \).
Therefore, \eqref{eq:final1} follows.

\emph{Step}~4. Proof of \eqref{eq:final2}: By induction, we may
assume that \eqref{ThD:2} of Theorem~D holds
for lower dimensional ambiant spaces.
Let \( P  \) be a general point of \( \varpi(G_{j}) \). We may assume
that \( P \) is a nonsingular point of \( \sigma^{-1}D \), \(
\varpi \) is flat over an open neighborhood of \( P \), and \(
G_{j} \to \varpi(G_{j}) \) is smooth over an open neighborhood of \( P
\). Let \( G_{j} \to S_{j} \) be the fiber space obtained as the Stein
factorization of \( G_{j} \to \varpi(G_{j}) \).
Let \( \Gamma \) be the fiber of \( G_{j} \to S_{j} \) over a point of
\( S_{j} \) lying over \( P \). By induction, we may apply \eqref{ThD:2}
of Theorem~D
to the fiber space \( G_{j} \to S_{j} \) and the \'etale endomorphism
\( g|_{G_{j}} \). Thus,
\( h_{\topo}(h_{j}) = h_{\topo}(g|_{\Gamma}) \).
Let \( \class(\Gamma) \) and \( \class(\varpi^{-1}(P)) \) be the cohomology
classes in \( \OH^{d, d}(M, \BRR) \) corresponding to the subspaces
\( \Gamma \) and \( \varpi^{-1}(P) \), respectively, where \( d = \dim S = \dim Y \).
Then \( \class(\varpi^{-1}(P)) - \class(\Gamma)
\) is represented by a positive current.
The class \( \class(\varpi^{-1}(P)) \in \OH^{d, d}(M, \BRR) \) equals the
class \( \class(F') \) corresponding to a smooth fiber \( F' \)
of \( \varpi \colon M \to S \), since \( \varpi \) is flat
along \( \varpi^{-1}(P) \). Thus,
\begin{align*}
\delta_{l}((g|_{\Gamma})^{m}, \eta|_{\Gamma}) =
\delta_{l}(g^{m}, \eta; \class(\Gamma)) &\leq
\delta_{l}(g^{m}, \eta; \class(\varpi^{-1}(P))) \\
&= \delta_{l}(g^{m}, \eta; \class(F')) =
\delta_{l}((g|_{F'})^{m}, \eta|_{F'})
\end{align*}
for a K\"ahler form \( \eta \), any \( 0 \leq l \leq n - d  \), and
any \( m \geq 1 \).
In particular, \( h_{\topo}(g|_{\Gamma}) \leq h_{\topo}(g|_{F'}) \).
We may assume that \( \mu \colon M \to X \) is an isomorphism along \(
F' \) and \( \sigma \colon S \to Y \) is an isomorphism at the point
\( \varpi(F') \). Therefore, \( h_{\topo}(h_{j}) = h_{\topo}(g|_{\Gamma})
\leq h_{\topo}(g|_{F'}) = h_{\topo}(f|_{F}) \).
Thus, we have completed the proof of \eqref{ThD:2} of Theorem~D.
\end{proof}

\end{document}